\newcommand     {\Cset}    {{\mathbb C}}
\newcommand     {\Nset}    {{\mathbb N}}
\newcommand     {\Zset}    {{\mathbb Z}}
\newcommand     {\Qset}    {{\mathbb Q}}
\DeclareMathOperator{\op}{op}
\DeclareMathOperator{\Irr}{Irr}
\newcommand     {\A}    {{\mathcal A}}
\newcommand     {\B}    {{\mathcal B}}
\newcommand     {\C}    {{\mathcal C}}
\renewcommand   {\H}    {{\mathcal H}}
\newcommand     {\M}    {{\mathcal M}}
\newcommand     {\Q}    {{\mathcal Q}}
\renewcommand   {\S}    {{\mathcal S}}
\newcommand     {\T}    {{\mathcal T}}
\newcommand     {\U}    {{\mathcal U}}
\newcommand     {\F}    {{\mathcal F}}
\newcommand     {\OK}   {{\mathcal O}_K}
\newcommand {\go} {{G^{\circ}}}
\DeclareMathOperator    {\Rep}  {Rep}
\DeclareMathOperator    {\Hilb}  {Hilb}
\DeclareMathOperator    {\SU}  {SU}
\DeclareMathOperator    {\SO}  {SO}
\DeclareMathOperator    {\rk}  {rk}
\DeclareMathOperator{\Cyc}{C}
\DeclareMathOperator{\Dih}{D}
\newtheorem{thm}{Theorem}[section]
\newtheorem{lemma}[thm]{Lemma}
\newtheorem{prop}[thm]{Proposition}
\newtheorem{cor}[thm]{Corollary}
\theoremstyle{definition}
\newtheorem{defn}[thm]{Definition}
\newtheorem{ex}[thm]{Example}
\newtheorem{exs}[thm]{Examples}
\newtheorem{rem}[thm]{Remark}
\numberwithin{equation}{section}
\begin{document}

 \title[Connected components of cmqg]
{Connected components of compact matrix quantum groups\\   and  finiteness  conditions}
\author[L.~Cirio]{Lucio S. Cirio}
\email{lucio.cirio@uni.lu}
\address{
Unit\'e de Recherche en Math\'ematiques, Universit\'e du Luxembourg\\
6, rue Richard Coudenhove-Kalergi, L--1359 Luxembourg, Luxembourg}
\author[A.~D'Andrea]{Alessandro D'Andrea}
 \author[C.~Pinzari]{Claudia Pinzari}
\author[S.~Rossi]{Stefano Rossi}
\email{dandrea@mat.uniroma1.it\\
pinzari@mat.uniroma1.it\\
stipan@alice.it}
\address{Dipartimento di Matematica, Universit\`a degli Studi di
Roma ``La Sapienza''\\ P.le Aldo Moro, 5 -- 00185 Rome, Italy}

\begin{abstract} We introduce the notion of identity component  of a compact quantum group and that of total disconnectedness. As a drawback of the generalized Burnside problem, we note that totally disconnected compact matrix quantum groups  may fail to be profinite.  We consider the problem of approximating the identity component as well as the maximal normal (in the sense of Wang) connected subgroup by introducing
 canonical, but possibly transfinite, sequences of subgroups. These sequences have a trivial behaviour
in the classical case. We give examples, arising as free products, where the identity component is not normal and the associated sequence has length $1$.

We give necessary and sufficient conditions for normality of the identity component and finiteness or profiniteness of the quantum component group. Among them, we introduce an ascending chain condition on the representation ring, called Lie property, which characterizes Lie groups in the commutative case and reduces to group Noetherianity of the dual in the cocommutative case. It is weaker than ring Noetherianity but ensures existence of a generating representation. The Lie property and ring Noetherianity are inherited by quotient quantum groups. We show that $A_u(F)$ is not of Lie type. We discuss an example arising from the compact real form of $U_q(\mathfrak{sl}_2)$ for $q<0$.
\end{abstract}


\maketitle

\tableofcontents

\section{Introduction}

The problem of extending L.~S.~Pontryagin's duality theory for  locally compact commutative groups to the non-commutative case was one of the first motivations for the development of the theory of Hopf algebras, or quantum groups, with the work  of G.~I.~Kac \cite{Kac} in the 1960s, see   \cite{ES}. Since then, Hopf algebras have been extensively studied both in a purely algebraic and an operator algebraic setting.
V.~G.~Drinfeld and M.~Jimbo obtained natural important examples as deformation of classical groups in the mid 80s \cite{Drinfeld1, Drinfeld2, Jimbo}, in the framework of Lie theory. They were originally motivated by the theory of integrable quantum systems.

On the other hand, the theory of $C^*$-algebras originated in the 1940s with the need of a mathematical framework for quantum mechanics.
A $C^*$-algebra is usually interpreted as a non-commutative topological space. This interpretation is stressed by the Gelfand transform, which gives a natural duality between locally compact Hausdorff spaces and commutative $C^*$-algebras. Noncommutative geometry, in the approach of A.~Connes, aims at furnishing the non-commutative space underlying a general $C^*$-algebra with geometric structures \cite{Connes}.

S.~L.~Woronowicz initiated an investigation of quantum groups motivated by noncommutative geometry  \cite{Wsu2}. In his approach a quantum group is described by a Hopf $C^*$-algebra. He gave an axiomatization of compact quantum groups admitting a generating finite-dimensional representation, the {\em compact matrix quantum groups}. He established existence of the Haar measure, as well as Peter-Weyl theory \cite{Wcmp, Wtk}. He later removed the finite-generation assumption, and formulated a general representation  theory for {\em compact quantum groups} \cite{Wcqg}.

Compact matrix quantum groups generalize not only compact Lie groups, but also finitely generated discrete groups. Indeed, the latter arise as dual objects, when all irreducible representations of the quantum group are one-dimensional (cocommutative examples). A general compact matrix quantum group, being an intermediate object between these special cases, exhibits aspects of both.

Woronowicz obtained important examples by deforming the algebra of continuous functions on the special unitary groups \cite{Wsu2, Wtk}. These were shown to be closely related to the examples of Drinfeld and Jimbo  via Tannaka--Krein duality \cite{Rosso}. New examples, the groups $A_u(F)$ and $A_o(F)$, were introduced by S.~Wang and A.~Van Daele \cite{Wang_free, WVD}. They are not deformations of classical groups, and are highly noncommutative in a suitable sense (apart from the special case $A_o(F), \rk F=2$, which is isomorphic to some ${\rm SU}_q(2)$ \cite{Banica}). They are often referred to as the {\em free unitary} and {\em orthogonal} quantum groups, respectively. Their representation theory has been studied by T.~Banica \cite{Banica}. A wealth of new examples has been described over time, see \cite{ Banica5, Banica3, Banica4, Wang2} and references therein.


The generalization of topological and differential-geometric notions to the compact quantum group setting is a challenging problem, comprising a reformulation of the geometry of compact Lie groups in global  terms, as explained by M.~A.~Rieffel in \cite{Rieffel}.
Notably, S. Wang proposed the problem of developing  Cartan-Weyl theory for connected compact matrix quantum groups \cite{Wang_problems}, he introduced the notion of simple compact quantum group, analysed its validity in the free as well as deformed examples and suggested the problem
of investigating the structure of compact quantum groups in terms of simple ones \cite{Wang}; see also \cite{Chirvasitu} for almost simplicity of the unitary free quantum groups.

One of the main difficulties of Wang's project relies in establishing to what extent noncommutative generalizations of classical concepts will play as prominent a role as in the classical case to allow a development of analogous theories.
Indeed, the variety of compact matrix quantum groups is so wide that not much general structure is known, or is likely to emerge, as compared to the theory of compact Lie groups. This can already be observed among the cocommutative examples, the trouble being that the full complexity of finitely generated discrete  groups appears as their duals.

As a very simple example, while compact matrix quantum groups are closed under passage to quantum subgroups, they are not so under formation of quotients, since, in the cocommutative case, these correspond via duality to subgroups of finitely generated groups, which are not finitely generated in general.

We intend to introduce additional constraints that restrict the class of compact matrix quantum groups to a subclass which can hopefully be treated along the lines of the theory of compact Lie groups and also benefit from ideas of geometric group theory. We are interested in selecting this subclass in agreement with the general paradigm of Connes' approach, according to which the richness of the geometric structure depends on the amount of noncommutativity that one decides to afford. Following M.~A.~Rieffel, noncommutative geometries are regarded as approximating classical geometries \cite{Rieffel2}. In this sense, the class should exclude highly non-commutative examples, and certainly include such examples as those arising from deformation of the classical groups. These typically have representation rings isomorphic to that of their classical limit, hence commutative.
More generally, we aim to limit the amount of noncommutativity in the representation theory.



An ubiquitous topological aspect of Lie groups is connectedness. Compact Lie groups are almost connected, in the sense that they have finitely many connected components. Almost connectedness is a simple but important structural property, in that it allows to reduce their study to the connected case, and eventually leads to complete classification.
In this paper, we aim to identify a class of compact matrix quantum groups for which an analogue of almost connectedness holds.

We believe that this is a first step towards a long-term project that focuses on constructing geometric structures, where a further refinement will likely be needed. Indeed, as indicated by
the well known solution  to Hilbert's fifth problem by A.~Gleason and D.~Montgomery and L.~Zippin
\cite{Gleason, MZ}, local connectedness, together with finite dimensionality, gives a characterization of Lie groups among locally compact groups, see also \cite{Tao} and references therein.

It turns out, as we shall show,  that for general compact {\em matrix} quantum groups, disconnectedness
is an extremely intricate and rich matter of investigation. For example, it includes the Burnside problem in discrete group theory, and this fact prevents  almost connectedness in the general case. Moreover, it exhibits unexpected phenomena as compared not only to the classical but also to the  cocommutative case.
For example, the identity component may fail to be normal, in the sense of S.~Wang \cite{Wang_free}. However, these examples have highly noncommutative representation theories. This indicates that the behaviour of the notion of connectedness we refer to, depends on the level of noncommutativity involved.

We shall indeed show that a class of almost connected compact matrix quantum groups with normal identity component can be precisely detected among those quantum groups  for which the torsion part of the  representation theory is commutative and normal, in a suitable sense, but more is needed. The crucial  property  emerging from our analysis is a finite chain condition on the representation ring of the quantum group, that we call Lie property. It imposes a finiteness condition on increasing chains of quotient quantum groups. This result is precisely stated  in Theorem \ref{final} and will be described in more detail later. For example, the Lie property holds if the the representation ring is isomorphic to that of a compact Lie group, and in this sense is natural for our program.

For the quantum $\SU(2)$ group, connectedness has been established by S.~L.~Woronowicz by means of differential calculus \cite{Wsu2}. In \cite{Wang} S.~Wang rephrased the notion of connectedness  for compact quantum groups in representation theoretic terms. His notion in fact goes back to L.~S.~Pontryagin's characterization of connected locally compact abelian groups via duality theory \cite{P}. A compact quantum group is called connected if the coefficients of every non-trivial representation generate an infinite-dimensional Hopf $^*$-subalgebra. Equivalently, every representation generates an infinite tensor subcategory with conjugates. Being formulated in representation theoretic terms, it relies on the group property in a fundamental way. 

Let $G$ be a compact quantum group. We start by observing that the set of connected quantum subgroups of $G$ is closed under the operation of taking the quantum subgroup generated by an arbitrary family, so it contains a unique maximal element $\go$: the {\em identity component} of $G$. Clearly, $G$ is connected if and only if $G=\go$. If $\go$ is the trivial group, we shall say that $G$ is {\em totally disconnected}.

Obviously, $\go$ reduces to the connected component of the identity if $G$ is a topological group, while, if $G$ is the dual of a discrete group $\Gamma$, it reduces to (the dual of) the universal torsion-free image  $\Gamma_f$ of $\Gamma$, as considered by S.~D.~Brodsky and J.~Howie \cite{BH}. In  that paper the authors give conditions implying that $\Gamma_f$ is locally indicable, i.e., every non-trivial finitely generated subgroup admits an epimorphism to ${\mathbb Z}$. The representation ring of a locally indicable group is an integral domain \cite{Higman}. Although beyond the specific aims of this paper, we find it quite remarkable that, when interpreted in the framework of quantum groups, the locally indicable groups correspond precisely to the cocommutative compact quantum groups $G$ admitting a $1$-dimensional classical torus as a quantum subgroup of every matrix quotient of $G$.

We consider the following problems, of a rather different nature: understanding  normality of the identity component, and finiteness of the non-commutative analogue of the component group $G^\circ\backslash G$ of a compact Lie group. The latter problem splits into two problems, reducing to the totally disconnected case: deciding whether $\go\backslash G$ is totally disconnected and under what conditions it is still a {\em matrix} quantum group, hence in turn involving the more fundamental problem of finite generation of quotients.

Thus a special case of our problem is that of whether a totally disconnected compact matrix quantum group is finite, and this has a negative answer, in general. One immediately realizes that this includes, for cocommutative quantum groups, the generalized Burnside problem, as mentioned before, i.e., deciding whether a finitely generated torsion group must be finite.
Indeed, if every irreducible representation of $G$ generates a finite tensor subcategory with conjugates (i.e., it is a {\em torsion representation}) then $G$ is totally disconnected, by Proposition \ref{sufficientfortotdisc}. In particular, cocommutative quantum groups corresponding to torsion groups are totally disconnected.

The Burnside problem was answered in the negative by E.~S.~Golod and I.~R.~Shafarevich for unbounded exponents \cite{GS, G} and by S.~I.~Adian and P.~S.~Novikov in the bounded case \cite{AN}. Such examples show, by Proposition \ref{Burnside}, that totally disconnected compact matrix quantum groups are not even {\em profinite} (cf. Definition \ref{profinite}). Hence the class of quantum groups whose all irreducible representations are torsion, contains the class of profinite quantum groups as a proper subclass. We shall later see that in fact it does not even exhaust the totally disconnected compact quantum groups.

Among classes of finitely generated torsion groups which are  known to be finite are the abelian groups, or, more generally, groups with finite conjugacy classes or the nilpotent ones.  Therefore, in the special case of totally disconnected quantum groups, the first class to consider for the finiteness problem is that for which the tensor product of two representations is commutative up to equivalence, see also Remark \ref{nonamenable}.

  We next describe our main results in more detail. The first one concerns normality of $G^\circ$: in Section \ref{normsect} we list many necessary and sufficient conditions. While $G^\circ$ is always normal in the commutative and cocommutative cases, we provide a class of examples arising as free products of compact quantum groups where $G^\circ$ is not normal.
Our result involves the following aspects. We start with a categorical characterization (Theorem \ref{carattnormal}) of quotient quantum groups by normal subgroups, which starts from  the results of \cite{PR};  we refer to the corresponding subcategories as {\em normal}. In the classical case tensor subcategories with conjugates are always normal, while in the cocommutative case, normal subcategories correspond to normal subgroups of $\Gamma$, if $G=C^*(\Gamma)$. We next study the {\em torsion subcategory} of the representation category of $G$ and its relation with the quotient space $\go\backslash G$. In the case of Lie groups, it coincides with the representation category of the latter, but already for cocommutative quantum groups it may be strictly smaller: for example, it may lack tensor products. The counterexamples to the Burnside problem show that direct sums of torsion objects may fail to be torsion,   even if the torsion subcategory has tensor products.

However, adjoining tensor products and direct sums may not suffice if the torsion subset is not a group. This is due to the fact that the quotient of a group by the subgroup generated by the torsion subset may still contain non-trivial torsion elements. An example has been constructed by M.~Chiodo in \cite{Chiodo}. From the quantum group viewpoint, this quotient is the dual of a subgroup, which is still disconnected. However, a simple ordinary inductive procedure \cite{Chiodo} yields
a sequence of quotients converging to the universal torsion-free quotient of \cite{BH},
whose length may be infinite or   arbitrarily  finite   (see Example \ref{arbitrary}).

If $G$ is a compact quantum group, we consider the unique maximal normal connected subgroup $G^{\rm n}$ of $G$. Clearly, there is an inclusion $G^{\rm n}\subset G^\circ$, which becomes an equality precisely when $G^\circ$ is normal. We may consider the normal quantum subgroup $G_1$ of $G$ defined by the requirement that $\text{Rep}(G_1\backslash G)$ is the smallest normal tensor subcategory of $\text{Rep}(G)$ containing all torsion representations. We construct    a canonical, but possibly transfinite,  normal decreasing sequence, $G_0=G\supset G_1\supset\dots\supset G_\alpha\supset\dots$ of quantum subgroups of $G$.   Cardinality considerations show  that this sequence must stabilize.   We introduce the {\em (normal) torsion degree} of $G$ as the smallest ordinal $\delta$ such that $G_\delta=G_{\delta+1}$.
In the cocommutative case, $\delta\leq\omega$, the first countable ordinal, and the sequence  reduces to Chiodo's construction.
 If $G^{\rm n}\backslash G$ is finite, or more generally if all irreducible representations of $G^{\rm n}\backslash G$ are torsion, then $G$ has torsion degree $\leq1$.  This includes the familiar case of compact groups, but also the intricate examples arising in connection with the Burnside problem.

We show that the torsion degree of $G$ coincides with the smallest ordinal $\delta$ such that $G_\delta$ is connected, and $G_\delta=G^{\rm n}$ (Theorem \ref{ntd}).
We derive a characterization of normality of $G^\circ$ in terms of the sequence $G_\alpha$ (Corollary \ref{necsuff}). This characterization is useful to exhibit a large class of free product quantum groups $G$ with non-normal identity component and yet of torsion degree $1$. In particular,    the identity component $G^\circ$  can be  any given (compact) adjoint semi-simple Lie group, while $G^{\rm n}$ is the trivial group  (Theorem \ref{non-normal} and Corollary \ref{corolnon-normal}). In these cases, the tensor subcategory ${\mathcal T}_1$ generated by the torsion subcategory is not normal, a novelty if compared to the case of discrete groups. Moreover, examples where $G^\circ$ is not normal and ${\mathcal T}_1$ is normal but infinite, i.e., with infinitely many irreducible objects, can be constructed as well.

A generalization of Chiodo's method provides cocommutative examples of each torsion degree $\leq\omega$, the first countable ordinal. It is an interesting problem that of deciding whether the torsion degree can assume values $>\omega$, especially in the case of compact quantum groups with normal identity component.
A slight variation of our construction yields a second, subnormal decreasing transfinite sequence, which converges to $G^\circ$ under certain conditions (Theorems \ref{suffnormality} and \ref{variation}).

A necessary condition for normality of $G^\circ$ and finiteness of $G^\circ\backslash G$ is that $\Rep(G^\circ\backslash G)$ equals  the torsion subcategory $\Rep(G)^t$, and therefore that $\Rep(G)^t$ is tensorial, finite and normal.
Theorem \ref{connectedcomponent} shows that these last conditions are also sufficient,  and moreover  $G$ has torsion degree $\leq 1$.  Our proof  uses the bimodule construction and induction theory for tensor $C^*$-categories developed in \cite{PRinduction}.  In Corollary \ref{corollary} we derive normality of $\go$ for compact quantum groups whose associated dense Hopf $^*$-algebra is an inductive limit of Hopf $^*$-subalgebras of quantum groups of the previous kind. Examples  clarify that the assumptions of Theorem \ref{connectedcomponent} are independently needed   for normality of $G^\circ$ and torsion degree $\leq1$.

From the technical viewpoint,   one may reasonably  argue whether the theory of induction of \cite{PR} used in the proof of Theorem   \ref{connectedcomponent}  may be replaced by a more direct
argument. We note that an  analogue of  Clifford-Mackey theory for compact quantum groups would suffice. However, to our knowledge, this theory is not  available. We hope to develop it elsewhere \cite{DPP}.

While we give simple criterions for normality of a tensor subcategory (Proposition \ref{sufficientfornormality}), our result reduces the problem of normality of the identity component to that of finiteness and tensoriality of the torsion subcategory. As noted before, the first case to consider is that where tensor product of torsion representations is commutative. In this case, the torsion subcategory is tensorial. We may thus regard this problem as a special case of understanding whether a full tensor subcategory (with conjugates, subobjects and direct sums) of a finitely generated tensor category is still finitely generated, which, as mentioned above, is of interest in its own right, whether or not commutativity of the torsion part is assumed.

Indeed, full tensor subcategories of $\Rep(G)$ are in one-to-one correspondence with quotient quantum groups of $G$ and also with subhypergroups of the dual object $\hat{G}$. We shall refer to the subring of the representation ring $R(G)$ generated by a subhypergroup as a  {\em representation subring}. Hence the problem is one of finite generation of subhypergroups, which we frame as finite generation of representation subrings.
 From the geometric viewpoint, it becomes the problem of identifying a class of compact matrix quantum groups that is stable under taking quotients.

This leads to Section 6, which also contains main results. We introduce an ascending chain condition on representation subrings of $R(G)$. We refer to $G$, or  $R(G)$, as being of {\em Lie type}. The terminology is motivated by the classical case: if $G$ is a compact group, every quotient group arises from a normal closed subgroup; the Lie property thus becomes equivalent to the requirement that every decreasing sequence of normal closed subgroups of $G$ stabilizes, which is indeed one of the characterizations of compact Lie groups among compact groups.

By Theorem \ref{generatingrep}, compact quantum groups of Lie type are necessarily compact matrix quantum groups. However, not every compact matrix quantum group is of Lie type. Indeed, in the cocommutative case, being of Lie type translates into the ascending chain condition on subgroups, or, equivalently, to the property that every subgroup is finitely generated: such groups are called Noetherian. For example, the free group on two generators is not Noetherian. We show that an analogous result holds for compact quantum groups: $A_u(F)$ is not of Lie type (Theorem \ref{$A_u(F)$}). The Lie property is obviously inherited by representation subrings. It follows that the family of compact quantum groups of Lie type is closed under taking quotients. In particular, every quotient is still a compact matrix quantum group. Equivalently, every full tensor subcategory is finitely generated.

One may also require that the representation ring of a compact quantum group $G$ is Noetherian; then $G$ is automatically of Lie type. More precisely, Theorem \ref{lietype} provides a natural connection between quotient quantum groups of $G$ and certain ideals of its representation ring established by the integer dimension function. A natural class of examples are the compact quantum groups with commutative and finitely
generated  representation ring. Indeed, being Noetherian, they are of Lie type. We notice, however, that a subgroup of a quantum group with Noetherian representation ring is not necessarily of Lie type (Remark \ref{nosubgroups}).

In general, Noetherianity of the representation ring is strictly stronger than the Lie property (although it is equivalent in the classical case) as follows by an example due to S.~V.~Ivanov \cite{Ivanov} in the context of discrete groups. We recall for completeness that almost polycyclic groups have Noetherian group ring and that Ivanov's example was motivated by Olshanskii's example earlier mentioned, which is also the first known example of a  Noetherian group not almost polycyclic \cite{Olshanskii}.  Whether almost polycyclic groups are the only ones with Noetherian group ring (over a field) is a long-standing open problem.

The main application of the Lie property  is to the study of the torsion subcategory $\Rep(G)^t$ of a compact quantum group $G$. Namely, if $G$ is of Lie type and if $\Rep(G)^t$  is commutative then it is automatically tensorial and, more importantly, being finitely generated, it is finite. Combining with Theorem \ref{connectedcomponent} shows that if $\Rep(G)^t$ is in addition normal, then $G^\circ$ is normal and $G$ is almost connected (Corollary \ref{commthentensfinite} and Theorem \ref{final}).

We would like to mention a striking, closely related result of M.~Hashimoto, who showed, with methods of algebraic geometry, that any pure subalgebra of a commutative finitely generated algebra over a Noetherian ring is finitely generated \cite{hashimoto}. Indeed, we note that a representation subring is a direct summand subalgebra, and is therefore pure.

We conclude the paper with an example arising from the compact real forms of Drinfeld--Jimbo quantization of
$\mathfrak{sl}_2$, for real values of the deformation parameter, namely $U_q(\mathfrak{su}_{2})$ for $q>0$ and $U_q(\mathfrak{su}_{1,1})$ for $q<0$. With the methods developed in this paper, we explicitly compute the identity component of the dual compact quantum groups, recognise that it is normal and compute the quantum component group.
While the case $q>0$ is widely known to split as   ${\rm SU}_q(2)\times\Cyc_2$, we shall mostly focus on the case $q<0$. This example does not arise as a product of the identity component and the component group.
We would like to express our gratitude to  K. De Commer \cite{DeCommer} who kindly indicated to us the papers by  L.~I.~Korogodski and E.~Koelink and J.~Kustermans
\cite{Koelink_Kustermans, Korogodski} on the quantum $\widetilde{{\rm SU}}(1,1)$ group, and a possible connection with $\widehat{U_q(\mathfrak{su}_{1,1})}$ for $q<0$.

In the Appendix, we introduce the notion of image of a quantum subgroup of a compact quantum group in a quotient, and we observe that in general not every quantum subgroup of a quotient is of this form. We discuss necessary and sufficient conditions in some special cases. We believe that the results of this section are helpful  to clarify the novelties that emerge from quantum groups as compared to the classical theory in relation with the general problem
of total disconnectedness of the quantum component group.

The paper is organized as follows.   Section 2   establishes notation and recalls results that we shall need. In Section 3 we give a categorical characterization of  quotient quantum groups of a given compact quantum group that arise from quantum subgroups. We refer to the associated categories as being normal, and we establish the main properties. Section 4 is dedicated to the introduction of the identity component, the maximal connected normal subgroup and to totally disconnected compact quantum groups. In Section 5 we discuss the problem of normality and that of finiteness, profiniteness or total disconnectedness  of the quantum component group. We introduce the above mentioned transfinite sequences approximating $G^\circ$ and $G^{\rm n}$, and we construct  examples where $G^\circ$ is not normal. In Section 6 we introduce the Lie property of a compact quantum group and we compare it with Noetherianity of the representation ring and finite generation of the hypergroup. In the last part of this Section we draw conclusions from the main results of the paper. Finally, as already mentioned, Section 7 is dedicated to an example.

\medskip

\section{Preliminaries}

 \subsection{Compact quantum groups}\label{prel}\
\medskip

\noindent We fix the notation and recall some results about compact quantum groups, duality, subgroups, normal subgroups and quotient spaces.
\begin{defn} (\cite{Wcqg})  A {\em compact quantum group} $G = (Q, \Delta)$ is a unital $C^*$-algebra $Q$ together with a coassociative unital $^*$-homomorphism $\Delta: Q\to Q\otimes Q$, called {\em comultiplication}, to the minimal $C^*$-algebraic tensor product such that $(Q\otimes{\mathbb C})\cdot \Delta(Q)$ and $({\mathbb C}\otimes Q)\cdot \Delta(Q)$ are dense.
\end{defn}
Let $H$ be a finite-dimensional Hilbert space, and denote by $\B(H)$ the algebra of linear operators. A representation of a compact quantum group $G = (Q, \Delta)$ on $H$ is a unitary element $u$ of $\B(H)\otimes Q$ such that the comultiplication on matrix coefficients
$$ u_{\psi,\phi}:=\psi^*\otimes 1 \circ u \circ \phi\otimes 1, \qquad \phi, \psi \in H,$$
is given by
$$\Delta(u_{\psi,\phi})=\sum_k u_{\psi, e_k}\otimes u_{e_k,\phi},$$
where $(e_k)$ is an orthonormal basis. The matrix coefficients   $u_{e_r, e_s}$ associated to a fixed orthonormal basis will be simply denoted by  $u_{rs}$.

A remarkable and well known theorem states that the linear space $\Q$ of coefficients of representations of $G$ is a  canonical dense Hopf $^*$-subalgebra of $Q$ in the algebraic sense, i.e., it is equipped with antipode and counit, and the comultiplication takes values in the algebraic tensor product
$$\Delta: {\Q}\to {\Q}\odot{\Q}.$$
Most importantly, $G$ admits a unique Haar measure, i.e., a translation invariant state $h$ on $Q$, which is faithful on $\Q$.  In particular, the given norm on $\Q$ is bounded below by the norm defined by the Haar measure (reduced norm) and above by the maximal $C^*$-norm, which is finite  \cite{Wcqg}.

The reduced and maximal norm differ in general. We may complete $\Q$ in the reduced or maximal $C^*$-norm and obtain a compact quantum group, $G_{\text{red}}$ or $G_{\text{max}}$ respectively, having the same representations as $G$. If the maximal and reduced norm coincide, $G$ is called coamenable. As the term indicates, this is an amenability property of the   representation theory of $G$.
 For regular multiplicative unitaries, coamenability has been introduced by Baaj and Skandalis \cite{BS}, and for compact quantum groups by Banica \cite{Banica2}. See also \cite{BMT}.

In this paper, an important role is played by the {\em cocommutative} examples, defined as follows. Let  $\Gamma$ be a discrete group. The group $C^*$-algebra, $C^*(\Gamma)$, is a compact quantum group $G$ with the usual comultiplication extending $\gamma\mapsto\gamma\otimes\gamma$, $\gamma\in \Gamma$. Irreducible representations are one-dimensional with coefficients given by the elements of $\Gamma$, hence $\Q=\Cset\Gamma$.
The Haar measure is given by evaluation at the identity and is a trace. $G$ is coamenable  if and only if $\Gamma$ is an amenable group.

On the other hand, if all the irreducible representations of a compact quantum group $G$ are one-dimensional, then they form a group, say $\Gamma$, under tensor products and conjugation.
Therefore the associated Hopf $C^*$--algebra $Q_G$ contains the group algebra $\Cset\Gamma$ as its canonical dense Hopf $^*$-subalgebra. Hence, up to the choice of the norm completion, $Q_G$ is of the form $C^*(\Gamma)$.
In this sense, cocommutative examples should be regarded as the non-commutative analogue of compact abelian groups. Indeed, Woronowicz calls such quantum groups {\em abelian} \cite{Wcmp}.

Further well known examples of compact quantum groups  are ${\rm SU}_q(d)$ \cite{Wsu2, Wtk}, which is coamenable \cite{Nagy}, $A_o(F)$ \cite{Wang_free, WVD}, coamenable if and only if $F$ has rank $2$ (a result ascribed to  Skandalis in \cite{Banica}), while $A_u(F)$ \cite{Wang_free, WVD} is never coamenable \cite{Banica}.

\subsection{Tannaka--Krein--Woronowicz  duality}\label{tannaka}\
\medskip

If $G$ is a compact quantum group, let $\Rep(G)$ be the category whose objects are finite-dimensional representations of $G$ and whose arrows are defined by
$$(u,v):=\{T\in\B(H_u, H_v): T\otimes 1\circ u=v\circ T\otimes 1\}.$$
This category has a natural structure of tensor $C^*$-category with conjugates, subobjects and direct sums in the sense of \cite{LR}. A  conjugate representation of $u$ will be denoted by $\overline{u}$ and the tensor product of objects by $uv$ and of arrows by $S\otimes T$. The trivial representation is the tensor unit and will be denoted by $\iota$. Every finite-dimensional representation is the direct sum of irreducible representations, hence the category is semisimple. If $u$ is a representation, a conjugate representation $\overline{u}$ is characterized by the existence of intertwiners $R\in(\iota, \overline{u}u)$, $\overline{R}\in(\iota, u\overline{u})$ solving the conjugate equations in the sense of \cite{LR}. It follows that $\overline{u}$ is unique up to unitary equivalence, $u$ is a conjugate of $\overline{u}$, both $\overline{u}u$ and $u\overline{u}$ contain the trivial representation, and two-sided Frobenius reciprocity holds, in the sense that there are natural linear isomorphisms
\begin{equation}\label{frobenius}
(v, wu)\simeq(v\overline{u}, w), \qquad (v, uw)\simeq(\overline{u}v, w).
\end{equation}
In particular, if $u$ is irreducible, $\overline{u}$ is irreducible as well and the  the spaces of arrows $(\iota, \overline{u}u)$, $(\iota, u\overline{u})$  have dimension $1$.
\begin{ex}
If $G$ arises from a discrete group $\Gamma$, tensor product and conjugate in $\Rep(G)$ correspond respectively to multiplication and inverse in $\Gamma$.
\end{ex}

Often,  compact quantum groups  are described via their representation category. The algebraic and the categorical approach are explicitly linked by a version of the Tannaka-Krein duality developed by Woronowicz \cite{Wtk}. Since this dual viewpoint will play a role in our paper, we briefly recall the necessary formalism.

When considered as an abstract category, $\Rep(G)$ does not determine $G$. For example, the representation categories of ${\rm SU}_q(2)$ and $A_o(F)$
are isomorphic \cite{Banica} as abstract tensor $C^*$-categories for a large class of choices for the matrix $F$. In order to recover $G$, we need to take into account
the embedding functor into the category of Hilbert spaces,
$$H: \Rep(G)\to\Hilb,$$
associating with any representation $u$ its Hilbert space $H_u$ and acting trivially on arrows.
Tannaka's duality is the process of recovering the dense Hopf algebra $(\Q, \Delta)$ from   $(\Rep(G), H)$.
Indeed, $\Q$ is linearly isomorphic, as a linear space, to the  algebraic direct sum of $\overline{H}_\alpha\otimes H_\alpha$, where $\alpha$ labels a complete set of irreducible representations and $H_\alpha$ is the Hilbert space of $\alpha$. The Hopf $^*$-algebra structure of $\Q$ is explicitly determined \cite{Wtk} by the fusion and conjugation structure of ($\Rep(G)$, $H$).

Abstract
tensor $C^*$-categories  do not generally embed
into the Hilbert spaces. In fact, those which do embed, after completion with subobjects and direct sums, are precisely the representation categories of the compact quantum groups.
 More precisely, for any given tensor $C^*$-category  $\T$ with conjugates, subobjects and direct sums and an embedding functor $\F:\T\to\Hilb$, there exists a compact quantum group $G$ such that ($\T$, $\F$) is isomorphic to $(\Rep(G),H)$.

\medskip

\subsection{Quantum subgroups and their quotient spaces}\
\medskip

\noindent  The notion of quantum subgroup for compact quantum groups is due to Podles \cite{Podles}. Since in this paper we adopt an algebraic approach, it will be convenient to consider a slight variation, see also \cite{Pembedding}, which identifies quantum subgroups with the same representation category. The two notions coincide if we focus on coamenable subgroups.
More precisely, recall that Podles defined a quantum subgroup  of $G=(Q_G, \Delta_G)$ to be a compact quantum group $K=({Q}_K, \Delta_K)$ together with a $^*$-epimorphism $\pi: { Q}_G\to{ Q}_K$ satisfying $\Delta_K\circ\pi=\pi\otimes\pi\circ\Delta_G$. Here, $\pi$ should be thought of as the analogue of the restriction map.

While quantum groups correspond to embedded tensor categories, in the Tannakian formalism, quantum subgroups correspond to inclusions of embedded categories. For any representation $u\in\Rep(G)$, we set $u\upharpoonright_K:= 1\otimes\pi\circ u$; this is a representation of $K$, referred to as the {\em restricted representation}.  Hence $\pi$ takes $\Q_G$ into $\Q_K$, and actually $\pi(\Q_G)=\Q_K$ since $\pi(\Q_G)$ is dense.

In this paper, a compact quantum group $K$ will be called a subgroup if there is an epimorphism between the dense Hopf $^*$-algebras  $\pi:\Q_G\to\Q_K$ compatible with comultiplications. Any irreducible representation of $K$ is a subrepresentation of some restricted representation. The map $u\in\Rep(G)\to u\upharpoonright_K\in\Rep(K)$ is a tensor $^*$-functor compatible with the embeddings of these representation categories into $\Hilb$. Conversely, if
${F}:\T\to\Hilb$ is an embedded tensor $C^*$-category with subobjects and direct sums and $r: \Rep(G)\to\T$ is a tensor $^*$-functor such that the following diagram
\[\xymatrix{
\Rep(G) \ar[rd]_{H} \ar[r]^r
&\T \ar[d]^{F} \\
 &\Hilb} \]
commutes and such that any irreducible of $\T$ is a subobject of some $r(u)$, then there is a quantum subgroup $K$ such that ($\T$, $F$) identifies with the pair corresponding to $K$ and $r$ with the restriction functor. The subgroup is unique up to the choice of the norm completion of the dense Hopf subalgebra.

Let $K=(Q_K,\Delta_K)$ be a quantum subgroup of $G=(Q_G,\Delta_G)$ defined by $\pi:Q_G\to Q_K$.
We may consider the
right translation of $G$ by $K$,
$$\rho:=1\otimes\pi\circ\Delta_G: Q_G\to Q_G\otimes Q_K,$$
which is an action of $K$ on $G$, in that it satisfies the relation
$$\rho\otimes 1\circ\rho= 1\otimes\Delta_K\circ\rho.$$
We may also consider the left translation of $G$ by $K$,
$$\lambda:=\pi\otimes1\circ\Delta_G: Q_G\to Q_K \otimes Q_G,$$
so that
$$1\otimes\lambda\circ\lambda=\Delta_K\otimes 1\circ\lambda.$$
This relation means that $\theta\circ\lambda$ is an action of $K$ on $G$ when endowed with the opposite comultiplication $\theta\circ\Delta_K$, where $\theta$ denotes the flip. We may thus consider the associated fixed point algebras
$$Q_{G/K}:=\{a\in Q_G:\rho(a)=a\otimes 1\},\qquad Q_{ K\backslash G}:=\{a\in Q_G:\lambda(a)=1\otimes a\},$$
which are analogues of the spaces of right and left $K$--invariant functions, respectively, and also
the analogue of the space of bi-$K$-invariant functions:
$$Q_{K\backslash G/K}:=Q_{K\backslash G}\cap Q_{ G/K}.$$

It is well known that $Q_{G/K}$ and $Q_{K\backslash G}$ are globally invariant under the translation action of $G$, in the sense that  if  $\Delta=\Delta_G$,
$$\Delta(Q_{K\backslash G})\subset Q_{K\backslash G}\otimes Q_G,\qquad \Delta( Q_{ G/K})\subset Q_G\otimes Q_{ G/K}.$$
For example, the first inclusion follows from
$$\lambda\otimes 1(\Delta(a))=\pi\otimes 1\otimes 1\circ\Delta\otimes 1\circ\Delta(a)=1\otimes\Delta \circ\lambda(a)=1\otimes\Delta(a)$$
for $a\in Q_{K\backslash G}$.

For the space of bi-$K$-invariant elements,
\begin{equation}\label{biK}
\Delta(Q_{K\backslash G/K})\subset Q_{ K\backslash G}\otimes Q_{G/K}.
\end{equation}

\medskip

\subsection{Normal quantum subgroups}\
\medskip

\noindent The notion of normal subgroup for compact quantum groups, as well as the following result, have been put forth by Wang (see, e.g., \cite{Wang} and references therein). We shall include a brief proof.
\begin{prop}\label{normal}
Let $K$ be a quantum subgroup of $G$. The following properties are equivalent,
\begin{itemize}
\item[{\rm a)}]
$Q_{K\backslash G}=Q_{ G/K},$
\item[{\rm b)}] $\Delta(Q_{K\backslash G})\subset Q_{K\backslash G}\otimes Q_{K\backslash G}$.
\item[{\rm c)}] If $v$ is an irreducible representation of $G$ such that the restricted representation $v\upharpoonright_K$ contains non-trivial invariant vectors, then $v\upharpoonright_K$ is a multiple of the trivial representation.
\end{itemize}
\end{prop}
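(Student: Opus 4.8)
The plan is to reduce everything to the Peter--Weyl/Tannaka decomposition $\Q_G=\bigoplus_\alpha C(\alpha)$, where $\alpha$ runs over classes of irreducible representations and $C(\alpha)\cong\overline{H}_\alpha\otimes H_\alpha$ is the space of matrix coefficients, identified via $v_{\psi,\phi}\leftrightarrow\overline{\psi}\otimes\phi$. Since $\pi$ is a Hopf $^*$-morphism, both $\rho=1\otimes\pi\circ\Delta$ and $\lambda=\pi\otimes1\circ\Delta$ preserve this decomposition, so it suffices to analyse the invariant spaces one isotypic component at a time, and I may work throughout with the dense Hopf $^*$-algebra $\Q_G$, the $C^*$-level statements following by density via the $h_K$-averaging conditional expectations onto $Q_{K\backslash G}$ and $Q_{G/K}$ (here $h_K$ denotes the Haar state of $K$).

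First I would compute which coefficients of a fixed irreducible $v$ are invariant. Writing $H_v^K:=\{\xi\in H_v:v\upharpoonright_K(\xi\otimes1)=\xi\otimes1\}$ for the space of $K$-fixed vectors, a direct computation from $\Delta(v_{\psi,\phi})=\sum_k v_{\psi,e_k}\otimes v_{e_k,\phi}$ gives $\rho(v_{\psi,\phi})=v_{\psi,\phi}\otimes1$ precisely when $\phi\in H_v^K$, and $\lambda(v_{\psi,\phi})=1\otimes v_{\psi,\phi}$ precisely when $\psi\in H_v^K$ (the latter using unitarity of $v\upharpoonright_K$ to pass between a trivial row and a trivial column). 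Under the identification above this yields the clean descriptions $Q_{G/K}\cap C(v)=\overline{H}_v\otimes H_v^K$ and $Q_{K\backslash G}\cap C(v)=\overline{H_v^K}\otimes H_v$.

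With these in hand, the equivalence (a)$\Leftrightarrow$(c) becomes pure linear algebra: for a subspace $W=H_v^K$ of $V=H_v$ one has $\overline{V}\otimes W=\overline{W}\otimes V$ inside $\overline{V}\otimes V$ if and only if $W=0$ or $W=V$, since for $0\neq W\subsetneq V$ a pure tensor $\overline{w}\otimes \xi$ with $w\in W$ and $\xi\notin W$ separates the two subspaces. Summing over $v$, condition (a) holds iff every irreducible $v$ has $H_v^K\in\{0,H_v\}$, i.e.\ $v\upharpoonright_K$ either contains no non-trivial invariant vector or is a multiple of $\iota$, which is exactly (c).

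It remains to tie in (b). For (a)$\Rightarrow$(b) I would invoke the globally invariant relations already recorded, $\Delta(Q_{K\backslash G})\subset Q_{K\backslash G}\otimes Q_G$ and $\Delta(Q_{G/K})\subset Q_G\otimes Q_{G/K}$: under (a) the two spaces coincide, say to $A$, so $\Delta(A)$ lies in $(A\otimes Q_G)\cap(Q_G\otimes A)=A\otimes A$, the last equality being the elementary fact that $(A\otimes B)\cap(B\otimes A)=A\otimes A$ for a subspace $A\subset B$ (applied at the level of $\Q_G$). For the converse (b)$\Rightarrow$(c), fix an irreducible $v$ with $H_v^K\neq0$, pick $0\neq\psi\in H_v^K$, and for an arbitrary index $k_0$ choose $\phi$ with $v_{e_{k_0},\phi}\neq0$; then $\Delta(v_{\psi,\phi})=\sum_k v_{\psi,e_k}\otimes v_{e_k,\phi}$ lies in $Q_{K\backslash G}\otimes Q_{K\backslash G}$ by (b). The first legs $\{v_{\psi,e_k}\}_k$ are linearly independent because $\phi\mapsto v_{\psi,\phi}$ is injective for $v$ irreducible and $\psi\neq0$; testing against a functional vanishing on $Q_{K\backslash G}$ then forces each $v_{e_k,\phi}$ into $Q_{K\backslash G}$, whence $e_{k_0}\in H_v^K$, and since $k_0$ was arbitrary, $H_v^K=H_v$, which is (c). I expect the main obstacle to be precisely this extraction step — arguing cleanly that a sum with linearly independent first legs can lie in $A\otimes A$ only if all second legs lie in $A$ — together with the bookkeeping required to pass between the algebraic and the $C^*$ tensor products; the fixed-vector computations and the comparison of $\overline{W}\otimes V$ with $\overline{V}\otimes W$ are routine once the set-up is in place.
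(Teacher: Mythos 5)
Your proof is correct and takes essentially the same approach as the paper's: the isotypic descriptions $Q_{K\backslash G}\cap C(v)=\overline{H_v^K}\otimes H_v$ and $Q_{G/K}\cap C(v)=\overline{H}_v\otimes H_v^K$ are precisely what underlies the paper's c)$\Rightarrow$a) (both spaces are closed spans of coefficients with an invariant vector in the appropriate slot), and your slice-functional extraction of the second legs of $\Delta(v_{\psi,\phi})$ is exactly the paper's b)$\Rightarrow$c). The only cosmetic deviations are that you establish a)$\Rightarrow$b) from the elementary identity $(A\odot B)\cap(B\odot A)=A\odot A$ applied to the two one-sided invariance relations, where the paper simply quotes the bi-invariance inclusion \eqref{biK}, and that you prove a)$\Leftrightarrow$c) by direct linear algebra on $\overline{V}\otimes W$ versus $\overline{W}\otimes V$ rather than closing the implication cycle.
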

\begin{proof}
a)$\Rightarrow$ b) follows from \eqref{biK}.\\
b)$\Rightarrow$ c) If $\psi$ is a non-trivial invariant vector for $v\upharpoonright_K$, and $(\phi_i)$ is an orthonormal basis, all coefficients
$$v_{\psi,\phi_i}:=\psi^*\otimes 1\circ v \circ \phi_i\otimes 1$$
lie in $Q_{K\backslash G}$ and
$$\Delta(v_{\psi,\phi_i})=\sum_jv_{\psi,\phi_j}\otimes v_{\phi_j,\phi_i}.$$
Then $v_{\phi_j,\phi_i}$ must be an element of $Q_{K\backslash G}$ for all $j$, whence all the $\phi_j$ are invariant vectors under $v\upharpoonright_K$.\\
c) $\Rightarrow$ a) follows from the fact that for any quantum subgroup $K$, $Q_{K\backslash G}$ is generated as a Banach space
by matrix coefficients $v_{\psi,\phi}$ of irreducible representations, where $\psi$ is invariant for $v\upharpoonright_K$ and $\phi$ is arbitrary. Similarly, $Q_{G/K}$ is generated by $v_{\phi',\psi'}$, where $\psi'$ is invariant and $\phi'$ is arbitrary.
\end{proof}
\begin{defn}
A quantum subgroup $K$ of $G$ is {\em normal} if it satisfies the equivalent conditions of Proposition \ref{normal}.
\end{defn}
Hence if $K$ is normal, $Q_{K\backslash G}$ becomes a Hopf $C^*$-subalgebra of $Q_G$ with respect to the restriction of the comultiplication of $G$. Moreover,
$K\backslash G=(Q_{K\backslash G}, \Delta)$ is a compact quantum group.
\begin{rem}
Wang showed that the above notion of normality can be equivalently stated in terms
of the   quantum adjoint action \cite{Wang_adjoint}. However, we shall not need it in this paper.
 \end{rem}
\begin{ex}
If $G=C^*(\Gamma)$ arises from a discrete group $\Gamma$, irreducible representations of $G$ are in one-to-one correspondence with elements from $\Gamma$, and are all one dimensional. This implies that the restriction of any irreducible to a quantum subgroup $K$ is still irreducible, so $K$ arises from a discrete group as well. In particular, Proposition \ref{normal}c holds, hence any quantum subgroup of $G$ is normal. The restriction functor gives rise to a group epimorphism from $\Gamma$ onto that group. If $\Lambda$ is the kernel, choosing the maximal norm, $K=C^*(\Lambda\backslash\Gamma)$ and  $K\backslash G=C^*(\Lambda)$.
\end{ex}

Let $K$ be a quantum subgroup of a compact quantum group $G$. If $K$ is cocommutative
then the restriction of  every irreducible representation $u$ of $G$ to $K$ is obviously direct sum
of $1$--dimensional representations of $K$, and this is in fact a characterization of cocommutativity of $K$. In later sections  a special class of cocommutative quantum subgroups will emerge,
those for which every $u\upharpoonright_K$ is a multiple of a single $1$-dimensional
representation. In the classical case, this property is a representation theoretic characterization of the property that
$K$ is a closed subgroup  of the {\em center} of $G$, by Clifford theorem
\cite{Weyl}.

On the other hand, in the noncommutative case, Wang has introduced the following notion
of central quantum subgroup.

\begin{defn} (\cite{Wang})
A quantum subgroup $K=(Q_K,\Delta_K)$ of a compact quantum group $G=(Q_G,\Delta_G)$ defined by
$\pi: Q_G\to Q_K$ is  called {\em central} if $\pi\otimes 1\Delta_G=\pi\otimes1\Delta'_G$
where $\Delta'_G$ is the coproduct of $Q_G$ opposite to $\Delta_G$.
\end{defn}

I. Patri has shown that Wang's  notion of central quantum subgroup  is in fact equivalent
to the previous  representation theoretic notion.    We are grateful to him for informing us of the following result.

\begin{prop} (\cite{patri}) $K$ is a central quantum subgroup of $G$ if and only if
 $u\upharpoonright_K$ is
a multiple of  a one-dimensional representation of $K$ for every irreducible representation $u\in\Rep(G)$.
\end{prop}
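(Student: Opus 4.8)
The plan is to prove both directions of the equivalence between centrality of $K$ in Wang's coproduct sense and the representation-theoretic condition that $u\upharpoonright_K$ is a multiple of a one-dimensional representation for every irreducible $u\in\Rep(G)$. The natural strategy is to translate the coproduct identity $\pi\otimes 1\circ\Delta_G=\pi\otimes 1\circ\Delta'_G$ into a statement about matrix coefficients of an arbitrary irreducible representation $u$, and to compare it with the restriction $u\upharpoonright_K=1\otimes\pi\circ u$. First I would fix an irreducible $u$ with orthonormal basis $(e_k)$ and matrix coefficients $u_{rs}$, recall that $\Delta_G(u_{rs})=\sum_k u_{rk}\otimes u_{ks}$ while $\Delta'_G(u_{rs})=\theta\circ\Delta_G(u_{rs})=\sum_k u_{ks}\otimes u_{rk}$, and apply $\pi\otimes 1$ to both. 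The centrality condition then reads
\begin{equation}\label{central-coeffs}
\sum_k \pi(u_{rk})\otimes u_{ks}=\sum_k \pi(u_{ks})\otimes u_{rk}\quad\text{for all }r,s.
\end{equation}

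For the direction asserting that centrality implies the representation-theoretic property, I would read \eqref{central-coeffs} as an identity in $Q_K\otimes Q_G$ and exploit linear independence of the coefficients $u_{ks}$ of the irreducible $u$ (guaranteed by faithfulness of the Haar measure on $\Q$, hence linear independence of matrix coefficients of inequivalent or distinct coefficients of a single irreducible). Matching the terms carrying a fixed second tensor factor $u_{ks}$ on the left against those on the right forces relations among the $\pi(u_{rk})$, which should pin down the matrix $(\pi(u_{rk}))_{rk}=u\upharpoonright_K$ to be a scalar multiple of a single one-dimensional corepresentation: concretely, the identity says that $u\upharpoonright_K$ commutes, in the appropriate sense, with the whole matrix $u$ regarded over $Q_G$, and since $u$ is irreducible its commutant is trivial, collapsing $u\upharpoonright_K$ onto a multiple of a character of $Q_K$. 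Conversely, if every $u\upharpoonright_K$ is a multiple of a one-dimensional representation, then $\pi(u_{rk})=\delta_{rk}\,\chi_u$ for a single group-like element (character) $\chi_u\in Q_K$ in a suitable basis, and substituting this into both sides of \eqref{central-coeffs} makes them visibly equal; since the coefficients of irreducibles span $\Q_G$ densely and the two maps $\pi\otimes 1\circ\Delta_G$ and $\pi\otimes 1\circ\Delta'_G$ are continuous, the identity extends from $\Q_G$ to all of $Q_G$, giving centrality.

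The main obstacle I expect is the bookkeeping in the forward direction: extracting from the single tensor identity \eqref{central-coeffs} the conclusion that $u\upharpoonright_K$ is diagonalizable into a multiple of one character, rather than merely some cocommutative (block-diagonal) representation of $K$. The clean way to handle this is to observe that \eqref{central-coeffs} expresses precisely that $u\upharpoonright_K\otimes 1$ intertwines $u$ with itself after a flip, i.e.\ that the operator implementing the restriction lies in the commutant $(u,u)$ inside $\B(H_u)\otimes Q_G$; irreducibility of $u$ then gives $(u,u)=\Cset$, forcing $u\upharpoonright_K$ to be scalar-valued entrywise up to a common character. I would also need to take care that a \emph{multiple} of a one-dimensional representation, not just a single character, is what emerges, which is automatic since $\dim H_u$ may exceed one; the character $\chi_u$ is uniquely determined by $u$ and the assignment $u\mapsto\chi_u$ will be multiplicative, reflecting that the one-dimensional restrictions form the dual of a central subgroup. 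Since the statement is attributed to Patri \cite{patri}, I would keep the argument concise and reference the linear-independence of matrix coefficients and the triviality of the commutant of an irreducible as the two load-bearing facts.
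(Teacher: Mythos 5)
The paper itself contains no proof of this proposition --- it is quoted from Patri's preprint \cite{patri} with an acknowledgment, so there is no internal argument to compare against. Judged on its own, your proof is correct, and both directions go through essentially as you outline. In fact the forward direction is even easier than you fear: the worry about ending up with a merely block-diagonal restriction dissolves already at the level of raw coefficient matching. Fixing one orthonormal basis, linear independence of the $n^2$ coefficients $u_{ab}$ of a single irreducible (Peter--Weyl, via faithfulness of the Haar state on $\Q$) applied to your identity $\sum_k \pi(u_{rk})\otimes u_{ks}=\sum_k \pi(u_{ks})\otimes u_{rk}$ gives directly $\pi(u_{rk})=0$ for $r\neq k$ (the element $u_{ks}$, $k\neq r$, does not occur on the right) and $\pi(u_{rr})=\pi(u_{ss})$ for all $r,s$ (comparing the coefficient of $u_{rs}$), so $u\upharpoonright_K=1_{H_u}\otimes\chi_u$ with no commutant argument needed. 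If you prefer the commutant route, it should be phrased carefully: the operator $U:=(1\otimes\pi)(u)$ lives in $\B(H_u)\otimes Q_K$, not in $\B(H_u)\otimes Q_G$, and the identity reads $U_{12}u_{13}=u_{13}U_{12}$ in $\B(H_u)\otimes Q_K\otimes Q_G$; one then slices the $Q_K$-leg with an arbitrary functional $\phi$ to get $(\mathrm{id}\otimes\phi)(U)\in(u,u)=\Cset 1$, whence $U\in\Cset 1\otimes Q_K$. In either version you should add the one-line verification that $\chi_u$ really is a one-dimensional representation: unitarity of $U$ makes $\chi_u$ unitary, and $(1\otimes\Delta_K)(U)=U_{12}U_{13}$ gives $\Delta_K(\chi_u)=\chi_u\otimes\chi_u$.

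Two further small tightenings. In the converse, the caveat ``in a suitable basis'' is superfluous: if $u\upharpoonright_K$ is equivalent to a multiple of a character $\chi$, then since $1_{H_u}\otimes\chi$ is fixed under conjugation by every unitary $W\otimes 1$, $W\in\B(H_u)$, one has exactly $u\upharpoonright_K=1_{H_u}\otimes\chi$, i.e.\ $\pi(u_{\psi,\varphi})=\langle\psi,\varphi\rangle\chi$ in every basis; both sides of the centrality identity then equal $\chi\otimes u_{rs}$ on coefficients, and the extension from $\Q_G$ to $Q_G$ is legitimate as you say, because $\Delta_G$, $\Delta'_G$ and $\pi\otimes\mathrm{id}$ (on the minimal tensor product) are bounded $^*$-homomorphisms and $\Q_G$ is dense. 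The closing remark that $u\mapsto\chi_u$ is multiplicative is true but not needed for the statement.
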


It follows that a central quantum subgroup  is not only cocommutative but also normal.
If $G$ is cocommutative
 every quantum subgroup is central.

\section{Quotient quantum groups}

The main topic of this section is a non-commutative analogue of the notion of quotient quantum group. In the classical theory, epimorphisms can be equivalently described by closed normal subgroups, the associated kernels. As in the non-commutative case not every embedded $G$-action is a quotient by a quantum subgroup,
 we introduce  quotient quantum groups without reference to subgroups. We thus start by recalling the relevant results. Later on, we characterize cases where quotients are induced by normal quantum subgroups and give sufficient conditions for their existence.

\subsection{A characterization of quotients by quantum subgroups}\
\medskip

\noindent Let $G=(Q,\Delta)$ be a compact quantum group. An action of $G$
on a unital $C^*$-algebra $A$ is a unital $^*$-homomorphism
$$\eta:  A\to  A\otimes Q$$ satisfying
$\eta\otimes 1\circ\eta=1\otimes\Delta\circ\eta$ and such that $\eta(A) \cdot (\Cset\otimes Q)$  is dense. This condition ensures that the linear subspace  $\A$ generated by the spectral subspaces (subspaces which transform like the irreducible representations of $G$ under the action) is dense \cite{Podles}.  Moreover, $\A$ is a $^*$-subalgebra invariant under the action of the dense Hopf algebra,
$$\eta(\A)\subset\A\odot\Q.$$
We shall say that the action $({A},\eta)$ is {\em embedded into the translation action}, or just {\em embedded}, if it is endowed with an injective $^*$-homomorphism
$$\alpha: \A\to {Q}$$ such that $\Delta\circ\alpha=\alpha\otimes 1\circ\eta$ on $\A$.
One  necessarily has $\alpha(\A)\subset\Q$. Hence, regarding $\A$ as a subalgebra of $\Q$, it becomes a translation invariant subalgebra,
$$\Delta(\A)\subset \A\odot\Q.$$
Note that this is an algebraic requirement, in that we are not requiring that $A$ can be embedded as a $C^*$-subalgebra of $Q$, although this will be automatically satisfied for example if the action $\eta$ of $G$ on $A$ is ergodic and coamenable. We shall refrain from giving details of this fact, as it will not be used in this paper; however, we shall later discuss the special case where $(A, \eta)$ is a Hopf $C^*$-algebra, see Propositions \ref{amenability1} and \ref{amenability2}.
Quotient spaces by quantum subgroups are clearly examples of embedded actions.

On the other hand, if $(A, \eta)$ is an embedded action, $\A$ must be generated by the coefficients  $u_{k,\psi_i}$ of unitary irreducible spectral representations of $G$, where $(\psi_i)$ is an orthonormal basis of $H_u$ and $k$ varies in a suitable subspace  $K_u\subset H_u$ whose dimension equals the multiplicity of $u$. In fact,
$$\Rep(G)\ni u \mapsto K_u\in\Hilb$$ extends additively to reducible representations, and one has $TK_u\subset K_v$ for $T\in(u,v)$, hence $u\to K_u$ becomes a $^*$-functor  \cite{PR}.
 For example, in the case of right quotients $(Q_{ K\backslash G}, \Delta)$, with $K$ a quantum subgroup,   $K_u$ is the space of invariant vectors for the restriction of $u$ to $K$.  However, not all embedded actions arise from quantum subgroups, examples
 can be found in \cite{Wang_ergodic} and \cite{Tomatsu}, see also \cite[p. 399]{PR}.
A characterization of quotient spaces by quantum subgroups among (not necessarily embedded) ergodic actions has been obtained in \cite{PR}. An analogous result in the embedded case has been idependently obtained in \cite{Tomatsu}.
We shall need here the special case of embedded actions. The following result has been  essentially proved in \cite[Sections 4, 5, 10]{PR}.
We sketch a proof as we need a slightly different formulation.
 \medskip

\begin{thm}\label{caratt}
Let $G$ be a compact quantum group and let $(A,\eta)$ be an embedded action of $G$. There exists a quantum subgroup $K$ of $G$ such that $(\A,\eta)$ is isomorphic to the dense algebraic action of $G$ on $K\backslash G$ if and only if for each pair of irreducible representations $u$, $v$ of $G$, the spectral spaces satisfy
\begin{equation}\label{3.1}
(1_{\overline{u}}\otimes K_v\otimes 1_u) R \subset K_{\overline{u}vu},\quad R\in(\iota, \overline{u}u).
\end{equation}
The subgroup is unique up to the choice of the norm completion on the dense Hopf subalgebra and its representation category is determined by
\begin{equation}\label{3.2}
(u\upharpoonright_K, v\upharpoonright_K)=\overline{R}^*\otimes 1_v\circ 1_u\otimes K_{\overline{u}v},
\end{equation}
where $u,v\in\Rep(G)$ are irreducible and $\overline{R}\in(\iota, u\overline{u})$ is non-zero.
\end{thm}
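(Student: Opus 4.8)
The plan is to prove the two implications separately. The forward implication is a short consequence of functoriality of the restriction functor, while the converse is an application of the Tannaka--Krein--Woronowicz reconstruction, in which hypothesis \eqref{3.1} supplies exactly the closure properties needed to manufacture the representation category of the sought subgroup.

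For the ``only if'' part, assume $(\A,\eta)$ is the dense algebraic action on $K\backslash G$ for a quantum subgroup $K$, so that $K_u$ is the space of invariant vectors of $u\upharpoonright_K$, i.e. $K_u=(\iota,u\upharpoonright_K)$ computed in $\Rep(K)$. Restriction $u\mapsto u\upharpoonright_K$ is a tensor $^*$-functor preserving conjugates, hence an intertwiner $R\in(\iota,\overline uu)$ restricts to an arrow in $(\iota,(\overline uu)\upharpoonright_K)$, and a vector $k\in K_v$ is an arrow in $(\iota,v\upharpoonright_K)$. Reading $1_{\overline u}\otimes k\otimes 1_u$ as an arrow of $\Rep(K)$ and composing with $R$ yields an element of $(\iota,(\overline uvu)\upharpoonright_K)=K_{\overline uvu}$, which is precisely \eqref{3.1}. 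The description \eqref{3.2} of $(u\upharpoonright_K,v\upharpoonright_K)$ is then the Frobenius rewriting coming from \eqref{frobenius}; it is independent of the nonzero choice of $\overline R$, since $u$ irreducible forces $(\iota,u\overline u)$ to be one-dimensional, so that rescaling $\overline R$ rescales the whole subspace.

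The substance lies in the converse. Starting from the spectral functor $u\mapsto K_u$ of the embedded action and assuming \eqref{3.1}, I would construct a category $\T$ with one object $r(u)$ for each object $u$ of $\Rep(G)$ and with morphism spaces \emph{defined} by the right-hand side of \eqref{3.2}, together with the forgetful functor $F:\T\to\Hilb$ sending $r(u)$ to $H_u$ and acting as the identity on arrows, which are already operators between the underlying Hilbert spaces. The commuting triangle $F\circ r=H$ and the requirement that every irreducible of $\T$ be a subobject of some $r(u)$ are then automatic. The core of the proof is to verify that the spaces \eqref{3.2} form a tensor $C^*$-subcategory: that they contain the genuine intertwiners $(u,v)$, so that $r$ is a tensor $^*$-functor and the conjugate data of $G$ survive in $\T$, and that they are closed under adjoints, composition and tensor products. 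Each such closure property is obtained by rewriting the relevant product of arrows into the normal form of \eqref{3.2} through repeated use of the Frobenius isomorphisms \eqref{frobenius} and the conjugate equations, the membership of the resulting vector in the correct spectral space $K_{\overline xy}$ being guaranteed precisely by \eqref{3.1}. I expect this bookkeeping to be the main obstacle and the heart of the argument.

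Once $\T$ is recognised as a tensor $C^*$-category with conjugates, subobjects and direct sums embedded by $F$, the Tannaka--Krein--Woronowicz duality recalled above produces a compact quantum group $K$, realised as a quantum subgroup of $G$ through $r$, whose representation category is $\T$. To finish, I would identify the reconstructed quotient with $(\A,\eta)$: specialising \eqref{3.2} to $u=\iota$ gives $(\iota,v\upharpoonright_K)=K_v$, so the $K$-invariant vectors of $v\upharpoonright_K$ are exactly $K_v$; since the coefficient algebra of an embedded action is determined by the spectral functor $v\mapsto K_v$ and the ambient $G$-action, the algebraic action on $K\backslash G$ is isomorphic to $(\A,\eta)$. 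Uniqueness up to the choice of norm completion is inherited from the corresponding clause in the Tannaka--Krein--Woronowicz correspondence.
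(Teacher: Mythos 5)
Your proposal is correct and takes essentially the same route as the paper's proof: necessity via the fact that restriction to $K$ is a tensor $^*$-functor with $K_u=(\iota,u\upharpoonright_K)$, and sufficiency by defining the morphism spaces through the right-hand side of \eqref{3.2}, checking (using \eqref{3.1} and functoriality of $u\mapsto K_u$) that they form an embedded tensor $C^*$-category containing $\Rep(G)$, and then invoking Tannaka--Krein--Woronowicz duality to obtain the subgroup whose invariant vectors reproduce the spectral spaces. The paper itself only sketches the tensoriality bookkeeping, deferring the details to \cite{PR}, so your acknowledged remaining verification matches the paper's own level of detail.
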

\begin{proof}
The necessity of the condition is a consequence of the fact that
$\Rep(K)$ is a tensor category and restricting a representation to $K$ defines a tensor functor. Indeed, $K_u:=(\iota, u\upharpoonright_K)$,
hence
$$(\iota, \overline{u}u)\subset (\iota, (\overline{u}u)\upharpoonright_K)= (\iota, \overline{u}\upharpoonright_Ku\upharpoonright_K),$$
so for $R\in (\iota, \overline{u}u)$,
$$(1_{\overline{u}}\otimes K_v\otimes 1_u) R\subset
(\iota, \overline{u}\upharpoonright_K v\upharpoonright_Ku\upharpoonright_K)=
(\iota, (\overline{u} vu)\upharpoonright_K)=K_{\overline{u} vu}.$$
Conversely, if the $K_u$ are the spectral spaces of an embedded action of $G$
then one can use Frobenius reciprocity to construct the representation category
of a quantum subgroup of $K$ starting with its invariant vectors for the restricted representations. Explicitly, one can show that for (possibly reducible) $u$, $v\in\Rep(G)$, the subspaces of $\B(H_u, H_v)$ given by formula \eqref{3.2}, where now   $\overline{R}$ defines a conjugate for $u$ in $\Rep(G)$,
form an embedded tensor $C^*$-category containing $\Rep(G)$ as a subcategory.  Specifically, condition \eqref{3.1}, together with the fact that
$u\in\Rep(G)\to K_u\in\Hilb$ is a functor, play a role in the proof of tensoriality of this category.
Hence this category, after completion with subobjects and direct sums, is the representation category of
a quantum subgroup $K$ of $G$ having $K_u$ as fixed vectors for $u\upharpoonright_K$.
\end{proof}

\subsection{Quotient quantum groups}\
\medskip

\noindent Let $G$ and $L$ be compact quantum groups with associated Hopf $C^*$-algebras $ {Q}_G$ and ${Q}_L$ respectively. An injective homomorphism ${Q}_L\to{Q}_G$ of unital Hopf $C^*$-algebras restricts to the dense subalgebras $ \Q_L\to\Q_G$ since $\varphi$ takes representations of $L$ to representations of $G$.
On the other hand, an injective homomorphism of Hopf $^*$-subalgebras $\varphi: \Q_L\to\Q_G$ may not extend in an injective way to the completions.
For example, we may choose for $G$ the maximal completion of a given non-coamenable compact quantum group, and for $L$ the reduced completion.
However,  lack of  coamenability is the only obstruction.
\begin{prop}\label{amenability1}
Let $G$ and $L$ be compact quantum groups and
let $\varphi:\Q_L\to\Q_G$ be an injective $^*$-homomorphism of the associated
dense Hopf $^*$-subalgebras. If $L$ is coamenable, $\varphi$ extends to an isometric homomorphism
between the completed Hopf $C^*$-algebras.
\end{prop}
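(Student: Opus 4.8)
The two norms that matter here are, on each dense Hopf $^*$-algebra, the reduced norm $\|\cdot\|_r$ coming from the GNS representation of the Haar state and the maximal $C^*$-norm $\|\cdot\|_{\max}$; by the discussion of \S\ref{prel} the given norm of a completed Hopf $C^*$-algebra lies between them, and coamenability of $L$ means that on $\Q_L$ all three coincide. The plan is to show that $\varphi$ is isometric for the given norms by squeezing $\|\varphi(a)\|_{Q_G}$ between $\|a\|^L_r$ and $\|a\|^L_{\max}$ for each $a\in\Q_L$, and then to extend by density and continuity. The upper bound is automatic: since $\varphi$ is a $^*$-homomorphism of $\Q_L$ into the $C^*$-algebra $Q_G$ and the maximal $C^*$-norm on $\Q_L$ is finite, one has $\|\varphi(a)\|_{Q_G}\le\|a\|^L_{\max}$, so $\varphi$ already extends to a contraction from the maximal completion of $\Q_L$ into $Q_G$. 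The real content is therefore the reverse inequality $\|\varphi(a)\|_{Q_G}\ge\|a\|^L_r$.

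The first step toward it is the identity $h_G\circ\varphi=h_L$ of Haar states. The functional $\omega:=h_G\circ\varphi$ is a state on $\Q_L$ (positive as $\varphi$ is a $^*$-homomorphism and $h_G$ a state, unital as $\varphi(1)=1$), and I claim it is invariant. Since $\varphi$ intertwines comultiplications, $\Delta_G\circ\varphi=(\varphi\otimes\varphi)\circ\Delta_L$, and applying $\varphi$ to the element $(\omega\otimes\mathrm{id})\Delta_L(a)\in\Q_L$ gives $(\omega\otimes\varphi)\Delta_L(a)=(h_G\otimes 1)\Delta_G(\varphi(a))=h_G(\varphi(a))1=\varphi(\omega(a)1)$; injectivity of $\varphi$ then yields $(\omega\otimes\mathrm{id})\Delta_L(a)=\omega(a)1$. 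By uniqueness of the Haar measure, $\omega=h_L$.

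Next I would compare the two GNS constructions. Because $h_G(\varphi(b)^*\varphi(a))=h_G(\varphi(b^*a))=h_L(b^*a)$, the assignment $[a]\mapsto[\varphi(a)]$ is a well defined isometry $W$ from the GNS space of $h_L$ into that of $h_G$, and multiplicativity of $\varphi$ shows that $W$ intertwines $\pi_{h_L}(a)$ with $\pi_{h_G}(\varphi(a))$. Consequently $\|a\|^L_r=\|\pi_{h_L}(a)\|\le\|\pi_{h_G}(\varphi(a))\|=\|\varphi(a)\|^G_r\le\|\varphi(a)\|_{Q_G}$, the last step because the reduced norm is dominated by the given norm of $Q_G$. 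This is the desired lower bound. Combining it with the upper bound and invoking coamenability of $L$, namely $\|a\|^L_r=\|a\|^L_{\max}=\|a\|_{Q_L}$, forces $\|\varphi(a)\|_{Q_G}=\|a\|_{Q_L}$ for every $a\in\Q_L$. By density of $\Q_L$ in $Q_L$, $\varphi$ extends to an isometric homomorphism $Q_L\to Q_G$, which still intertwines the comultiplications by continuity.

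I expect the only delicate point to be the invariance argument for $\omega$: it cannot be run directly, since no identity map is available on the right leg of $\Delta_G\circ\varphi$, and must instead be pushed through $\varphi$ and then concluded from injectivity. Everything else is a standard squeeze between the reduced and maximal norms, with coamenability entering precisely to collapse these two bounds onto the given norm of $Q_L$.
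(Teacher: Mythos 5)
Your proof is correct and follows essentially the same route as the paper's: squeeze $\|\varphi(a)\|_{Q_G}$ between the reduced and maximal norms of $\Q_L$ via the GNS comparison and the automatic $C^*$-bound, then let coamenability collapse the two. The only difference is cosmetic --- you verify in detail that $h_G\circ\varphi=h_L$ (via invariance and uniqueness of the Haar state), a point the paper simply asserts.
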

\begin{proof}
Let us regard $\Q_L$ as a Hopf $^*$-subalgebra of $\Q_G$.
The restriction of the Haar measure $h_G$ of $G$ to $\Q_L$ is the Haar measure
$h_L$ of $L$, hence $L^2(G)$ contains a copy of $L^2(L)$.  Moreover, the GNS representation $\pi_{h_G}$    restricts to the GNS representation $\pi_{h_L}$ of $\Q_L$ on that subspace. Therefore for $x\in\Q_L$,
$\|\pi_{h_L}(x)\|  \leq\|\pi_{h_G}(x)\|$. On the other hand, with respect to the maximal
norms of $\Q_G$ and $\Q_L$ we obviously have
$\|\pi_{h_G}(x)\|\leq \|x\|\leq\|x\|_{\text{max}}^G\leq  \|x\|_{\text{max}}^L$, where
$\|\cdot{}\|$ is the original norm of $Q_G$. If $L$ is coamenable, the reduced and maximal norms of $\Q_L$ coincide, so the original
norm of $Q_G$ restricts to the unique norm  of $\Q_L$.
\end{proof}

\begin{prop}\label{amenability2}
Let $L$ and $G$ be compact quantum groups such that the associated Hopf $C^*$-algebras are related by an injective inclusion,
$Q_L\to Q_G$. If $G$ is coamenable then $L$ is coamenable as well.
\end{prop}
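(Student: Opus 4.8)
The plan is to use the representation-theoretic characterization of coamenability (see \cite{Banica2, BMT}): a compact quantum group is coamenable if and only if its counit extends to a bounded $^*$-character on the reduced $C^*$-algebra. Throughout I would regard $\Q_L$ as a Hopf $^*$-subalgebra of $\Q_G$ via $\varphi$. Since $\varphi$ is compatible with the comultiplications, it is compatible with the counits as well, i.e. $\varepsilon_G\circ\varphi=\varepsilon_L$ on $\Q_L$; indeed the counit is determined by the comultiplication through the axiom $(\varepsilon\otimes 1)\circ\Delta=1$, which a morphism of Hopf $^*$-algebras preserves.

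Because $G$ is coamenable, its maximal and reduced norms agree, so $Q_G$ is the unique $C^*$-completion of $\Q_G$ and the counit $\varepsilon_G$ extends to a bounded character $\tilde\varepsilon_G$ on $Q_G=Q_{G_{\rm red}}$. Pulling back along the isometric inclusion $\varphi$, the functional $\tilde\varepsilon_G\circ\varphi$ is a bounded character on $Q_L$ restricting to $\varepsilon_L$ on $\Q_L$. To conclude coamenability of $L$ it remains to see that this character is bounded for the \emph{reduced} norm of $L$; equivalently, that the norm $Q_L$ inherits from $Q_G$ is exactly the reduced norm of $L$, so that $Q_L=Q_{L_{\rm red}}$ and the above character is a character of $Q_{L_{\rm red}}$.

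This last point is the crux. As in the proof of Proposition \ref{amenability1}, uniqueness of the Haar measure gives $h_G\!\upharpoonright_{\Q_L}=h_L$, so that $L^2(L)$ sits inside $L^2(G)$ as a reducing subspace for the left action $\pi_{h_G}$ of $\Q_L$, with $\pi_{h_G}(x)\!\upharpoonright_{L^2(L)}=\pi_{h_L}(x)$; this already yields $\|x\|_{\rm red}^L\le\|x\|_{\rm red}^G$. For the reverse inequality I would decompose $L^2(G)$ as a module over $L$ under this left action and show it is quasi-equivalent to a multiple of the regular representation $\pi_{h_L}$ of $L$ on $L^2(L)$, using Peter--Weyl orthogonality to check that each matrix block carries a representation weakly contained in $\pi_{h_L}$. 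This is the quantum analogue of the right-coset decomposition $\ell^2(\Gamma)=\bigoplus_{\Lambda\backslash\Gamma}\ell^2(\Lambda)$, which in the cocommutative case $G=C^*(\Gamma)$, $L=C^*(\Lambda)$ shows that $C^*_{\rm red}(\Lambda)$ embeds isometrically into $C^*_{\rm red}(\Gamma)$. The main obstacle is precisely this quasi-equivalence: it encodes the fact that coamenability of $L$ amounts to amenability of its discrete dual, inherited by the quantum subgroup $\hat L\subset\hat G$. Once $\|x\|_{\rm red}^L=\|x\|_{\rm red}^G$ for $x\in\Q_L$ is established, one has $Q_L=Q_{L_{\rm red}}$, so $\tilde\varepsilon_G\circ\varphi$ is a bounded character on $Q_{L_{\rm red}}$ extending $\varepsilon_L$, and $L$ is coamenable by the cited characterization.
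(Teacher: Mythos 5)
Your overall strategy could in principle be completed, but as written it hinges on a claim you leave unproven, and that claim is precisely where all the difficulty sits. There is also a concrete imprecision in the sketch: the individual Peter--Weyl matrix blocks $\overline{H}_u\otimes H_u\subset L^2(G)$ for irreducible $u\notin\Rep(L)$ are \emph{not} invariant under left multiplication by $\Q_L$ --- multiplying a coefficient of $u$ by a coefficient of $v\in\Rep(L)$ produces coefficients of all irreducibles contained in $vu$ --- so ``each matrix block carries a representation weakly contained in $\pi_{h_L}$'' does not parse as stated. You must first aggregate blocks into ``coset'' classes ($u\sim u'$ iff $u'<vu$ for some $v\in\Rep(L)$), which are the smallest invariant pieces, and then prove weak containment of each coset block in the regular representation of $L$; in the non-Kac case this involves the modular structure of the Haar state and amounts to a genuinely nontrivial (though true and known) theorem --- the isometric inclusion $Q_{L,\mathrm{red}}\subset Q_{G,\mathrm{red}}$ with Haar-state-preserving conditional expectation --- not a routine orthogonality check. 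So within the proposal the decisive step is a gap, even if the statement filling it exists in the literature.

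The decisive point, however, is that the characterization you yourself cite from \cite{BMT} makes the entire reduced-norm analysis unnecessary, and this is exactly how the paper proceeds. By \cite[Theorem 2.2]{BMT}, coamenability of a compact quantum group is equivalent to the conjunction: the Haar state is faithful on the given Hopf $C^*$-algebra \emph{and} the counit is norm bounded there. Since $G$ is coamenable, $h_G$ is faithful on $Q_G$ and $\varepsilon_G$ is bounded. The inclusion $Q_L\to Q_G$ is an injective $^*$-homomorphism of $C^*$-algebras, hence isometric; $h_G$ restricts to $h_L$ and $\varepsilon_G$ restricts to $\varepsilon_L$ (as you note), and faithfulness of a state passes trivially to a $C^*$-subalgebra. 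So $L$ satisfies both clauses and is coamenable --- two lines. Observe that the faithfulness clause silently delivers exactly what you labor to prove: a faithful Haar state has faithful, hence isometric, GNS representation, so the inherited norm on $Q_L$ \emph{is} the reduced norm, with no decomposition of $L^2(G)$ needed. Your route, once the coset-block containment is actually established, proves a stronger standalone fact (reduced norms of Hopf $^*$-subalgebras are always inherited, coamenability or not), but for the proposition at hand it replaces a one-line restriction argument by a substantial theorem.
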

\begin{proof}
By \cite[Theorem 2.2]{BMT}, a compact quantum group is coamenable if and only if the Haar measure is faithful and the counit is norm bounded.
On the other hand, the Haar state of $Q_G$ restricts to the Haar state of $Q_L$, and the counit of $\Q_G$ restricts to the counit of $\Q_L$.
\end{proof}

We shall mostly be interested in the case where $G$ is a given compact quantum group, and $L$ is the compact quantum group associated to a Hopf $C^*$-subalgebra of $Q_G$ obtained by selecting a family of representations. In this case, too, there is obviously no problem in extending uniquely the inclusion map to the completion. We shall adopt the following algebraic notion of epimorphism.

\begin{defn}
If $L$ and $G$ are compact quantum groups, a non-commutative
epimorphism $G\to L$ is an injective $^*$-homomorphism $\varphi: \Q_L\to\Q_G$ between the associated dense Hopf $^*$-algebras satisfying
$$\Delta_G\circ \varphi=\varphi\otimes\varphi \circ \Delta_L.$$
We shall refer to $L$ as a {\em quotient quantum group} of $G$.
\end{defn}
In other words,  a quotient quantum group is an embedded action which is also
a Hopf $C^*$-algebra. An epimorphism $G\to L$
gives rise to a commutative diagram,
\begin{equation}\label{epi}
\xymatrix{
\Rep(L) \ar[rd]_{H^L} \ar[r]
&\Rep(G) \ar[d]^{H^G} \\
&\Hilb}
\end{equation}
where the top arrow takes the representation $(u: H_u\to H_u\otimes { Q}_L)\in\Rep(L)$ to the representation $\hat{\varphi}(u):=1_{H_u}\otimes\varphi\circ u\in\Rep(G)$. Note that the range of $u$ is actually contained in $H_u\otimes \Q_L$. An arrow $T\in(u,v)$ of $\Rep(L)$ is a linear map  between the associated Hilbert spaces such that $T\otimes 1_{{Q}_L}\circ u=v\circ T$. The functor $\hat{\varphi}$ acts trivially on arrows.

\begin{prop}\label{categoriesquotients}
Let $G$ be a compact quantum group. The assignment
$$L \ni \varphi\mapsto\hat{\varphi} \in \Rep(L)$$
establishes a bijective correspondence between epimorphisms $G\to L$  of compact quantum groups and full tensor $^*$-functors $\S\to \Rep(G)$ of tensor $C^*$-categories with conjugates, subobjects and direct sums.
\end{prop}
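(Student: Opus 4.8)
The plan is to read the statement through the Tannaka--Krein--Woronowicz duality recalled in Subsection \ref{tannaka}: a compact quantum group is the same datum as an embedded tensor $C^*$-category $(\T,\F)$, with the dense Hopf $^*$-algebra $\Q$ reconstructed from $(\T,\F)$. The proposition then says that the inclusion $\varphi\colon\Q_L\hookrightarrow\Q_G$ and the functor $\hat\varphi$ are two faces of the same object, so the task is to transport one into the other and check the two assignments are mutually inverse.

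First I would settle the forward direction. Given an epimorphism $\varphi\colon G\to L$, the functor $\hat\varphi$ of diagram \eqref{epi} is strictly tensor: on coefficients, $\hat\varphi(uv)$ has matrix entries $\varphi(u_{ik}v_{jl})=\varphi(u_{ik})\varphi(v_{jl})=\hat\varphi(u)_{ik}\hat\varphi(v)_{jl}$, which are exactly the entries of $\hat\varphi(u)\hat\varphi(v)$, using that $\varphi$ is a $^*$-homomorphism with $\Delta_G\circ\varphi=\varphi\otimes\varphi\circ\Delta_L$; it is a $^*$-functor because $\varphi$ preserves adjoints and unitarity, and it acts as the identity on Hilbert spaces and on arrows, so \eqref{epi} commutes. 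For fullness (and faithfulness) I would note that $T\in(\hat\varphi(u),\hat\varphi(v))$ means $(T\otimes 1_{Q_G})(1_{H_u}\otimes\varphi)u=(1_{H_v}\otimes\varphi)v(T\otimes 1_{Q_G})$; since $T\otimes 1$ commutes past $1\otimes\varphi$ and $1\otimes\varphi$ is injective (as $\varphi$ is), this is equivalent to $(T\otimes 1_{Q_L})u=v(T\otimes 1_{Q_L})$, i.e.\ $T\in(u,v)$. Hence $(\hat\varphi(u),\hat\varphi(v))=(u,v)$, so $\hat\varphi$ is a full (indeed fully faithful) tensor $^*$-functor over $\Hilb$.

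For the reverse direction, start with a full tensor $^*$-functor $r\colon\S\to\Rep(G)$. I would first observe that $r$ is automatically faithful: $\S$ is semisimple, so it suffices to check injectivity on $\operatorname{Hom}$-spaces between irreducibles; for an irreducible $s$ one has $(s,s)=\Cset\,1_s$ and $r(1_s)=1_{r(s)}$, so faithfulness can only fail if $r(s)=0$ for some nonzero $s$; but choosing a conjugate $\overline{s}$, the subobject $\iota\subset s\overline{s}$ maps to $\iota=r(\iota)\subset r(s\overline{s})=r(s)\,r(\overline{s})$, which is impossible if $r(s)=0$. Thus $\F:=H^G\circ r$ is a faithful tensor $^*$-functor into $\Hilb$, i.e.\ an embedding functor. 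Concretely, let $\Q_L\subset\Q_G$ be the linear span of the matrix coefficients of the representations $r(s)$, $s\in\S$. Since $r(\S)$ is closed under tensor product, conjugation, subobjects and direct sums (because $r$ is a tensor $^*$-functor and $\S$ carries these structures), $\Q_L$ is closed under multiplication (via $\Delta$), under $^*$ (via conjugates) and contains the unit, hence is a Hopf $^*$-subalgebra of $\Q_G$; its completion is a quotient quantum group $L$ whose representation category is the full subcategory $r(\S)\subset\Rep(G)$. Equivalently, this is Woronowicz's reconstruction applied to $(\S,\F)\cong(\Rep(L),H^L)$. The inclusion $\varphi\colon\Q_L\hookrightarrow\Q_G$ is then an epimorphism $G\to L$ with $\hat\varphi$ equivalent to $r$.

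Bijectivity would follow from the coefficient formula $\varphi(u_{ij})=\hat\varphi(u)_{ij}$: starting from $\varphi$, passing to $\hat\varphi$ and reconstructing recovers $\varphi$ on every coefficient; starting from $r$, the constructed $L$ and inclusion satisfy $\hat\varphi\cong r$ under the evident identification of $\S$ with $r(\S)=\Rep(L)$, so the correspondence is a bijection up to isomorphism of $L$ and natural equivalence of functors. The main obstacle I anticipate is precisely in the reverse direction: verifying that the span of coefficients of $r(\S)$ is a genuine Hopf $^*$-subalgebra whose associated compact quantum group has representation category \emph{exactly} $r(\S)$, with no spurious extra irreducibles. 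This is where the automatic faithfulness of $r$ and the Tannaka--Krein--Woronowicz reconstruction carry the real weight; everything else is a bookkeeping check that $\varphi$ is an injective $^*$-homomorphism intertwining the comultiplications.
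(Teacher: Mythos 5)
Your proof is correct and follows essentially the same route as the paper: the identical intertwiner computation (using injectivity of $\varphi$) establishing fullness of $\hat{\varphi}$, and Tannaka--Krein--Woronowicz reconstruction in the converse, where your Hopf $^*$-subalgebra $\Q_L\subset\Q_G$ spanned by the coefficients of the image category is exactly the paper's map $\varphi_F$ defined blockwise on the subspaces $\overline{H}_u\otimes H_u$, injective because fullness sends inequivalent irreducibles to inequivalent irreducibles. Your explicit check that a full tensor $^*$-functor out of a semisimple category with conjugates is automatically faithful (via $\iota\subset \overline{s}s$) is a small refinement of a point the paper leaves implicit, but it does not alter the method.
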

\begin{proof}
The algebraic structure   of a compact quantum group is explicitly related to the algebraic structure of its representation category, and this relation makes the associated functor $\hat{\varphi}$ into a tensor $^*$-functor. If $T\in(\hat{\varphi}(u), \hat{\varphi}(v))$
then
$$1_{H_v}\otimes\varphi\circ T\otimes 1_{\Q_L}\circ u=
T\otimes 1_{\Q_G}\circ1_{H_u}\otimes\varphi\circ u=$$
$$T\otimes 1_{\Q_G}\hat{\varphi}(u)=\hat{\varphi}(v)T=1_{H_v}\otimes\varphi\circ v\circ T,$$
hence $T\in(u,v)$ if $\varphi$ is injective, showing that $\hat{\varphi}$ is full. The converse statement
is a consequence of the explicit reconstruction of the Hopf
algebra from an embedded category. A full tensor $^*$-functor   $F:
\Rep(L)\to\Rep(G)$ such that $H^G\circ F=H^L$
takes irreducible representations of $L$ into irreducible representations of $G$.
We thus have a map $\varphi_F$ taking elements of the subspace $\overline{H}_{u}\otimes H_u$
of $\Q_L$, with $u\in\Rep(L)$ irreducible, to itself, regarded as an element
of $\Q_G$ via the commutative diagram \eqref{epi}.   This map must preserve the Hopf $^*$-algebra operations and is injective since  the subspaces $\overline{H}_{u}\otimes H_u$ are in direct sum. One has $\widehat{\varphi_F}=F$ and $\varphi_{\hat{\varphi}}=\varphi$.

On the other hand, any full tensor $^*$-subcategory of $\Rep(G)$ with conjugates, is embeddable into the Hilbert spaces, hence, by duality, it corresponds to a compact quantum group, which is a quotient of $G$.
\end{proof}
\begin{ex}
If $K$ is a normal quantum subgroup of $G$ then $K\backslash G$ is a quotient quantum group of $G$ by Proposition \ref{normal}b.
\end{ex}
\subsection{Normal tensor subcategories}\label{normalsubsect}\
\medskip

\noindent In this section we give a characterization,  in terms of the associated inclusion $\S\subset \Rep(G)$, of quotient quantum groups which can be written as quotients by a normal quantum subgroup. To this aim, we establish a connection with Theorem \ref{caratt}.

We start with a full inclusion $\S\subset\T$ of abstract tensor $C^*$-categories with conjugates, subobjects and direct sums. An irreducible object of $\S$ stays irreducible in $\T$, hence a complete set of irreducible objects of $\T$ contains a complete set of irreducible objects of $\S$ as a subhypergroup. Let ${\S^\perp}$ be the full subcategory of $\T$ whose objects are those objects of $\T$ that are disjoint from all objects of $\S$. Note that $\S^\perp$ has conjugates, subobjects and direct sums, but generally fails to be tensorial. We may decompose every object $u$ of $\T$ as
$$u=u_\S\oplus u_{\S^\perp},$$
where $u_\S\in\S$ and $u_{\S^\perp}\in\S^\perp$, where $u_\S$ is the maximal subobject of $u$ lying in $\S$.
 Note that if $u=u_\S$ and $v=v_{\S^\perp}$ then $(u, v)=0$. Therefore any arrow $T\in(u_\S\oplus u_{\S^\perp}, v_\S\oplus v_{\S^\perp})$ takes a diagonal form,
$$T=T_\S\oplus T_{\S^\perp},$$
with $T_\S\in (u_\S, v_\S)$, $T_{\S^\perp}\in(u_{\S^\perp}, v_{\S^\perp})$. We may thus consider the functor $$S:\T\to\S,$$ defined by $u\mapsto u_\S$ on objects and $T\mapsto T_\S$ on arrows. This is obviously a $^*$-functor between tensor $C^*$-categories.
 \begin{lemma}\label{splitinj}
If $u \in \S, v \in \S^\perp$, then $uv, vu \in \S^\perp$.
\end{lemma}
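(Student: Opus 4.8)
The plan is to prove the statement by reducing it to the defining disjointness property of $\S^\perp$ and then applying Frobenius reciprocity. Recall that $v\in\S^\perp$ means precisely that $v$ is disjoint from every object of $\S$, i.e.\ $(w,v)=0$ for all $w\in\S$; by semisimplicity it suffices to test this against irreducible $w$. So to conclude $uv\in\S^\perp$ I would show $(w,uv)=0$ for every $w\in\S$, and likewise $(w,vu)=0$ to obtain $vu\in\S^\perp$.

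First I would record the two structural facts about $\S$ that drive the argument: since $\S$ is a full tensor subcategory with conjugates, it is closed under conjugation (so $\overline{u}\in\S$ whenever $u\in\S$) and under tensor products (so $\overline{u}w\in\S$ and $w\overline{u}\in\S$ whenever $u,w\in\S$). A minor point worth noting is that a conjugate of $u$ computed inside $\S$ is also a conjugate inside $\T$, since $\S\subset\T$ is full and the conjugate equations are the same; hence $\overline{u}$ is unambiguous and, crucially, lies in $\S$.

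The key computation is then a single application of Frobenius reciprocity \eqref{frobenius}, which is available in $\T$ because $\T$ has conjugates. For the object $uv$, with $u,w\in\S$ and $v\in\S^\perp$, I would write
$$(w,uv)\simeq(\overline{u}w,v).$$
Since $\overline{u}w\in\S$ and $v$ is disjoint from all objects of $\S$, the right-hand side vanishes, so $(w,uv)=0$ for every $w\in\S$, giving $uv\in\S^\perp$. The claim for $vu$ is symmetric: using the other Frobenius isomorphism I would get $(w,vu)\simeq(w\overline{u},v)$, and again $w\overline{u}\in\S$ forces the right-hand side to be zero, so $vu\in\S^\perp$.

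I do not expect a serious obstacle here: the content is entirely that $\S^\perp$ is ``absorbing'' under multiplication by objects of $\S$, and Frobenius reciprocity is exactly the device that converts tensoring by $u\in\S$ on one side into tensoring by $\overline{u}\in\S$ on the other, keeping us inside $\S$ where the disjointness hypothesis applies. The only things to keep track of are the correct direction of the two Frobenius isomorphisms (left versus right tensoring) for the two cases $uv$ and $vu$, and the observation that conjugates and tensor products of $\S$-objects remain in $\S$.
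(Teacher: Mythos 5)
Your proof is correct and follows essentially the same route as the paper: one application of Frobenius reciprocity moving $u$ across as $\overline{u}$, the observation that $\overline{u}w$ and $w\overline{u}$ stay in $\S$, and disjointness of $v$ from $\S$. The only cosmetic difference is that you test the hom spaces $(w,uv)$ rather than $(uv,w)$, which is immaterial in a $C^*$-category, and your remark that conjugates computed in the full subcategory $\S$ agree with those in $\T$ is a correct point the paper leaves implicit.
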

\begin{proof}
Let $w \in \S$. By Frobenius reciprocity \eqref{frobenius}, we have
$$(uv, w) \simeq (v, \overline u w), \qquad (vu, w) \simeq (v, w \overline u).$$
However, $\overline u w, w \overline u$ both belong to $\S$ as $\S$ is a tensor category with conjugates. Hence $(uv, w), (vu, w)$ are both trivial for every choice of $w$, and we conclude that $uv, vu$ both lie in $\S^\perp$.
\end{proof}
By Lemma \ref{splitinj}, it follows that
$$(uv)_\S= u_\S v_\S\oplus(u_{\S^\perp} v_{\S^\perp})_\S,$$
$$(uv)_{\S^\perp}=u_\S v_{\S^\perp}\oplus u_{\S^\perp} v_\S\oplus
(u_{\S^\perp} v_{\S^\perp})_{\S^\perp},$$
for every $u, v \in \T$. Hence $S$ is not a tensor functor, as $(uv)_\S$ only contains $u_\S v_\S$ as a subobject. For example, if $u\in\S^\perp$, $u_\S=\overline{u}_\S=0$ while $(\overline{u} u)_\S$ contains the trivial object of $\S$.
\medskip

\begin{rem}
It is not difficult to show that the functor $S: u\to u_\S$ is a quasitensor functor in the sense of \cite{PR}.
\end{rem}
\begin{prop}\label{normalcat}
Let $\S\subset\Rep(G)$
be a full tensor $C^*$-category
with conjugates and  subobjects and $\Q_L$ the associated Hopf $^*$-algebra. For an irreducible $u=(u_{js})\in\Rep(G)$, the following conditions are equivalent,
\begin{itemize}
\item[{\rm a)}]
    $1_{\overline{u}}\otimes H_v\otimes 1_u\circ
    R\subset H_{(\overline{u}vu)_\S}, \quad R\in(\iota, \overline{u}u), v\in \S$ irreducible,
\item[{\rm b)}]
    $\sum_i u_{ij}^*xu_{i,s}\in\Q_L, \quad x\in\Q_L$.
\end{itemize}
\end{prop}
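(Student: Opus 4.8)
The plan is to translate both conditions into the single language of matrix coefficients lying in $\Q_L$, exploiting that, by Proposition \ref{categoriesquotients}, $\Q_L$ is precisely the linear span of the matrix coefficients of the objects of $\S$, equivalently of the irreducible objects of $\S$. The bridge between (a) and (b) is the following membership criterion, which I would isolate first: for any $w\in\Rep(G)$ and any $\xi\in H_w$, one has $\xi\in H_{w_\S}$ if and only if $w_{\xi,\phi}\in\Q_L$ for every $\phi\in H_w$.

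I would prove this criterion using the canonical splitting $w=w_\S\oplus w_{\S^\perp}$ together with the orthogonal decomposition $\Q_G=\bigoplus_\beta\overline H_\beta\otimes H_\beta$ over a complete set of irreducibles $\beta$, under which $\Q_L=\bigoplus_{\beta\in\S}\overline H_\beta\otimes H_\beta$. If $\xi\in H_{w_\S}$, then for $\phi\in H_{w_\S}$ the coefficient $w_{\xi,\phi}$ is a coefficient of $w_\S\in\S$, hence lies in $\Q_L$, while for $\phi\in H_{w_{\S^\perp}}$ it vanishes, because $w$ is block diagonal for the splitting and $\xi\perp H_{w_{\S^\perp}}$; this gives one implication. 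Conversely, writing $\xi=\xi_\S+\xi_{\S^\perp}$, the coefficients $w_{\xi_{\S^\perp},\phi}$ with $\phi\in H_{w_{\S^\perp}}$ are coefficients of $w_{\S^\perp}$, hence lie in the summand complementary to $\Q_L$; since by hypothesis they also lie in $\Q_L$, they vanish for all such $\phi$, forcing $\xi_{\S^\perp}=0$ by faithfulness of the Haar measure on $\Q_G$. This step I expect to be routine, but it must be set up carefully.

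With the criterion in hand, I would apply it to $w=\overline u v u$ and to the vectors $\xi=(1_{\overline u}\otimes\eta\otimes1_u)R$, $\eta\in H_v$, which span $(1_{\overline u}\otimes H_v\otimes1_u)R$. Thus (a), for the given irreducible $u$ and all irreducible $v\in\S$, becomes the requirement that every matrix coefficient $(\overline u v u)_{\xi,\phi}$ lie in $\Q_L$. A direct computation of these coefficients in an orthonormal basis, using the conjugate equation for $R\in(\iota,\overline u u)$ and the fact that the coefficients of $\overline u$ are given by the $u_{ab}^*$, expresses them as the bilinear elements $\sum_i u^*_{\bullet}\,v_{\bullet}\,u_{\bullet}$. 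Since $v$ runs over the irreducibles of $\S$ and its coefficients $v_{\bullet}$ span $\Q_L$, linearity lets me replace the family $\{v_\bullet\}$ by an arbitrary $x\in\Q_L$, bringing the condition to the shape $\sum_i u_{ij}^* x u_{is}\in\Q_L$ of (b).

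The main obstacle is the precise identification in this last step: the coefficient computation naturally produces an adjoint-type expression of the form $\overline u\,(1\otimes x)\,u$, whereas (b) is phrased through $u^*\,(1\otimes x)\,u$. I would reconcile the two using unitarity of $u$, namely the relations $\sum_i u_{ij}^*u_{is}=\delta_{js}$ and $\sum_i u_{ji}u_{si}^*=\delta_{js}$, together with the invariance of $\Q_L$ under the antipode and the $^*$-operation, which hold because $\Q_L$ is a Hopf $^*$-subalgebra; applying the antipode and $^*$ interchanges the two bilinear forms while preserving membership in $\Q_L$. In the non-Kac case the modular factors entering $R$ and the coefficients of $\overline u$ must be tracked, but they cancel against each other, so that the equivalence of (a) and (b) is unaffected.
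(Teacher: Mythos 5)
Your proposal is correct and takes essentially the same route the paper intends: the authors omit a detailed proof of Proposition \ref{normalcat}, noting only that it follows from the Tannakian expression of the product and involution of $\Q_G$ in terms of the Hilbert spaces of representations, and your argument --- the membership criterion ($\xi\in H_{w_\S}$ iff $w_{\xi,\phi}\in\Q_L$ for all $\phi$, provable via the Peter--Weyl splitting $\Q_G=\Q_L\oplus\Q_L^\perp$ and the counit $\varepsilon(w_{\xi,\phi})=\langle\xi,\phi\rangle$) applied to $w=\overline{u}vu$ and $\xi_\eta=(1_{\overline{u}}\otimes\eta\otimes 1_u)R$ --- is exactly that reconstruction carried out in coefficients. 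Your final worry resolves even more simply than you anticipate: choosing the standard conjugate with $R=\sum_a\mu_a^{-1}\,\overline{e_a}\otimes e_a$ and $\overline{R}=\sum_a\mu_a\, e_a\otimes\overline{e_a}$, the conjugate equations force $\overline{u}_{ac}=(\mu_a/\mu_c)\,u_{ac}^*$, so the modular weights cancel on the nose, giving $w_{\xi_\eta,\,\overline{e_c}\otimes f_d\otimes e_s}=\mu_c^{-1}\sum_a u_{ac}^*\,v_{\eta,f_d}\,u_{as}$, which yields the equivalence with (b) by linearity (coefficients $v_{\eta,f_d}$, $v\in\S$ irreducible, span $\Q_L$) without any appeal to the antipode, $^*$-invariance of $\Q_L$, or the unitarity relations.
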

 We omit a detailed proof. We just note that this is a consequence of Tannakian reconstruction of the involution and product formula of the dense Hopf algebra  in terms of the Hilbert spaces of the representations, see Section \ref{tannaka}.

\begin{defn}
A full tensor $C^*$-category $\S\subset\Rep(G)$ with conjugates and subobjects will be called {\em normal} if the above equivalent conditions hold for any irreducible $u\in\S^\perp$.
\end{defn}

Note that the conditions of Proposition \ref{normalcat} are always satisfied by the objects $u\in{\mathcal S}$.
Hence, normality amounts to requiring that it is satisfied by all objects of $\Rep(G)$.

\begin{ex}
If $\Lambda\subset\Gamma$ is an inclusion of discrete groups, $C^*(\Lambda)$, with its natural comultiplication, is a quantum quotient of  $G=C^*(\Gamma)$. Here $\Lambda$ and $\Gamma-\Lambda$ identify, respectively, to the sets of irreducible objects of $\S$ and $\S^\perp$, so $u_\S=u$ if $u\in\Lambda$ and $u_\S=0$ otherwise.
 Since the product of irreducible objects is irreducible, the normality condition reduces to the requirement that $\Lambda$ is a normal subgroup of $\Gamma$.
\end{ex}
If the subgroup $\Lambda$ is central in $\Gamma$  then $\Lambda$ is normal.
We next discuss  sufficient conditions for normality of a tensor subcategory $\S$ of $\Rep(G)$.

\begin{prop}\label{sufficientfornormality}
Let $\S\subset\Rep(G)$ be a full tensor $C^*$-subcategory with conjugates, subobjects and direct sums, $\Q_L$ the associated quotient quantum subgroup and  $\Q_L^\perp$ the linear subspace of $\Q_G$ generated by the coefficients of the representations of $\S^\perp$. Consider the following properties,
\begin{itemize}
\item[{\rm a)}] $\Q_L$ and $\Q_L^\perp$ are in the commutant of each other,
\item[{\rm b)}] for $u\in\S^\perp$, $v\in\S$ the permutation operator   $\vartheta_{v,u}: H_v\otimes H_u\to H_u\otimes H_v$ is an arrow in $(vu, uv)$.
\item[{\rm c)}]  for $u\in\S^\perp$, $v\in\S$, there is an arrow $\varepsilon_{v,u}\in(vu, uv)$ such that
$$(\varepsilon_{v,\overline{u}}
\otimes 1_u)\phi\otimes R= 1_{\overline{u}}\otimes\phi\otimes 1_u\circ R,$$
for $\phi\in H_v$, $R\in(\iota, \overline{u}u)$,
\item[{\rm d)}] $\overline{u}vu\in\S$,  $\quad u\in\S^\perp,  v\in \S$ irreducible.
\end{itemize}
Then
 a)$\Leftrightarrow$ b)$\Rightarrow$ c) and
  any of a), b), c), d) implies that $\S$ is normal.
  \end{prop}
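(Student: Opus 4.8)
The plan is to establish the two implications $a)\Leftrightarrow b)\Rightarrow c)$ and then to verify that each of the four conditions forces the normality criterion of Proposition \ref{normalcat}; since $a)\Leftrightarrow b)\Rightarrow c)$, it will suffice to treat $c)\Rightarrow$ normal and $d)\Rightarrow$ normal separately. For $a)\Leftrightarrow b)$ I would simply spell out the intertwining relation $\vartheta_{v,u}\otimes 1\circ vu=uv\circ\vartheta_{v,u}\otimes 1$ in matrix coefficients. With the standard conventions $(uv)_{(i,k),(j,l)}=u_{ij}v_{kl}$ and $(vu)_{(k,i),(l,j)}=v_{kl}u_{ij}$, composing with the flip turns this equality into $v_{kl}u_{ij}=u_{ij}v_{kl}$ for all indices, so that $\vartheta_{v,u}\in(vu,uv)$ is equivalent to the commutation of every coefficient of $v$ with every coefficient of $u$. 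Since the coefficients of the objects of $\S$ (resp.\ $\S^\perp$) span $\Q_L$ (resp.\ $\Q_L^\perp$), letting $u$ range over $\S^\perp$ and $v$ over $\S$ recovers exactly condition $a)$.

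For $b)\Rightarrow c)$ I would take $\varepsilon_{v,u}:=\vartheta_{v,u}$, which is an arrow in $(vu,uv)$ by $b)$ (using that $\overline{u}\in\S^\perp$ whenever $u\in\S^\perp$, as $\S$ is closed under conjugates). The required identity is then a one-line computation: writing $R\in(\iota,\overline{u}u)$ as a vector of $H_{\overline{u}}\otimes H_u$, the flip $\vartheta_{v,\overline{u}}\otimes 1_u$ moves the factor $\phi\in H_v$ past $H_{\overline{u}}$, which is precisely the effect of inserting $\phi$ via $1_{\overline{u}}\otimes\phi\otimes 1_u$.

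The crux is $c)\Rightarrow$ normal, for which I would use form $a)$ of Proposition \ref{normalcat}: for irreducible $u\in\S^\perp$, irreducible $v\in\S$ and $R\in(\iota,\overline{u}u)$ one must show $1_{\overline{u}}\otimes H_v\otimes 1_u\circ R\subset H_{(\overline{u}vu)_\S}$. The point is that the vectors $\phi\otimes R$, $\phi\in H_v$, constitute the range of the intertwiner $1_v\otimes R\in(v,v\overline{u}u)$; being a copy of $v\in\S$, this range lies in $H_{(v\overline{u}u)_\S}$. Now $\varepsilon_{v,\overline{u}}\otimes 1_u$ is an intertwiner in $(v\overline{u}u,\overline{u}vu)$, and by semisimplicity any intertwiner maps a subobject lying in $\S$ again into $\S$: its restriction to such a subobject has image a quotient, hence a subobject, of an object of $\S$, and $\S$ is closed under subobjects. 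Thus $\varepsilon_{v,\overline{u}}\otimes 1_u$ carries $H_{(v\overline{u}u)_\S}$ into $H_{(\overline{u}vu)_\S}$, and the identity of $c)$ gives $1_{\overline{u}}\otimes\phi\otimes 1_u\circ R=(\varepsilon_{v,\overline{u}}\otimes 1_u)(\phi\otimes R)\in H_{(\overline{u}vu)_\S}$, as required. Finally $d)\Rightarrow$ normal is immediate: if $\overline{u}vu\in\S$ then $(\overline{u}vu)_\S=\overline{u}vu$ and the inclusion of Proposition \ref{normalcat}a) holds trivially.

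I expect the only genuine obstacle to be the semisimplicity step inside $c)\Rightarrow$ normal, namely the verification that the intertwiner $\varepsilon_{v,\overline{u}}\otimes 1_u$ cannot push vectors out of the maximal $\S$-subobject of its target; the remaining implications are direct computations once the index conventions and the identification of $\{\phi\otimes R\}$ with the range of $1_v\otimes R$ are in place.
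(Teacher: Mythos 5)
Your proposal is correct and follows essentially the same route as the paper: the matrix-coefficient computation for a)$\Leftrightarrow$b), taking $\varepsilon_{v,u}=\vartheta_{v,u}$ for b)$\Rightarrow$c), and for c) the chain $1_{\overline{u}}\otimes\phi\otimes 1_u\circ R=(\varepsilon_{v,\overline{u}}\otimes 1_u)(\phi\otimes R)\in H_{(\overline{u}vu)_\S}$, which the paper justifies by exactly the facts you invoke ($\iota\in\S$, $u\mapsto u_\S$ is a functor so arrows respect the $\S\oplus\S^\perp$ decomposition, and $u_\S v_\S\leq (uv)_\S$). Your only variation --- identifying $\{\phi\otimes R\}$ as the range of $1_v\otimes R\in(v,v\overline{u}u)$ instead of first placing the range of $R$ in $H_{(\overline{u}u)_\S}$ and tensoring with $v\in\S$ --- is an equally valid phrasing of the same step, and your treatment of d) via Proposition \ref{normalcat} matches the paper's.
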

\begin{proof}
The equivalence of a) and b) follows again from  Tannaka duality, and obviously they imply c). We check normality if c) holds. If $\phi\in H_v$, and $u$, $v$ are as required,
$$1_{\overline{u}}\otimes\phi\otimes 1_u\circ R=(\varepsilon_{v,\overline{u}}\otimes 1_u)\phi\otimes R\subset (\varepsilon_{v,\overline{u}}\otimes 1_u)H_v\otimes H_{(\overline{u}u)_\S}$$
$$\subset (\varepsilon_{v,\overline{u}}\otimes 1_u) H_{(v\overline{u}u)_\S}\subset H_{{{(\overline{u}vu)}}_\S},$$
where we have used the fact that $\overline{u}\in\S^\perp$, $\iota\in\S$, that $u\mapsto u_\S$ is a functor and that $u_\S v_\S$ is contained in $(uv)_\S$. The fact that d) implies normality follows from Proposition \ref{normalcat}.
 \end{proof}

\begin{rem}
In terms of matrix coefficients, the  above condition   $\overline{u}vu\in\S$
is equivalent to  $\overline{u}_{\psi,\varphi}v_{\xi,\eta }u_{\psi',\varphi'}\in{\mathcal Q}_L$.
\end{rem}

 We are now ready to prove the following application of Theorem \ref{caratt} to quotient quantum groups.
\begin{thm}\label{carattnormal}
Let $L$ be a quotient quantum group of $G$ and  $\S=\Rep(L)$ the corresponding subcategory of $ \Rep(G)$. There is a normal quantum subgroup $K$ of $G$ such that $(\Q_L,\Delta_L)$ is isomorphic to the dense Hopf $^*$-subalgebra of $K\backslash G$ if and only if $\S$ is normal. It is unique   up to the choice of the norm completion on the dense Hopf subalgebra, and its representation category is determined by
$$(u\upharpoonright_{K}, v\upharpoonright_{K})=\{\overline{R}^*\otimes 1_v\circ 1_u\otimes\phi, \quad \phi\in H_{({\overline{u} v})_\S}\}$$
where $u,v\in\Rep(G)$ are irreducible and $\overline{R}\in(\iota, u\overline{u})$ is non-zero. In particular,
$$\dim(u\upharpoonright_{K}, v\upharpoonright_{K})=\dim H_{({\overline{u} v})_\S}.$$
\end{thm}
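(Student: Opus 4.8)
The plan is to apply Theorem \ref{caratt} to the embedded action given by $\Q_L$ itself, equipped with its translation action, and to read off both the existence of $K$ and its normality from the structure of $\S$. First I would check that $\Q_L$, regarded as a translation-invariant $^*$-subalgebra of $\Q_G$ via $\varphi$, is an embedded action of $G$: since $L$ is a quotient quantum group one has $\Delta_G(\Q_L)\subset\Q_L\odot\Q_L\subset\Q_L\odot\Q_G$, and $\alpha=\varphi$ furnishes the required embedding. I would then identify its spectral spaces. Because $\Q_L$ is linearly spanned by the coefficients of the irreducible representations lying in $\S$, for an irreducible $u\in\Rep(G)$ the whole coefficient space of $u$ belongs to $\Q_L$ when $u\in\S$ and none of it does when $u\in\S^\perp$; extending additively to reducible representations as in \cite{PR}, the spectral space is $K_u=H_{u_\S}$ for every $u\in\Rep(G)$.

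Next I would match the hypothesis of Theorem \ref{caratt} with normality of $\S$. With $K_v=H_{v_\S}$, the spectral condition \eqref{3.1} reads $(1_{\overline u}\otimes H_{v_\S}\otimes 1_u)R\subset H_{(\overline u v u)_\S}$ for all irreducible $u,v$. This is automatic when $v\in\S^\perp$, where the left-hand side vanishes, and when $u\in\S$, where both sides lie in $\S$; hence it is equivalent to its validity for irreducible $u\in\S^\perp$, $v\in\S$, which is precisely condition a) of Proposition \ref{normalcat}, that is, normality of $\S$. Thus \eqref{3.1} holds iff $\S$ is normal, and by Theorem \ref{caratt} this is equivalent to the existence of a quantum subgroup $K$ with $\Q_L$ isomorphic, as an embedded action, to the algebraic quotient space $\Q_{K\backslash G}$. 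The representation category is then given by \eqref{3.2} with $K_{\overline u v}=H_{(\overline u v)_\S}$, yielding the displayed formula for $(u\upharpoonright_K,v\upharpoonright_K)$ and the dimension count.

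Finally I would observe that such a $K$ is automatically normal. Indeed, the isomorphism identifies $\Q_{K\backslash G}$ with $\Q_L$ as subalgebras of $\Q_G$, and $\Q_L$ is a Hopf $^*$-subalgebra because $L$ is a quotient quantum group; hence $\Delta(\Q_{K\backslash G})\subset\Q_{K\backslash G}\otimes\Q_{K\backslash G}$, which is condition b) of Proposition \ref{normal} and gives normality of $K$. Since a normal quantum subgroup is in particular a quantum subgroup, the existence of a subgroup and of a normal subgroup realizing $L$ amount to the same statement, so the chain ``$\S$ normal $\Leftrightarrow$ \eqref{3.1} $\Leftrightarrow$ $\exists\,K$ with $K\backslash G\cong L$ $\Leftrightarrow$ $\exists$ normal $K$ with $K\backslash G\cong L$'' gives the asserted equivalence; uniqueness up to completion is inherited from Theorem \ref{caratt}.

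The main obstacle I anticipate is the careful identification $K_u=H_{u_\S}$ of the spectral spaces of $\Q_L$ under right translation, and the verification that \eqref{3.1}, a priori a condition on all pairs $(u,v)$, collapses exactly onto the normality condition of Proposition \ref{normalcat} for $u\in\S^\perp$, $v\in\S$. This rests on the Tannakian reconstruction recalled in Section \ref{tannaka}; once this bookkeeping is in place, the automatic normality of $K$ and the two-directional equivalence are formal.
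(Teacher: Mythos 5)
Your proposal is correct and takes essentially the same route as the paper's own proof: both regard $\Q_L$, via its comultiplication, as an embedded action of $G$ whose spectral functor is $K_u=H_{u_\S}$, reduce condition \eqref{3.1} to the case of irreducible $u\in\S^\perp$, $v\in\S$ (where it becomes exactly Proposition \ref{normalcat}), and deduce automatic normality of $K$ from Proposition \ref{normal}b because $\Q_L$ is a Hopf subalgebra. The only difference is that you spell out in detail the bookkeeping that the paper leaves as a sketch.
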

\begin{proof}
If we regard the comultiplication of $L$ as an action of $G$ of ${Q_L}$, $L$ becomes an embedded action of $G$. Moreover, if a quantum subgroup $K$ realizes $L$ as a quotient $G$-action, then Proposition \ref{normal}b shows that $K$ is automatically normal since $L$ is a quantum group. We are thus reduced to apply Theorem \ref{caratt}. The spectral functor of this action is the functor
$$K: \Rep(G)\stackrel S\rightarrow\S\stackrel H\rightarrow\Hilb$$
obtained by composing $S$
 with the embedding of $\S$ in the Hilbert spaces, hence in particular $K_u=H_{u_\S}$. We claim that it suffices to verify the required property for irreducible representations
$v\in\S$, $u\in\S^\perp$. Indeed, for $v\in\S^\perp$, $K_v=0$. Moreover, for $u, v\in \S$, $\overline{u}vu\in\S$, so $K_{\overline{u}vu}$ is the whole Hilbert space and the required property is trivially satisfied.
\end{proof}

\begin{defn}
A tensor subcategory ${\mathcal S}\subset\Rep(G)$ satisfying condition c) of Proposition  \ref{sufficientfornormality}, or the associated quantum subgroup of $G$,  will be referred to as being {\em strongly normal}.
\end{defn}

For example ${\mathcal S}=\langle \iota\rangle$ --- the subcategory of $\Rep(G)$ whose only objects are multiples of the trivial representation --- and  ${\mathcal S}=\Rep(G)$ are normal and correspond to $K=G$ and the trivial subgroup, respectively.

Note that any object $v\in\S$ restricts to a multiple of the  trivial representation since $(\iota, v\upharpoonright_K)$ has full dimension, whereas $(\iota, v\upharpoonright_K)=0$ for $v\in\S^\perp$.

\begin{prop}\label{centrality}
Condition d) of Proposition  \ref{sufficientfornormality} is equivalent to centrality of the  quantum subgroup associated to ${\mathcal S}$.
\end{prop}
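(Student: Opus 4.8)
The plan is to collapse both sides of the asserted equivalence onto a single fusion condition on $\Rep(G)$, namely that $\overline{u}u\in\S$ for every irreducible $u\in\Rep(G)$; call this condition $(*)$. I would first observe that the statement is only meaningful once the quantum subgroup $K$ with $\S=\Rep(K\backslash G)$ exists, and that both sides guarantee this: condition d) forces $\S$ to be normal by Proposition \ref{sufficientfornormality}, while a central quantum subgroup is normal, as noted after the result of \cite{patri}. In either case Theorem \ref{carattnormal} applies, and in particular the dimension formula $\dim(u\upharpoonright_K,v\upharpoonright_K)=\dim H_{(\overline{u}v)_\S}$ is at my disposal.

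The central step is to specialise this formula to $v=u$. Then $\dim(u\upharpoonright_K,u\upharpoonright_K)=\dim H_{(\overline{u}u)_\S}$, whereas $\dim H_{\overline{u}u}=(\dim H_u)^2$. A finite-dimensional representation $\rho$ of $K$ satisfies $\dim(\rho,\rho)=(\dim H_\rho)^2$ precisely when its algebra of self-intertwiners is the full matrix algebra $\B(H_\rho)$, which in turn happens exactly when $\rho$ is a multiple of a single one-dimensional representation. Applying this to $\rho=u\upharpoonright_K$ shows that $u\upharpoonright_K$ is a multiple of a one-dimensional representation if and only if $(\overline{u}u)_\S=\overline{u}u$, that is $\overline{u}u\in\S$. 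Combined with the characterisation of centrality in \cite{patri}, this yields that $K$ is central if and only if $(*)$ holds.

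It then remains to prove that condition d) is equivalent to $(*)$. For the implication d)$\Rightarrow(*)$, I use that an irreducible $u$ lies either in $\S$ or in $\S^\perp$, since $u=u_\S\oplus u_{\S^\perp}$: if $u\in\S$ then $\overline{u}u\in\S$ because $\S$ is a tensor category with conjugates, and if $u\in\S^\perp$ then $\overline{u}u=\overline{u}\,\iota\,u\in\S$ upon taking $v=\iota\in\S$ in d). For $(*)\Rightarrow$d), I would first invoke the previous paragraph to conclude that $K$ is central, so by \cite{patri} every restriction $u\upharpoonright_K$ is a multiple of a one-dimensional representation $\chi_u$, with $\chi_v$ trivial exactly when the irreducible $v$ lies in $\S$. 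Since restriction is a tensor $^*$-functor, for irreducible $u\in\S^\perp$ and $v\in\S$ the representation $(\overline{u}vu)\upharpoonright_K$ is a multiple of $\overline{\chi_u}\,\chi_v\,\chi_u=\overline{\chi_u}\chi_u=\iota$; hence every irreducible summand of $\overline{u}vu$ restricts trivially to $K$ and thus lies in $\S$, giving $\overline{u}vu\in\S$. The main obstacle is precisely the reduction in the middle paragraph: recasting the abstract centrality condition, through the dimension formula and the elementary full-matrix-algebra criterion, as the concrete fusion statement $(*)$. Once this is in place the two implications are bookkeeping with one-dimensional characters, the only subtlety being that the character assignment $u\mapsto\chi_u$ becomes available only after centrality has been established, so it may legitimately be used only in the direction $(*)\Rightarrow$d).
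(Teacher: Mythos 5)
Your proof is correct and follows essentially the same route as the paper's: the paper likewise collapses condition d) to the single requirement that $\overline{u}u$ restrict to a multiple of the trivial representation of $K$ (your condition $(*)$, obtained by taking $v=\iota$ and noting that $v\upharpoonright_K$ is a trivial multiple), and then invokes Frobenius reciprocity and Peter--Weyl theory --- which is exactly your dimension-formula and full-matrix-algebra step made explicit --- together with Patri's characterization of centrality. The only cosmetic difference is that you route $(*)\Rightarrow$ d) through the characters $\chi_u$, where the paper simply observes that since $v\upharpoonright_K$ is a multiple of the trivial representation, $\overline{u}vu$ and $\dim(v)\,\overline{u}u$ have the same restriction to $K$.
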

\begin{proof}
If $K$ is the quantum subgroup defined by a normal subcategory ${\mathcal S}$
then d) means that for $v$, $u$ are required,   $\overline{u}vu$ restricts to a multiple of the trivial representation of
$K$. Since the same holds for $v$, this is equivalent to requiring $\overline{u}u$ the same property. Frobenius reciprocity and Peter-Weyl theory show that this is equivalent to $K$ being central.
\end{proof}

 \begin{ex}\label{su2}
Let $G={\rm SU}_q(2)$, and denote by  $u_n$ the (self-conjugate) irreducible representation of dimension $n+1$. Consider the full subcategories $\S$ and $\S^\perp$ of $\Rep(G)$ with subobjects and direct sums generated by the irreducible representations with even and odd indices respectively.  The Clebsch-Gordan fusion rules show that $\S$ is a tensor $C^*$-subcategory with conjugates.
It is indeed the category of representations of a quantum ${\rm SO}(3)$.

Proposition \ref{sufficientfornormality}d holds, hence $\S$ is a normal subcategory, and, by Theorem \ref{carattnormal}, there must exist a normal quantum subgroup $K$ inducing the quotient, which is central by Proposition \ref{centrality}. Since ${u}_1^2\in\S$, by Frobenius reciprocity $(u_1\upharpoonright_K, u_1\upharpoonright_K)$ has full dimension, hence $u_1\upharpoonright_K$ is direct sum of two  one-dimensional representations, $g$ and $g'$, which are non-trivial since $(\iota, u_1\upharpoonright_K)=0$. Since $u_1^2$ restricts to the trivial representation, $g'=g^{-1}$ and $g^2=1$. Therefore $K\simeq \Cyc_2$ is the  cyclic group\footnote{Henceforth, we will denote by $\Cyc_n, 1 < n \leq \infty$ the cyclic group of order $n$.} of order $2$.
\end{ex}

 \section{The identity component of a compact quantum group}\label{idcompsect}

In this section we introduce the identity component $\go$ of a compact quantum group $G$ starting from the notion of connectedness introduced by Wang in \cite{Wang}. We next introduce totally disconnected compact quantum groups as those for which $\go$ is trivial, and, looking at examples arising from discrete groups, we discuss the main novelties with respect to the classical case.
\begin{defn}[\cite{Wang}] A compact quantum group is {\em connected} if the associated Hopf $C^*$-algebra admits no finite-dimensional unital Hopf $^*$-subalgebra other than the trivial one.
\end{defn}
In the classical case this definition says that the only finite group $\Gamma$ for which there is a continuous epimorphism $G\to\Gamma$ is the trivial group. This is obviously weaker than connectedness, but it is in fact equivalent since if $G$ is disconnected, we have a non-trivial compact component group $\go\backslash G$, which is profinite. Hence it has non-trivial finite quotients. We next consider the categorical counterpart of connectedness.

\subsection{Torsion in tensor $C^*$-categories}\
\medskip

\noindent
 \begin{defn}
An object $u$ of a tensor $C^*$-category with conjugates $\T$ will be called a {\it torsion object} if the smallest full tensor $C^*$-subcategory  $\T_u$ of $\T$ with conjugates and subobjects containing $u$ has finitely many inequivalent irreducible objects.
\end{defn}

\begin{prop}\label{subconj}
If $u$ is a torsion object, so is
every subobject of $u$,  the  conjugate of $u$ or
any finite direct sum of  objects of  $\T_u$.
\end{prop}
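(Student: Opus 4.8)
The plan is to reduce all three assertions to a single principle: each of the listed constructions produces an object whose generated category has its irreducible objects contained in those of $\T_u$, which is a finite set by hypothesis. Throughout I use that $\T_u$ is, by definition, the smallest full tensor $C^*$-subcategory with conjugates and subobjects containing $u$; in particular it is closed under tensor products, conjugation and passage to subobjects, and any full tensor $C^*$-subcategory with conjugates and subobjects that contains $u$ must contain $\T_u$. I also use that fullness makes irreducibility intrinsic: an object irreducible in a full subcategory is irreducible in $\T$, so that $\Irr(\T')\subseteq\Irr(\T_u)$ whenever $\T'$ is a full tensor $C^*$-subcategory of $\T_u$ with conjugates and subobjects.

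First I would dispose of the subobject and conjugate cases, which are immediate. If $w$ is a subobject of $u$, then $w\in\T_u$ because $\T_u$ is closed under subobjects; hence $\T_w\subseteq\T_u$ by minimality, so $\Irr(\T_w)\subseteq\Irr(\T_u)$ is finite and $w$ is torsion. Likewise $\overline u\in\T_u$ since $\T_u$ is closed under conjugates, so $\T_{\overline u}\subseteq\T_u$ and $\overline u$ is torsion (in fact $\T_{\overline u}=\T_u$, since $u=\overline{\overline u}\in\T_{\overline u}$, but only one inclusion is needed for finiteness).

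The direct-sum case requires slightly more care, since a priori $\T_u$ need not be stipulated to be closed under finite direct sums. Let $v=\bigoplus_i w_i$ with each $w_i\in\T_u$. The key point is that forming finite direct sums creates no new irreducible objects beyond those occurring in the summands. Concretely, every irreducible object of $\T_v$ appears as a subobject of some word $v^{\epsilon_1}\cdots v^{\epsilon_n}$ with each $v^{\epsilon_j}\in\{v,\overline v\}$, because conjugation distributes over direct sums and sends words to words while direct sums can be pushed outward. Expanding $v=\bigoplus_i w_i$ and $\overline v=\bigoplus_i\overline{w_i}$ distributes such a word into a finite direct sum of words in the $w_i$ and $\overline{w_i}$. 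Since each $w_i\in\T_u$ and $\T_u$ is closed under conjugation and tensor products, every such word lies in $\T_u$, and hence so do its irreducible subobjects. Therefore $\Irr(\T_v)\subseteq\Irr(\T_u)$, which is finite, and $v$ is torsion.

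I expect the only genuine subtlety to be the bookkeeping in the direct-sum case, namely making precise that the irreducibles of $\T_v$ are exhausted by subobjects of words in $v,\overline v$, and that these reduce to words in the $w_i,\overline{w_i}$. Everything else is a formal consequence of semisimplicity together with the closure and minimality properties defining $\T_u$; in particular, if one adopts the convention that a tensor $C^*$-subcategory is automatically additive, this last case collapses to the observation that $v\in\T_u$, whence $\T_v\subseteq\T_u$.
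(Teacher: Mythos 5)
Your proof is correct. The paper in fact states this proposition without any proof, treating it as immediate, and your argument is exactly the routine verification it leaves implicit: the subobject and conjugate cases follow from minimality of $\T_u$, and for the direct-sum case your key step --- identifying $\T_v$ with the full subcategory of subobjects of words in $v,\overline v$ (legitimate because that class is itself closed under tensor products, conjugates and subobjects and contains $v$), then distributing $v=\bigoplus_i w_i$ and $\overline v=\bigoplus_i \overline{w_i}$ so that semisimplicity places every irreducible of $\T_v$ inside a word in the $w_i,\overline{w_i}$, hence in $\Irr(\T_u)$ --- is precisely the care required by the fact that the paper's definition of $\T_u$ does not stipulate closure under direct sums.
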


Tensor products of torsion objects may fail to be torsion.
For example,  consider the representation category of the compact quantum group arising from a discrete group $\Gamma$. The  set of irreducible torsion representations corresponds to the set $\Gamma^t$ of torsion elements of $\Gamma$ and this is not a subgroup, in general. An example is provided by the infinite dihedral group $\Dih_\infty = \Cyc_2*\Cyc_2$.

  \begin{defn}
  An abstract  tensor $C^*$-category $\T$ with conjugates and subobjects admitting  no non-trivial irreducible torsion object, will be called {\it torsion-free}.
\end{defn}

\begin{prop}
A compact quantum group $G$ is connected if and only if
$\Rep(G)$ admits no non-trivial  full tensor $C^*$-subcategory with conjugates and finitely many irreducible representations.
Equivalently, $\Rep(G)$ is torsion-free.
 \end{prop}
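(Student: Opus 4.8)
The plan is to pass everything through Tannaka--Krein--Woronowicz duality and the correspondence of Proposition \ref{categoriesquotients} between quotient quantum groups of $G$ and full tensor $^*$-functors into $\Rep(G)$. The single numerical fact that makes the translation work is the Tannakian reconstruction recalled in Section \ref{tannaka}: the dense Hopf $^*$-algebra of a compact quantum group $L$ is linearly isomorphic to $\bigoplus_\alpha \overline{H}_\alpha \otimes H_\alpha$, the sum ranging over a complete set of irreducibles. Hence $\Q_L$ is finite-dimensional if and only if $\Rep(L)$ has finitely many inequivalent irreducible objects. Throughout I may harmlessly assume my subcategories are closed under subobjects and direct sums, since completing in this way changes neither the set of irreducible objects nor the finiteness condition, and it is what is needed to apply Proposition \ref{categoriesquotients}.

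The central step is to exhibit a bijection between non-trivial finite-dimensional unital Hopf $^*$-subalgebras $B$ of $Q_G$ and non-trivial full tensor $C^*$-subcategories $\S\subset\Rep(G)$ with conjugates and finitely many irreducibles. Given such a $B$, I would first note that, being a finite-dimensional $C^*$-algebra equipped with the restricted comultiplication $\Delta_G|_B$, it is the function algebra of a finite quantum group; by Peter--Weyl it is spanned by the matrix coefficients of its irreducible corepresentations $w^{(\beta)}$, and since $\Delta_G$ restricts to the comultiplication of $B$, each $w^{(\beta)}$ is in fact a representation of $G$, so that its coefficients lie in $\Q_G$ and therefore $B\subset\Q_G$. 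Thus $B=\Q_L$ for a finite quotient $L$, and by Proposition \ref{categoriesquotients} the inclusion $B\hookrightarrow\Q_G$ corresponds to a full tensor $^*$-functor whose image $\S$ is a full tensor $C^*$-subcategory with conjugates; fullness keeps the $w^{(\beta)}$ irreducible in $\Rep(G)$, so $\S$ has finitely many irreducibles. Conversely, starting from $\S$, I would apply Proposition \ref{categoriesquotients} to obtain a quotient $L$ with $\Q_L$ the span of the coefficients of the objects of $\S$; by the first paragraph this is finite-dimensional, hence a finite-dimensional unital Hopf $^*$-subalgebra of $Q_G$. Triviality matches $\langle\iota\rangle$ on both sides, so Wang's definition of connectedness gives the first equivalence at once.

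For the second equivalence I would argue straight from the definition of torsion object. If $\Rep(G)$ is not torsion-free it contains a non-trivial irreducible torsion object $u$; by definition the generated subcategory $\T_u$ has finitely many irreducibles and is a full tensor $C^*$-subcategory with conjugates containing the non-trivial object $u$, hence exactly a witness of the required kind. Conversely, if $\Rep(G)$ admits a non-trivial full tensor $C^*$-subcategory $\S$ with conjugates and finitely many irreducibles, then for any non-trivial irreducible $u\in\S$ one has $\T_u\subset\S$, so $\T_u$ too has finitely many irreducibles and $u$ is a non-trivial torsion object. The two formulations are thus contrapositives of one another.

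The step I expect to be the main obstacle is the verification, inside the central step, that an \emph{arbitrary} finite-dimensional unital Hopf $^*$-subalgebra is automatically of coefficient type and lands inside the dense algebra $\Q_G$ --- this is precisely what makes Proposition \ref{categoriesquotients} applicable. The cleanest route is the Peter--Weyl argument sketched above, whose delicate points are that a finite-dimensional $C^*$-algebra carrying a compatible Hopf $^*$-structure really is a finite quantum group (so that its Haar state exists and is faithful on $B$, and Peter--Weyl holds) and that the compatibility $\Delta_G|_B=\Delta_B$ promotes corepresentations of $B$ to representations of $G$. Once this is secured, everything else is formal bookkeeping with the Tannakian dictionary.
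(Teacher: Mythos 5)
Your proposal is correct and takes essentially the same approach as the paper: the published proof consists solely of the observation that irreducible components of torsion objects are torsion (your second equivalence, argued identically via $\T_u$), while the translation between finite-dimensional unital Hopf $^*$-subalgebras of $Q_G$ and finite full tensor subcategories of $\Rep(G)$ is left implicit there as a consequence of the Tannakian formalism of Section \ref{tannaka}. Your central step --- Peter--Weyl for the finite quantum group $(B,\Delta_G|_B)$, promotion of its corepresentations to representations of $G$ so that $B\subset\Q_G$, and then Proposition \ref{categoriesquotients} --- is precisely the routine verification the authors omit, carried out correctly.
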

\begin{proof}
Since the irreducible components
of a torsion object are torsion objects,
if a category admits a non-trivial torsion object, then it also admits a non-trivial irreducible torsion object.
\end{proof}

In particular, quantum groups with fusion rules identical (or quasi-equivalent) to those of connected compact groups are connected.

\begin{exs}\quad
\begin{itemize}
\item[{\rm a)}] Finite non-trivial quantum groups are clearly disconnected.
\item[{\rm b)}] If $G$ arises from a discrete group $\Gamma$, the irreducible torsion objects of $\Rep(G)$ correspond to the elements of the torsion subset $\Gamma^t$ of $\Gamma$, hence $G$ is connected if and only if $\Gamma$ is torsion--free.
\item[{\rm c)}] The deformation quantum groups $G_q$ obtained from classical compact Lie group, as well as $A_o(F)$, are connected, as the fusion rules are the same as those of the classical groups.
\item[{\rm d)}] Inspection of the fusion rules \cite{Banica} of $A_u(F)$ shows that these quantum groups are connected as well.
\end{itemize}
\end{exs}
In the following proposition, we use the notion of image of a quantum subgroup $K$ of $G$ in a quotient $L$ of $G$, as introduced in the Appendix.
\begin{prop}
Let $G$ be a compact quantum group.
\begin{itemize}
\item[{\rm a)}] If $G$ is connected, any quotient quantum group $L$ of $G$ is connected.
\item[{\rm b)}] Let  $K$ and $L$ be a quantum subgroup and quotient of $G$ respectively.
If $K$ is connected, the image of $K$ in $L$ is connected.
\end{itemize}
\end{prop}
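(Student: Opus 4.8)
The core of both statements is a single observation, which I would record first: \emph{connectedness is inherited by quotient quantum groups}. Suppose $N$ is a quotient of $H$. By definition the associated dense Hopf $^*$-algebras are related by an injective, comultiplication-preserving homomorphism $\Q_N\hookrightarrow\Q_H$; equivalently, by Proposition \ref{categoriesquotients}, $\Rep(N)$ is realized as a full tensor $C^*$-subcategory of $\Rep(H)$ with conjugates and subobjects. Now assume $H$ is connected and let $B\subset\Q_N$ be a non-trivial finite-dimensional unital Hopf $^*$-subalgebra. Its image in $\Q_H$ is again a finite-dimensional unital Hopf $^*$-subalgebra, hence trivial by connectedness of $H$; since $\Q_N\hookrightarrow\Q_H$ is injective, $B$ is trivial, so $N$ is connected. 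Categorically the same argument reads: a full tensor $C^*$-subcategory of $\Rep(N)$ with conjugates and finitely many irreducibles is, by fullness of the inclusion $\Rep(N)\subset\Rep(H)$, such a subcategory of $\Rep(H)$; hence torsion-freeness passes from $H$ to $N$.

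Part a) is exactly this observation with $H=G$ and $N=L$. For part b) the plan is to reduce to a) by identifying the image $M$ of $K$ in $L$ with a quotient quantum group of $K$. Writing $\pi:\Q_G\to\Q_K$ for the restriction defining $K$ and $\varphi:\Q_L\hookrightarrow\Q_G$ for the inclusion defining the quotient $L$, the composite $\pi\circ\varphi:\Q_L\to\Q_K$ is a Hopf $^*$-algebra homomorphism intertwining the comultiplications. Its image $\Q_M:=(\pi\circ\varphi)(\Q_L)$ is a Hopf $^*$-subalgebra of $\Q_K$: the induced surjection $\Q_L\twoheadrightarrow\Q_M$ presents $M$ as the quantum subgroup of $L$ that the Appendix calls the image of $K$ in $L$, while the inclusion $\Q_M\hookrightarrow\Q_K$ presents $M$ simultaneously as a quotient quantum group of $K$. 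Applying part a) to $K$ and its quotient $M$, connectedness of $K$ forces connectedness of $M$, as asserted.

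The only genuinely delicate point is the bookkeeping in part b): one must match the Hopf $^*$-subalgebra $\Q_M\subset\Q_K$ produced from $\pi\circ\varphi$ with the definition of the image given in the Appendix, and verify that the inclusion $\Q_M\hookrightarrow\Q_K$ is a bona fide non-commutative epimorphism $K\to M$. Both are immediate once one observes that $\pi$ and $\varphi$ are themselves comultiplication-preserving, so that the two structure maps of $M$ are merely the corestriction and the restriction of $\pi\circ\varphi$ and automatically satisfy the required compatibilities. With this identification in hand, part b) is a purely formal consequence of part a), and no analytic input (completions, coamenability) is needed, since the entire argument takes place at the level of the dense Hopf $^*$-algebras.
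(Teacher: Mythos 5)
Your proof is correct and takes essentially the same route as the paper: part a) rests on the fact that $\Rep(L)$ embeds as a full tensor subcategory of $\Rep(G)$ (equivalently, $\Q_L\hookrightarrow\Q_G$ is injective), so connectedness passes to quotients, and part b) reduces to a) by observing that the image $[K]$ of $K$ in $L$ is, by the Appendix construction via $\pi\circ\varphi$, a quotient quantum group of $K$. Your additional bookkeeping merely makes explicit the identifications the paper leaves implicit.
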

\begin{proof}
a) follows from the fact that the representation category of $L$ is just a full subcategory of the representation category of $G$.

b) The image of $K$ in $L$ is a quotient quantum group of $K$, hence b) follows from a).
\end{proof}

 \begin{prop}
If $\T\subset \U$ is an inclusion of tensor $C^*$-categories with conjugates and subobjects, then every torsion object of $\T$ is torsion in $\U$.
\end{prop}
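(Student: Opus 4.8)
The plan is to show that the subcategory of $\U$ generated by $u$ cannot be larger than the subcategory of $\T$ generated by $u$, so that its necessarily finite set of irreducibles is inherited. Write $\T_u$ for the smallest full tensor $C^*$-subcategory of $\T$ with conjugates and subobjects containing $u$, and $\U_u$ for the analogous subcategory of $\U$; by hypothesis $\T_u$ has finitely many inequivalent irreducible objects, and the goal is to bound the irreducibles of $\U_u$ by those of $\T_u$.

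First I would record the consequences of fullness of the inclusion $\T\subset\U$: for any two objects $a,b$ of $\T$ the intertwiner spaces $(a,b)$ computed in $\T$ and in $\U$ coincide. In particular, an object of $\T$ is irreducible in $\T$ if and only if it is irreducible in $\U$, and two objects of $\T$ are unitarily equivalent in $\T$ if and only if they are unitarily equivalent in $\U$, since the relevant unitaries live in the common intertwiner spaces.

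The key step is to verify that $\T_u$, viewed inside $\U$, is itself a full tensor $C^*$-subcategory of $\U$ with conjugates and subobjects containing $u$. Fullness in $\U$ follows from the equality of intertwiner spaces together with fullness of $\T_u$ in $\T$. For conjugates, I would note that a conjugate of $a\in\T_u$ already exists in $\T_u$, and the solutions $R,\overline R$ of the conjugate equations lie in $\T_u\subset\U$, so the same object is a conjugate in $\U$. For subobjects, a subobject of $a\in\T_u$ taken in $\U$ corresponds to a projection in $(a,a)_\U=(a,a)_{\T_u}$, which splits in $\T_u$ because $\T_u$ has subobjects; hence the subobject already lies in $\T_u$. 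With these closure properties established, the minimality defining $\U_u$ forces $\U_u\subseteq\T_u$.

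Finally, every irreducible object of $\U_u$ is an object of $\T_u$ by the inclusion just proved, and by the first step it is irreducible in $\T_u$ as well; thus the inequivalent irreducibles of $\U_u$ inject into those of $\T_u$, of which there are finitely many, so $u$ is torsion in $\U$. The delicate point, and the one I would treat most carefully, is the verification that forming subobjects and conjugates inside the ambient category $\U$ does not leave $\T_u$; this is exactly where fullness of the inclusion is indispensable, for without it the decompositions computed in $\U$ could in principle be strictly finer than those in $\T$.
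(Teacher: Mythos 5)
There is a genuine gap: your entire argument rests on the assumption that the inclusion $\T\subset\U$ is \emph{full}, but fullness is not among the hypotheses, and it fails in precisely the application the proposition exists for. Immediately after this statement the paper deduces Corollary \ref{restrictingtorsiontosubgroup} by taking $\T=\Rep(G)$ inside $\U=\Rep(K)$ via restriction to a quantum subgroup $K$, and the restriction functor is essentially never full: intertwiner spaces strictly grow, and an irreducible representation of $G$ typically becomes reducible upon restriction to $K$. Without fullness each step after your first paragraph collapses: irreducibility and unitary equivalence are no longer detected inside $\T$; a projection in $(a,a)_\U$ need not lie in $(a,a)_{\T_u}$, so $\T_u$ is not closed under subobjects formed in $\U$; and the inclusion $\U_u\subseteq\T_u$ produced by your minimality argument is false in general --- for a subgroup restriction one expects $\U_u$ to contain strictly more inequivalent irreducibles than $\T_u$. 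Your closing sentence, observing that fullness is indispensable to your argument, is accurate, and it is exactly why the argument does not prove the stated proposition.

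The paper's proof avoids comparing intertwiner spaces altogether and uses only arrows that $\T$ already supplies, which persist under any inclusion of tensor $C^*$-categories. Solutions $R\in(\iota,\overline{u}u)$, $\overline{R}\in(\iota,u\overline{u})$ of the conjugate equations in $\T$ remain solutions in $\U$, so $\overline{u}$ is still a conjugate of $u$ there; consequently every irreducible of $\U_u$ is a subobject of some tensor word $w$ in $u$ and $\overline{u}$, and these words are the same objects whether computed in $\T$ or in $\U$. Each such $w$ lies in $\T_u$, hence decomposes in $\T$ into members of the finite set $F$ of irreducibles of $\T_u$; in $\U$ each member of $F$ decomposes further into a finite direct sum of $\U$-irreducibles (endomorphism algebras are finite-dimensional in a $C^*$-category with conjugates). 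The finite set $F'$ of all $\U$-irreducible constituents of elements of $F$ then exhausts the irreducibles of $\U_u$ up to equivalence, so $u$ is torsion in $\U$. Under your extra hypothesis of fullness this degenerates to $F'=F$ and recovers your conclusion $\U_u\subseteq\T_u$; the general statement, where an irreducible of $\T$ is allowed to split in $\U$, is strictly stronger and is what the corollary requires.
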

\begin{proof}
If $u\in\T$ is a torsion object, it generates a tensor $C^*$-subcategory of $\T$ (full, with conjugates, subobjects and direct sums) with a finite set, say $F$, of irreducible objects. As an element of $\U$, every object of $F$ decomposes into a finite direct sum of inequivalent irreducible representations of $\U$ with suitable multiplicities. Hence, as an object of $\U$, $u$ generates a tensor $C^*$-subcategory of $\U$ with finitely many irreducible representations.
\end{proof}
In particular, choosing for $\Rep(G)\subset \Rep(K)$ the inclusion given by restricting a representation of $G$ to a quantum subgroup $K$, gives the following useful result.
\begin{cor}\label{restrictingtorsiontosubgroup}
If $K$ is a  quantum subgroup of a compact quantum group $G$, every torsion representation $u$ of $G$ restricts to a torsion representation of $K$. In particular, if $u\in \Rep(G)$ is torsion and $K$ is connected, then $u\upharpoonright_K$ is a multiple of the trivial representation.
\end{cor}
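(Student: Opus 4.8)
The plan is to reduce Corollary \ref{restrictingtorsiontosubgroup} directly to the preceding proposition, which states that torsion objects survive as torsion objects under a full inclusion of tensor $C^*$-categories. First I would recall that, by the discussion in Section \ref{tannaka} on quantum subgroups, the restriction map $u\mapsto u\upharpoonright_K$ is a tensor $^*$-functor $\Rep(G)\to\Rep(K)$ compatible with the embeddings into $\Hilb$, and that every irreducible of $K$ is a subobject of some $u\upharpoonright_K$. The essential point is that this restriction functor realizes $\Rep(G)$ (or rather its image) as a full tensor $C^*$-subcategory of $\Rep(K)$: the intertwiner spaces $(u\upharpoonright_K, v\upharpoonright_K)$ contain the restrictions of arrows in $(u,v)$, and the functor is compatible with tensor products and conjugates.

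The main step is then to apply the preceding proposition verbatim to the inclusion $\Rep(G)\subset\Rep(K)$ induced by restriction. If $u\in\Rep(G)$ is a torsion object, it generates inside $\Rep(G)$ a full tensor $C^*$-subcategory with conjugates, subobjects and direct sums having finitely many irreducible objects; by the proposition, the image of this subcategory under restriction generates inside $\Rep(K)$ a tensor $C^*$-subcategory with finitely many irreducibles, so $u\upharpoonright_K$ is a torsion object of $\Rep(K)$. This gives the first assertion.

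For the second assertion, I would combine the first with the hypothesis that $K$ is connected. By the earlier proposition characterizing connectedness, $\Rep(K)$ is torsion-free, meaning it admits no non-trivial irreducible torsion object. Since $u\upharpoonright_K$ is torsion, each of its irreducible components is a torsion object of $\Rep(K)$ by Proposition \ref{subconj}; torsion-freeness forces each such component to be the trivial representation $\iota$. Hence $u\upharpoonright_K$ is a multiple of $\iota$, as claimed.

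I do not expect any serious obstacle here, since the corollary is essentially a specialization of the immediately preceding proposition to a concrete inclusion. The only point requiring a little care is verifying that restriction to $K$ genuinely yields a \emph{full} inclusion of tensor $C^*$-categories in the sense needed — that is, that the abstract tensor $C^*$-category generated by the image of $u$ inside $\Rep(K)$ is exactly the one to which the proposition applies. This is already guaranteed by the setup in Section \ref{tannaka}, where the restriction functor is shown to be a tensor $^*$-functor compatible with the Hilbert space embeddings, so the verification is routine.
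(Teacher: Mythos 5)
Your route is exactly the paper's: the corollary carries no separate proof there, being presented as an immediate application of the proposition immediately preceding it (torsion objects of $\T$ stay torsion in $\U$ for an inclusion $\T\subset\U$) to the inclusion $\Rep(G)\subset\Rep(K)$ given by restriction, and your treatment of the connected case --- torsion-freeness of $\Rep(K)$ plus Proposition \ref{subconj} applied to the irreducible components of $u\upharpoonright_K$ --- is likewise the intended argument.

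There is, however, one false assertion in your write-up, and you single it out as "the essential point": the restriction functor does \emph{not} realize $\Rep(G)$ as a \emph{full} tensor subcategory of $\Rep(K)$. Intertwiner spaces grow under restriction, i.e.\ $(u,v)\subset(u\upharpoonright_K,v\upharpoonright_K)$ is strict in general; for instance, with $G=\SU(2)$ and $K$ a maximal torus, the irreducible $u_n$ restricts to a direct sum of $n+1$ characters, so $(u_n\upharpoonright_K,u_n\upharpoonright_K)$ has dimension $n+1$ while $(u_n,u_n)$ is one-dimensional. Fortunately, fullness is neither assumed nor needed: the preceding proposition is stated for an arbitrary inclusion of tensor $C^*$-categories with conjugates and subobjects, and its proof is designed precisely for the non-full situation --- each of the finitely many $\T$-irreducibles generated by $u$ decomposes into finitely many $\U$-irreducibles, a phenomenon that cannot occur under a full inclusion (under which the corollary would be nearly vacuous, since irreducibles would restrict to irreducibles). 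What the setup of Section 2 actually guarantees, and all the proposition requires, is that restriction is a faithful tensor $^*$-functor compatible with the embeddings into $\Hilb$, so that $\Rep(G)$ sits inside $\Rep(K)$ as a generally non-full tensor subcategory. Delete the fullness claim and the closing remark that its verification is "the only point requiring a little care," and your proof coincides with the paper's.
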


\subsection{The identity component $G^\circ$   and the normal counterpart $G^{\rm n}$}\
\medskip

\noindent
 Let us identify $\Rep(G)$ with a tensor $C^*$-subcategory of $\Hilb$ with subobjects and direct sums, via   the  embedding functor $H:\Rep(G)\to\Hilb$.
 Consider the subcategory  $\T^\circ\subset \Hilb$
with arrows between the objects $u$, $v\in\Rep(G)$  given by
$$(u,v)_{\T^\circ}=\cap_K(u\upharpoonright_K, v\upharpoonright_K),$$
where the intersection is taken over all the connected quantum subgroups
$K$  of
$G$.  $\T^\circ$ is clearly a tensor $^*$-subcategory of $\Hilb$ containing in turn
$\Rep(G)$ as a tensor $^*$-subcategory and with the same objects. Completing
$\T^\circ$ under subobjects and direct sums gives the representation category
of a quantum subgroup $\go$ of $G$. Note that in the case where $G$ is a compact group, this construction yields the closed subgroup generated by the connected
closed subgroups of $G$, i.e., the connected component of the identity of $G$.

\begin{prop}\label{conncomp}
$\go$ is the largest connected quantum subgroup of $G$.
\end{prop}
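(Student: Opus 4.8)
The plan is to verify the two defining requirements of a largest connected quantum subgroup separately: first that every connected quantum subgroup of $G$ is contained in $\go$, and second that $\go$ is itself connected; uniqueness then follows formally. Throughout I work with the embedded category $\T^\circ$, whose arrow spaces are $(u,v)_{\T^\circ}=\cap_K(u\upharpoonright_K,v\upharpoonright_K)$ over all connected quantum subgroups $K$, using that, by construction and the duality recalled in Section \ref{tannaka}, $\go$ is the quantum subgroup of $G$ with $(u\upharpoonright_{\go},v\upharpoonright_{\go})=(u,v)_{\T^\circ}$ for $u,v\in\Rep(G)$.

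For the first requirement, fix a connected quantum subgroup $K$. Since an intersection is contained in each of its terms, $(u\upharpoonright_{\go},v\upharpoonright_{\go})=(u,v)_{\T^\circ}\subset(u\upharpoonright_K,v\upharpoonright_K)$ for all $u,v\in\Rep(G)$. Because these categories share the same objects and the same embedding into $\Hilb$, this inclusion of arrow spaces means the restriction functor $\Rep(\go)\to\Rep(K)$ is well defined and compatible with the embeddings; as every irreducible of $K$ is a subobject of some $u\upharpoonright_K$, it exhibits $K$ as a quantum subgroup of $\go$, i.e. $K\subset\go$.

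The main point is connectedness of $\go$, which I would establish by showing $\Rep(\go)$ admits no non-trivial irreducible torsion object. Let $w$ be such an object, and realise it as the range $pH_u$ of a minimal projection $p\in(u\upharpoonright_{\go},u\upharpoonright_{\go})$ for some irreducible $u\in\Rep(G)$. For each connected $K$ one has $p\in(u\upharpoonright_{\go},u\upharpoonright_{\go})\subset(u\upharpoonright_K,u\upharpoonright_K)$, so $pH_u$ is a $K$-subrepresentation, namely $w\upharpoonright_K$. Since $K\subset\go$ is connected and $w$ is torsion, Corollary \ref{restrictingtorsiontosubgroup} forces $w\upharpoonright_K$ to be a multiple of the trivial representation, that is $pH_u\subset(\iota,u\upharpoonright_K)$. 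Intersecting over all connected $K$ gives $pH_u\subset\cap_K(\iota,u\upharpoonright_K)=(\iota,u)_{\T^\circ}=(\iota,u\upharpoonright_{\go})$, so $w$ is a multiple of $\iota$; being irreducible, $w=\iota$. Hence $\Rep(\go)$ is torsion-free and $\go$ is connected by the characterization proved just above.

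The hard part is precisely this last step: one must simultaneously use that restricting a torsion $\go$-object to each connected subgroup trivializes it (Corollary \ref{restrictingtorsiontosubgroup}) and that the vectors fixed by all connected subgroups are exactly the $\go$-invariant vectors — the latter being exactly what the defining intersection for $\T^\circ$ encodes. Once $\go$ is known to be connected and to contain every connected quantum subgroup, it is the unique maximal connected quantum subgroup, since any connected subgroup containing $\go$ would, by the first step, also be contained in $\go$.
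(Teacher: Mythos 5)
Your proof is correct and takes essentially the same route as the paper's: both realise the torsion object of $\Rep(\go)$ as the range of a projection in $(u\upharpoonright_{\go},u\upharpoonright_{\go})$ for an irreducible $u\in\Rep(G)$, note that this projection lies in every $(u\upharpoonright_K,u\upharpoonright_K)$, invoke Corollary \ref{restrictingtorsiontosubgroup} to trivialize the restriction to each connected $K$, and conclude by intersecting the invariant-vector spaces $(\iota,u\upharpoonright_K)$, which the definition of $\T^\circ$ identifies with $(\iota,u\upharpoonright_{\go})$. The only difference is that you spell out the maximality step $K\subset\go$, which the paper dismisses as holding by construction.
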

\begin{proof}
Note that $\go$ contains every connected quantum subgroup $K$ as a quantum subgroup by construction. We are left to show that $\go$ is connected. Let $v$ be an irreducible torsion object of $\Rep(\go)$ and let $u$ be an irreducible object of $\Rep(G)$ such that $v<u\upharpoonright_{\go}$. The orthogonal projection $E_v\in(u\upharpoonright_{\go}, u\upharpoonright_{\go})$ corresponding to $v$ is an arrow in every $(u\upharpoonright_{K}, u\upharpoonright_{K})$ and it corresponds to  $v\upharpoonright_{K}$. Restriction of a torsion object to a quantum subgroup is still torsion, so $v\upharpoonright_K$ is a multiple of the trivial representation of $K$ since $K$ is connected. Hence elements of an orthonormal basis of the range of $E_v$ lie in every arrow space $(\iota, v\upharpoonright_K)\subset (\iota, u\upharpoonright_K)$, hence they lie in $(\iota, u\upharpoonright_{\go})$. This shows that $v$ is a multiple of the trivial representation of $\go$.
\end{proof}
\begin{defn}
We shall refer to $\go$ as the {\it  identity component} of $G$. If  $\go$ is the trivial group, $G$ will be called {\em totally disconnected}.
\end{defn}
\begin{rem}
Clearly, $G=\go$ if and only if $G$ is connected.
\end{rem}
A connected quantum subgroup $K$ of $G$ is a subgroup of $\go$ by construction, hence there is  a commutative diagram
\[\xymatrix{
\Rep(\go) \ar[rd]_{H} \ar[r]
&\Rep(K) \ar[d]^{H} \\
&\Hilb}\]
where the top arrow is the restriction functor. Conversely, if a connected quantum subgroup $G'$ of $G$ has associated commutative  diagrams for each connected quantum subgroup  $K$ of $G$ then $G'=\go$. Summarizing, $\go$ is the connected quantum subgroup of $G$ defined by the following universal property for connected quantum subgroups $K$ of $G$,
\[\xymatrix{
\Rep(G) \ar[d] \ar[r] &\Rep(\go)\ar[ld] \ar[d]\\
\Hilb  &\ar[l] \Rep(K)}\]

\begin{rem} The notion of identity component of a quantum group is often implicitly used in representation theory   to rule out certain finite-dimensional representations. The simplest instance is that of $U_q(\mathfrak{su}_{2})$ for $0<q<1$, where restricting to the identity component amounts to focusing on the so-called ``type $I$ representations'' --- those representations with positive weights (see, e.g., \cite{CP}).
We shall discuss this in more detail in the last section, where we shall also consider the case
of negative parameters.
\end{rem}

We shall  often need the following fact,  an easy consequence of  Corollary \ref{restrictingtorsiontosubgroup}.

\begin{prop}\label{restrictingtorsiontoconnectedcomponent}
Every torsion object of $\Rep(G)$ restricts to a multiple of the trivial representation of $\go$.
\end{prop}

In the classical context of compact  groups, the converse of Proposition \ref{restrictingtorsiontoconnectedcomponent} holds by profiniteness of the component group. This property fails for compact quantum groups. Indeed,
a tensor product of (even irreducible) torsion representations may fail to be torsion, however it still  restricts to a multiple of the trivial representation of $\go$.

\begin{cor}\label{subgroupquotient}
If $N$ is a normal quantum subgroup of $G$ such that $N$ and $N\backslash G$ are connected then $G$ is connected.
\end{cor}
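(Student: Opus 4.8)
The plan is to use the representation-theoretic characterization of connectedness established above: $G$ is connected precisely when $\Rep(G)$ is torsion-free, i.e.\ it admits no non-trivial irreducible torsion object. Accordingly, I would fix an arbitrary irreducible torsion object $u\in\Rep(G)$ and show that $u$ is a multiple of the trivial representation $\iota$; since $u$ is irreducible this forces $u=\iota$, and as $u$ was arbitrary it follows that $\Rep(G)$ is torsion-free, hence $G$ is connected.

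First I would restrict $u$ to $N$. Because $u$ is torsion and $N$ is connected, Corollary \ref{restrictingtorsiontosubgroup} gives that $u\upharpoonright_N$ is a multiple of the trivial representation of $N$. The crucial translation step is then to read this as the statement that $u$ descends to the quotient $N\backslash G$, i.e.\ that $u$ lies in the full tensor subcategory $\S:=\Rep(N\backslash G)\subset\Rep(G)$. This is where the structure theory of normal subgroups enters: by Proposition \ref{normal} together with Theorem \ref{carattnormal}, the objects of $\Rep(N\backslash G)$ are exactly those representations $v$ of $G$ for which $v\upharpoonright_N$ is a multiple of the trivial representation of $N$ (equivalently, for irreducible $v$, for which $v\upharpoonright_N$ merely contains an invariant vector, normality then upgrading this to a multiple of the trivial representation via Proposition \ref{normal}c). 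Hence $u\in\S$.

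Next I would argue that $u$ remains a torsion object in the smaller category $\S$. Since $\S$ is a full tensor $C^*$-subcategory of $\Rep(G)$ with conjugates and subobjects, the subcategory $\T_u$ generated by $u$ under tensor products, conjugates and subobjects is already contained in $\S$; and because $\S$ is full, $\T_u$ has the same irreducible objects whether computed in $\S$ or in $\Rep(G)$. As $u$ is torsion in $\Rep(G)$, this common set of irreducibles is finite, so $u$ is torsion in $\S=\Rep(N\backslash G)$. Finally, connectedness of $N\backslash G$ means that $\Rep(N\backslash G)$ is torsion-free, so the irreducible torsion object $u$ must be a multiple of the trivial representation; thus $u=\iota$, completing the argument.

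I expect the only genuine subtlety to be the identification in the second paragraph of $\Rep(N\backslash G)$ with the representations that are trivial upon restriction to $N$; everything else is routine bookkeeping with the preservation of the torsion property under full tensor inclusions (which works in both directions precisely because $\S$ is \emph{full}). In particular, one should take care that normality of $N$ is exactly what promotes ``contains an invariant vector on restriction to $N$'' to ``is a multiple of the trivial representation on $N$'', and also what guarantees that $N\backslash G$ is a compact quantum group at all; without normality the statement ``$N\backslash G$ is connected'' would be meaningless and the argument would break down.
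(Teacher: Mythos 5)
Your proposal is correct and follows essentially the same route as the paper's proof: restrict the torsion object to the connected subgroup $N$ to see it is a multiple of the trivial representation, identify it as a representation of the quotient $N\backslash G$, and invoke connectedness (torsion-freeness) of the quotient. The paper states the descent step in one line, while you spell out its justification via Proposition \ref{normal} and the fullness of $\Rep(N\backslash G)\subset\Rep(G)$ (which guarantees the torsion property passes to the subcategory) --- a useful elaboration, but not a different argument.
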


\begin{proof}
If $u$ is a torsion representation of $G$ then it restricts to a multiple of the trivial representation on $\go$, as well as on every connected quantum subgroup, hence in particular on $N$. Therefore $u$ is actually a representation of the quotient quantum group $N\backslash G$ which is connected by assumption, hence $u$ must be a multiple of the trivial representation.
\end{proof}

\begin{rem}\label{totdisc3}
The problem of total disconnectedness of the quantum component group will be treated in the next section.
One would be tempted  to use Corollary \ref{subgroupquotient} in order to conclude that if $\go$ is normal, then $\go\backslash G$ is totally disconnected. However, the classical argument requires that subgroups $F$ of a quotient $N\backslash G$ arise as images of subgroups of $G$.
This fact does not hold in general. We will discuss this in more detail later in the Appendix, where we shall also address some special cases where the two notions of subquotients coincide.
\end{rem}

We next discuss the special case of cocommutative quantum groups.
\begin{ex}\label{exconnectedcomponent}
If $G=C^*(\Gamma)$ is the quantum group associated to the discrete group $\Gamma$, a connected quantum subgroup $K$ is associated to a torsion-free quotient $\Lambda\backslash\Gamma$ by a normal subgroup $\Lambda$. The identity component $\go$ corresponds to the universal torsion-free quotient $\Lambda^\circ\backslash\Gamma$, where $\Lambda^\circ$ is the torsion-free radical of $\Gamma$ in the sense of \cite{BH}, i.e., the intersection of all normal subgroups with torsion-free quotient.

Note that $\Lambda^\circ$ contains the torsion subset $\Gamma^t$. If $\Gamma^t$ is a subgroup of $\Lambda$, then it is normal and $\Gamma^t\backslash\Gamma$ is torsion free, hence $\Lambda^\circ=\Gamma^t$. In this case, $\go$ corresponds to $\Gamma^t\backslash\Gamma$ and $\go\backslash G$ to $\Gamma^t$. In particular,  $\go\backslash G$ is totally disconnected since $(\Gamma^t)^t=\Gamma^t$.

In the general case, $\Lambda^\circ$ contains the subgroup $N_1$ generated by $\Gamma^t$, which is normal.  In \cite{Chiodo}, Chiodo and Vyas give an example of a finitely presented group for which $N_1\backslash\Gamma$ is isomorphic to a non-trivial torsion (cyclic) group. This shows that, in general, $\Lambda^\circ$ strictly contains $N_1$; we will generalize this example in Section \ref{normsect}.
\end{ex}

Consider now the quantum subgroup $G^{\rm n}$ of $G$ whose representation category is determined by
$$(u\upharpoonright_{G^{\rm n}}, v\upharpoonright_{G^{\rm n}}):=\cap_{N}(u\upharpoonright_{N}, v\upharpoonright_{N}),$$
for $u$, $v\in\Rep(G)$, where $N$ ranges over all normal connected quantum subgroups of $G$.

\begin{prop} $G^{\rm n}$ is  the largest connected,  normal quantum subgroup of $G$, and $G^{\rm n}\subset \go$.
\end{prop}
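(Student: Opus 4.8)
The plan is to follow the template of the proof of Proposition~\ref{conncomp}, adding a verification of normality. First note that, exactly as for $\go$, the intersected arrow spaces $\cap_N(u\upharpoonright_N,v\upharpoonright_N)$ assemble into a tensor $C^*$-subcategory of $\Hilb$ containing $\Rep(G)$ with the same objects, so after completion with subobjects and direct sums the Tannaka--Krein--Woronowicz duality of Section~\ref{tannaka} produces a quantum subgroup $G^{\rm n}$. By construction $G^{\rm n}$ contains every normal connected subgroup $N$: the inclusion of arrow spaces $(u\upharpoonright_{G^{\rm n}},v\upharpoonright_{G^{\rm n}})=\cap_{N'}(u\upharpoonright_{N'},v\upharpoonright_{N'})\subseteq(u\upharpoonright_{N},v\upharpoonright_{N})$ provides a restriction $^*$-functor $\Rep(G^{\rm n})\to\Rep(N)$ compatible with the embeddings into $\Hilb$, which exhibits $N$ as a quantum subgroup of $G^{\rm n}$. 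It then remains to show that $G^{\rm n}$ is itself connected and normal; maximality and the inclusion $G^{\rm n}\subset\go$ will follow at once.

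For connectedness I would repeat the argument of Proposition~\ref{conncomp} with $N$ (ranging over normal connected subgroups) in place of $K$. Let $v$ be an irreducible torsion object of $\Rep(G^{\rm n})$ and choose an irreducible $u\in\Rep(G)$ with $v<u\upharpoonright_{G^{\rm n}}$. The orthogonal projection $E_v\in(u\upharpoonright_{G^{\rm n}},u\upharpoonright_{G^{\rm n}})=\cap_N(u\upharpoonright_N,u\upharpoonright_N)$ lies in each $(u\upharpoonright_N,u\upharpoonright_N)$ and, since $N\subset G^{\rm n}$, cuts out $v\upharpoonright_N$. By Corollary~\ref{restrictingtorsiontosubgroup} the restriction $v\upharpoonright_N$ is torsion, hence a multiple of $\iota$ because $N$ is connected; thus the vectors of an orthonormal basis of the range of $E_v$ lie in $(\iota,v\upharpoonright_N)\subseteq(\iota,u\upharpoonright_N)$ for every $N$, and therefore in $\cap_N(\iota,u\upharpoonright_N)=(\iota,u\upharpoonright_{G^{\rm n}})$. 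Hence $v$ is a multiple of $\iota$ and $G^{\rm n}$ is connected.

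Normality is the genuinely new step, and I would deduce it from the criterion of Proposition~\ref{normal}c. Suppose $v\in\Rep(G)$ is irreducible and $v\upharpoonright_{G^{\rm n}}$ has a nonzero invariant vector, that is $(\iota,v\upharpoonright_{G^{\rm n}})=\cap_N(\iota,v\upharpoonright_N)\neq0$. Choosing $0\neq\xi$ in this intersection, $\xi$ is a nonzero invariant vector for every $v\upharpoonright_N$, and since each $N$ is normal, Proposition~\ref{normal}c forces $v\upharpoonright_N$ to be a multiple of $\iota$, that is $(\iota,v\upharpoonright_N)=H_v$. Intersecting over $N$ gives $(\iota,v\upharpoonright_{G^{\rm n}})=H_v$, so $v\upharpoonright_{G^{\rm n}}$ is a multiple of $\iota$, and $G^{\rm n}$ is normal by Proposition~\ref{normal}c.

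Maximality is then immediate: $G^{\rm n}$ is connected and normal and contains every connected normal subgroup, so it is the largest such; and being connected it is contained in the largest connected subgroup $\go$ by Proposition~\ref{conncomp}. I expect normality to be the only real obstacle, since the rest transcribes the $\go$ argument. The delicate point there is that the hypothesis of Proposition~\ref{normal}c supplies a \emph{single} nonzero vector in $(\iota,v\upharpoonright_{G^{\rm n}})=\cap_N(\iota,v\upharpoonright_N)$ that is simultaneously invariant under every normal connected $N$ at once, letting the normality criterion act uniformly across the family; merely knowing each $(\iota,v\upharpoonright_N)$ to be nonzero separately would not suffice, so it is essential to argue from the common intersection rather than term by term.
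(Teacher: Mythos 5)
Your proof is correct and is essentially the paper's own argument: connectedness is transported verbatim from Proposition \ref{conncomp}, and normality follows because a nonzero vector in $(\iota,v\upharpoonright_{G^{\rm n}})=\cap_N(\iota,v\upharpoonright_N)$ makes each $(\iota,v\upharpoonright_N)$ nonzero, hence full by Proposition \ref{normal}c applied to the normal subgroup $N$, hence the intersection is full. Only your closing remark is off: since normality of each $N$ upgrades ``nonzero'' to ``full'', knowing each $(\iota,v\upharpoonright_N)\neq 0$ separately \emph{would} in fact suffice (each term, and therefore the intersection, becomes all of $H_v$), so nothing essential hinges on producing a single simultaneously invariant vector --- fortunately this misjudgment lies outside the proof and does not affect its validity.
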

\begin{proof} The same arguments as in Proposition \ref{conncomp} show that $G^{\rm n}$ is connected. In particular, $G^{\rm n}\subset \go$. If $v$ is irreducible and the arrow space $(\iota, v\upharpoonright_{G^{\rm n}})$ is not zero then for every $N$, $(\iota, v\upharpoonright_{N})$ is non-zero, hence it is full since $N$ is normal. Therefore $(\iota, v\upharpoonright_{G^{\rm n}})$ is full as well, hence $G^{\rm n}$ is normal.
\end{proof}

Notice that $\go=G^{\rm n}$ if and only if $\go$ is normal. In this case, the quotient  $\go\backslash G$ will be called the {\it quantum  component group}. We shall give a description of $\go$ and $G^{\text{n}}$ as limits of certain transfinite sequences defined by torsion in Section \ref{normsect}.

\subsection{Totally disconnected quantum groups}\label{totdiscqg}\

\begin{prop}\label{sufficientfortotdisc}
If every irreducible representation of $\Rep(G)$ is a torsion object,
then $G$ is totally disconnected.
\end{prop}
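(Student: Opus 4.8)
\emph{The plan is} to combine Proposition \ref{restrictingtorsiontoconnectedcomponent} with the Peter--Weyl description of the irreducible representations of a quantum subgroup. First I would observe that, by hypothesis, every irreducible $u\in\Rep(G)$ is a torsion object. Proposition \ref{restrictingtorsiontoconnectedcomponent} then applies to each such $u$ and yields that $u\upharpoonright_{\go}$ is a multiple of the trivial representation $\iota$ of $\go$, for every irreducible $u\in\Rep(G)$.

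Next I would invoke the general fact recalled in the subsection on quantum subgroups: any irreducible representation of a quantum subgroup is a subrepresentation of some restricted representation $w\upharpoonright_{\go}$ with $w\in\Rep(G)$. Since the restriction functor is additive and every $w\in\Rep(G)$ decomposes as a finite direct sum of irreducibles of $G$, one may take the source representation $w$ to be \emph{irreducible} without loss of generality. By the first step $w\upharpoonright_{\go}$ is then a multiple of $\iota$, so the only irreducible subrepresentation it can contain is $\iota$ itself.

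I would conclude that every irreducible representation of $\go$ is trivial, so $\Rep(\go)$ consists only of multiples of $\iota$. Equivalently, $\go$ is the trivial group, and hence $G$ is totally disconnected by definition.

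The one point requiring a little care is the reduction to an irreducible source representation $w$ in the second step: one must check that decomposing $w$ into irreducibles of $G$ and applying restriction componentwise places the given irreducible of $\go$ inside the restriction of one irreducible summand. This is immediate from additivity of restriction and semisimplicity, so no genuine obstacle arises; the argument is a direct consequence of Proposition \ref{restrictingtorsiontoconnectedcomponent} together with Proposition \ref{conncomp}.
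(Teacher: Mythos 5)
Your argument is correct and coincides with the paper's own proof: the paper likewise notes that every irreducible of $\go$ is a subrepresentation of the restriction of an irreducible of $G$, which is torsion by hypothesis, and then applies Proposition \ref{restrictingtorsiontoconnectedcomponent} to conclude triviality. Your extra care about reducing to an irreducible source representation is exactly the (routine) semisimplicity point the paper leaves implicit.
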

\begin{proof}
Every irreducible representation $v$ of $\go$ is a subrepresentation
of the restriction of an irreducible representation of $G$, which is assumed to be torsion, hence $v$ is trivial by Proposition \ref{restrictingtorsiontoconnectedcomponent}. This shows that $\go$ is trivial.
\end{proof}
\begin{exs}\label{examples}\quad
\begin{itemize}
\item[{\rm a)}] Finite quantum groups are  clearly totally disconnected.

\item[{\rm b)}] A compact quantum group $G$ for which the associated
Hopf $C^*$-algebra ${Q}_G$ is the inductive limit of Hopf $C^*$-algebras ${ Q}_{G_n}$ corresponding to totally disconnected quantum groups,  is itself totally disconnected.  Indeed, on one hand $\Rep(G)$ is the inductive limit of the full subcategories $\Rep(G_n)$, and, on the other hand, if $K$ is a connected quantum subgroup of $G$ then the full subcategory of $\Rep(K)$ with objects the subobjects of the restrictions of the objects of $\Rep(G_n)$ defines a connected quantum subgroup of $G_n$ so it must correspond to the trivial group since $G_n$ is totally disconnected.
 \end{itemize}
\end{exs}
In next section we shall show that the converse of Proposition \ref{sufficientfortotdisc} does not hold in general.
\begin{defn}\label{profinite}
A compact quantum group is {\em profinite} if its Hopf $C^*$-algebra is the inductive limit of finite-dimensional Hopf $C^*$-subalgebras. Equivalently, $\Rep(G)$ is the inductive limit of full, finite, tensor $C^*$-subcategories with conjugates and subobjects.
\end{defn}

 If $G$ is a profinite quantum group, all of its representations, even reducible ones, are torsion. In particular, profinite quantum groups are totally disconnected. We next show that this is in fact a characterization of profiniteness.

\begin{prop}\label{carattprofiniteness}
A compact quantum group is profinite if and only if every object of $\Rep(G)$ is a torsion object.
\end{prop}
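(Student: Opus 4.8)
The plan is to prove only the nontrivial implication, since the converse --- that profiniteness forces every object to be torsion --- has already been recorded in the paragraph preceding the statement (all representations of a profinite quantum group, even reducible ones, are torsion). So I assume that every object of $\Rep(G)$ is a torsion object and aim to exhibit $\Rep(G)$ as an inductive limit of full, finite tensor $C^*$-subcategories with conjugates and subobjects, which is the categorical formulation of profiniteness.

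First I would fix a complete set $\{u_i\}_{i\in I}$ of pairwise inequivalent irreducible representations of $G$. For each finite subset $F\subset I$ I form the object $u_F=\bigoplus_{i\in F}u_i\in\Rep(G)$; by hypothesis it is a torsion object, so the subcategory $\T_{u_F}$ it generates (full, with conjugates, subobjects and direct sums) has only finitely many inequivalent irreducible objects, and is therefore a finite full tensor $C^*$-subcategory with conjugates and subobjects that contains each $u_i$ with $i\in F$.

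Next I would check that the family $\{\T_{u_F}\}_F$, indexed by the finite subsets of $I$ ordered by inclusion, is a directed inductive system: if $F\subset F'$ then $u_F$ is a subobject of $u_{F'}$, hence $u_F\in\T_{u_{F'}}$, and minimality of $\T_{u_F}$ gives $\T_{u_F}\subset\T_{u_{F'}}$; directedness follows since $\T_{u_{F_1}},\T_{u_{F_2}}\subset\T_{u_{F_1\cup F_2}}$. The union $\bigcup_F\T_{u_F}$ is then a full tensor $C^*$-subcategory of $\Rep(G)$ with conjugates and subobjects; it contains every irreducible $u_i$ (as $u_i\in\T_{u_{\{i\}}}$), and, by semisimplicity of $\Rep(G)$, every object decomposes as a finite direct sum of irreducibles and so lies in some $\T_{u_F}$. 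Hence $\bigcup_F\T_{u_F}=\Rep(G)$.

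Finally I would transfer this to the Hopf algebra side. Each finite category $\T_{u_F}$, embedded in $\Hilb$ via $H$ and possessing conjugates, corresponds under Tannaka--Krein--Woronowicz duality (cf.\ Proposition \ref{categoriesquotients}) to a finite-dimensional Hopf $^*$-subalgebra $\Q_{L_F}\subset\Q_G$; the inclusions and the exhaustion of the categorical side pass to $\Q_{L_F}\subset\Q_{L_{F'}}$ and to $\bigcup_F\Q_{L_F}=\Q_G$, so $Q_G$ is the inductive limit of the finite-dimensional $\Q_{L_F}$, which is exactly profiniteness. The only points requiring care are the identification of the finite category $\T_{u_F}$ with a genuinely finite-dimensional Hopf $C^*$-subalgebra (rather than merely a finite quotient quantum group) and the verification that the categorical inductive limit matches the $C^*$-algebraic one; both follow from the explicit Tannakian reconstruction of the dense Hopf algebra recalled in Section \ref{tannaka}, and constitute the main bookkeeping step of the argument.
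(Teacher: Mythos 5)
Your proof is correct and follows essentially the same route as the paper's: the hypothesis is applied to finite direct sums of irreducibles (this is exactly where ``every object is torsion'' is needed, since direct sums of torsion objects need not be torsion), the generated subcategories $\T_{u_F}$ are finite and exhaust $\Rep(G)$ by semisimplicity, and profiniteness follows. Your final Tannakian transfer to finite-dimensional Hopf $^*$-subalgebras is harmless but redundant, since Definition \ref{profinite} already declares the categorical formulation (inductive limit of full, finite tensor $C^*$-subcategories with conjugates and subobjects) equivalent to the Hopf-algebraic one.
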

\begin{proof}
If every object of $\Rep(G)$ is torsion, then the direct sum of any finite family of representations is a torsion object, hence the full tensor $^*$-subcategory with conjugates and subobjects generated by this family contains only finitely many irreducible representations. On the other hand,  $\Rep(G)$ is inductive limit of these finite subcategories. The last statement is a consequence of Examples \ref{examples}a, b.
\end{proof}
Every compact totally disconnected (classical) group is profinite, and indeed finite if it is a Lie group. We next see that there are totally disconnected compact matrix quantum groups which are not profinite. By Propositions \ref{sufficientfortotdisc} and \ref{carattprofiniteness}, it suffices to exhibit an example   admitting non-torsion reducible representations and such that all irreducible ones are torsion.

As already mentioned in the introduction, the main point is that there is a connection with the generalized Burnside problem in classical group theory. This problem asks whether any torsion finitely generated group is finite, and was answered in the negative by Golod and Shafarevich \cite{GS, G}. Adian and Novikov proved that the Burnside problem with bounded exponents has a negative answer as well \cite{AN}.
\begin{prop}\label{Burnside}
Let $\Gamma$ be a counterexample to the generalized Burnside problem, i.e., an infinite, finitely generated, discrete group such that every element has finite order. Then $G=C^*(\Gamma)$ is a totally disconnected compact matrix quantum group with non-torsion representations, hence it is not profinite.
\end{prop}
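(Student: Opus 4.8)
The plan is to translate each of the four assertions directly from the hypotheses on $\Gamma$, exploiting the dictionary for the cocommutative quantum group $G=C^*(\Gamma)$, whose irreducible representations are exactly the elements of $\Gamma$, tensor product corresponds to multiplication and conjugate to inverse. First, to see that $G$ is a compact matrix quantum group, I would use finite generation: fix a finite generating set $g_1,\dots,g_n$ of $\Gamma$ and form the finite-dimensional representation $u=\bigoplus_{i=1}^n(g_i\oplus g_i^{-1})$, a direct sum of one-dimensional representations. Its matrix coefficients are $g_1,g_1^{-1},\dots,g_n,g_n^{-1}$, which generate $\Q=\Cset\Gamma$ as a $^*$-algebra precisely because the $g_i$ generate $\Gamma$. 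Hence $u$ is a generating representation and $G$ is a compact matrix quantum group.

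For total disconnectedness I would invoke Proposition \ref{sufficientfortotdisc}, according to which it suffices to verify that every irreducible representation of $G$ is a torsion object. An irreducible representation is a single $\gamma\in\Gamma$, and the smallest full tensor $C^*$-subcategory of $\Rep(G)$ with conjugates and subobjects containing it has as irreducible objects the powers $\{\gamma^k:k\in\Zset\}$; this set is finite exactly when $\gamma$ has finite order. Since $\Gamma$ is a torsion group, every $\gamma$ has finite order, so every irreducible is a torsion object, and Proposition \ref{sufficientfortotdisc} yields that $G$ is totally disconnected.

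To produce a non-torsion representation, and thereby rule out profiniteness, I would return to the generating representation $u$ above. Because $u$ generates $\Rep(G)$, the subcategory $\T_u$ it generates is all of $\Rep(G)$, whose irreducible objects are the elements of $\Gamma$; as $\Gamma$ is infinite, $\T_u$ has infinitely many irreducibles, so $u$ is not a torsion object. Note $u$ is a finite direct sum of torsion objects, so this is exactly the announced phenomenon that direct sums of torsion objects may fail to be torsion. Finally, by the characterization in Proposition \ref{carattprofiniteness}, $G$ is profinite if and only if every object of $\Rep(G)$, reducible ones included, is torsion; the existence of the non-torsion object $u$ then shows $G$ is not profinite. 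The argument is essentially a translation of the group-theoretic hypotheses, so there is no serious technical obstacle; the only point demanding care is this last step, where it is \emph{infiniteness} of $\Gamma$ (not the torsion or finite-generation hypotheses) that is used to manufacture a non-torsion reducible representation out of torsion irreducibles.
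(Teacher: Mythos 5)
Your proof is correct and takes essentially the same route as the paper: both verify that the irreducibles of $C^*(\Gamma)$ correspond to group elements and are therefore torsion, invoking Proposition \ref{sufficientfortotdisc} for total disconnectedness, and both exhibit the direct sum of the one-dimensional representations attached to a finite generating set as a generating representation whose tensor powers yield every element of the infinite group $\Gamma$ as an irreducible subobject, so that it is non-torsion and Proposition \ref{carattprofiniteness} excludes profiniteness. The only cosmetic differences are that the paper phrases the non-torsion argument in the Grothendieck semiring $\Nset\Gamma$, via the set $E(S_A)$ of group elements appearing in the powers of $A=g_1+\dots+g_N$, and omits the inverses $g_i^{-1}$ from the generating representation, which your version includes harmlessly since conjugates are available in any case.
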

\begin{proof}
As irreducible representations of $C^*(\Gamma)$ correspond to group elements, they are all torsion objects, hence $G$ is totally disconnected by Proposition \ref{sufficientfortotdisc}. The Grothendieck semiring of $C^*(\Gamma)$ identifies with ${\mathbb N}\Gamma$.

If $S$ is a subset of ${\mathbb N}\Gamma$, consider the set $E(S)$ of group elements appearing in the linear combinations of the elements of $S$.
To show existence of non-torsion representations, it suffices to find an element $A$ of ${\mathbb N}\Gamma$ such that, setting $S_A:=\{A^n, n=0,1,2,\dots\}$, the associated set $E(S_A)$ is infinite. If $g_1,\dots, g_N$ is a set of generators of $\Gamma$, and $A:=g_1+\dots+g_N$, we have $E(S_A)=\Gamma$. Finally, note that $A$ is a unitary representation of $C^*(\Gamma)$ with coefficients $\{g_1,\dots, g_N\}$, hence $C^*(\Gamma)$ is a compact matrix quantum group.
\end{proof}

\begin{rem}\label{nonamenable}
Most of the counterexamples to the Burnside problem are highly non-commutative.  Olshankii constructed the first non-amenable example \cite{Olshanskii2} providing at the same time the first example to the problem of  von~Neumann of whether there exist non-amenable groups without non-abelian free subgroups. Adian proved that the free Burnside groups $B(m, n)$ are non-amenable for large odd exponents $n$ and $m>1$ \cite{Adian}. Recently, the groups of Golod and Shafarevich were shown to be non-amenable as well \cite{EJ-Z}.

However, amenability does not suffice to yield finiteness. An example of intermediate growth, hence amenable, has been constructed by  Grigorchuk \cite{G1}, thus answering negatively to  Milnor's problem of whether the growth of a group must be either polynomial or exponential.

 On the other hand, by a well known result of  Gromov \cite{Gromov}, any finitely generated group of polynomial growth is almost nilpotent. These classes of groups do have finite torsion subgroups. Therefore,  the first class of quantum groups for a positive answer    is that for which the tensor product of two representations is commutative up to equivalence. This topic will be considered more extensively in Section \ref{noetherianity}.
\end{rem}

 \medskip

\section{Normality of $\go$ and profiniteness of $\go\backslash G$}\label{normsect}

If $G$ is a compact group, the connected component of the identity $\go$ is a closed normal subgroup and one can thus form the {\it component group} $\go \backslash G$, of which it is desirable to have a non-commutative analogue. The aim of this section is to give necessary and sufficient conditions for the normality, in the sense of Wang, of the identity component of a compact quantum group.
This shall involve an analysis of the torsion subcategory of $\Rep(G)$. We give examples where $G^\circ$ is not normal.  We shall also give conditions guaranteeing that the associated quantum component group is finite or  profinite.

 \subsection{The torsion subcategory}\
\begin{defn}
Let $\T$ be a tensor $C^*$-category with conjugates, subobjects and direct sums. The full subcategory $\T^t$, whose objects are the torsion objects of
$\T$, will be called the {\em torsion subcategory} of $\T$.
\end{defn}
We have already noted that
$\T^t$ in general is not closed under tensor products.
Moreover, as we have seen from the examples related to the Burnside problem, direct sums of torsion representations of compact quantum groups may result in a non-torsion representation, and this may happen even if tensor products of irreducible objects of $\T^t$ are torsion, since in those examples every element has finite order. On the other hand, we remark that closure under direct sums is stronger than closure under tensor products.
 \begin{prop}
The torsion subcategory $\T^t$  is a $C^*$-category with conjugates and subobjects. If direct sums of irreducible torsion objects are torsion then $\T^t$ also has tensor products and direct sums.
\end{prop}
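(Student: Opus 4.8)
The plan is to deduce everything from Proposition \ref{subconj} together with the definition of torsion object, the only genuinely new input being the direct-sum hypothesis in the second assertion.

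For the first assertion, note that $\T^t$ is by construction a \emph{full} subcategory of $\T$, so its arrow spaces are those of $\T$ and inherit the norm, involution and $C^*$-identity; hence $\T^t$ is a $C^*$-category with no further work. Closure under conjugation is immediate from Proposition \ref{subconj}, which asserts that $\overline u$ is torsion whenever $u$ is. For subobjects I would argue as follows: given $u\in\T^t$ and a projection $E\in(u,u)$, use that $\T$ has subobjects to obtain $w\in\T$ and an isometry $S\in(w,u)$ with $SS^*=E$; since $w$ is a subobject of the torsion object $u$, it is again torsion by Proposition \ref{subconj}, so $w\in\T^t$ and the splitting takes place inside $\T^t$.

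Now assume that every finite direct sum of irreducible torsion objects is torsion. Closure of $\T^t$ under direct sums is then immediate: decomposing $u,v\in\T^t$ into their irreducible subobjects---each torsion by Proposition \ref{subconj}---exhibits $u\oplus v$ as a finite direct sum of irreducible torsion objects, hence torsion. For tensor products I would first reduce to irreducible factors: writing $u=\bigoplus_i u_i$ and $v=\bigoplus_j v_j$ with all $u_i,v_j$ irreducible and torsion, one has $uv=\bigoplus_{i,j}u_iv_j$, so once each $u_iv_j$ is shown to be torsion, splitting it into irreducibles via Proposition \ref{subconj} exhibits $uv$ as a finite direct sum of irreducible torsion objects, hence torsion by hypothesis. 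Thus it suffices to treat the case of irreducible torsion factors.

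The heart of the matter is this last step, and here the direct-sum hypothesis is used to build a single finite tensor category containing both factors. Fix irreducible torsion $u_i,v_j$, let $w_u$ and $w_v$ be the direct sums of complete sets of irreducibles of the finite categories $\T_{u_i}$ and $\T_{v_j}$, and set $w:=w_u\oplus w_v$. Each summand of $w$ is an irreducible torsion object, so $w$ is torsion by hypothesis and $\T_w$ has finitely many irreducibles. Since $w_u,w_v$ are direct summands of $w$ and $\T_w$ has subobjects, the objects $u_i$ and $v_j$---being subobjects of $w_u$ and $w_v$ respectively---lie in $\T_w$; as $\T_w$ is a tensor category, $u_iv_j\in\T_w$, and therefore $\T_{u_iv_j}\subseteq\T_w$ has finitely many irreducibles, i.e.\ $u_iv_j$ is torsion. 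The main obstacle is precisely that tensor products of torsion objects need not be torsion in general (as the Burnside examples show); the argument succeeds only because passing through the direct sum $w$ lets the hypothesis absorb $u_i$ and $v_j$ into one finitely generated torsion category $\T_w$ whose tensor structure then forces $u_iv_j$ to be torsion.
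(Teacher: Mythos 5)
Your proof is correct and follows essentially the same strategy as the paper's: the direct-sum hypothesis makes a suitable direct sum of irreducible torsion objects torsion, and the tensor product is then trapped inside the finite tensor category that this direct sum generates. The paper's version is just more economical---it skips both your reduction to irreducible factors and your enlarged object $w$, noting directly that $u\oplus v$ is torsion and that $uv$ is a subobject of $(u\oplus v)\otimes(u\oplus v)$, hence lies in the finite category $\T_{u\oplus v}$---but that is a streamlining of the same idea, not a different argument.
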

\begin{proof}
By Proposition \ref{subconj}, $\T^t$ is a $C^*$-category with conjugates and subobjects. Moreover, every torsion object is a direct sum of irreducible subobjects, which are torsion, hence finite direct sums of torsion objects are torsion. Moreover, the tensor product of two torsion objects is a subobject of the tensor square of the direct sum, hence it is torsion.
\end{proof}

The following are sufficient conditions.
\begin{prop}\label{commutativetorsion}
Assume that either
\begin{itemize}
\item[{\rm a)}]
the  objects of $\T^t$ commute up to equivalence, or
\item[{\rm b)}]
$\T^t$ has finitely many inequivalent irreducible representations and is closed under finite tensor products of them.
\end{itemize}
Then $\T^t$  is a tensor $C^*$-category with conjugates, subobjects and  direct sums.
\end{prop}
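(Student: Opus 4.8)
The plan is to base both cases on the criterion supplied by the preceding Proposition: once we know that \emph{every finite direct sum of irreducible torsion objects is again torsion}, that result immediately yields closure of $\T^t$ under tensor products and direct sums, while closure under conjugates and subobjects is already guaranteed by Proposition \ref{subconj}. Thus in each case the whole task reduces to verifying this single closure property for direct sums.

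For case (a), I would argue as follows. Let $u_1,\dots,u_k$ be irreducible torsion objects; each generates a full tensor $C^*$-subcategory $\T_{u_i}$ (with conjugates, subobjects and direct sums) having a \emph{finite} set $F_i$ of irreducible objects. The subcategory generated by $u_1\oplus\cdots\oplus u_k$ has as its irreducibles precisely the irreducible subobjects of the words (iterated tensor products) in the alphabet $\{u_i,\overline{u_i}: 1\le i\le k\}$. Since conjugates of torsion objects are torsion (Proposition \ref{subconj}) and, by hypothesis, torsion objects commute up to equivalence, any such word can be sorted, up to equivalence, into a product $w_1 w_2\cdots w_k$, where $w_i$ is a word in $u_i,\overline{u_i}$; hence $w_i\in\T_{u_i}$ and decomposes into objects of $F_i$. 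Consequently every irreducible subobject of the original word is an irreducible subobject of some product $a_1 a_2\cdots a_k$ with $a_i\in F_i$. There are only finitely many such products, and each decomposes into finitely many irreducibles by semisimplicity; hence only finitely many irreducibles occur, so $u_1\oplus\cdots\oplus u_k$ is torsion.

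For case (b), let $u_1,\dots,u_n$ be the finitely many irreducible torsion objects. By Proposition \ref{subconj} this set is closed under conjugation, and by hypothesis it is closed under tensor products; since $\T^t$ has no other irreducibles, every $u_iu_j$ and every $\overline{u_i}$ decomposes into the $u_\ell$. Hence any finite direct sum of torsion objects is a direct sum of the $u_\ell$, and the subcategory it generates has at most $n$ irreducibles and is therefore torsion; the preceding Proposition then applies (alternatively, closure of $\T^t$ under tensor products and direct sums is visible directly from these two observations). The only genuinely delicate point is the commutation/sorting step in case (a): one must check that commutativity up to equivalence, applied repeatedly, does rearrange an arbitrary word so that all factors built from a single $u_i$ are grouped together, and that the resulting bound on the number of irreducibles is uniform over all words.
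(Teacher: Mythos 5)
Your proof is correct and takes essentially the same route as the paper: in case (a) both arguments use commutativity up to equivalence to sort tensor words and bound the resulting irreducibles by the finite set of products $a_1\cdots a_k$ with $a_i\in F_i$ (the paper does this for a pair of torsion objects $u,v$, sorting $(uv)^n(\overline{v}\,\overline{u})^m\simeq u^n\overline{u}^m v^n\overline{v}^m$ so that everything decomposes inside $F_uF_v$), and case (b) is immediate in both. Your explicit reduction through the preceding Proposition to finite direct sums of irreducibles, and your sorting of arbitrary words by adjacent transpositions (tensoring each commuting equivalence with identity arrows), are sound and simply make explicit the steps the paper compresses into its ``similarly'' and ``b) is immediate.''
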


 \begin{proof}
a)
Let $u$ and $v$ be torsion objects of $\T$ and let $F_u$ and $F_v$ be the finite sets of irreducible representations appearing in the full tensor $C^*$-subcategories generated by $u$ and $v$ respectively. By the commutativity assumption, the full tensor subcategory with conjugates generated by $u v$ has, as objects, the set of all $(uv)^n(\overline{v}\,\overline{u})^m\simeq u^n\overline{u}^mv^n\overline{v}^m$, $m,n=0,1,2,\dots,$ which decompose as a direct sum of elements in $F_uF_v$, which is a finite set of possibly reducible objects, in turn decomposing into direct sums of finitely many irreducible objects. This shows that $uv$ is a torsion object. Similarly, the full tensor subcategory generated by $u\oplus v$ has, as objects, the set of all $(u\oplus v)^n(\overline{u}\oplus\overline{v})^m$ which are direct sums of objects of the form $u^r\overline{u}^{s}v^{n-r}\overline{v}^{m-s}$, whose addenda still lie in $F_u F_v$.
b) is immediate.
\end{proof}
\begin{rem}
The commutativity requirement of a) can be weakened to the requirement that $uv$ and $vu$ are  quasi-equivalent (i.e., are supported on the same set of irreducible representations)  for any pair of torsion objects $u$, $v$.
\end{rem}

\subsection{A normal sequence converging to $G^{\rm n}$}\label{torsiondegsubsect}\
\medskip

\noindent
In the remainder of this section we assume, unless otherwise stated, that all tensor subcategories of the representation category of a given compact quantum group, are full, with conjugates, subobjects and direct sums.

Let $G$ be a compact quantum group. We construct a possibly transfinite decreasing sequence of normal quantum subgroups of $G$, having $G^{\rm n}$ as a limit group. We use it to derive a characterization of normality of $G^\circ$. We also exhibit a class of examples for which $G^\circ$ is not normal.

First of all, notice that every full tensor subcategory of $\Rep(G)$ with conjugates, subobjects and direct sums is uniquely determined by the set of its objects\footnote{We are implicitly assuming, as in \cite{GLR},  that all objects belong to a given universe.}. This is a unital subsemigroup of the set of objects of $\Rep(G)$ closed under the same operations. Conversely, any subsemigroup with these properties corresponds to a full tensor subcategory of $\Rep(G)$ with the required structure. As a consequence, we can consider unions and intersections of arbitrary families of such subcategories, and the result will be normal if in addition each element of the family is normal.

We recursively define by transfinite induction a family of normal tensor subcategories ${\mathcal N}_\alpha$ of $\Rep(G)$, indexed by ordinals, and a corresponding family of subgroups $G_\alpha \subset G$, as follows. Set ${\mathcal N}_0 = \langle \iota \rangle$, $G_0 = G$; if $\beta= \alpha + 1$ is a successor ordinal, and $G_\alpha$ is defined, let ${\mathcal N}_{\beta}$ be the smallest normal tensor subcategory of $\Rep(G)$ containing all the irreducible representations $v\in\Rep(G)$ such that $v\upharpoonright_{G_{\alpha}}$ contains a torsion representation of $G_{\alpha}$. If instead $\beta$ is a limit ordinal, and $G_\alpha$ is defined for all $\alpha<\beta$, we set ${\mathcal N}_\beta:=\cup_{\alpha<\beta}{\mathcal N}_\alpha$. In both cases, we set $G_\beta$ to be the normal quantum subgroup of $G$ such that
$${\mathcal N}_\beta=\Rep(G_\beta\backslash G).$$

For instance, ${\mathcal N}_1$ is the smallest normal tensor subcategory of $\Rep(G)$ containing $\Rep(G)^t$, i.e., the class of objects of ${\mathcal N}_1$ is the intersection of the class of objects of all normal tensor subcategories of $\Rep(G)$ containing $\Rep(G)^t$. Since the torsion subcategory $\Rep(G)^t$ is not tensorial, and we have no reason to believe that it is normal, ${\mathcal N}_1$ will be in general strictly larger than $\Rep(G)^t$.

Notice that ${\mathcal N}_\alpha \subset {\mathcal N}_{\alpha+1}$, as all irreducible objects lying in ${\mathcal N}_\alpha = \Rep(G_\alpha\backslash G)$ have a trivial restriction to $G_\alpha$, and are therefore torsion in $\Rep(G_\alpha)$; similarly, ${\mathcal N}_\beta$, when $\beta$ is a limit ordinal, is by definition larger than all ${\mathcal N}_\alpha, \alpha < \beta$. We may conclude that ${\mathcal N}_\alpha \subset {\mathcal N}_\beta$, and consequently $G_\alpha \supset G_\beta$, whenever $\alpha < \beta$.

Note that $G_\delta=G_{\delta+1}$ if and only if ${\mathcal N}_\delta={\mathcal N}_{\delta+1}$, and, if this is the case, the  sequences stabilize, i.e., $G_\alpha=G_\delta$ and ${\mathcal N}_\alpha={\mathcal N}_\delta$ for $\alpha\geq \delta$.
On the other hand, ${\mathcal N}_\alpha$, and hence $G_\alpha$, must stabilize for cardinality considerations.
\begin{defn}
The smallest ordinal $\delta$ such that $G_\delta=G_{\delta+1}$ will be called the {\em (normal) torsion degree} of $G$.
\end{defn}
For example, if the Hopf $C^*$-algebra $Q_G$ is separable then the torsion degree of $G$ is a countable ordinal. The following motivating example shows that the torsion degree of a cocommutative quantum group $G=C^*(\Gamma)$ cannot exceed the first infinite ordinal, regardless of the cardinality of $\Gamma$.
\begin{ex}\label{atmostomega}
If $G=C^*(\Gamma)$, our construction reduces to the following construction of \cite{Chiodo}. Set $N_1=\langle\Gamma^t\rangle$ and let $N_r, r>1,$ be the (normal) subgroup generated by elements $g\in\Gamma$ for which $g^n\in N_{r-1}$ for some $n>0$. Then ${\mathcal N}_r$ is the normal subcategory associated to $N_r$.

It is easy to see that $\cup_{r\geq 1} N_r$ is a normal subgroup of $\Gamma$ contained in the torsion-free radical $\Lambda^\circ$ as defined in \cite{BH}; also see Example \ref{exconnectedcomponent} above. Moreover, it is easy to check that $\cup N_r\backslash\Gamma$ is torsion free, hence $\cup N_r=\Lambda^\circ$, so $\cup_r{\mathcal N}_r=\Rep(\go\backslash G)$. Hence $\go= G_\omega$ is determined by the limit of the sequence.

If we apply the construction of $\Lambda^\circ$ to  $\Lambda^\circ$ itself, we obtain $\Lambda^\circ$ again, since $\Gamma$ and $\Lambda^\circ$ have the same torsion part. Hence  $\go\backslash G$ is totally disconnected also in the  case where $\Gamma^t$ is not a subgroup.
\end{ex}

Our next aim is to determine the limit of the sequence $G_\alpha$  in the general case.
\begin{lemma}\label{lemma} For each ordinal $\alpha$, ${\mathcal N}_\alpha\subset \Rep(G^{\rm n}\backslash G)$, hence $G^{\rm n}\subset G_\alpha$. \end{lemma}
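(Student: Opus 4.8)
The plan is to argue by transfinite induction on $\alpha$, proving the inclusion ${\mathcal N}_\alpha\subset\Rep(G^{\rm n}\backslash G)$; the equivalent statement $G^{\rm n}\subset G_\alpha$ then follows from the order-reversing correspondence between normal quantum subgroups and their associated normal tensor subcategories given by Theorem \ref{carattnormal} (a larger subgroup kills more representations, hence yields a smaller quotient category). First I would record the description used throughout: since $G^{\rm n}$ is normal, $\Rep(G^{\rm n}\backslash G)$ is a \emph{normal} full tensor subcategory of $\Rep(G)$, and by Proposition \ref{normal} it consists precisely of those representations whose restriction to $G^{\rm n}$ is a multiple of the trivial representation. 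The base case $\alpha=0$ is immediate, as ${\mathcal N}_0=\langle\iota\rangle$ and the trivial representation restricts trivially; the limit case is equally immediate, since ${\mathcal N}_\beta=\cup_{\alpha<\beta}{\mathcal N}_\alpha$ is a union of subcategories each contained in $\Rep(G^{\rm n}\backslash G)$ by the inductive hypothesis.

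The heart of the argument is the successor step. Assume ${\mathcal N}_\alpha\subset\Rep(G^{\rm n}\backslash G)$, equivalently $G^{\rm n}\subset G_\alpha$, so that $G^{\rm n}$ is a \emph{connected} quantum subgroup of $G_\alpha$. Because ${\mathcal N}_{\alpha+1}$ is by definition the smallest normal tensor subcategory containing its prescribed generators, and $\Rep(G^{\rm n}\backslash G)$ is itself a normal tensor subcategory, it suffices to verify that each generator lies in $\Rep(G^{\rm n}\backslash G)$. So let $v\in\Rep(G)$ be irreducible such that $v\upharpoonright_{G_\alpha}$ contains a nonzero torsion representation $w$ of $G_\alpha$; I must show that $v\upharpoonright_{G^{\rm n}}$ is a multiple of the trivial representation. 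Applying Corollary \ref{restrictingtorsiontosubgroup} to the connected subgroup $G^{\rm n}$ of $G_\alpha$, the restriction $w\upharpoonright_{G^{\rm n}}$ is a multiple of the trivial representation, and it is nonzero since $w$ is. By transitivity of restriction, $w\upharpoonright_{G^{\rm n}}$ is a subrepresentation of $(v\upharpoonright_{G_\alpha})\upharpoonright_{G^{\rm n}}=v\upharpoonright_{G^{\rm n}}$, so $v\upharpoonright_{G^{\rm n}}$ contains a nonzero invariant vector.

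The final, and crucial, move is to upgrade this to the full conclusion using normality of $G^{\rm n}$: since $v$ is irreducible and $G^{\rm n}$ is normal in $G$, Proposition \ref{normal}c forces $v\upharpoonright_{G^{\rm n}}$ to be a multiple of the trivial representation, i.e. $v\in\Rep(G^{\rm n}\backslash G)$. This completes the successor step, and hence the induction. The main obstacle I anticipate is precisely this last step: knowing only that a torsion piece collapses on the connected subgroup $G^{\rm n}$ yields merely an invariant vector inside $v\upharpoonright_{G^{\rm n}}$, and it is the normality of $G^{\rm n}$, through Proposition \ref{normal}c, that is indispensable in passing from "contains an invariant vector" to "is entirely trivial." One must also be careful to invoke the inductive hypothesis in the sharpened form $G^{\rm n}\subset G_\alpha$, so that $G^{\rm n}$ is genuinely a connected quantum subgroup of $G_\alpha$ and Corollary \ref{restrictingtorsiontosubgroup} applies with $G_\alpha$, rather than $G$, as the ambient group.
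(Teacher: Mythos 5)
Your proof is correct and follows essentially the same route as the paper's: transfinite induction where the successor step restricts the torsion piece of $v\upharpoonright_{G_\alpha}$ to the connected subgroup $G^{\rm n}\subset G_\alpha$ to produce an invariant vector, then uses normality of $G^{\rm n}$ (Proposition \ref{normal}c) to upgrade this to full triviality of $v\upharpoonright_{G^{\rm n}}$, and finally uses normality of the subcategory $\Rep(G^{\rm n}\backslash G)$ to absorb all of ${\mathcal N}_{\alpha+1}$. Your identification of the normality step as the crucial upgrade from ``has an invariant vector'' to ``is a multiple of the trivial representation'' matches the paper exactly.
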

\begin{proof}
Every torsion representation of $G$ is trivial on $\go$, and hence also on $G^{\rm n}$.
We therefore have
an inclusion of full subcategories of $\Rep(G)$,
$$\Rep(G)^t\subset\Rep(G^{\rm n}\backslash G).$$
Hence
$$\Rep(G_1\backslash G)={\mathcal N}_1\subset\Rep(G^{\rm n}\backslash G),$$
since $ \Rep(G^{\rm n}\backslash G)$ is normal. As before, we obtain
 $G^{\rm n}\subset G_1.$
An inclusion $G^{\rm n}\subset G_\alpha$ implies that if $v$ is an irreducible   of $G$ such that $v\upharpoonright_{G_\alpha}$ contains a torsion representation of $G_\alpha$ then $v\upharpoonright_{G^{\rm n}}$ has invariant vectors since $G^{\rm n}$ is connected.
But then $v\in \Rep(G^{\rm n}\backslash G)$ by normality of $G^{\rm n}$.
Hence, ${\mathcal N}_{\alpha+1}\subset\Rep(G^{\rm n}\backslash G)$ implying $G^{\rm n}\subset G_{\alpha+1}$. If $\alpha$ is a limit ordinal and if $G^{\rm n}\subset G_\beta$ for $\beta<\alpha$, then
${\mathcal N}_\alpha=\cup_{\beta<\alpha}{\mathcal N}_\beta\subset\Rep(G^{\rm n}\backslash G),$ hence $G^{\rm n}\subset G_\alpha.$
\end{proof}
\begin{thm}\label{ntd} Let $G$ be a compact quantum group. The torsion degree of $G$   is the smallest ordinal  $\delta$ such that $G_\delta$ is connected. Moreover, $G_\delta=G^{\rm n}$ and ${\mathcal N}_\delta=\Rep(G^{\rm n}\backslash G)$. In particular,
${\mathcal N}_\delta=\Rep(G)$ if and only if
$G^{\rm n}$ is the trivial group.
\end{thm}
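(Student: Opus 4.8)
The plan is to derive the whole statement from a single equivalence: for every ordinal $\alpha$, the sequence is stationary at $\alpha$, i.e. $G_\alpha = G_{\alpha+1}$, if and only if $G_\alpha$ is connected. Granting this, the first clause is immediate, for the torsion degree is by definition the least $\delta$ with $G_\delta = G_{\delta+1}$, and the equivalence makes this least ordinal coincide with the least $\delta$ for which $G_\delta$ is connected (existence of both being guaranteed by the cardinality argument preceding the definition). The remaining identities $G_\delta = G^{\rm n}$ and $\mathcal{N}_\delta = \Rep(G^{\rm n}\backslash G)$ will then follow by combining this with Lemma \ref{lemma} and the maximality of $G^{\rm n}$.

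To prove the equivalence I would use throughout the description of $\mathcal{N}_\delta = \Rep(G_\delta\backslash G)$ as the full subcategory of those representations of $G$ whose restriction to $G_\delta$ is a multiple of the trivial representation. First suppose $G_\delta$ is connected. Then the only irreducible torsion object of $\Rep(G_\delta)$ is the trivial one, so for an irreducible $v \in \Rep(G)$ the restriction $v\upharpoonright_{G_\delta}$ contains a torsion representation of $G_\delta$ precisely when it contains non-zero invariant vectors; since $G_\delta$ is normal, Proposition \ref{normal} then forces $v\upharpoonright_{G_\delta}$ to be a multiple of the trivial representation, i.e. $v \in \mathcal{N}_\delta$. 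As $\mathcal{N}_\delta$ is already normal, the smallest normal tensor subcategory generated by such $v$ is $\mathcal{N}_\delta$ itself, whence $\mathcal{N}_{\delta+1} = \mathcal{N}_\delta$ and $G_{\delta+1} = G_\delta$.

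For the converse, suppose $G_\delta$ is not connected, so $\Rep(G_\delta)$ contains a non-trivial irreducible torsion object $w$. Writing $w$ as a subobject of $u\upharpoonright_{G_\delta}$ for some irreducible $u \in \Rep(G)$, the representation $u$ lies among the generators of $\mathcal{N}_{\delta+1}$. Were $u$ already in $\mathcal{N}_\delta$, its restriction $u\upharpoonright_{G_\delta}$ would be a multiple of the trivial representation, forcing its subobject $w$ to be trivial, a contradiction. Hence $u \in \mathcal{N}_{\delta+1}\setminus\mathcal{N}_\delta$, so $\mathcal{N}_\delta \subsetneq \mathcal{N}_{\delta+1}$ and $G_\delta \neq G_{\delta+1}$, completing the equivalence.

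It then remains to identify the stable value. The first clause gives that $G_\delta$ is connected; it is also normal by construction, hence a connected normal quantum subgroup of $G$, and maximality of $G^{\rm n}$ yields $G_\delta \subset G^{\rm n}$, while the reverse inclusion is exactly Lemma \ref{lemma}. Thus $G_\delta = G^{\rm n}$ and $\mathcal{N}_\delta = \Rep(G_\delta\backslash G) = \Rep(G^{\rm n}\backslash G)$. Finally, $\mathcal{N}_\delta = \Rep(G)$ means every representation of $G$ is trivial on $G^{\rm n}$; since every irreducible of $G^{\rm n}$ embeds in the restriction of some representation of $G$, this holds exactly when $\Rep(G^{\rm n})$ consists only of multiples of the trivial representation, that is, when $G^{\rm n}$ is trivial. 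I expect the delicate point to be the equivalence in the second and third paragraphs, and within it the passage from ``$v\upharpoonright_{G_\delta}$ contains invariant vectors'' to ``$v\upharpoonright_{G_\delta}$ is a multiple of the trivial representation'', which genuinely requires the normality of $G_\delta$ via Proposition \ref{normal}; the subsequent identifications are formal.
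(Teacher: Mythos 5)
Your proposal is correct and takes essentially the same approach as the paper: the analysis of the generators of ${\mathcal N}_{\delta+1}$ at a stationary ordinal (your third paragraph is the paper's converse argument in contrapositive form), together with Lemma \ref{lemma} and the maximality of $G^{\rm n}$, are exactly the paper's ingredients. The only cosmetic difference is that you package everything as a per-ordinal equivalence, proving ``connected $\Rightarrow$ stationary'' via Proposition \ref{normal}c, whereas the paper obtains that direction in one stroke from $G^{\rm n}\subset G_\alpha$ and maximality, which yields $G_\alpha=G^{\rm n}$ and hence global stabilization immediately.
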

\begin{proof}
We know that $G^{\rm n}\subset G_\alpha$ for all ordinals $\alpha$. If some $G_\alpha$ is connected then $G_\alpha=G^{\rm n}$ since $G^{\rm n}$ contains every normal connected quantum subgroup of $G$. Therefore the  sequences stabilize for $\beta\geq \alpha$.

Conversely, let us assume that $G$ has torsion degree $\delta$. We show that $G_\delta$ is connected. Let $v$ be an irreducible of $G$ such that $v\upharpoonright_{G_\delta}$ contains a torsion representation of $G_\delta$. Then $v\in{\mathcal N}_{\delta+1}={\mathcal N}_\delta=\Rep(G_\delta\backslash G)$. Hence, $v\upharpoonright_{G_\delta}$ is a multiple of the trivial representation. This shows that $\Rep(G_\delta)$ is torsion free, i.e., $G_\delta$ is connected, or equivalently $G_\delta \subset G^{\rm n}$. The remaining statements follow easily.
\end{proof}

\begin{cor} If $\go$ is normal and if $G$ has torsion degree $\delta$ then $G_\delta=\go$ and ${\mathcal N}_\delta=\Rep(\go\backslash G)$.\end{cor}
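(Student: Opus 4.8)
The plan is to reduce the statement directly to Theorem \ref{ntd} by observing that the normality hypothesis forces the two distinguished connected subgroups $\go$ and $G^{\rm n}$ to coincide. Recall that $G^{\rm n}$ was defined as the largest connected \emph{normal} quantum subgroup of $G$, and that one always has the inclusion $G^{\rm n}\subset\go$. First I would note that, once $\go$ is assumed normal, it is a connected normal quantum subgroup of $G$, and is therefore contained in the maximal such subgroup, giving $\go\subset G^{\rm n}$. Combined with the always-valid reverse inclusion $G^{\rm n}\subset\go$, this yields $\go=G^{\rm n}$. This is precisely the elementary observation recorded earlier, namely that $\go=G^{\rm n}$ exactly when $\go$ is normal; here we only need the direction in which normality of $\go$ produces the equality, and this rests purely on the maximality properties defining the two subgroups.

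With the identification $\go=G^{\rm n}$ in hand, the conclusion is immediate. Theorem \ref{ntd} asserts, for the torsion degree $\delta$ of $G$, that $G_\delta=G^{\rm n}$ and ${\mathcal N}_\delta=\Rep(G^{\rm n}\backslash G)$. Substituting $G^{\rm n}=\go$ gives at once $G_\delta=\go$ and ${\mathcal N}_\delta=\Rep(\go\backslash G)$, as required. I do not expect any genuine obstacle: the entire substantive content of the corollary is carried by Theorem \ref{ntd}, and the only additional point is the trivial equality $\go=G^{\rm n}$ under the normality assumption, so the proof amounts to a one-line specialization of the theorem.
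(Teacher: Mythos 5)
Your proof is correct and follows exactly the route the paper intends: the corollary is stated without proof as an immediate consequence of Theorem \ref{ntd} together with the earlier remark that $\go = G^{\rm n}$ if and only if $\go$ is normal, which is precisely the maximality argument you spell out. Nothing is missing.
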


\begin{cor}\label{necsuff}
Let $G$ be a compact quantum group. Then $G^\circ$ is normal if and only if for every ordinal $\alpha$, every representation of ${\mathcal N}_\alpha$ restricts to a multiple of the trivial representation of $G^\circ$.
\end{cor}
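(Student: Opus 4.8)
The plan is to prove both implications using the equivalence ``$\go$ normal $\iff\go=G^{\rm n}$'' together with Lemma \ref{lemma}, Theorem \ref{ntd} and the normality criterion of Proposition \ref{normal}c. It is convenient to first note that, since $\mathcal N_\alpha\subset\mathcal N_\beta$ for $\alpha<\beta$ and the sequence stabilizes at the torsion degree $\delta$ with $\mathcal N_\delta=\Rep(G^{\rm n}\backslash G)$ by Theorem \ref{ntd}, the condition in the statement is equivalent to the single requirement that every object of $\Rep(G^{\rm n}\backslash G)$ restrict to a multiple of the trivial representation of $\go$.

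For the forward implication I would argue as follows. If $\go$ is normal then $\go=G^{\rm n}$, and Lemma \ref{lemma} gives $\mathcal N_\alpha\subset\Rep(G^{\rm n}\backslash G)$ for every ordinal $\alpha$. Any object $v\in\Rep(G^{\rm n}\backslash G)$ is trivial on $G^{\rm n}$, i.e. $v\upharpoonright_{G^{\rm n}}$ is a multiple of $\iota$; since $G^{\rm n}=\go$, the same holds for $v\upharpoonright_{\go}$. Hence every representation of each $\mathcal N_\alpha$ restricts to a multiple of the trivial representation of $\go$.

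For the reverse implication I would verify directly that $\go$ satisfies Proposition \ref{normal}c. Let $v$ be irreducible in $\Rep(G)$ with $v\upharpoonright_{\go}$ admitting a non-trivial invariant vector. Because $G^{\rm n}\subset\go$, restriction is transitive and faithful on arrows, so $(\iota,v\upharpoonright_{\go})\subset(\iota,v\upharpoonright_{G^{\rm n}})$; thus $v\upharpoonright_{G^{\rm n}}$ also has a non-trivial invariant vector. As $G^{\rm n}$ is normal, Proposition \ref{normal}c forces $v\upharpoonright_{G^{\rm n}}$ to be a multiple of $\iota$, whence $v\in\Rep(G^{\rm n}\backslash G)=\mathcal N_\delta$. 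Applying the hypothesis to $\mathcal N_\delta$ then shows that $v\upharpoonright_{\go}$ is a multiple of the trivial representation, which is precisely the content of Proposition \ref{normal}c for $\go$.

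The bulk of the work is already carried by Theorem \ref{ntd} and Lemma \ref{lemma}, so the proof is short. The step that requires genuine care---and which I expect to be the only real obstacle---is the reduction in the reverse direction from the possibly non-normal subgroup $\go$ to the normal subgroup $G^{\rm n}$: one must observe that an invariant vector for $v\upharpoonright_{\go}$ descends to an invariant vector for the restriction to the smaller, normal subgroup $G^{\rm n}$, which is exactly what allows Proposition \ref{normal}c to be invoked and thereby places $v$ inside $\mathcal N_\delta$. Once this membership is secured, the hypothesis immediately yields triviality of $v\upharpoonright_{\go}$ and the normality criterion is fulfilled.
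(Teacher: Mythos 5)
Your proof is correct and takes essentially the same route as the paper: the forward direction is Lemma \ref{lemma} combined with $\go=G^{\rm n}$, and the reverse direction rests, exactly as in the paper, on normality of $G^{\rm n}$ and on Theorem \ref{ntd} identifying ${\mathcal N}_\delta$ with $\Rep(G^{\rm n}\backslash G)$, via the chain ``invariant vector for $\go$ $\Rightarrow$ invariant vector for $G^{\rm n}$ $\Rightarrow$ triviality on $G^{\rm n}$ $\Rightarrow$ membership in ${\mathcal N}_\delta$ $\Rightarrow$ triviality on $\go$''. The only cosmetic difference is that you verify condition c) of Proposition \ref{normal} directly for $\go$, whereas the paper packages the same invariant-vector comparison as the inclusion $\go\subset G^{\rm n}$.
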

\begin{proof}
If the statement holds for all ordinals, it certainly holds for the torsion degree $\delta$. Therefore, all representations of ${\mathcal N}_\delta$ restrict to some multiple of the trivial representation of $G^\circ$. We argue that there are more $G^\circ$-invariant vectors than $G^{\rm n}$-invariant vectors in the irreducible representations of $G$, by normality of $G^{\rm n}$. It follows that $G^\circ\subset G^{\rm n}$, hence equality holds.
The converse follows from  Lemma \ref{lemma}.
\end{proof}

The next proposition determines the sequences   associated to $G^{\rm n}\backslash G$. We shall identify $\Rep(G^{\rm n}\backslash G)$ with a full tensor subcategory of $\Rep(G)$ in the natural way. We omit the proof.

\begin{prop}
The quantum groups  $G$ and $G^{\rm n}\backslash G$ have the same associated sequences
of normal subcategories of $\Rep(G)$ and hence the same torsion degree. Moreover,
for every ordinal $\alpha$,  $(G^{\rm n}\backslash G)_\alpha=G^{\rm n}\backslash G_\alpha$. In particular,  $(G^{\rm n}\backslash G)^{\rm n}$ is the trivial group.
\end{prop}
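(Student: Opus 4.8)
The plan is to run a transfinite induction identifying the sequence attached to $H:=G^{\rm n}\backslash G$ with the one attached to $G$, and then to read off the last assertion from Theorem~\ref{ntd} applied to $H$. Throughout I identify $\Rep(H)=\Rep(G^{\rm n}\backslash G)$ with the full tensor subcategory $\mathcal N_\delta$ of $\Rep(G)$, where $\delta$ is the torsion degree of $G$, and I write $(\tilde H_\alpha,\mathcal M_\alpha)$ for the sequence attached to $H$, so that $\mathcal M_\alpha=\Rep(\tilde H_\alpha\backslash H)$.

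The first ingredient is a quantum \emph{third isomorphism theorem}. By Lemma~\ref{lemma} one has $G^{\rm n}\subseteq G_\alpha$ for every $\alpha$, with $G^{\rm n},G_\alpha$ both normal in $G$; hence $G^{\rm n}$ is normal in $G_\alpha$ and $K_\alpha:=G^{\rm n}\backslash G_\alpha$ is a normal quantum subgroup of $H$. I would check, directly from Theorem~\ref{carattnormal} and the description of the fixed-point algebras, that
$$K_\alpha\backslash H=G_\alpha\backslash G,\qquad\text{i.e.}\qquad \Rep(K_\alpha\backslash H)=\mathcal N_\alpha$$
as subcategories of $\Rep(G)$: a representation of $G$ lies in $\mathcal N_\alpha$ precisely when it is trivial on $G_\alpha$, and for a representation already trivial on $G^{\rm n}$ (i.e.\ lying in $\Rep(H)$) this is equivalent to triviality on $K_\alpha$. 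In particular each $\mathcal N_\alpha$ is normal in $\Rep(H)$, and the $K_\alpha$ descend from $G$ to the trivial group, since $K_\delta=G^{\rm n}\backslash G_\delta=G^{\rm n}\backslash G^{\rm n}$ is trivial by Theorem~\ref{ntd}.

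Next I would prove by transfinite induction that $(\tilde H_\alpha,\mathcal M_\alpha)=(K_\alpha,\mathcal N_\alpha)$. The base and limit steps are immediate. For the successor step two compatibilities make the generating sets coincide. First, for irreducible $v\in\Rep(H)$ the restriction $v\upharpoonright_{K_\alpha}$ computed inside $H$ agrees with $v\upharpoonright_{G_\alpha}$ computed inside $G$ — a diagram chase with the restriction functors, using that $v$ is trivial on $G^{\rm n}\subseteq G_\alpha$, so that $v\upharpoonright_{G_\alpha}\in\Rep(K_\alpha)$. Second, since $\Rep(K_\alpha)$ is a full tensor subcategory of $\Rep(G_\alpha)$ closed under subobjects, conjugates, tensor products and direct sums, an object lying in $\Rep(K_\alpha)$ is torsion there iff it is torsion in $\Rep(G_\alpha)$. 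Hence the set $S$ of irreducibles detected at stage $\alpha+1$ is literally the same for $H$ and for $G$; here one also uses that the $G$-detected irreducibles automatically sit in $\mathcal N_{\alpha+1}\subseteq\mathcal N_\delta=\Rep(H)$, so no irreducible outside $\Rep(H)$ is ever detected. It then remains to identify the two normal closures of $S$. One inclusion is free: $\mathcal N_{\alpha+1}$, being normal in $\Rep(G)$ and contained in $\Rep(H)$, is a fortiori normal in $\Rep(H)$ and contains $S$, whence $\mathcal M_{\alpha+1}\subseteq\mathcal N_{\alpha+1}$.

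The reverse inclusion is the crux and the \textbf{main obstacle}: I must show that the smallest $\Rep(H)$-normal subcategory containing $S$ is already normal in $\Rep(G)$. This is exactly a correspondence statement for normal subgroups of the quotient $H=G^{\rm n}\backslash G$ — equivalently a transitivity of normality, which fails for arbitrary subcategories (cf.\ the subtleties recorded in Remark~\ref{totdisc3} and the Appendix). What I expect to rescue the argument is that $G_\alpha$ is normal in the whole of $G$, not merely in $H$: the detecting condition ``$v\upharpoonright_{G_\alpha}$ contains torsion'' is then governed by a $\Rep(G)$-normal datum, and in the cocommutative model this forces the generated subgroup $\langle S\rangle=N_{\alpha+1}$ to be normal already in the ambient $\Gamma$ (Example~\ref{atmostomega}), so that the $\Gamma$- and $\Lambda^\circ$-normal closures of $S$ coincide. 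Concretely I would exhibit $\mathcal N_{\alpha+1}=\Rep(K_{\alpha+1}\backslash H)$ through the third isomorphism identity and verify, via Proposition~\ref{normalcat}, that the additional normality conditions required for $u\in\Rep(G)\setminus\Rep(H)$ are forced on the relevant intertwiner spaces by the normality of $\Rep(H)$ in $\Rep(G)$. Granting this, $\mathcal M_\alpha=\mathcal N_\alpha$ for all $\alpha$, so the two sequences and their torsion degrees coincide and $\tilde H_\alpha=K_\alpha=G^{\rm n}\backslash G_\alpha$, which is the displayed identity; finally Theorem~\ref{ntd} applied to $H$ gives $(G^{\rm n}\backslash G)^{\rm n}=\tilde H_\delta=G^{\rm n}\backslash G_\delta=G^{\rm n}\backslash G^{\rm n}$, the trivial group.
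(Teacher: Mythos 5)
Your overall skeleton is the right one, and most of the reductions are sound: identifying $\Rep(G^{\rm n}\backslash G)$ with ${\mathcal N}_\delta$, observing via the argument of Lemma \ref{lemma} that every irreducible detected at stage $\alpha+1$ for $G$ already lies in $\Rep(H)$ (its restriction to the connected normal subgroup $G^{\rm n}$ kills torsion, and Proposition \ref{normal}c upgrades an invariant vector to full triviality), noting that torsion is intrinsic to full tensor subcategories closed under the relevant operations, and obtaining the inclusion ${\mathcal M}_{\alpha+1}\subseteq{\mathcal N}_{\alpha+1}$ from the fact that a $\Rep(G)$-normal subcategory contained in $\Rep(H)$ is automatically $\Rep(H)$-normal. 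Note, however, that the paper gives no proof of this proposition (it is explicitly omitted), so there is no authorial argument to fall back on, and two of your steps are not actually established. The lesser one: that $G^{\rm n}\backslash G_\alpha$ is a quantum subgroup of $H$ is not a mere ``diagram chase'' --- restricting $\pi_\alpha:\Q_G\to\Q_{G_\alpha}$ to $\Q_H$ a priori yields only the image $[G_\alpha]$ of the Appendix, which could be a proper quotient of $G^{\rm n}\backslash G_\alpha$. This is fixable: the projection $E=(h_{G^{\rm n}}\otimes\mathrm{id})\circ\lambda$ of $\Q_{G}$ onto the left $G^{\rm n}$-invariants intertwines $\pi_\alpha$ with the analogous projection of $\Q_{G_\alpha}$ (using $\Delta_{G_\alpha}\pi_\alpha=(\pi_\alpha\otimes\pi_\alpha)\Delta_G$ and compatibility of the restriction maps to $G^{\rm n}$), whence $\pi_\alpha(\Q_H)=\Q_{G^{\rm n}\backslash G_\alpha}$; but this needs saying.

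The decisive problem is the reverse inclusion ${\mathcal N}_{\alpha+1}\subseteq{\mathcal M}_{\alpha+1}$, which you correctly isolate but then leave conjectural (``What I expect to rescue the argument\dots'', ``Granting this\dots''). Moreover, the rescue you sketch --- that the extra conditions of Proposition \ref{normalcat} for irreducible $u\in\Rep(H)^\perp$ are ``forced on the relevant intertwiner spaces by the normality of $\Rep(H)$ in $\Rep(G)$'' --- cannot work as stated, because normality is not transitive: already cocommutatively, $\Lambda\trianglelefteq\Pi\trianglelefteq\Gamma$ does not give $\Lambda\trianglelefteq\Gamma$ (take $\Gamma=\Dih_4$, $\Pi$ its Klein subgroup $\Cyc_2\times\Cyc_2$, $\Lambda$ generated by a reflection). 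Normality of $\Rep(H)$ in $\Rep(G)$ only yields $(1_{\overline u}\otimes\phi\otimes 1_u)R\in H_{(\overline u v u)_{\Rep(H)}}$ for $v\in{\mathcal M}_{\alpha+1}$, which is strictly weaker than the required landing in $H_{(\overline u v u)_{{\mathcal M}_{\alpha+1}}}$. Any correct argument must exploit that the generating set $S$ is cut out $G$-equivariantly by the $G$-normal subgroup $G_\alpha$ --- in the cocommutative model this is exactly the conjugation-invariance in $\Gamma$ of $\{g:g^n\in N_\alpha\}$, which makes the $\Pi$- and $\Gamma$-normal closures coincide --- and you supply no quantum substitute: you never show that for detected $v$ and $u\in\Rep(H)^\perp$ the subobject of $\overline u v u$ spanned by the vectors $(1_{\overline u}\otimes\phi\otimes 1_u)R$ stays inside the $\Rep(H)$-normal closure of $S$, e.g.\ by relating its restriction to $G_\alpha$ back to torsion. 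As it stands the proposal is a correct reduction of the proposition to this one statement, together with an inadequate heuristic for it; the central step of the proof is missing.
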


\begin{rem}
The problem (cf. Remark \ref{totdisc3}) of whether normality of $\go$ implies that $\go\backslash G$ is totally disconnected in general, is equivalent to that of normality of the identity component of $\go\backslash G$, by the previous proposition.
\end{rem}

The following result gives some information on the ordinals that can possibly arise as values of the torsion degree of a compact quantum group. It is a  generalization of the cocommutative case.

\begin{prop}\label{tdcocommutativesubgroup}
Let $G$ be a compact quantum group. If $G_\alpha$ is cocommutative for some ordinal $\alpha$, then so is $G_\beta$ for every $\beta>\alpha$, and the torsion degree of G is $\leq \alpha + \omega$. In particular, $G^{\rm n}$ is cocommutative.
\end{prop}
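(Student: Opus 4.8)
The plan is to exploit the fact, recalled before Proposition~\ref{normal} and in the Example on discrete groups, that a quantum subgroup of a cocommutative quantum group is cocommutative (restricting a one-dimensional irreducible keeps it one-dimensional). Since the sequence is decreasing, $G_\beta\subset G_\alpha$ for every $\beta>\alpha$; as $G_\alpha$ is cocommutative, each $G_\beta$ is then cocommutative. This already settles the first assertion. Having done this, I would fix a presentation $G_\alpha=C^*(\Gamma_\alpha)$ (up to the choice of norm completion) of the cocommutative group $G_\alpha$, and describe the tail of the sequence inside it: every $G_{\alpha+n}$ is a quantum subgroup of $C^*(\Gamma_\alpha)$, hence of the form $G_{\alpha+n}=C^*(\Delta_n\backslash\Gamma_\alpha)$ for a normal subgroup $\Delta_n\triangleleft\Gamma_\alpha$, with $\Rep(G_{\alpha+n}\backslash G_\alpha)$ corresponding to $\Delta_n\subset\Gamma_\alpha$. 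Since $G_{\alpha+n}$ decreases, the $\Delta_n$ increase, and at the limit ordinal $G_{\alpha+\omega}=C^*(\Delta_\omega\backslash\Gamma_\alpha)$ with $\Delta_\omega=\bigcup_n\Delta_n$.

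\textbf{Reduction and the one-step lemma.} By Theorem~\ref{ntd} the torsion degree is the least ordinal at which the sequence becomes connected, so to get the bound $\le\alpha+\omega$ it suffices to prove that $G_{\alpha+\omega}$ is connected, i.e.\ (cocommutative case) that $\Delta_\omega\backslash\Gamma_\alpha$ is torsion-free. The engine of the argument is the following one-step lemma: \emph{every torsion irreducible $w$ of $G_{\alpha+n}$ restricts to a multiple of the trivial representation of $G_{\alpha+n+1}$.} To prove it I would use that any irreducible of the quantum subgroup $G_{\alpha+n}$ is a subobject of some $v\upharpoonright_{G_{\alpha+n}}$ with $v\in\Rep(G)$ irreducible; then $v\upharpoonright_{G_{\alpha+n}}$ contains the torsion representation $w$, so by construction $v\in{\mathcal N}_{\alpha+n+1}=\Rep(G_{\alpha+n+1}\backslash G)$, whence $v\upharpoonright_{G_{\alpha+n+1}}$ is trivial by normality of $G_{\alpha+n+1}$ (Proposition~\ref{normal}); restricting the inclusion $w<v\upharpoonright_{G_{\alpha+n}}$ further to $G_{\alpha+n+1}$ forces $w\upharpoonright_{G_{\alpha+n+1}}$ trivial. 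In group terms this says precisely that every torsion element of $\Gamma^{(n)}:=\Delta_n\backslash\Gamma_\alpha$ lifts into $\Delta_{n+1}$.

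\textbf{Passing to the limit.} Suppose $\bar h\in\Delta_\omega\backslash\Gamma_\alpha$ is torsion, say $h^k\in\Delta_\omega$ for some $k$ and $h\in\Gamma_\alpha$. Since $\Delta_\omega=\bigcup_n\Delta_n$ is a union, $h^k\in\Delta_n$ for some $n$, so the image of $h$ in $\Gamma^{(n)}$ is torsion; by the one-step lemma $h\in\Delta_{n+1}\subset\Delta_\omega$, i.e.\ $\bar h$ is trivial. Hence $\Delta_\omega\backslash\Gamma_\alpha$ is torsion-free and $G_{\alpha+\omega}$ is connected, giving torsion degree $\le\alpha+\omega$. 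Finally, $G_{\alpha+\omega}$ is connected and normal, so by Theorem~\ref{ntd} it equals $G^{\rm n}$; being a quantum subgroup of $G_\alpha$ it is cocommutative, which is the last assertion. (Conceptually this reproduces Chiodo's stabilization of Example~\ref{atmostomega}, now carried out inside $\Gamma_\alpha$.)

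\textbf{Main obstacle.} The delicate point is that the categories ${\mathcal N}_{\alpha+n+1}$ are required to be normal \emph{in} $\Rep(G)$, not merely in $\Rep(G_\alpha)$, so the subgroups $\Delta_n\subset\Gamma_\alpha$ need not coincide with the intrinsic Chiodo subgroups $N_n=N_n(\Gamma_\alpha)$; taking the normal closure relative to the larger category $\Rep(G)$ could a priori enlarge ${\mathcal N}_{\alpha+n+1}$, hence shrink $G_{\alpha+n+1}$ and enlarge $\Delta_{n+1}$. The resolution, which I would stress, is that the argument only ever uses the one-directional lemma ``torsion dies in one step'' together with the union structure of $\Delta_\omega$: enlarging $\Delta_{n+1}$ only kills more torsion, so torsion-freeness of the limit quotient is preserved. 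In particular one never needs the reverse inclusion $\Delta_n\subset N_n$ nor an exact identification with Chiodo's construction, and this is precisely what lets the same proof run verbatim above an arbitrary starting ordinal $\alpha$.
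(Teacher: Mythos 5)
Your proposal is correct and follows essentially the same route as the paper's proof: both translate the tail $G_{\alpha+n}$ into an increasing chain of normal subgroups $\Delta_n$ (the paper's $M_n$) of the dual group of $G_\alpha$ with union $\Delta_\omega$ at the limit, and both use the mechanism that a torsion representation appearing at a finite stage forces the ambient irreducible $v\in\Rep(G)$ into ${\mathcal N}_{\alpha+n+1}\subset{\mathcal N}_{\alpha+\omega}$, so that $G_{\alpha+\omega}$ is connected and Theorem \ref{ntd} gives the bound. Your explicit ``one-step lemma'' and the closing remark that one never needs $\Delta_n$ to agree with the intrinsic Chiodo subgroups $N_n(\Gamma_\alpha)$ are just a more detailed packaging of the argument the paper runs in a single pass.
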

\begin{proof}
The second assertion follows from the first and the fact that $G_\beta$ is a
quantum subgroup of $G_\alpha$ for $\beta\geq\alpha$.
If the restriction of some irreducible representation $v$ of $G$ to $G_{\alpha+\omega}$ contains a torsion representation, then the same holds for some irreducible component,
say $g$, of $v\upharpoonright_{G_\alpha}$. The sequence $G_\alpha\supset G_{\alpha+1}\supset\dots \supset G_{\alpha+\omega}$ is described by an associated sequence
$M_0\subset M_1\subset\dots \subset M_{\omega}$ of normal subgroups of the dual of $G_\alpha$ such that $M_\omega=\cup_n M_n$.
Since some power of $g$
lies $M_{\omega}$, it must lie in $M_{n}$, for some $n<\omega$. This shows that $v\in{\mathcal N}_{\alpha+n}\subseteq {\mathcal N}_{\alpha+\omega}=
\Rep(G_{\alpha+\omega}\backslash G)$, hence $v\upharpoonright_{G_{\alpha+\omega}}$ is a multiple of the trivial representation, and therefore $G_{\alpha+\omega}$ is connected.
\end{proof}

\subsection{Examples }\label{variousexamples}\
\medskip

\noindent In this subsection we give examples of compact quantum groups with  torsion degrees $\leq\omega$ or with non-normal identity component.

The quantum groups with   profinite quotient $G^{\rm n}\backslash G$   have torsion degree $\leq1$.    In particular, this includes the profinite quantum groups, by Proposition \ref{carattprofiniteness}.
They  have the simplest  sequences, ${\mathcal N}_r=\Rep(G)^t$
for all $r\geq1$.
\begin{prop}\label{tdprofiniteness} Let $G$ be a compact quantum group.
Then $\Rep(G)^t=\Rep(G^{\rm n}\backslash G)$ if and only if  $G^{\rm n}\backslash G$ is profinite. In this case,
\begin{itemize}
\item[{\rm a)}]
$\Rep(G)^t$ has tensor products and direct sums, moreover it is the inductive limit of full finite tensor $^*$-subcategories of $\Rep(G)$ with conjugates and subobjects,
\item[{\rm b)}]
$\Rep(G)^t$ is a normal subcategory of $\Rep(G)$.
\end{itemize}
\end{prop}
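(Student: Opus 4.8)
The plan is to reduce the whole statement to Proposition \ref{carattprofiniteness}, which characterises profiniteness of a compact quantum group $H$ by the condition that every object of $\Rep(H)$ be a torsion object. The conceptual bridge between the two sides of the asserted equivalence is the observation that, for an object $u$ lying in the full tensor subcategory $\Rep(G^{\rm n}\backslash G)\subset\Rep(G)$, being a torsion object of $\Rep(G)$ is equivalent to being a torsion object of $\Rep(G^{\rm n}\backslash G)$. One implication is the general fact, already recorded in the excerpt, that a torsion object of a tensor $C^*$-subcategory remains torsion in the ambient category. For the converse I would use that $\Rep(G^{\rm n}\backslash G)$ is a \emph{full} tensor subcategory closed under conjugates, subobjects and direct sums: the smallest such subcategory generated by $u$ inside $\Rep(G)$ is then already contained in $\Rep(G^{\rm n}\backslash G)$ and coincides with the one generated inside $\Rep(G^{\rm n}\backslash G)$, and by fullness its irreducibles remain irreducible there, so the finite count of irreducibles is the same.

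With this bridge in hand the equivalence is short. The inclusion $\Rep(G)^t\subset\Rep(G^{\rm n}\backslash G)$ holds unconditionally: since $G^{\rm n}$ is a connected (normal) quantum subgroup, every torsion representation of $G$ restricts to a multiple of the trivial representation of $G^{\rm n}$ by Corollary \ref{restrictingtorsiontosubgroup}, hence factors through $G^{\rm n}\backslash G$ --- this is precisely the inclusion used in the proof of Lemma \ref{lemma}. If $G^{\rm n}\backslash G$ is profinite, then by Proposition \ref{carattprofiniteness} every object of $\Rep(G^{\rm n}\backslash G)$ is torsion there, hence torsion in $\Rep(G)$, giving the reverse inclusion and thus equality. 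Conversely, if $\Rep(G)^t=\Rep(G^{\rm n}\backslash G)$, then every object of $\Rep(G^{\rm n}\backslash G)$ is torsion in $\Rep(G)$, hence torsion in $\Rep(G^{\rm n}\backslash G)$ by the converse half of the bridge, and Proposition \ref{carattprofiniteness} yields profiniteness of $G^{\rm n}\backslash G$.

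For the two supplementary claims I would argue under the equivalent hypothesis that $\Rep(G)^t=\Rep(G^{\rm n}\backslash G)$ and $G^{\rm n}\backslash G$ is profinite. Part (a) is then immediate: $\Rep(G)^t=\Rep(G^{\rm n}\backslash G)$ is the representation category of a compact quantum group, so it carries tensor products and direct sums, and by Definition \ref{profinite} profiniteness means exactly that it is the inductive limit of full finite tensor $C^*$-subcategories with conjugates and subobjects, which, $\Rep(G^{\rm n}\backslash G)$ being full in $\Rep(G)$, are in particular such subcategories of $\Rep(G)$. Part (b) follows because $G^{\rm n}$ is by construction a normal quantum subgroup of $G$, so by Theorem \ref{carattnormal} the subcategory $\Rep(G^{\rm n}\backslash G)=\Rep(G)^t$ is normal.

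The only genuinely delicate point is the bridge of the first paragraph, and within it the converse implication; everything else is a bookkeeping assembly of Proposition \ref{carattprofiniteness}, Corollary \ref{restrictingtorsiontosubgroup}, Definition \ref{profinite} and Theorem \ref{carattnormal}. I would take care to invoke fullness of the inclusion $\Rep(G^{\rm n}\backslash G)\subset\Rep(G)$ explicitly, since it is what prevents irreducibles of $\Rep(G^{\rm n}\backslash G)$ from splitting further in $\Rep(G)$ and is therefore exactly the reason the torsion counts agree.
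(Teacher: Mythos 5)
Your proof is correct and follows essentially the same route as the paper: the paper's (very terse) proof rests precisely on the equality $\Rep(G)^t=\Rep(G^{\rm n}\backslash G)^t$ --- which is your ``bridge'', using fullness and closure under conjugates and subobjects of $\Rep(G^{\rm n}\backslash G)\subset\Rep(G)$ --- combined with Proposition \ref{carattprofiniteness}. Your treatment of a) via Definition \ref{profinite} and of b) via normality of $G^{\rm n}$ and Theorem \ref{carattnormal} merely makes explicit what the paper leaves to the reader.
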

\begin{proof}
The stated characterization follows easily from the equality $\Rep(G)^t=\Rep(G^{\rm n}\backslash G)^t$ and from Proposition \ref{carattprofiniteness}.
 \end{proof}

Note that the case of torsion degree $\leq1$ also includes, by Propositions
\ref{sufficientfortotdisc} and \ref{Burnside}, the examples  from the Burnside problem recalled in Subsection \ref{totdiscqg}. We next describe examples of higher torsion degrees.
\begin{ex}\label{exChiodo}
In \cite{Chiodo} Chiodo and Vyas exhibited an  example of a discrete group $\Gamma$ dual to a cocommutative compact quantum group of torsion degree $2$.
Their methods can be generalized to show that the same holds for any free product $\Gamma$ of $(\Cyc_m*\Cyc_n)$ and $\Cyc_\infty$, with amalgamation $xy=z^p$, where $m, n$ and $p\geq 2$ are integers, and $x, y, z$ denote generators, respectively, of the three cyclic groups. Indeed, the same arguments show that $N_1$ is the normal subgroup generated by $x$ and $y$. Moreover, it is easy to see that   the quotient $N_1\backslash \Gamma$ is $\Cyc_p$.
Hence, for all $\gamma\in \Gamma$,   $\gamma^p\in N_1$, implying $N_2=\Gamma=N_r$ for all $r \geq 2$. Therefore
$$\Rep(G)^t\subset{\mathcal N}_1\subset{\mathcal N}_2=\Rep(G).$$
Both inclusions are strict. In particular,  $\go$ is the trivial group, i.e. $G$ is totally disconnected. Note that  not all irreducible representations of $G$ are torsion.
\end{ex}

We sketch, now, how to generalize Example \ref{exChiodo} to yield cocommutative compact quantum groups with torsion degree equal to any given $\alpha \leq \omega$.

\begin{ex}\label{arbitrary}
Declare a {\em pointed group} to be a pair $P = (\Gamma, g)$ where $\Gamma$ is a group and
$g \in \Gamma$ is a distinguished element. If $P = (\Gamma, g)$ and $P' = (\Gamma', g')$ are pointed groups, define their free product to be $P * P' = (\Gamma * \Gamma', g g')$; if $n \in \Nset$, and $H$ denotes the free product of $\Gamma$ and $\Gamma'$ with amalgamation\footnote{Here we are implicitly assuming that $g, g'^n$ have the same order.} $g = g'^n$, then we also set $P \circ_n P' = (H, [g'])$.

Let us now recursively define a sequence of pointed groups $P_k = (\Gamma_k, \gamma_k), k \in \Nset,$ by choosing $P_1 = (\{\pm 1\}, -1)$, and setting $P_{k+1} = (P_k * P_k) \circ_2 \Cyc_\infty$. For instance, $\Gamma_1 \simeq \Cyc_2$, and $\Gamma_2$ is as in Example \ref{exChiodo}, when $l = m = n = 2$. Torsion elements in a free product with amalgamation are those obtained by conjugating torsion elements of the free factors; it is then clear that the quotient of $\Gamma_{k+1}$ by the normal subgroup generated by its torsion elements is isomorphic to $\Gamma_k$. An easy induction shows that the compact quantum group $C^*(\Gamma_k)$ has torsion degree equal to $k$, for all $k \in \Nset$.

Notice now that each $\Gamma_k$ injects into $\Gamma_{k+1}$ by mapping to the first free factor. The limit group $\Gamma_\omega$ of this direct system is isomorphic to its quotient by the normal subgroup generated by its torsion: this prevents the torsion degree of $C^*(\Gamma_\omega)$ from being finite. However, as observed in Example \ref{atmostomega}, every cocommutative compact quantum group has torsion degree $\leq \omega$.
\end{ex}

We next discuss non-normality of the identity component. Free products between compact quantum groups have been introduced by Wang in \cite{Wang_free}.

\begin{thm}\label{non-normal}
Let $G^\circ$ be a connected compact quantum group and let $\Gamma$ be a discrete  group   and consider the free product quantum group $G=G^\circ*C^*(\Gamma)$.
\begin{itemize}
\item[{\rm a)}] If $G^\circ$   has a
non-trivial irreducible representation of dimension $>1$ and $\Gamma$ has a non-trivial element of finite order then the identity component of $G$ contains $G^\circ$ as a subgroup and  is not normal.
Irreducible objects of $\Rep(G)^t$ correspond to elements of $\Gamma^t$.
\item[{\rm b)}]
If $\Gamma$ is a torsion group  then $G^\circ$ is the identity component of $G$.
\item[{\rm c)}] If $G^\circ$ has no non-trivial irreducible representation of dimension $1$ and if $\Gamma$ is a torsion group then  $G$ has torsion degree $\leq\omega$; the subgroups $G_k$  of $G$ are central quantum  subgroups of  $G^\circ$ for all $1\leq k\leq\omega$.
  \end{itemize}
\end{thm}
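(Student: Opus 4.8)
The plan is to read everything off the fusion rules for the free product $G=G^\circ*C^*(\Gamma)$ of \cite{Wang_free}: the irreducible objects of $\Rep(G)$ are the reduced alternating words $a_1a_2\cdots a_n$ whose letters are alternately non-trivial irreducibles of $G^\circ$ and non-trivial elements of $\Gamma$ (viewed as $1$-dimensional representations), with $\overline{a_1\cdots a_n}=\overline{a_n}\cdots\overline{a_1}$ and tensor product given by concatenation with fusion in the relevant factor. The two basic tools are: (i) the restriction functor $\Rep(G)\to\Rep(G^\circ)$ attached to the natural inclusion $G^\circ\subset G$ sends each $\gamma\in\Gamma$ to the trivial representation and is the identity on $G^\circ$-letters, so a word restricts to $G^\circ$ by deleting its $\Gamma$-letters and tensoring what remains; and (ii) torsion objects restrict to a multiple of the trivial representation on every connected quantum subgroup (Corollary \ref{restrictingtorsiontosubgroup}). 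Since $G^\circ$ is connected, Proposition \ref{conncomp} gives $G^\circ\subset G^{\mathrm{id}}$, where I write $G^{\mathrm{id}}$ for the identity component of $G$; this is the first assertion of (a).

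For the torsion classification in (a): if a reduced word contains a $G^\circ$-letter it restricts to a non-trivial representation of the connected group $G^\circ$, hence by (ii) cannot be torsion; the remaining irreducibles are the single letters $\gamma\in\Gamma$, and such a $\gamma$ generates the finite category $\Rep(\langle\gamma\rangle)$ exactly when $\gamma$ has finite order, so the irreducibles of $\Rep(G)^t$ are indexed by $\Gamma^t$. For non-normality I use Corollary \ref{necsuff}: it suffices to produce one object of some ${\mathcal N}_\alpha$ that does not restrict trivially to $G^\circ$ (hence, since $G^\circ\subset G^{\mathrm{id}}$, not to $G^{\mathrm{id}}$). Fix $\gamma\in\Gamma^t$ non-trivial and an irreducible $u$ of $G^\circ$ with $\dim u>1$. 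If $u\in{\mathcal N}_1$ we are done, as $u$ restricts to itself; otherwise $u\in{\mathcal N}_1^{\perp}$, and applying normality of ${\mathcal N}_1$ (Proposition \ref{normalcat}, condition a) with $v=\gamma\in{\mathcal N}_1$ shows the non-zero subspace $1_{\overline u}\otimes H_\gamma\otimes 1_u\circ R\subset H_{(\overline u\gamma u)_{{\mathcal N}_1}}$ for $R\in(\iota,\overline u u)$. As the alternating word $\overline u\gamma u$ is irreducible, this forces $\overline u\gamma u\in{\mathcal N}_1$; its restriction to $G^\circ$ is $\overline u u$, not a multiple of the trivial representation since $\dim u>1$. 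Corollary \ref{necsuff} then yields non-normality of $G^{\mathrm{id}}$.

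For (b), assume $\Gamma$ is a torsion group, so every $\gamma$ is a torsion object. Let $K$ be any connected quantum subgroup of $G$; by (ii) each $\gamma\upharpoonright_K$ is trivial, i.e. the defining epimorphism $\pi_K\colon\Q_G\to\Q_K$ kills every $\gamma-1$. Since $\Q_G=\Q_{G^\circ}\ast\Cset\Gamma$ and $\Q_G/(\gamma-1:\gamma\in\Gamma)\cong\Q_{G^\circ}$, the map $\pi_K$ factors through $\Q_{G^\circ}$, exhibiting $K$ as a quantum subgroup of $G^\circ$. Hence every connected quantum subgroup of $G$, in particular $G^{\mathrm{id}}$, is contained in $G^\circ$; combined with $G^\circ\subset G^{\mathrm{id}}$ this gives $G^{\mathrm{id}}=G^\circ$.

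For (c), $G^{\mathrm{id}}=G^\circ$ by (b). Since $\Gamma\subset\Rep(G)^t\subset{\mathcal N}_1\subset{\mathcal N}_k$ and objects of ${\mathcal N}_k=\Rep(G_k\backslash G)$ restrict trivially to $G_k$, the factorization of (b) gives $G_k\subset G^\circ$ for every $k\ge1$. To see each $G_k$ is central in $G^\circ$, I invoke Patri's criterion (the Proposition following the definition of central subgroup): for every irreducible $u\in\Rep(G^\circ)$ the word $\overline u\gamma u$ lies in ${\mathcal N}_1\subset{\mathcal N}_k$ (by the computation in (a)), so $(\overline u\gamma u)\upharpoonright_{G_k}$ is trivial; as $\gamma\upharpoonright_{G_k}$ is also trivial, $\overline{u\upharpoonright_{G_k}}\otimes u\upharpoonright_{G_k}$ is a multiple of the trivial representation, which forces every irreducible constituent of $u\upharpoonright_{G_k}$ to be $1$-dimensional and all equal to a single character. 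Thus $G_k$ is cocommutative and central in $G^\circ$ for $1\le k\le\omega$, and $G_1$ being cocommutative, Proposition \ref{tdcocommutativesubgroup} yields torsion degree $\le1+\omega=\omega$. The main obstacle is the structural input $\overline u\gamma u\in{\mathcal N}_1$: it powers both the non-normality in (a) and the centrality in (c), and it rests on combining irreducibility of alternating words in the free product with the normality criterion of Proposition \ref{normalcat}; the hypothesis in (c) that $G^\circ$ has no non-trivial $1$-dimensional irreducible serves to identify the $1$-dimensional irreducibles of $G$ with $\Gamma$, keeping the associated cocommutative quotient under control.
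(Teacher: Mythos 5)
Your proofs of the non-normality claim in (a), of (b), and of (c) are correct and essentially the paper's own argument: the paper likewise derives $\overline u\gamma u\in{\mathcal N}_1$ from irreducibility of the alternating word (\cite[Theorem 3.10]{Wang_free}) combined with the normality criterion of Proposition \ref{normalcat}, contradicts Corollary \ref{necsuff} through the restriction $\overline u u$ to the connected factor, proves (b) by observing that the defining epimorphism of a connected subgroup kills $\Gamma$ and therefore factors through $\Q_{G^\circ}$, and in (c) obtains centrality via Patri's criterion from triviality of $(\overline u\gamma u)\upharpoonright_{G_1}$. Your genuine variations are improvements in exposition rather than in substance: the explicit case split $u\in{\mathcal N}_1$ versus $u\in{\mathcal N}_1^\perp$, and the uniform verification that every $G_k$, $1\le k\le\omega$, is central in $G^\circ$ (the paper checks this directly only for $G_1$ and then reaches the remaining $G_k$ through cocommutativity, Proposition \ref{tdcocommutativesubgroup}, and normality in $G^\circ$; your direct route is cleaner).

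There is, however, a genuine gap in your justification of the torsion classification in (a), namely the step ``if a reduced word contains a $G^\circ$-letter it restricts to a non-trivial representation of the connected group $G^\circ$.'' The restriction of a word to $G^\circ$ is the tensor product of its $G^\circ$-letters, and this can be trivial without the word reducing to a single $\Gamma$-letter: the hypothesis of (a) supplies one irreducible of dimension $>1$ but does not exclude non-trivial characters of $G^\circ$ (take $G^\circ=T\times\SU(2)$, $T$ the circle). If $\chi$ is such a character and $\gamma\in\Gamma$ a non-trivial torsion element, the alternating word $w=\chi\gamma\chi^{-1}$ is irreducible and distinct from every element of $\Gamma$, restricts to $\chi\otimes\overline{\chi}\simeq\iota$ on $G^\circ$, and its tensor powers are $\chi\gamma^k\chi^{-1}$ (or $\iota$ when $\gamma^k=e$), so $w$ is a torsion character of $G$ not coming from $\Gamma^t$. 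The cocommutative shadow of this phenomenon: for $G^\circ=T$ and $\Gamma=\Cyc_2$ one has $G=C^*(\Zset*\Cyc_2)$, whose torsion set consists of all conjugates of the involution. What is actually true is that a torsion word must have all of its $G^\circ$-letters one-dimensional --- if $a_1\otimes\cdots\otimes a_k$ is a multiple of $\iota$ with each $a_i$ irreducible, an induction using Frobenius reciprocity forces $\dim a_i=1$ for all $i$ --- so the torsion irreducibles are the conjugates of $\Gamma^t$ inside the free product of $\Gamma$ with the (torsion-free, by connectedness) character group of $G^\circ$, and they coincide with $\Gamma^t$ exactly when $G^\circ$ has no non-trivial character. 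In fairness, the paper asserts the same sentence without proof and with the same imprecision, and, as your closing remark anticipates, the hypothesis in (c) excluding non-trivial one-dimensional representations of $G^\circ$ is precisely what eliminates these extra torsion characters. None of the remaining assertions (the non-normality in (a), parts (b) and (c)) uses the classification, so the rest of your argument stands.
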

\begin{proof}
a)
By the universality property of free product quantum groups,    $G^\circ$ is a quantum subgroup of $G$,   connected by assumption, hence a subgroup of the identity component of $G$.
Let now   $u\in\Rep(G)$ and $\gamma\in\Gamma$ be as required.    Then $\overline{u}\gamma u$ is a non-trivial irreducible representation of $G$, by \cite[Theorem 3.10]{Wang_free}.
Hence, if  ${\mathcal S}$ is any
tensor subcategory of $\Rep(G)$ then either $(\overline{u}\gamma u)_{\mathcal S}=\overline{u}\gamma u$ or $ (\overline{u}\gamma u)_{\mathcal S}=0$.
If in addition ${\mathcal S}$ is normal and contains $\Rep(G)^t$ then the former holds,
by Proposition \ref{normalcat} a. Hence  $\overline{u}\gamma u$ is an object of  ${\mathcal S}$, and therefore of  ${\mathcal N}_1$. But the restriction of $\overline{u}\gamma u$
to $G^\circ$ is $\overline{u}u$, since $\gamma$ is a torsion one-dimensional representation of $G$ and $G^\circ$ is connected. By Frobenius reciprocity and Peter-Weyl theory, this is not a multiple of the trivial representation since $u$ is irreducible and of dimension $>1$, thus contradicting Corollary \ref{necsuff}. Hence the identity component is not normal.
Note that the irreducible torsion representations of $G$ arise precisely  from the torsion elements of $\Gamma$.

b) Since $G^\circ$ is a connected quantum subgroup
of $G$,
we need to verify that any connected quantum subgroup $K$ of $G$ is in fact a subgroup of $G^\circ$.
Now,  $G^\circ$ is also a quotient quantum group of $G$, hence the defining epimorphism $\pi: {\mathcal Q}_G\to{\mathcal Q}_K$   can be restricted to the Hopf subalgebra
$\pi^\circ: {\mathcal Q}_{G^\circ}\to{\mathcal Q}_K$.
By connectedness of $K$, $\pi$ acts trivially on $\Gamma$, hence  $\pi^\circ$ is an epimorphism as well. Since $\pi$ factors through ${\mathcal Q}_G\to{\mathcal Q}_{G^\circ}\to{\mathcal Q}_K$,    $K$ is in fact a subgroup of $G^\circ$.

c)  We may assume $G^\circ\neq1$. Recall that $G_1$ was defined by ${\mathcal N}_1=\Rep(G_1\backslash G)$ and that ${\mathcal N}_1$ contains the torsion subcategory
of $G$. Therefore  torsion representations of $G$ are trivial on $G_1$. In particular, this holds for the elements of $\Gamma$.
We show that $G_1$ is in fact a quantum subgroup of $G^\circ$. To this aim,
 we may argue just as in the proof of b) with $G_1$ in place of $K$, and replace the argument involving connectedness of $K$ with  the previous observation.
We next see that $G_1$ is central.
If $u$ and $\gamma$ are as in the proof of a), ${\overline u}\gamma u\in{\mathcal N}_1$. Since ${\overline u}\gamma u$
has the same restriction as $\overline{u}u$ to $G_1$, we see that
$\overline{u}u$ is trivial on   $G_1$, as well. Hence
$u\upharpoonright_{G_1}$ is a multiple of  a one-dimensional representation. By the arbitrary choice of $u$, $G_1$ is then central. Hence $G$ has torsion degree $\leq\omega$ and moreover
$G^{\rm n}$, as well as all the $G_k$, are cocommutative, by
Proposition \ref{tdcocommutativesubgroup}.

Finally, we note that if we have an intermediate quantum subgroup $N\subset K\subset G$ with $N$ normal in $G$ then $N$ must also be normal in $K$.
Indeed, every irreducible representation of $K$ is a subrepresentation of the restriction of some irreducible representation of $G$.
Hence $G_k$ is normal in $G^\circ$ for $1\leq k\leq\omega$.
\end{proof}

\begin{rem}
Note that, if in a)  $\Gamma^t$ is in addition a subgroup, $\Rep(G)^t$ is tensorial but not normal.
\end{rem}

The previous theorem shows that strong restrictions on the value of the torsion degree of a free product $G^\circ *C^*(\Gamma)$ may be due to shortness of central quantum subgroups  of $G^\circ$.
In particular, we note the following result.

\begin{cor}\label{corolnon-normal} Let $G$ be a compact quantum group of the form  $G=G^\circ*C^*(\Gamma)$, where $G^\circ$ is a semi-simple compact Lie group and $\Gamma$ is a non-trivial torsion group. Then $G^\circ$ is the identity component of $G$,  $G^{\rm n}$ is the trivial group, and $G$ has torsion degree $\leq2$. In particular, when $G^\circ$ is adjoint, then $G$ has torsion degree $1$.
\end{cor}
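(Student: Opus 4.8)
The plan is to obtain the three assertions from Theorem~\ref{non-normal}, supplying two classical facts about the semi-simple compact Lie group $G^\circ$: being connected it is a legitimate ``connected compact quantum group'' for that theorem, and being semi-simple it is perfect ($G^\circ=[G^\circ,G^\circ]$), so its only one-dimensional representation is the trivial one; moreover its center $Z(G^\circ)$ is finite.

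Since $\Gamma$ is a torsion group, Theorem~\ref{non-normal}b applies verbatim and gives that $G^\circ$ is the identity component of $G$, which is the first assertion. The perfectness of $G^\circ$ means it has no non-trivial irreducible representation of dimension $1$, so the hypotheses of Theorem~\ref{non-normal}c are met as well; that part then yields torsion degree $\le\omega$ together with the key structural information that each $G_k$, and in particular $G_1$, is a central quantum subgroup of $G^\circ$.

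The crux is to improve ``$\le\omega$'' to ``$\le 2$''. For this I would invoke the Clifford-theorem characterization recalled after Patri's proposition: a central quantum subgroup of a classical compact group is a closed subgroup of its center. Applied to $G_1\subset G^\circ$, and using finiteness of $Z(G^\circ)$, this exhibits $G_1$ as a finite quantum group. Consequently $\Rep(G_1)$ has only finitely many irreducible objects, so every representation of $G_1$ is a torsion object; in particular $v\upharpoonright_{G_1}$ contains a torsion representation of $G_1$ for \emph{every} irreducible $v\in\Rep(G)$. By the very definition of ${\mathcal N}_2$ this forces ${\mathcal N}_2=\Rep(G)$, i.e. $G_2$ is the trivial group. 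Since $G^{\rm n}=G_\delta$ by Theorem~\ref{ntd} and $G_2=\{e\}$, we conclude $G^{\rm n}=\{e\}$ and torsion degree $\delta\le 2$, with $\delta=1$ precisely when $G_1$ is already trivial.

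Finally, if $G^\circ$ is adjoint then $Z(G^\circ)$ is trivial, so the only central quantum subgroup of $G^\circ$ is the trivial one and hence $G_1=\{e\}$; as $G=G^\circ*C^*(\Gamma)$ is non-trivial we have $G_0=G\neq G_1$, whence the torsion degree equals $1$. The only genuinely non-formal step here is the identification of $G_1$ as a finite subgroup of $Z(G^\circ)$; once this finiteness is available, the collapse of the sequence $({\mathcal N}_\alpha)$ at stage $2$ is immediate.
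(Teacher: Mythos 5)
Your proof is correct and takes essentially the same route as the paper's: the paper likewise applies Theorem~\ref{non-normal} (parts b and c) to get that $G^\circ$ is the identity component and that $G_1$ is a central quantum subgroup of $G^\circ$, observes that central subgroups of a semi-simple compact Lie group are finite, and concludes ${\mathcal N}_2=\Rep(G)$, hence torsion degree $\leq 2$, triviality of $G^{\rm n}$, and torsion degree $1$ in the adjoint case. Your explicit use of the Clifford/Patri characterization to place $G_1$ inside the finite center $Z(G^\circ)$, and your verification that finiteness of $G_1$ forces every irreducible of $G$ into ${\mathcal N}_2$, merely spell out the steps the paper compresses into ``the conclusions easily follow.''
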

\begin{proof}
Central subgroups of $G^\circ$ are finite. In particular, by the previous theorem $G_1$ is finite, hence ${\mathcal N}_2=\Rep(G)$ and the conclusions easily follow.
\end{proof}

We next give an example showing that torsion degree $2$ can arise from the examples  considered in the previous corollary.

\begin{ex}
Consider $G={\rm SU}(2)*C^*(\Gamma)$, where $\Gamma$ is any non-trivial torsion group, and denote by $u_n$ be the self-conjugate irreducible representation of ${\rm SU}(2)$ of dimension $n+1$. The set of irreducible objects  of the category $\Rep({\rm SU}(2))$ has a unique $\Zset_2$-grading making $u_n$ even or odd according to the parity of $n$. We may then extend this grading to all the irreducible objects of $\Rep(G)$ by setting every $\gamma \in \Gamma$ to be even.

Let now ${\mathcal N}$ be the full subcategory of $\Rep(G)$ whose objects are finite direct sums of even irreducible objects. Clearly, ${\mathcal N}$ is a tensor subcategory of $\Rep(G)$; it contains $\Rep(G)^t$ by Theorem \ref{non-normal}a; finally, it satisfies the requirements of Proposition \ref{sufficientfornormality}d, and is therefore normal and even central by Proposition \ref{centrality}. We conclude that
${\mathcal N}_1\subset{\mathcal N}\subset \Rep(G)={\mathcal N}_2$, and the latter inclusion is strict as $u_1\notin{\mathcal N}$.
\end{ex}

\begin{ex}
A combination of Theorem \ref{non-normal} and Example \ref{exChiodo} can be used to give a compact quantum group $G$ for which the tensor category ${\mathcal T}_1$ generated by the torsion subcategory $\Rep(G)^t$ is normal but $G$ has torsion degree $>1$ and a non-normal the identity component. In particular, ${\mathcal T}_1$ has infinitely many irreducible representations.
\end{ex}

\subsection{The problem of approximating $G^\circ$}\
\medskip

We propose a variant of the methods of the previous subsection, yielding a second transfinite decreasing sequence of quantum subgroups of $G$ --- denoted by
$K_\alpha$, where $\alpha$ is an ordinal --- which is more likely to approximate $G^\circ$ if normality of the latter is not known.

Define recursively ${\mathcal M}_0:=\langle\iota\rangle$, $G_0=G$; if $\beta = \alpha+1$ is a successor ordinal and $K_\alpha\subset G$ is defined, set $\M_\beta$ to be the smallest normal tensor subcategory of $\Rep(K_\alpha)$ containing $\Rep(K_\alpha)^t$, and let $K_{\beta}$ be the normal quantum subgroup of $K_\alpha$ such that
$${\mathcal M}_{\beta}=\Rep(K_{\beta}\backslash K_\alpha).$$
In particular, $K_1=G_1$.
If $\beta$ is a limit ordinal, and $K_\alpha\subset G$ is defined for $\alpha<\beta$,
let $K_\beta$ be the compact quantum group with representation category
determined by
$$(u\upharpoonright_{K_\beta}, v\upharpoonright_{K_\beta})=\cup_{\alpha<\beta}
(u\upharpoonright_{K_\alpha}, v\upharpoonright_{K_\alpha}),$$
for $u$, $v\in\Rep(G)$.
By construction, for $\alpha<\beta$, the inclusion $\Rep(K_\alpha)\subset\Rep(K_\beta)$
is compatible with the embedding into the Hilbert spaces, hence $K_\alpha\supset K_\beta$. We see that $K_\delta=K_{\delta+1}$
 if and only if $K_\delta$ is connected, and then $K_\alpha= K_\delta$ for all $\alpha>\delta$.

\begin{prop}
Any decreasing sequence   of quantum subgroups of $G$ indexed by the ordinals stabilizes.
\end{prop}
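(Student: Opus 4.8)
The plan is to translate the statement about quantum subgroups into a statement about arrow spaces inside the finite-dimensional intertwiner spaces of the ambient category, where ordinary linear-algebraic finiteness takes over. Recall from the Tannaka--Krein--Woronowicz formalism of Section~\ref{tannaka} that a quantum subgroup $K$ of $G$ is determined, up to the choice of norm completion, by its embedded representation category, i.e.\ by the collection of arrow spaces $(u\upharpoonright_K, v\upharpoonright_K)\subseteq\B(H_u,H_v)$; by semisimplicity and additivity of the restriction functor, together with the fixed Clebsch--Gordan decompositions in $\Rep(G)$, this data is in turn pinned down by its values on a complete set $\{u_i\}_{i\in I}$ of irreducible representations of $G$. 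Here $I$ is a \emph{set}, by the universe convention adopted above. Thus I would encode each $K_\alpha$ by the family of subspaces $(u_i\upharpoonright_{K_\alpha}, u_j\upharpoonright_{K_\alpha})\subseteq\B(H_{u_i},H_{u_j})$, indexed by $(i,j)\in I\times I$.

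First I would record the monotonicity built into a decreasing sequence: since $K_\alpha\supseteq K_\beta$ for $\alpha\le\beta$, any arrow $T\in(u_i\upharpoonright_{K_\alpha}, u_j\upharpoonright_{K_\alpha})$ still intertwines after further restriction through the epimorphism $\Q_{K_\alpha}\to\Q_{K_\beta}$, so that $(u_i\upharpoonright_{K_\alpha}, u_j\upharpoonright_{K_\alpha})\subseteq(u_i\upharpoonright_{K_\beta}, u_j\upharpoonright_{K_\beta})$. Hence, for each fixed pair $(i,j)$, the assignment $\alpha\mapsto(u_i\upharpoonright_{K_\alpha}, u_j\upharpoonright_{K_\alpha})$ is a non-decreasing (under inclusion) transfinite chain of subspaces of the finite-dimensional space $\B(H_{u_i},H_{u_j})$.

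The heart of the argument is then elementary. The dimensions $\dim(u_i\upharpoonright_{K_\alpha}, u_j\upharpoonright_{K_\alpha})$ form a non-decreasing sequence of integers bounded by $\dim H_{u_i}\cdot\dim H_{u_j}$, so they can strictly increase only finitely often and are eventually constant. Let $\delta_{ij}$ be the least ordinal beyond which this dimension is constant; since an inclusion of subspaces of equal finite dimension is an equality, the subspace $(u_i\upharpoonright_{K_\alpha}, u_j\upharpoonright_{K_\alpha})$ is itself constant for $\alpha\ge\delta_{ij}$. Finally I would set $\delta=\sup_{(i,j)\in I\times I}\delta_{ij}$. For every $\alpha\ge\delta$ all the arrow spaces agree with those at $\delta$, and therefore $K_\alpha=K_\delta$ by the reconstruction recalled above; the sequence stabilizes.

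The one point requiring care — and the only place where the foundational hypothesis is essential — is the passage from pointwise stabilization to global stabilization via $\delta=\sup_{(i,j)}\delta_{ij}$: this supremum is a genuine ordinal precisely because $I$, and hence $I\times I$, is a set rather than a proper class. I expect this to be the main (and really the sole) obstacle, and it is exactly what the universe convention is designed to guarantee. Everything else reduces to the observation that the relevant data lives in finite-dimensional intertwiner spaces, where ascending chains of subspaces necessarily terminate.
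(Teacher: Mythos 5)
Your proof is correct and is essentially the paper's own argument: the paper likewise encodes each $K_\alpha$ by set-indexed linear data --- the subspaces of $K_\alpha$-invariant vectors in the Hilbert spaces $H_{u_j}$ of a complete set of irreducibles, which by Frobenius reciprocity carry the same information as your arrow spaces --- observes the same monotonicity, and concludes stabilization from finite-dimensionality of each coordinate. The only cosmetic difference is the finishing step: you stabilize each pair $(i,j)$ separately and take $\delta=\sup_{(i,j)}\delta_{ij}$ via replacement, whereas the paper bounds the length of any strictly increasing chain of subspaces of $V=\Pi_{j\in J}H_j$ by the cardinality of $J$, so that any sequence indexed by an ordinal of strictly larger cardinality must stabilize.
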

\begin{proof}
Let $J$ be a set of the same cardinality, that we may assume infinite, as that of a complete set $(u_j)$ of inequivalent irreducible representations of $G$. By Tannaka--Krein duality and Frobenius  reciprocity, a quantum subgroup $K$ of $G$ is uniquely determined by the specification of the subspace  of
$K$--invariant vectors in the Hilbert space $H_j$ of $u_j$, which we regard as a   vector subspace of $V:=\Pi_{j\in J}H_j$. The given sequence   therefore corresponds to an increasing sequence of subspaces of $V$, which stabilizes
 if the cardinality of the corresponding ordinal  strictly exceeds
that of $J$.
\end{proof}

\begin{thm}\label{suffnormality}
Assume that for each ordinal $\alpha$, the smallest tensor subcategory ${\mathcal T}_{\alpha+1} \subset \Rep(K_\alpha)$ containing $\Rep(K_\alpha)^t$ is already normal. Then for every ordinal, $G^\circ\subset K_\alpha$ and $K_\alpha$ stabilizes to $G^\circ$.  If in addition $G$ has torsion degree $\leq1$,  then   $\go$  is normal, $\Rep(\go\backslash G)={\mathcal T}_1$ and $\go\backslash G$ is totally disconnected.
\end{thm}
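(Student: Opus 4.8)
The plan is to establish the two stages of the statement in turn: first that $G^\circ\subset K_\alpha$ for every ordinal $\alpha$, with the sequence stabilizing to $G^\circ$, and then the sharper conclusions under the torsion degree hypothesis. For the first stage I would prove $G^\circ\subset K_\alpha$ by transfinite induction. The base case is trivial since $K_0=G$. Granting the inclusion for all $\alpha$, the preceding proposition shows that the decreasing sequence $(K_\alpha)$ stabilizes at some $K_\delta=K_{\delta+1}$, and as recorded in the construction this equality forces $K_\delta$ to be connected. A connected quantum subgroup of $G$ lies in $G^\circ$ by Proposition \ref{conncomp}, while the induction supplies the reverse inclusion $G^\circ\subset K_\delta$; hence $K_\delta=G^\circ$ and the sequence stabilizes to $G^\circ$.

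The heart of the argument---and the place where the hypothesis is indispensable---is the successor step. Assume $G^\circ\subset K_\alpha$. As $G^\circ$ is connected, it is a connected quantum subgroup of $K_\alpha$, so by Corollary \ref{restrictingtorsiontosubgroup} every torsion object of $K_\alpha$ restricts to a multiple of the trivial representation of $G^\circ$. Let ${\mathcal R}$ be the full subcategory of $\Rep(K_\alpha)$ whose objects restrict to a multiple of $\iota$ on $G^\circ$. Since restriction to $G^\circ$ commutes with tensor products, conjugates, direct sums and subobjects and sends $\iota$ to $\iota$, the subcategory ${\mathcal R}$ is a tensor $C^*$-subcategory with conjugates, subobjects and direct sums, and by the previous sentence it contains $\Rep(K_\alpha)^t$. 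Now the hypothesis enters: the smallest tensor subcategory ${\mathcal T}_{\alpha+1}$ of $\Rep(K_\alpha)$ containing $\Rep(K_\alpha)^t$ is assumed normal, so ${\mathcal T}_{\alpha+1}={\mathcal M}_{\alpha+1}=\Rep(K_{\alpha+1}\backslash K_\alpha)$. Because ${\mathcal R}$ is a tensor subcategory containing $\Rep(K_\alpha)^t$, minimality of ${\mathcal T}_{\alpha+1}$ gives ${\mathcal M}_{\alpha+1}={\mathcal T}_{\alpha+1}\subset{\mathcal R}$. By Theorem \ref{carattnormal} the space of $K_{\alpha+1}$-invariant vectors in any $w\in\Rep(K_\alpha)$ is $H_{w_{{\mathcal M}_{\alpha+1}}}$; since $w_{{\mathcal M}_{\alpha+1}}\in{\mathcal M}_{\alpha+1}\subset{\mathcal R}$ these vectors are $G^\circ$-invariant, and Frobenius reciprocity then yields $G^\circ\subset K_{\alpha+1}$. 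The limit step is routine: the union formula defining the invariant vectors of $K_\beta$, together with $G^\circ\subset K_\alpha$ for $\alpha<\beta$, gives $(\iota,w\upharpoonright_{K_\beta})\subset(\iota,w\upharpoonright_{G^\circ})$ for all $w$, whence $G^\circ\subset K_\beta$. I expect this successor step to be the main obstacle, precisely because without the normality hypothesis the defining category ${\mathcal M}_{\alpha+1}$ (the smallest \emph{normal} one) could be strictly larger than ${\mathcal T}_{\alpha+1}$ (the smallest tensor subcategory), so that the containment ${\mathcal M}_{\alpha+1}\subset{\mathcal R}$ would fail.

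For the second stage, suppose $G$ has torsion degree $\leq 1$. By Theorem \ref{ntd} the subgroup $G_1$ is connected and $G_1=G^{\rm n}$; since $K_1=G_1$ by construction, $K_1$ is connected, so the sequence $(K_\alpha)$ stabilizes already at $\delta=1$, and the first stage gives $G^\circ=K_1=G_1=G^{\rm n}$. In particular $G^\circ$ is normal. Moreover the standing hypothesis at $\alpha=0$ states that ${\mathcal T}_1$ is normal, so ${\mathcal T}_1={\mathcal M}_1={\mathcal N}_1=\Rep(G_1\backslash G)=\Rep(\go\backslash G)$.

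It remains to show that $H:=\go\backslash G$ is totally disconnected. Let $K$ be any connected quantum subgroup of $H$, and let ${\mathcal R}'$ be the full subcategory of $\Rep(H)$ consisting of objects that restrict to a multiple of $\iota$ on $K$; as above it is a tensor subcategory with conjugates, subobjects and direct sums. Every object of $\Rep(G)^t$ lies in ${\mathcal T}_1=\Rep(H)$ and is torsion there, hence restricts to a multiple of $\iota$ on the connected subgroup $K$ by Corollary \ref{restrictingtorsiontosubgroup}; thus ${\mathcal R}'\supset\Rep(G)^t$. Since ${\mathcal T}_1$ is by definition the tensor subcategory generated by $\Rep(G)^t$, we conclude ${\mathcal R}'=\Rep(H)$, so every representation of $H$ restricts to a multiple of $\iota$ on $K$. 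This forces $\Rep(K)$ to consist only of multiples of the trivial representation, i.e. $K$ is the trivial group. Hence $H^\circ$ is trivial, which is precisely total disconnectedness of $\go\backslash G$.
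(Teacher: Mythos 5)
Your proof is correct, and its first part coincides in substance with the paper's: both rest on the observations that once ${\mathcal T}_{\alpha+1}$ is normal it equals ${\mathcal M}_{\alpha+1}=\Rep(K_{\alpha+1}\backslash K_\alpha)$, that every object of ${\mathcal T}_{\alpha+1}$ restricts to a multiple of the trivial representation on the connected subgroup $G^\circ\subset K_\alpha$ (your category ${\mathcal R}$ makes explicit what the paper asserts in one line), and that the resulting containment of invariant-vector spaces upgrades, via Theorem \ref{carattnormal} and the Frobenius-type formula \eqref{3.2}, to $G^\circ\subset K_{\alpha+1}$; your limit step and stabilization argument are likewise the paper's. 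A cosmetic difference: the paper proves the inclusion once for $K_1$ and then relativizes, using the remark that any intermediate subgroup $G^\circ\subset K\subset G$ satisfies $K^\circ=G^\circ$, whereas you run the successor step directly for $G^\circ$, which lets you bypass that remark. The genuine divergence is in the final clause. The paper deduces total disconnectedness of $\go\backslash G$ by bootstrapping: it checks that $\go\backslash G$ inherits the hypotheses (same torsion subcategory, ${\mathcal T}_1$ becomes the whole of $\Rep(\go\backslash G)$ and is trivially normal there, torsion degree $\leq1$), applies the first part of the theorem to the quotient, and then invokes Theorem \ref{ntd}. You instead argue directly: for any connected subgroup $K$ of $\go\backslash G$, the full subcategory of $\Rep(\go\backslash G)$ of objects restricting to a multiple of $\iota$ on $K$ is a tensor subcategory (with conjugates, subobjects and direct sums) containing $\Rep(G)^t$ by Corollary \ref{restrictingtorsiontosubgroup}, hence contains ${\mathcal T}_1=\Rep(\go\backslash G)$ by minimality, forcing every irreducible of $K$ to be trivial. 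Your route is more elementary and self-contained, avoiding both the verification that the hypotheses transfer to the quotient and the appeal to Theorem \ref{ntd}; the paper's bootstrap, on the other hand, displays the statement as stable under passage to $\go\backslash G$, a pattern it exploits elsewhere. Both arguments are sound.
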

\begin{proof}
Since every torsion representation of $G$ is trivial on $\go$ by Proposition \ref{restrictingtorsiontoconnectedcomponent}, so is every representation of ${\mathcal T}_1=\Rep(K_1\backslash G)$. On the other hand, an irreducible representation of $\Rep(K_1\backslash G)$ is precisely an irreducible representation of $G$ restricting to a multiple of the trivial representation on $K_1$.
Taking into account Proposition \ref{normal}c for $K_1$, $\go$ has more invariant vectors than $K_1$ in the spaces of irreducible representations of $G$, so by \eqref{3.2},  $\Rep(K_1)\subset\Rep(\go)$, hence $\go\subset K_1$.

On the other hand, if $K$ is an intermediate quantum subgroup of
$G$, $G^\circ\subset K\subset G$, then $K^\circ=G^\circ$. Hence, applying the first part
to an inclusion $G^\circ\subset K_\alpha$ gives $G^\circ\subset K_{\alpha+1}$.
Let $\alpha$ be a limit ordinal and assume that $G^\circ\subset K_\beta$ for $\beta<\alpha$. Then
for every irreducible representation $u$ of $G$, the space of vectors in $H_u$ invariant
under $K_\alpha$ coincides with that of vectors
invariant under all the restrictions to $K_\beta$. These are also invariant under the restriction to
$G^\circ$, hence $G^\circ\subset K_\alpha$. We thus see that $G^\circ$ is a subgroup
of the limit of the sequence. On the other hand, this limit group is connected, hence it coincides with $G^\circ$.

If $G$ has torsion degree $\leq1$ then  $K_1=\go$ and it is normal. Furthermore, $\go\backslash G$ has the same torsion subcategory as $G$, and the same category ${\mathcal T}_1=\Rep(\go\backslash G)$, obviously normal in $\Rep(\go\backslash G)$, and with torsion degree $\leq1$. By the first part of the statement applied  to $\go\backslash G$, the identity component of $\go\backslash G$ is normal, and by Theorem \ref{ntd}, $\go\backslash G$ is totally disconnected.
\end{proof}
A slight variation of the proof of the previous theorem also shows the following result.
\begin{thm}\label{variation}
Let $G$ be a compact quantum group such that $G^\circ$ is normal. Then the decreasing sequence $K_\alpha$ stabilizes to $G^\circ$.
\end{thm}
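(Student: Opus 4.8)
The plan is to run the same transfinite induction as in the proof of Theorem~\ref{suffnormality}, proving that $\go\subseteq K_\alpha$ for every ordinal $\alpha$, but replacing the hypothesis on the categories ${\mathcal T}_{\alpha+1}$ by systematic use of the standing assumption that $\go$ is normal. Once $\go\subseteq K_\alpha$ is established for all $\alpha$, the theorem follows at once: the decreasing family $K_\alpha$ stabilizes (any decreasing sequence of quantum subgroups indexed by the ordinals does, for cardinality reasons), say at $K_\delta=K_{\delta+1}$; as observed just before Theorem~\ref{suffnormality}, the equality $K_\delta=K_{\delta+1}$ forces $K_\delta$ to be connected. Being a connected quantum subgroup of $G$, $K_\delta$ is contained in $\go$, and combined with $\go\subseteq K_\delta$ this gives $K_\delta=\go$.

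Before the induction I would record the two facts that keep $\go$ under control inside each $K_\alpha$. First, whenever $\go\subseteq K_\alpha\subseteq G$, normality of $\go$ in $G$ forces normality of $\go$ in $K_\alpha$, by the intermediate-subgroup remark used at the end of the proof of Theorem~\ref{non-normal} (every irreducible of $K_\alpha$ is a subobject of a restriction of an irreducible of $G$); hence $\Rep(\go\backslash K_\alpha)$ is a well-defined \emph{normal} tensor subcategory of $\Rep(K_\alpha)$. Second, since $\go$ is a connected quantum subgroup of $K_\alpha$, Corollary~\ref{restrictingtorsiontosubgroup} applied to $K_\alpha$ shows that every torsion object of $\Rep(K_\alpha)$ restricts to a multiple of the trivial representation of $\go$; that is, $\Rep(K_\alpha)^t\subseteq\Rep(\go\backslash K_\alpha)$.

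The successor step is the core of the argument, and is exactly where normality of $\go$ does the work that the hypothesis of Theorem~\ref{suffnormality} does there. Assume $\go\subseteq K_\alpha$. By the two facts above, $\Rep(\go\backslash K_\alpha)$ is a normal tensor subcategory of $\Rep(K_\alpha)$ containing $\Rep(K_\alpha)^t$; since ${\mathcal M}_{\alpha+1}$ is by definition the smallest such category, ${\mathcal M}_{\alpha+1}\subseteq\Rep(\go\backslash K_\alpha)$, i.e.\ $\Rep(K_{\alpha+1}\backslash K_\alpha)\subseteq\Rep(\go\backslash K_\alpha)$. As $K_{\alpha+1}$ and $\go$ are both normal in $K_\alpha$, this inclusion of normal subcategories corresponds, under Theorem~\ref{carattnormal}, to the reverse inclusion of subgroups $\go\subseteq K_{\alpha+1}$; concretely, comparing spaces of invariant vectors, every irreducible of $K_\alpha$ that is trivial on $K_{\alpha+1}$ lies in ${\mathcal M}_{\alpha+1}$ and is therefore trivial on $\go$, so $\go$ has at least as many invariant vectors as $K_{\alpha+1}$ in each irreducible, whence $\go\subseteq K_{\alpha+1}$. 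For a limit ordinal $\beta$ with $\go\subseteq K_\alpha$ for all $\alpha<\beta$, the defining relation $(\iota,u\upharpoonright_{K_\beta})=\cup_{\alpha<\beta}(\iota,u\upharpoonright_{K_\alpha})$ together with the inclusions $(\iota,u\upharpoonright_{K_\alpha})\subseteq(\iota,u\upharpoonright_{\go})$ (valid since $\go\subseteq K_\alpha$) gives $(\iota,u\upharpoonright_{K_\beta})\subseteq(\iota,u\upharpoonright_{\go})$ for every irreducible $u$, hence $\go\subseteq K_\beta$. The base case $\go\subseteq K_0=G$ is trivial.

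The main obstacle, relative to the familiar structure of Theorem~\ref{suffnormality}, is the successor step, and in particular the bookkeeping that $\go$ stays normal in each $K_\alpha$: this is what makes $\Rep(\go\backslash K_\alpha)$ available as a normal tensor subcategory sandwiched between $\Rep(K_\alpha)^t$ and $\Rep(K_\alpha)$, and it is precisely this normal category that dominates the minimal ${\mathcal M}_{\alpha+1}$ and yields $\go\subseteq K_{\alpha+1}$. Without the normality of $\go$ one has no reason to expect ${\mathcal M}_{\alpha+1}$ to consist of representations trivial on $\go$, which is exactly the gap that Theorem~\ref{suffnormality} closes by its different hypothesis.
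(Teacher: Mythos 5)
Your proof is correct and is precisely the ``slight variation'' of the proof of Theorem~\ref{suffnormality} that the paper invokes without spelling out: you run the same transfinite induction showing $\go\subseteq K_\alpha$, replacing the normality hypothesis on ${\mathcal T}_{\alpha+1}$ by the observation that normality of $\go$ in $G$ passes to each intermediate $K_\alpha$ (via the remark at the end of the proof of Theorem~\ref{non-normal}), so that $\Rep(\go\backslash K_\alpha)$ is a normal tensor subcategory containing $\Rep(K_\alpha)^t$ and hence dominates ${\mathcal M}_{\alpha+1}$. The remaining steps (invariant-vector comparison via \eqref{3.2} and Proposition~\ref{normal}c, the limit-ordinal step, stabilization for cardinality reasons, and connectedness of the stable term forcing equality with $\go$) all match the paper's argument.
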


\subsection{Normality of $\go$ and profiniteness of $\go\backslash G$}\
\medskip

\noindent
We now proceed to give a characterization, motivated by the theory of Lie groups, of compact quantum groups with normal
$G^\circ$ and finite $G^\circ\backslash G$. This includes the problem of showing that   the   torsion degree is $\leq1$. The examples  discussed in Subsection \ref{variousexamples} show that the properties involved in our characterization are independent.
The proof relies on the induction theory for tensor $C^*$-categories developed in \cite{PR}.

Let us first assume that $\Rep(G)^t$ is a tensor subcategory with direct sums. We may apply the construction of Section \ref{normalsubsect} and associate a $^*$-functor
$$t:\Rep(G)\to\Rep(G)^t$$
mapping the representation $u$ to its maximal torsion subrepresentation $u_t$. The complementary subrepresentation will be denoted $u_f$, and referred to as
the free part of $u$, and one may decompose $uv$ in torsion and free part, as done in Section \ref{normalsubsect}. We shall call $u$ torsion or free if $u=u_t$ or $u=u_f$ respectively.
\begin{thm}\label{connectedcomponent}
Let $G$ be a compact quantum group. Then the following are equivalent,
\begin{itemize}
\item[{\rm a)}] $G^\circ$ is normal in $G$ and $G^\circ\backslash G$ is finite,
\item[{\rm b)}] $\Rep(G)^t$ is tensorial, finite and normal.
\end{itemize}
In this case,
 $G$ has torsion degree $\leq1$. Moreover,
\begin{itemize}
\item[{\rm c)}]
$\Rep(\go\backslash G)= \Rep(G)^t$,
\item[{\rm d)}]
$\Rep(\go)$ is determined by
$$(u\upharpoonright_{\go}, v\upharpoonright_{\go})=\{\overline{R}^*\otimes 1_v\circ 1_u\otimes\phi, \quad \phi\in H_{({\overline{u} v})_t}\},$$
where $u$, $v$ are irreducible representations of $G$ and $\overline{R}\in(\iota, u\overline{u})$ is non-zero.
\end{itemize}
\end{thm}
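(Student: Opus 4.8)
The plan is to prove the equivalence a) $\Leftrightarrow$ b) and then read off c), d) and the torsion-degree bound as by-products. The implication a) $\Rightarrow$ b) is the soft direction. Assuming $\go$ is normal with $\go\backslash G$ finite, I would identify $\Rep(\go\backslash G)$ with a full tensor subcategory of $\Rep(G)$ and show it coincides with $\Rep(G)^t$: on one hand $\go\backslash G$ is finite, hence profinite, so by Proposition \ref{carattprofiniteness} every object of $\Rep(\go\backslash G)$ is torsion, and torsion is preserved under the inclusion $\Rep(\go\backslash G)\subset\Rep(G)$; on the other hand, by Proposition \ref{restrictingtorsiontoconnectedcomponent} every torsion object of $\Rep(G)$ restricts trivially to $\go$, and normality of $\go$ means precisely that such objects lie in $\Rep(\go\backslash G)$. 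This gives $\Rep(G)^t=\Rep(\go\backslash G)$, which is automatically tensorial, finite, and normal (the last by Theorem \ref{carattnormal}), and simultaneously proves c).

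For b) $\Rightarrow$ a), I would start from the hypothesis that $\Rep(G)^t$ is tensorial and normal and apply Theorem \ref{carattnormal} to produce a normal quantum subgroup $N$ of $G$ with $\Rep(N\backslash G)=\Rep(G)^t$; finiteness of $\Rep(G)^t$ says exactly that $N\backslash G$ is finite. Since $\Rep(G)^t$ is now itself normal and tensorial, it equals its smallest normal tensorial enlargement, so ${\mathcal N}_1=\Rep(G)^t=\Rep(N\backslash G)$ and hence $N=G_1$; in particular $G^{\rm n}\subset N$ by Lemma \ref{lemma}. It then remains to establish the two inclusions $N\subset\go$ and $\go\subset N$. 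The first will follow once $N$ is shown to be connected, since a connected quantum subgroup is contained in $\go$ by Proposition \ref{conncomp}; the second is a disconnectedness argument on the finite quotient.

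The main obstacle is connectedness of $N$, equivalently that $G$ has torsion degree $\leq 1$ (Theorem \ref{ntd}). This is precisely the point that fails in the Burnside- and Chiodo-type examples, and it is where I would invoke the bimodule construction and induction theory of \cite{PR, PRinduction}. The idea is to argue by contradiction: a non-trivial irreducible torsion object $z$ of $\Rep(N)$ sits inside $u\upharpoonright_N$ for some irreducible $u\in\Rep(G)$; since the quotient $N\backslash G$ is finite, induction from $N$ to $G$ takes $z$ to a \emph{finite-dimensional} representation $\hat z$ of $G$ with $z$ a subobject of $\hat z\upharpoonright_N$, and the induction/bimodule machinery, together with finiteness of $N\backslash G$, forces the tensor subcategory generated by $\hat z$ in $\Rep(G)$ to be finite. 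Thus $\hat z$ is a torsion object of $G$, so $\hat z\in\Rep(G)^t=\Rep(N\backslash G)$ and therefore restricts trivially to $N$, contradicting $0\neq z<\hat z\upharpoonright_N$. Controlling the tensor category generated by the induced object is the delicate step, and is the reason a Clifford--Mackey theory (not presently available) would streamline the argument; absent it, I would lean on the explicit imprimitivity and Frobenius-reciprocity statements of \cite{PRinduction}.

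Once $N$ is connected it is a connected normal subgroup, so $N\subset\go$, and also $N\subset G^{\rm n}$, whence $N=G^{\rm n}$. For the reverse inclusion I would pass to the finite quotient $N\backslash G$: being finite, it is totally disconnected, and the image of the connected subgroup $\go$ in $N\backslash G$ is connected (image of a connected quantum subgroup in a quotient is connected, as developed in the Appendix), hence trivial; this forces $\go\subset N$. Combining, $\go=N=G^{\rm n}$ is normal and $\go\backslash G=N\backslash G$ is finite, which is a). Finally, c) is the identity $\Rep(\go\backslash G)=\Rep(N\backslash G)=\Rep(G)^t$; the arrow formula d) is the specialization of the formula in Theorem \ref{carattnormal} to the normal subcategory $\S=\Rep(G)^t$, so that $(\overline u v)_\S=(\overline u v)_t$; and torsion degree $\leq 1$ follows since $G_1=N=G^{\rm n}$ is connected, again by Theorem \ref{ntd}.
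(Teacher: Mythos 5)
Your overall architecture matches the paper's: a) $\Rightarrow$ b) is exactly Proposition \ref{tdprofiniteness} unpacked; the identifications ${\mathcal N}_1=\Rep(G)^t$ and $N=G_1$, the inclusion $\go\subset G_1$ (your image-in-the-quotient argument is equivalent to the invariant-vector comparison by which the paper invokes Theorem \ref{suffnormality}), and the derivation of c), d) and the torsion-degree bound from Theorem \ref{carattnormal} and Theorem \ref{ntd} once $G_1$ is known to be connected, are all as in the paper. The problem is at the crux, connectedness of $G_1$, where your argument has a genuine gap. Your contradiction hinges on inducing an irreducible torsion object $z\in\Rep(N)$ up to a \emph{finite-dimensional ordinary representation} $\hat z$ of $G$ satisfying $z<\hat z\upharpoonright_N$, with the tensor subcategory of $\Rep(G)$ generated by $\hat z$ finite. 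That is precisely the Clifford--Mackey-type statement which the paper explicitly notes is \emph{not} available for compact quantum groups (and which the authors say they hope to develop elsewhere). The induction of \cite{PRinduction} does not produce an object of $\Rep(G)$: it produces a Hilbert bimodule representation over the coefficient algebra $Q_{G_1\backslash G}$, and neither the Frobenius-type embedding $z<\hat z\upharpoonright_N$ nor torsion control on what the induced object generates follows from the results you cite. You correctly sense this when you call it ``the delicate step,'' but flagging it does not discharge it.

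The paper's actual proof is organized so as never to need such an induced representation, and the inversion is worth noting. For a \emph{free} irreducible $u$, it forms ${\rm Ind}(\mu_u)$, the induced Hilbert $G$-bimodule $\H_u$ over $Q_{G_1\backslash G}$, and shows it is a free object in the bimodule category: first, no torsion $v$ can be spectral in $\H_u$, since the $v$-isotypic component is $(\mu_v,\mu_u)\otimes H_v$ and, by normality of $G_1$, a torsion spectral $v$ would force $u\upharpoonright_{G_1}$ to be a multiple of $\iota$, i.e.\ $u=u_t$; second, any non-zero torsion $G$-submodule $X\subset\H_u$ is a finite-dimensional vector space --- this is where finiteness of $G_1\backslash G$ enters, through the linear isomorphism $\H_u\simeq H_u\otimes Q_{G_1\backslash G}$ of \cite[Theorem 6.3]{PRinduction} --- so the tensor subcategory of bimodules generated by $X$ has finitely many spectral irreducibles, and the explicit tensor-product and conjugation formulas for the $\H_u$ show that these spectral representations generate a finite tensor category with conjugates in $\Rep(G)$, i.e.\ they are all torsion, contradicting the first point. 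Freeness of $u\upharpoonright_{G_1}$ is then pulled back from ${\rm Ind}(\mu_u)$ because the functor $\mu_u\mapsto{\rm Ind}(\mu_u)$ is faithful, full and tensorial (\cite[Theorem 6.4]{PRinduction}). In short: where you induce the torsion object up and need an honest representation of $G$ containing it in restriction, the paper induces the restriction of a free object and shows the induced \emph{bimodule} carries no torsion at all; that restructuring is what makes every step land on statements \cite{PRinduction} actually provides, and it is the missing content in your sketch.
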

\begin{proof} a)$\Rightarrow$ b) follows from Proposition   \ref{tdprofiniteness}. b) $\Rightarrow$ a)
Since $\Rep(G)^t$ is a normal tensor  subcategory
of $\Rep(G)$, it is the representation category of $G_1\backslash G$. Moreover,  properties   c) and d) in the statement hold with $G_1$ in place of $\go$, by Theorem \ref{carattnormal}.
By  Theorem \ref{suffnormality}, $\go\subset G_1$.
 We are left to show that $G_1$ is connected. To this aim, note that by d) applied to $G_1$, for every irreducible torsion representation $u$ of $G$, the arrow space $(\iota, u\upharpoonright_{G_1})$ has full dimension, hence $u\upharpoonright_{G_1}$ is a multiple of the trivial representation. We are left to show that every irreducible free representation of $G$ after restriction to $G_1$ is still free.
We shall apply the theory of induction (and use notation) of \cite{PRinduction} to the tensor categories $\A=\Rep(G)$, $\M=\Rep(G_1)$, the
embedding functor $\tau=H$ of $\Rep(G)$ and the restriction functor $\mu:v\in\Rep(G)\to v\upharpoonright_{G_1}\in\Rep(G_1)$.

By \cite[Theorem 6.2]{PRinduction}, for each representation $u$ of $G$ there is a Hilbert bimodule
representation $\text{Ind}(\mu_u)$ of $G$ on a canonical Hilbert $G$-bimodule
$\H_u$ over the  coefficient $C^*$-algebra $\C=Q_{G_1\backslash G}$. By \cite[Theorem 6.4]{PRinduction}, the functor $\mu_u\to\text{Ind}(\mu_u)$ is faithful, tensorial and full. Hence it suffices to show that $\text{Ind}(\mu_u)$ is a free object if $u$ is a free irreducible representation of $G$.
If $v$ is an irreducible representation of $G$, the space of the $v$-isotypic component of the
$G$--bimodule $\H_u$ is   $(\mu_v, \mu_u)\otimes H_v$, with $G$ acting trivially on   $(\mu_v, \mu_u)$. Hence no torsion  $v\in\hat{G}$ can be spectral since otherwise $0\neq (\mu_v, \mu_u)=(\dim(v)\iota, \mu_u)$ and this would imply
$\mu_u=\dim(u)\iota$ since $G_1$ is normal, which in turn would imply
$\dim H_u=\dim(\iota, u\upharpoonright_{G_1})=\dim H_{u_t}$ and hence
$u=u_t$.

Let  $X$ be a non-zero torsion Hilbert $G$-submodule of $\H_u$. By \cite[Theorem 6.3]{PRinduction}, $\H_u$ is linearly isomorphic to $H_u\otimes Q_{{G_1}\backslash G}$ and ${G_1}\backslash G$ is finite, hence $\H_u$, and therefore also $X$, is a finite-dimensional vector space. It follows that the set $\hat{X}$ of inequivalent irreducible spectral representations  of $G$ arising from the full tensor $^*$-subcategory with conjugates and subobjects of the category of Hilbert $G$-bimodules generated by $X$ is finite. From the formula of tensor products and conjugates of the $\H_u$, according to \cite[Sections 7.3 and 7.4]{PRinduction}, we see that $\hat{X}$ must contain all the irreducible representations of $G$ lying in the tensor category with conjugates generated by the spectral representations of $X$. Therefore all spectral representations of $X$ are torsion, but this is impossible since $\H_u$ has no torsion spectral representation of $G$, and the proof is complete.
\end{proof}
\begin{cor}\label{corollary}
Let $G$ be a compact quantum group such that
${\mathcal Q}_G=\varinjlim{\mathcal Q}_{L_n}$, where for each $n\in{\mathbb N}$, $L_n$ is a quotient quantum group
 with the property that $\Rep(L_n)^t$ is tensorial, finite and normal
in $\Rep(L_n)$. Then $\go$ is normal and $\go\backslash G$ is profinite. In particular, $G$ has torsion degree $\leq1$. Moreover,
$${\mathcal Q}_{\go}=\varinjlim{\mathcal Q}_{(L_n)^\circ},$$
$${\mathcal Q}_{\go\backslash G} =\varinjlim{\mathcal Q}_{(L_n)^\circ\backslash L_n}.$$
\end{cor}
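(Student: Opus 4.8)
\emph{Proof proposal.} The plan is to reduce everything to Theorem \ref{connectedcomponent}, applied to each quotient $L_n$, and then to pass to the inductive limit. The device that makes the limit work is that torsion, and more generally the maximal torsion subobject $u_t$ of an object, is \emph{local}: it is computed identically inside any full tensor subcategory $\Rep(L_n)$ and inside all of $\Rep(G)$, so the finiteness available at each stage is relaxed to profiniteness in the limit.

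First I would record this locality. Since each $L_n$ is a quotient of $G$, Proposition \ref{categoriesquotients} identifies $\Rep(L_n)$ with a full tensor $C^*$-subcategory of $\Rep(G)$, and $\Q_G=\varinjlim\Q_{L_n}$ gives $\Rep(G)=\bigcup_n\Rep(L_n)$. For $u\in\Rep(L_n)$ the tensor subcategory generated by $u$ is the same in $\Rep(L_n)$ as in $\Rep(G)$, so $u$ is torsion in $L_n$ if and only if it is torsion in $G$; hence the $\Rep(L_n)^t$ form an increasing chain and
$$\Rep(G)^t=\bigcup_n\Rep(L_n)^t.$$
As an increasing union of tensorial categories, $\Rep(G)^t$ is itself tensorial. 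It is also normal in $\Rep(G)$: given irreducibles $u\in(\Rep(G)^t)^\perp$ and $v\in\Rep(G)^t$, both lie in some $\Rep(L_n)$, there $u\in(\Rep(L_n)^t)^\perp$ and $v\in\Rep(L_n)^t$, and the condition of Proposition \ref{normalcat} for $\Rep(L_n)^t$ in $\Rep(L_n)$ is exactly the one for $\Rep(G)^t$ in $\Rep(G)$ once one uses $(\overline u v u)_{\Rep(L_n)^t}=(\overline u v u)_{\Rep(G)^t}$. Thus $\Rep(G)^t$ is a normal tensor subcategory, so ${\mathcal N}_1=\Rep(G)^t$ and $G_1$ is the normal subgroup with $\Rep(G_1\backslash G)=\Rep(G)^t$.

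Next I would identify $G_1$ with the limit of the $(L_n)^\circ$ and prove it is connected. Comparing the arrow-space formula of Theorem \ref{carattnormal} for $G_1$ with that of Theorem \ref{connectedcomponent}d applied to $L_n$, and again using $(\overline u v)_{\Rep(L_n)^t}=(\overline u v)_{\Rep(G)^t}$, one obtains $(u\upharpoonright_{G_1},v\upharpoonright_{G_1})=(u\upharpoonright_{(L_n)^\circ},v\upharpoonright_{(L_n)^\circ})$ for all irreducible $u,v\in\Rep(L_n)$. Hence $\Q_{G_1}=\varinjlim\Q_{(L_n)^\circ}$, and each $\Rep((L_n)^\circ)$ is a full tensor subcategory of $\Rep(G_1)$. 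Any irreducible torsion object of $\Rep(G_1)$ lies in some $\Rep((L_n)^\circ)$, where by fullness it remains torsion, hence trivial since $(L_n)^\circ$ is connected (Theorem \ref{connectedcomponent} for $L_n$). Therefore $G_1$ is connected.

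It remains to assemble the conclusions. Since $G_1$ is connected, Theorem \ref{ntd} gives torsion degree $\leq 1$ and $G_1=G^{\rm n}$. Because the torsion degree is $\leq 1$, the only nontrivial term of the sequence is ${\mathcal N}_1=\Rep(G)^t$, all of whose objects restrict to multiples of the trivial representation of $\go$ by Proposition \ref{restrictingtorsiontoconnectedcomponent}; Corollary \ref{necsuff} then yields normality of $\go$, so $\go=G^{\rm n}=G_1$ and $\Q_{\go}=\varinjlim\Q_{(L_n)^\circ}$. Finally $\Rep(\go\backslash G)=\Rep(G)^t=\bigcup_n\Rep((L_n)^\circ\backslash L_n)$ with each $(L_n)^\circ\backslash L_n$ finite (Theorem \ref{connectedcomponent} for $L_n$), so $\Q_{\go\backslash G}=\varinjlim\Q_{(L_n)^\circ\backslash L_n}$ is an inductive limit of finite-dimensional Hopf $C^*$-algebras and $\go\backslash G$ is profinite. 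The main obstacle I anticipate is establishing the locality of torsion and of $u_t$ together with the compatibility of the two arrow-space formulas; once these are secured, the finite-stage results of Theorem \ref{connectedcomponent} transfer to the limit essentially formally.
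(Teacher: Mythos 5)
Your proposal is correct and follows essentially the same route as the paper's proof: apply Theorem \ref{connectedcomponent} to each $L_n$, use locality of torsion in full tensor subcategories to get $\Rep(G)^t=\bigcup_n\Rep(L_n)^t$ tensorial and normal, compare the intertwiner formulas of Theorem \ref{carattnormal} and Theorem \ref{connectedcomponent}d to identify $\Rep(G_1)=\bigcup_n\Rep((L_n)^\circ)$, and deduce connectedness of $G_1$, hence $G_1=\go$ and profiniteness of $\go\backslash G$ via $(5.1)$. The only (immaterial) divergence is in the final assembly, where you invoke Theorem \ref{ntd} and Corollary \ref{necsuff} to conclude $\go=G^{\rm n}=G_1$, while the paper obtains $\go\subset G_1$ directly from Theorem \ref{suffnormality} and then uses connectedness of $G_1$ to force equality.
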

\begin{proof} By the previous theorem, $(L_n)^\circ$ is normal in $L_n$,  $(L_n)^\circ\backslash L_n$ is finite and $\Rep(L_n)^t=\Rep((L_n)^\circ\backslash L_n)$, for all $n$.
Since $\Rep(G)=\cup_n\Rep(L_n)$ as full tensor subcategories then
$\Rep(G)^t=\cup_n\Rep(L_n)^t$. In particular, $\Rep(G)^t$
is a tensor subcategory of $\Rep(G)$ with direct sums. Normality of $\Rep(L_n)^t$
in $\Rep(L_n)$ for all $n$ implies normality of $\Rep(G)^t$ in $\Rep(G)$.
Hence $G_1\supset \go$ by Theorem \ref{suffnormality}. Moreover,
$$\Rep(G)^t=\Rep(G_1\backslash G)=\cup_n\Rep((L_n)^\circ\backslash L_n).\eqno(5.1)$$
The formula of intertwiners of $\Rep((L_n)^\circ)$ between restrictions of representations $u, v\in L_n$ to $(L_n)^\circ$ shows that the intertwiners do not change if we regard $u, v$ as objects of $L_{n+1}$ and we restrict them to $(L_{n+1})^\circ$. Therefore there is a natural inclusion
of full subcategories $\Rep((L_n)^\circ)\subset \Rep((L_{n+1})^\circ)$. Since
$\Rep(G_1)$ is determined by $\Rep(G)^t$ through a similar formula, $(5.1)$
implies
$$\Rep(G_1)=\cup_n\Rep((L_n)^\circ).$$
In particular, $\Rep(G_1)$ is torsion free, hence $G_1$ is connected implying in turn $G_1=\go$.
\end{proof}

\section{Noetherianity and finiteness of representation rings}\label{noetherianity}

  The aim of this section is to formulate properties of a geometric nature on compact quantum groups that ensure
  an analogue of the classical property that quotients of  Lie groups  by closed normal subgroups are Lie groups. More precisely, we aim to
  restrict the class of compact matrix quantum groups to a subclass which is closed under the passage to quotient quantum groups.

  In what follows, $R= R(G)$ will be the Grothendieck ring of (finite-dimensional) representations of a compact quantum group $G$.   We start observing that quotient
  quantum groups of $G$   are in one-to-one correspondence with certain subrings of $R$, and we next turn our attention to them.

\begin{defn}
A unital subring $A \subset R$ is a {\em sub-representation ring}, denoted $A < R(G)$, if it is closed under taking duals and subobjects; in other words, $A < R$ if and only if its elements are precisely the $\Zset$-linear combinations of the irreducible elements of $R$ contained in $A$.
\end{defn}
\begin{rem}
Let $\Irr(R)$ denote the set of all irreducible elements of the representation ring $R$. If $A < R$, then $\Irr(A) \subset \Irr(R)$. Sub-representation rings of $R$ are in one-to-one correspondence with full tensor subcategories (with conjugates,   subobjects and direct sums) of the category $\Rep(G)$, and are uniquely determined by their set of irreducible objects.
\end{rem}
If $X \subset \Irr(R)$, we denote by $\langle X \rangle$ the intersection of all sub-representation rings of $R$ containing $X$; this is again a sub-representation ring of $R$.
\begin{defn}
Let $G$ be a compact quantum group, $R$ its Grothendieck ring of representations. Then:
\begin{itemize}
\item
$R$ is {\em finitely generated} if it is finitely generated as a $\Zset$-algebra.
\item
$R$ is {\em Noetherian} if it is a Noetherian ring.
\item
$R$ is of {\em Lie type} if all increasing sequences
$$A_1 < A_2 < \dots < A_n \dots$$
of sub-representation rings stabilize.
\item
$R$  {\em has a generating representation} if there exists a finite subset $X \subset \Irr(R)$ such that $X \subset A < R$ implies $A = R$.
\end{itemize}
We will say that $G$ is of Lie type whenever $R$ is of Lie type. Clearly, $G$ is a compact matrix quantum group if and only if $R$ has a generating representation.
\end{defn}

\begin{rem}\label{rightleft}\qquad
\begin{itemize}
\item
The ring $R$ is endowed with an antiautomorphism which associates with every representation $v$ its conjugate representation $\overline v$. In particular, $R$ is isomorphic to its opposite ring $R^{\op}$. As a consequence, $R$ is left Noetherian if and only if it is right Noetherian.
\item
If $R$ is of Lie type and $A < R$, then $A$ is also trivially of Lie type.
\item
The Grothendieck ring $R(G)$ of a compact quantum group $G$ certainly contains strictly more information than its ring structure. Indeed, representation rings of the classical Lie groups $\SU(2)$ and $\SO(3)$ are both ring-isomorphic to the ring $\Zset[u]$; however, they are not isomorphic as representation rings, as the former contains a non-trivial sub-representation ring, whereas the latter does not.
\end{itemize}
\end{rem}

\begin{prop}\label{alsoright}
Let $A < R$. Then the injection $A \hookrightarrow R$ splits as a homomorphism of $A$-bimodules.
\end{prop}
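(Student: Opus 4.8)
The plan is to exhibit an explicit left inverse to the inclusion and then verify that it is $A$-bilinear. Since $\Rep(G)$ is semisimple, $R$ is a free $\Zset$-module with basis $\Irr(R)$, and by the definition of a sub-representation ring $A$ is precisely the submodule spanned by $\Irr(A)\subset\Irr(R)$. I would therefore let $p\colon R\to A$ be the $\Zset$-linear projection that fixes every element of $\Irr(A)$ and annihilates every element of $\Irr(R)\setminus\Irr(A)$; this is the obvious complementary-summand projection, and it satisfies $p|_A=\mathrm{id}_A$, so it is a candidate splitting of $A\hookrightarrow R$ as $\Zset$-modules. What remains is to upgrade it to a splitting of $A$-bimodules, i.e. to show $p(ar)=a\,p(r)$ and $p(ra)=p(r)\,a$ for all $a\in A$, $r\in R$.

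By $\Zset$-bilinearity it suffices to check these identities for $a=x\in\Irr(A)$ and $r=y\in\Irr(R)$. If $y\in\Irr(A)$, then $xy,yx\in A$ because $A$ is closed under tensor products and subobjects, so $p$ acts as the identity on both sides and the identities are immediate. The substantive case is $y\in\Irr(R)\setminus\Irr(A)$, where $p(y)=0$ and one must show $p(xy)=0=p(yx)$, i.e. that no irreducible lying in $A$ occurs in the decomposition of $xy$ or of $yx$. Suppose $w\in\Irr(A)$ occurs in $xy$, that is $\dim(w,xy)>0$. By Frobenius reciprocity \eqref{frobenius} we have $(w,xy)\simeq(\overline{x}\,w,y)$, so $y$ occurs in $\overline{x}\,w$; but $\overline{x}\in A$ by closure under conjugates and $w\in A$, hence $\overline{x}\,w\in A$ and every irreducible summand of it lies in $A$, forcing $y\in\Irr(A)$, a contradiction. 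The same argument with the other isomorphism $(w,yx)\simeq(w\,\overline{x},y)$ handles $yx$. Thus $p(xy)=p(yx)=0$, which establishes $A$-bilinearity.

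The only real content is this last Frobenius reciprocity step, and the crucial hypotheses it consumes are exactly that $A$ is closed both under conjugation and under tensor products, which are precisely the two conditions packaged into the definition of a sub-representation ring. I do not expect a serious obstacle here: once one observes that left or right multiplication by an element of $A$ cannot carry a basis vector out of the complement $\langle\Irr(R)\setminus\Irr(A)\rangle$ back into $A$, the verification is elementary. Concluding, $p$ is an $A$-bimodule homomorphism restricting to the identity on $A$, so it splits the inclusion $A\hookrightarrow R$ as a homomorphism of $A$-bimodules, as required.
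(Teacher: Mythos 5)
Your proof is correct and follows essentially the same route as the paper: you project onto $A$ along the $\Zset$-span of $\Irr(R)\setminus\Irr(A)$, exactly the decomposition $R=A\oplus U$ used there, and your Frobenius reciprocity step is precisely the content (and the proof) of the paper's Lemma \ref{splitinj}, which the paper simply cites at this point. The only difference is that you inline that lemma's argument rather than invoking it.
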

\begin{proof}
Let $U \subset R$ be the $\Zset$-submodule generated by $\Irr(R) \setminus \Irr(A)$. Then $R$ decomposes into $A \oplus U$ as a $\Zset$-module, and $U$ is an $A$-bisubmodule of $B$ by Lemma \ref{splitinj}.
 \end{proof}

\begin{thm}\label{noetherian}
Let $A < R$. If $R$ is Noetherian, then $A$ is also Noetherian.
\end{thm}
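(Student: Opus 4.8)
The plan is to exploit the $A$-bimodule splitting furnished by Proposition \ref{alsoright}. By Remark \ref{rightleft}, $R$ carries an antiautomorphism $v\mapsto\overline v$, so $R$ is left Noetherian precisely when it is right Noetherian; this antiautomorphism restricts to $A$, so it will suffice to prove that $A$ is left Noetherian, the right-handed statement following formally. Write $p\colon R\to A$ for the $A$-bimodule retraction attached to the decomposition $R=A\oplus U$ of Proposition \ref{alsoright}; thus $p$ is additive, $p|_A=\mathrm{id}_A$, and $p(axb)=a\,p(x)\,b$ for all $a,b\in A$ and $x\in R$.

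First I would record a contraction identity for ideals. Given a left ideal $I$ of $A$, let $RI$ be the left ideal of $R$ it generates, consisting of finite sums $\sum_k r_k i_k$ with $r_k\in R$ and $i_k\in I$. Applying $p$ and using its right $A$-linearity (note $i_k\in I\subset A$) gives $p(\sum_k r_k i_k)=\sum_k p(r_k)\,i_k$; since each $p(r_k)$ lies in $A$ and $I$ is a left ideal of $A$, every term $p(r_k)i_k$ lies in $I$, whence $p(RI)\subset I$. Conversely $I=1\cdot I\subset RI$ and $p|_A=\mathrm{id}$, so $I=p(I)\subset p(RI)$. Therefore
\begin{equation*}
p(RI)=I
\end{equation*}
for every left ideal $I$ of $A$.

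With this identity in hand the transfer of the ascending chain condition is immediate. Let $I_1\subset I_2\subset\cdots$ be an ascending chain of left ideals of $A$. Extending to $R$ yields an ascending chain $RI_1\subset RI_2\subset\cdots$ of left ideals of $R$, which stabilizes at some index $n$ because $R$ is left Noetherian. Applying $p$ and invoking $p(RI_m)=I_m$ shows $I_n=I_{n+1}=\cdots$, so the original chain stabilizes. Hence $A$ is left Noetherian, and therefore Noetherian by the remark above.

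The one point requiring care — and the reason a mere ring-theoretic retract would not do — is precisely the contraction identity $p(RI)=I$: it rests on $p$ being a map of $A$-\emph{bimodules}, so that $p$ commutes with right multiplication by the elements of $I$. This two-sided $A$-linearity is exactly what Proposition \ref{alsoright} supplies through Lemma \ref{splitinj}, and it is the crux of the argument; the rest is the standard extension–contraction passage for the ascending chain condition.
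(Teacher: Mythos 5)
Your proof is correct and follows essentially the same route as the paper's: both rest on the $A$-bimodule splitting $R=A\oplus U$ of Proposition \ref{alsoright} and the extension--contraction passage for ascending chains, the only cosmetic difference being that you contract via the retraction identity $p(RI)=I$ while the paper contracts via $RI\cap A=I$, proved through the internal decomposition $RI=I\oplus UI$ with $UI\subset U$. Your explicit reduction of the two-sided statement to the left-handed one via the conjugation antiautomorphism (which restricts to $A$ since $A$ is closed under duals) is a welcome touch the paper leaves implicit.
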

\begin{proof}
Let $I\subset A$ be a left ideal. Then $RI$ is a left ideal of $R$. If $R = A \oplus U$ is a direct sum decomposition of (left) $A$-modules, then $RI = AI \oplus UI$. Now, $AI = I\subset A$ as $I$ is a left $A$-module and $1 \in A$; moreover $UI \subset U$ as, by Proposition \ref{alsoright}, $U$ is a right $A$-submodule. This implies that $RI \cap A = I$, hence if $I \subsetneq I'$ are left ideals of $A$, then $RI \subsetneq RI'$.

Say $A$ is not left Noetherian. Then there exists an infinite ascending sequence of proper inclusions
$$I_1 \subset I_2 \subset \dots \subset I_n \subset \dots$$
among left ideals of $A$. This yields an infinite ascending sequence of proper inclusions
$$R I_1 \subset R I_2 \subset \dots \subset R I_n \subset \dots$$
of left ideals of $R$. Noetherianity of $R$ leads now to a contradiction.
\end{proof}
\begin{rem}
Notice that both summands in the above decomposition $RI = I \oplus UI$ are left $A$-submodules, as, by Proposition \ref{alsoright}, $A(UI) = (AU)I \subset UI$. We will not need this fact.
\end{rem}

The map $\dim: R \to \Zset$ is a homomorphism of rings, hence $I_A = \ker \dim|_A$ is a two-sided ideal of $A$. Then $J_A = R I_A$ is a left ideal of $R$.
\begin{lemma}\label{irrdim}
One has $\Irr(A) = \{ u \in \Irr(R)\,|\, u - \dim u \in J_A\}$. In particular, the assignment $A \mapsto J_A$ is injective.
\end{lemma}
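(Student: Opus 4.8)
The plan is to establish the displayed set equality by a double inclusion, and then read off injectivity of $A \mapsto J_A$ as a formal consequence. Throughout I would work with the $\Zset$-module decomposition $R = A \oplus U$ of Proposition \ref{alsoright}, where $U$ is the span of $\Irr(R) \setminus \Irr(A)$; the point I will repeatedly use is that $U$ is in particular a \emph{right} $A$-submodule of $R$, by Lemma \ref{splitinj}. As a preliminary I would record that $I_A = \ker(\dim|_A)$ is spanned, as an abelian group, by the elements $u - \dim u$ with $u \in \Irr(A)$ (here $\dim u$ abbreviates $\dim(u)\,\iota$): any $\sum_u n_u u \in A$ equals $\sum_u n_u (u - \dim u) + \big(\sum_u n_u \dim u\big)\iota$, and the last term vanishes precisely when the element has dimension $0$.

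The inclusion $\Irr(A) \subseteq \{u \in \Irr(R) : u - \dim u \in J_A\}$ is immediate: for $u \in \Irr(A)$ one has $u - \dim u \in I_A = \iota\cdot I_A \subseteq R I_A = J_A$. The substance of the argument is the reverse inclusion, and the key step is to compute $J_A$ inside $R = A \oplus U$. Multiplying $R = A \oplus U$ on the right by $I_A \subseteq A$ and using $U A \subseteq U$, I obtain $J_A = R I_A = I_A \oplus U I_A$, with $I_A \subseteq A$ and $U I_A \subseteq U$. Consequently the $A$-component (the image under the projection $R \to A$ along $U$) of \emph{every} element of $J_A$ lies in $I_A$, hence has dimension $0$.

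With this in hand the reverse inclusion follows by a dimension count. Take $v \in \Irr(R)$ with $v - \dim v \in J_A$, and suppose toward a contradiction that $v \notin \Irr(A)$. Then $v$ is a basis irreducible of $U$, while $\dim v = \dim(v)\,\iota \in A$, so the $A$-component of $v - \dim v$ is exactly $-\dim(v)\,\iota$; applying $\dim$ yields $-\dim v$, which is nonzero because $v$ is a nontrivial genuine representation and so $\dim v \geq 1$. This contradicts the fact that the $A$-component must lie in $I_A = \ker(\dim|_A)$, and therefore $v \in \Irr(A)$. This proves the set equality. Injectivity is then formal: the formula recovers $\Irr(A)$ from $J_A$, so $J_A = J_B$ forces $\Irr(A) = \Irr(B)$, and a sub-representation ring is the $\Zset$-span of its irreducible objects, whence $A = B$.

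The only genuine subtlety, and the step I would handle most carefully, is the direct-sum decomposition $J_A = I_A \oplus U I_A$: it rests on $U$ being a right $A$-module (Proposition \ref{alsoright}, via Lemma \ref{splitinj}), without which the projection of $J_A$ to $A$ would not be controlled. The second, lighter, ingredient is the positivity $\dim v \geq 1$ of the dimension of a nontrivial irreducible, which is exactly what makes the offending $A$-component $-\dim(v)\,\iota$ detectably nonzero under $\dim$.
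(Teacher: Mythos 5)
Your proof is correct and follows essentially the same route as the paper: the same decomposition $J_A = R I_A = I_A \oplus U I_A$ (resting on $U$ being a right $A$-submodule via Proposition \ref{alsoright}), followed by the same contradiction that for $v \in \Irr(R) \setminus \Irr(A)$, the element $v$ lies in $U$, so the $A$-component of $v - \dim v$ is $-\dim(v)\,\iota$, which must lie in $J_A \cap A = I_A$ and hence forces $\dim v = 0$. The paper leaves the injectivity claim implicit after the set equality, which you spell out explicitly, but that is a purely formal addition.
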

\begin{proof}
We know that $R$ decomposes into the direct sum of $A$-submodules $A \oplus U$, and correspondingly $J_A = I_A \oplus UI_A$. Let $u \in \Irr(R)$ satisfy $u - \dim u \in J_A$, and assume $u \in \Irr(R) \setminus \Irr(A)$. Then $u \in U$, hence $(-\dim u) + u$ is the unique expression of $u - \dim u$ as sum of an element from $A$ and an element from $U$. As $u - \dim u \in J_A$, then $- \dim u$ belongs to $J_A\cap A = I_A$, which forces $\dim u = 0$, a contradiction.
\end{proof}

\begin{thm}\label{lietype}
If $R$ is Noetherian then $R$ is of Lie type.
\end{thm}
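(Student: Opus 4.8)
The plan is to exploit the injective assignment $A\mapsto J_A$ supplied by Lemma \ref{irrdim}, transporting an ascending chain of sub-representation rings of $R$ into an ascending chain of left ideals of $R$, where Noetherianity forbids infinite strict growth. The only genuinely new point to verify is that this assignment respects inclusions; everything else is then a formal consequence of the ascending chain condition on left ideals.

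First I would check that $A\mapsto J_A$ is order-preserving. If $A<B<R$, then $A\subset B$, and hence any element of $A$ annihilated by $\dim$ is an element of $B$ annihilated by $\dim$, i.e. $I_A=\ker(\dim|_A)\subset\ker(\dim|_B)=I_B$. Multiplying on the left by $R$ yields $J_A=RI_A\subset RI_B=J_B$, so the assignment preserves inclusions. Combining this monotonicity with the injectivity established in Lemma \ref{irrdim} gives strict monotonicity: a strict inclusion $A\subsetneq B$ of sub-representation rings forces $J_A\subset J_B$ by the previous observation and $J_A\neq J_B$ by injectivity, whence $J_A\subsetneq J_B$.

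Finally, let $A_1<A_2<\cdots$ be any increasing sequence of sub-representation rings of $R$. Applying the assignment produces an increasing sequence $J_{A_1}\subset J_{A_2}\subset\cdots$ of left ideals of $R$. Since $R$ is Noetherian --- left Noetherianity being equivalent to right Noetherianity by Remark \ref{rightleft} --- this chain of left ideals stabilizes, so that $J_{A_n}=J_{A_{n+1}}=\cdots$ for some $n$. By injectivity of $A\mapsto J_A$ the original chain then stabilizes as well, $A_n=A_{n+1}=\cdots$, and therefore $R$ is of Lie type.

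The main (and essentially only) obstacle was already surmounted in Lemma \ref{irrdim}, namely exhibiting a correspondence between sub-representation rings and left ideals of $R$ that is injective, so that the Noetherian condition on $R$ can be brought to bear on chains of full tensor subcategories. What remains above is merely to record that this correspondence is monotone, after which the argument reduces to the standard transfer of the ascending chain condition along an order-embedding.
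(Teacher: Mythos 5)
Your proof is correct and takes essentially the same approach as the paper: both arguments transport an ascending chain of sub-representation rings to an ascending chain of one-sided ideals of $R$ via the injective assignment $A \mapsto J_A$ of Lemma \ref{irrdim} and then invoke Noetherianity (the paper phrases this with right ideals and right Noetherianity, you with left ideals, which is immaterial by Remark \ref{rightleft}). Your explicit check that the assignment is order-preserving, so that strict inclusions of sub-representation rings yield strict inclusions of ideals, simply makes precise a step the paper leaves implicit.
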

\begin{proof}
Let
$$A_1 < A_2 < \dots < A_n < \dots$$
be an infinite ascending sequence of proper inclusions of sub-representation rings of $R$. Then
$$J_{A_1} \subset J_{A_2} \subset \dots \subset J_{A_n} \subset \dots$$
is an infinite ascending sequence of proper inclusions of right ideals of $R$. However, $R$ is right Noetherian, and we get a contradiction.
\end{proof}

\begin{thm}\label{generatingrep}
If $R$ is of Lie type then $R$ has a generating representation. Equivalently, every compact quantum group of Lie type is a compact matrix quantum group.
\end{thm}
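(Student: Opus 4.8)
The plan is to argue by contraposition: I will show that if $R$ has no generating representation, then $R$ fails to be of Lie type. The second sentence of the statement is then immediate, since by definition a compact quantum group is a compact matrix quantum group exactly when $R$ has a generating representation.

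First I would unwind the hypothesis. By the definition of $\langle X\rangle$ as the intersection of all sub-representation rings containing $X$, having a generating representation is equivalent to the existence of a finite $X\subset\Irr(R)$ with $\langle X\rangle=R$: indeed, if $\langle X\rangle=R$ then any sub-representation ring $A$ with $X\subset A$ satisfies $R=\langle X\rangle\subset A$, so $A=R$; the converse is clear. Hence the assumption that $R$ has no generating representation says precisely that $\langle X\rangle\neq R$ for every finite $X\subset\Irr(R)$.

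Next I would produce an infinite, non-stabilizing ascending chain by recursion. Put $A_0:=\langle\emptyset\rangle=\Zset\,\iota$. Given $A_n=\langle u_1,\dots,u_n\rangle$, note that $A_n\neq R$ because $A_n$ is generated by the finite set $\{u_1,\dots,u_n\}$; therefore $\Irr(R)\setminus\Irr(A_n)\neq\emptyset$, and I may choose an irreducible $u_{n+1}$ in it and set $A_{n+1}:=\langle u_1,\dots,u_{n+1}\rangle$. Each $A_{n+1}$ is again a sub-representation ring containing $A_n$ together with the irreducible $u_{n+1}\notin\Irr(A_n)$, so the inclusion $A_n<A_{n+1}$ is proper. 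This yields a strictly increasing sequence $A_0<A_1<A_2<\cdots$ that never stabilizes, contradicting the Lie property; hence some finite $X$ must generate $R$.

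This is the familiar observation that the ascending chain condition on the lattice of sub-representation rings forces $R$ to be finitely generated. I do not expect a genuine obstacle here; the only points needing care are routine and follow directly from the definitions: that an arbitrary intersection of sub-representation rings is again one (so that each $\langle X\rangle$, and in particular each $A_n$, lies in the correct class), and that the assumption guarantees $A_n\neq R$ at every stage, which is exactly what allows the recursion to continue and produce the infinite chain.
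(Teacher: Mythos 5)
Your proof is correct and is essentially the paper's own argument: the paper runs the same recursion (adjoining an irreducible $u_{i+1}\in\Irr(R)\setminus\Irr(A_i)$ at each step) and concludes that since the strictly ascending chain must stabilize by the Lie property, some $A_N$ already exhausts $\Irr(R)$, whence $R=\langle u_1,\dots,u_N\rangle$. Your contrapositive packaging, and the preliminary check that a generating representation is the same as a finite $X\subset\Irr(R)$ with $\langle X\rangle=R$, are just explicit renderings of steps the paper leaves implicit.
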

\begin{proof}
Set $Y_0 = \emptyset$, and define inductively $A_i = \langle Y_i \rangle$, $Y_{i+1} = Y_i \cup \{u_{i+1}\}$, where $u_{i+1} \in \Irr(R) \setminus \Irr(A_i)$.
Then
$$A_0 < A_1 < A_2 < \dots < A_n < \dots$$
does not stabilize, hence there must exist $N$ such that $\Irr(R) \setminus \Irr(A_N) = \emptyset$. This forces $R = \langle u_1, \dots, u_N\rangle$.
\end{proof}

\begin{rem}
Assume we have a ring homomorphism $d: R \to \Qset$ which takes non-zero values on irreducible representations. Then $I^d_A = \ker d|_A$ is a two-sided ideal of $A$, and $J^d_A = R I^d_A$ is a left ideal of $R$. The proof of Lemma \ref{irrdim} may be adapted to prove that the assignment $A \mapsto J^d_A$ is injective. Indeed,
$$\Irr(A) = \{ u \in \Irr(R)\,|\, s u - r \in J_A \mbox{ for some non-zero } r, s \in \Zset \mbox{ satisfying } d(u) = r/s\}.$$
A similar argument applies when $K$ is a number field, $\OK$ its ring of algebraic integers, $R_K = \OK \otimes_\Zset R$, $A_K = \OK \otimes_\Zset A$, and we are given a ring homomorphism $d: R \to K$ which is non-zero on irreducible representations.

This may be applied towards hypergroups possessing a dimension function taking values in a number field $K$ and not necessarily associated to compact quantum groups. Indeed, if the fusion ring is Noetherian, then it stays Noetherian after we tensor it by the finitely generated $\Zset$-algebra $\OK$. Then adapting Lemma \ref{irrdim} and Theorem \ref{lietype} shows that the hypergroup satisfies the ascending chain condition on sub-hypergroups, and contains a generating representation. This should be compared with the well known result by Etingof, Nikshych and Ostrik who proved that
any complex-valued homomorphism of the Grothendieck ring of a fusion category (with finitely many inequivalent irreducible representations) takes values in
${\mathbb Q}(\zeta)$, with $\zeta$ some  root of unity. In particular,
the Jones index of a subfactor with finite depth is a cyclotomic integer
 \cite[Theorem 8.51]{ENO}.
\end{rem}

Declare a property (*) to be {\em hereditary} whenever
$$R \mbox{ has property (*)}, A < R \Longrightarrow A \mbox{ has property (*)}.$$
Then being of Lie type is trivially a hereditary property, and Theorem \ref{noetherian} shows that Noetherianity is also hereditary. In the commutative case, being finitely generated implies Noetherianity, and a result of Hashimoto shows that being finitely generated is also a hereditary property
\cite{hashimoto}, cf. Subsection 6.1.

Hereditary properties of Grothendieck rings of representations of a compact quantum group $G$ are inherited by all quotients of $G$. Therefore all quotients of a compact quantum group of Lie type are still of Lie type, and the same holds for the property of having a Noetherian representation ring, whereas this certainly fails for the property of being a compact matrix quantum group, already in the cocommutative case: indeed, not every subgroup of a finitely generated group is finitely generated.

\begin{rem}
In the classical case, if $G$ is a compact  group, and $R$ is its representation ring, then $G$ is a Lie group if and only if $R$ is Noetherian. Indeed, if it is not a Lie group, then it has an infinite strictly increasing sequence of quotients, which is equivalent to $R$ having an infinite strictly increasing sequence of sub-representation rings. Vice-versa, if $G$ is a compact connected Lie group, then we may find a finite cover of $G$ isomorphic to a direct product $K \times T$, where $K$ is a simply connected compact Lie group, and $T$ is a torus.
Then it is easy to show that the representation ring of $K \times T$ equals $R = \Zset[u_1, \dots, u_r, \chi_1^{\pm 1}, \dots, \chi_d^{\pm 1}]$, where $r$ is the rank of $K$, $u_1, \dots, u_r$ are the fundamental representations of $K$, and $d$ is the dimension of $T$. As $R$ is Noetherian, then all of its sub-representation rings, including that of $G$, are too.
In the case where $G$ is a general compact Lie group, Segal showed that $R(G)$ is a finitely generated
ring. In particular, being commutative, it is still Noetherian \cite{Segal}.
We conclude that Noetherianity, being of Lie type, possessing a generating representation and being finitely generated, are equivalent requirements in the classical setting.

A compact quantum group with representation ring isomorphic to that of a compact Lie group is of Lie type. In particular, deformations of the classical groups as well as $A_o(F)$ are of Lie type.
\end{rem}

\begin{ex}
If $G$ is a cocommutative quantum group associated to the discrete group $\Gamma$,
$R(G)$ reduces to the group ring ${\mathbb Z}\Gamma$. Correspondingly, the Lie property becomes the requirement that $\Gamma$ is a Noetherian group, i.e., that it satisfies the ascending chain condition on subgroups. Equivalently, every subgroup is finitely generated. The previous results generalize properties known for group rings  (see, e.g., \cite{Rowen}) to representation rings of compact quantum groups. The examples known in the literature recalled  in the Introduction distinguish the various properties.
\end{ex}

In analogy with the fact that the free groups are not Noetherian, we show the following fact.

\begin{thm}\label{$A_u(F)$}The quantum groups $A_u(F)$ are not of Lie type.
\end{thm}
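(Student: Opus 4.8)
The plan is to produce an explicit, strictly increasing, non-stabilizing chain of sub-representation rings of $R(A_u(F))$, in direct analogy with the fact that the free group, though finitely generated, contains infinite properly ascending chains of (necessarily non-finitely-generated) subgroups. I would start from Banica's description of the fusion rules \cite{Banica}, which are independent of $F$: the irreducible objects of $\Rep(A_u(F))$ are indexed by the free monoid on two letters $a,b$, where $a$ is the fundamental representation and $b=\overline a$, conjugation acts by $\overline{x_1\cdots x_k}=\overline{x}_k\cdots\overline{x}_1$ with $\overline a=b$ and $\overline b=a$, and the fusion rule reads
$$x\otimes y=\sum_{x=vz,\ y=\overline z\,w}vw,$$
the sum being over all factorizations in which $z$ is a suffix of $x$ and $\overline z$ the matching prefix of $y$. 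Assigning weight $+1$ to $a$ and $-1$ to $b$, I record a word $x=x_1\cdots x_k$ by its partial sums $s_0(x)=0,\ s_1(x),\dots,s_k(x)$; since $z$ and $\overline z$ contribute opposite total weights, every irreducible appearing in $x\otimes y$ has the same total weight $s_k(x)+s_k(y)$.

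For each $n\ge 1$ I would then set $S_n$ to be the set of words $x$ that are \emph{balanced of height} $\le n$, meaning $s_k(x)=0$ and $0\le s_j(x)\le n$ for all $j$ (Dyck words of bounded height), and put $A_n=\Zset S_n$. Closure of $A_n$ under conjugation is immediate from the identity $s_j(\overline x)=s_{k-j}(x)$, valid for balanced $x$, which shows that conjugation merely reverses the sequence of partial sums, preserving both balancedness and the height bound. The crucial point — which I expect to be the main obstacle — is closure under fusion: I must check that every constituent $vw$ of $x\otimes y$ with $x,y\in S_n$ again lies in $S_n$. Here $v$ is the prefix of $x$ of length $|v|$, ending at height $h=s_{|v|}(x)\ge 0$, while $w$ is the suffix of $y$ following $\overline z$; since $z$ is the suffix of $x$ after $v$ one has $s_k(\overline z)=h$, so $w$ starts, inside $y$, at height $h$ and runs back down to $0$ staying in $[0,n]$. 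Consequently the height profile of $vw$ is that of $v$ (from $0$ to $h$, within $[0,n]$) followed by exactly that tail of $y$, so it stays in $[0,n]$ and returns to $0$, giving $vw\in S_n$. As concatenation ($z=\emptyset$) is always a constituent, $S_n$ is in particular a submonoid, and together with the two closure properties this makes $A_n$ a genuine sub-representation ring, $A_n<R(A_u(F))$.

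Finally, $a^nb^n\in S_n$ has height exactly $n$, hence $a^nb^n\in S_n\setminus S_{n-1}$, and the inclusions
$$A_1<A_2<\dots<A_n<\dots$$
are all strict (their union consists of all balanced words with non-negative partial sums). This infinite properly ascending chain does not stabilize, so by definition $R(A_u(F))$ is not of Lie type. The only delicate step is the fusion-closure of $S_n$ sketched above; the remaining verifications are routine bookkeeping on partial sums.
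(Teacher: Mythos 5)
Your proof is correct, and while it follows the same overall strategy as the paper --- exhibiting a strictly increasing, non-stabilizing chain of sub-representation rings of $R(A_u(F))$ by exploiting Banica's free-monoid fusion rules --- the chain you build and the way you verify it are genuinely different. The paper takes $A_d=\langle \iota, \overline{u}^ru^r:\ 1\le r\le d\rangle$, the sub-representation ring \emph{generated} by the elements $\overline{u}^ru^r$; closure is then automatic, and all the work goes into an inductive, multi-case analysis of the constituents of $\overline{u}^pu^q\otimes\overline{u}^ru^r$, showing that the only length-one words occurring in iterated products of the generators are $\overline{u}^ru^r$ with $r\le d$, whence $\overline{u}^{d+1}u^{d+1}\in A_{d+1}\setminus A_d$. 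You instead \emph{define} each term of your chain explicitly as the $\Zset$-span of the balanced words of height at most $n$, and prove fusion-closure directly: your partial-sum bookkeeping is sound, the key point being that $z$ has total weight $-h$, so $\overline{z}$ ends at height $h$ inside $y$ and the profile of $vw$ is the profile of $v$ followed verbatim by the tail of the profile of $y$; likewise the reversal identity $s_j(\overline{x})=s_{k-j}(x)$ for balanced $x$ correctly gives closure under conjugates, and closure under subobjects is automatic because $A_n$ is spanned by a set of irreducibles, so strictness follows from $a^nb^n\in S_n\setminus S_{n-1}$ by linear independence of irreducibles. Note that your filtration is not the paper's in disguise: under your conventions the paper's generators $\overline{u}^ru^r=b^ra^r$ have negative partial sums and lie outside every $S_n$. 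What your route buys is transparency and completeness --- once the two conserved quantities (balance and bounded height) are identified, closure and strictness are immediate, whereas the paper's generated-subring argument compresses the combinatorics into a rather terse induction and case analysis; what the paper's route buys is a chain with small, natural generators. Both arguments ultimately rest on the same mechanism: a conserved balance (equal numbers of $u$ and $\overline{u}$) together with a boundedness constraint that fusion cannot violate.
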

\begin{proof}
For a positive integer $d$, let $A_d$ be the sub-representation ring of $R(A_u(F))$
generated by $\iota$ and $\{\overline{u}^ru^r, r=1,\dots,d\}$. This is clearly an increasing
sequence of sub-representation rings. We show that $A_d$ is strictly increasing.

Banica \cite{Banica} showed that the irreducible representations of $A_u(F))$ are
labeled  by the elements of the  free unital semigroup ${\mathbb N}*{\mathbb N}$ with the following fusion rules.   The semigroup product and the representation tensor product will be denoted by
$xy$ and $x\otimes y$ respectively.
Let $u$ and $\overline{u}$ be the
generators of ${\mathbb N}*{\mathbb N}$.
One has:
$xu\otimes \overline{u}y=xu\overline{u}y+x\otimes y$, $xu\otimes uy=xu^2y$, and similar
relations with the roles of $u$ and $\overline{u}$ exchanged. It follows that
for $p$, $q$, $r\geq1$, the irreducible subrepresentations of $\overline{u}^pu^q\otimes\overline{u}^ru^r$ are of the following form.
$a_1$) For $r\geq q$,
$\overline{u}^pu^{q-j}\overline{u}^{r-j}u^r$
for $j=0,\dots q-1$  and, in addition, $a_{1,1}$) for $r>q$, $\overline{u}^{p+r-q}u^r$;  $a_{1,2,1}$) $r=q$, $p>r$,
$\overline{u}^{p-j}u^{r-j}$, $j=0,\dots,r-1$, $\overline{u}^{p-r}$
$a_{1,2,2}$) for $r=q$, $p\leq r$, $\overline{u}^{p-j}u^{r-j}$, $j=0,\dots,p-1$
and in addition $a_{1,2,2,1}$) for $r=q$, $p<r$, $u^{r-p}$; $a_{1,2,2,2}$) $r=q$, $p=r$, $\overline{u}u$; $a_2$) $r<q$, $\overline{u}^pu^{q-j}\overline{u}^{r-j}u^r$, $j=0,\dots,r-1$,
$\overline{u}^pu^q$.

For a word of the form $\overline{u}^{p_1}u^{q_1}\dots \overline{u}^{p_k}u^{q_k}$,
with $p_i$, $q_i\geq1$, we refer to $k$ as its length.
An inductive argument on  $n$ shows that  the irreducible subrepresentations
of a tensor product
$\overline{u}^{r_1}u^{r_1}\otimes \dots \otimes \overline{u}^{r_n}u^{r_n}$, $1\leq r_j\leq d$,  if not trivial, are words
$\overline{u}^{p_1}u^{q_1}\dots \overline{u}^{p_k}u^{q_k}$
of length $1\leq k\leq n$ satisfying $\sum_j p_j=\sum _jq_j$. In particular, case $a_{1,2,2,1})$ does not arise for $k=1$. It follows that   $\{\overline{u}^ru^r, 1\leq r\leq d\}$ are all the words of length $1$ obtained in this way. In particular, $\overline{u}^{d+1}u^{d+1}\in A_{d+1}- A_d$.
\end{proof}

\begin{rem}\label{nosubgroups}
Both in the commutative and in the cocommutative case, the property of being of  Lie type is preserved by passing to quantum subgroups. This fact does not hold for general compact quantum groups, even if
the representation ring of the larger group is isomorphic to that of a compact Lie group (hence commutative and finitely generated).
For example, the   quantum group $G=A_o(n)$ of Wang admits, as a subgroup, the cocommutative quantum group associated to the free product $\Cyc_2*\dots*\Cyc_2$ of $n$ copies of the cyclic group of order $2$, see \cite{Wang_free}. This group  is not Noetherian for $n\geq 3$
since it contains the free group ${\mathbb F}_2$.
\end{rem}

\subsection{Applications to finiteness of $G^\circ\backslash G$}\

\medskip

\noindent Theorem \ref{connectedcomponent}  leaves us with the problem of deciding under what conditions the torsion subcategory  $\Rep(G)^t$
associated to a compact quantum group $G$ is tensorial, finite and normal.  A relevant part of the problem is that of ensuring tensoriality and finiteness of $\Rep(G)^t$.
As observed in the introduction, the cocommutative examples lead to consider the case where $\Rep(G)^t$ is commutative as a first class of examples.

\begin{cor}\label{commthentensfinite}
Let $G$ be a compact quantum group of Lie type. Then (all subgroups and) all quotient quantum groups of $G$ are compact matrix quantum groups. In particular, if torsion representations commute (up to equivalence) then $\Rep(G)^t$ is tensorial and finite.
\end{cor}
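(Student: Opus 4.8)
The plan is to deduce both assertions from the correspondence between quotient quantum groups and sub-representation rings, together with the hereditary character of the Lie property. First I would recall that, by Proposition \ref{categoriesquotients} and the remarks preceding Proposition \ref{alsoright}, quotient quantum groups of $G$ are in bijection with sub-representation rings $A<R(G)$. By the second item of Remark \ref{rightleft}, each such $A$ is again of Lie type, so Theorem \ref{generatingrep} applies to $A$ and produces a generating representation; equivalently, the corresponding quotient is a compact matrix quantum group. This disposes of the quotient assertion. For the parenthetical claim about subgroups, note that $G$ itself is a compact matrix quantum group by Theorem \ref{generatingrep}: if $u$ generates $\Rep(G)$ and $K$ is any quantum subgroup, then applying the restriction functor --- a tensor $^*$-functor --- shows that $u\upharpoonright_K$ generates every restricted representation, and since every irreducible of $K$ is a subobject of some restricted representation, $u\upharpoonright_K$ is a generating representation of $K$.

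For the final assertion I would first invoke Proposition \ref{commutativetorsion}a: commutativity up to equivalence of torsion representations makes $\Rep(G)^t$ a full tensor $C^*$-subcategory with conjugates, subobjects and direct sums, that is, it is tensorial. Such a subcategory corresponds to a sub-representation ring $A<R(G)$, hence to a quotient quantum group $L$ of $G$. By the first part $L$ is a compact matrix quantum group, so $A$ admits a generating representation $\{v_1,\dots,v_n\}$, i.e. $\langle v_1,\dots,v_n\rangle=A$. Setting $v=v_1\oplus\cdots\oplus v_n$, closure of $\Rep(G)^t$ under direct sums (again Proposition \ref{commutativetorsion}a) guarantees that $v$ is itself a torsion object, so by definition the tensor $C^*$-subcategory $\T_v$ with conjugates and subobjects it generates has only finitely many inequivalent irreducibles. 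Each $v_i$ is a subobject of $v$ and hence lies in $\T_v$; since $\T_v$ is closed under tensor products, conjugates and subobjects, every irreducible of $A=\langle v_1,\dots,v_n\rangle$ already occurs among the finitely many irreducibles of $\T_v$. Therefore $\Rep(G)^t$ is finite.

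The one step requiring genuine care --- and the one I regard as the crux --- is the passage from ``admitting a generating representation'' to ``having finitely many irreducible objects'': a generating representation by itself never forces finiteness, which is precisely why a compact matrix quantum group need not be finite. The mechanism that forces finiteness here is that, under the commutativity hypothesis, the finitely many generators can be amalgamated into the single object $v$, which remains torsion, so that the defining property of a torsion object caps the number of irreducibles. Commutativity is thus used twice: once to secure tensoriality of $\Rep(G)^t$, and once to ensure that the direct sum of the generators is still torsion.
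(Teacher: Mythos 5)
Your proof is correct and takes essentially the same route as the paper's: quotients correspond to sub-representation rings, the Lie property is hereditary so Theorem \ref{generatingrep} yields a generating representation for every quotient (with the standard restriction argument covering subgroups), and commutativity via Proposition \ref{commutativetorsion} makes $\Rep(G)^t$ tensorial. Your careful final step --- amalgamating the finitely many generators into a single direct sum, which stays torsion by commutativity and hence generates only finitely many irreducibles --- is exactly the mechanism the paper compresses into the phrase ``finite by commutativity''.
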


\begin{proof} The first statement follows from Theorem  \ref{generatingrep} and the fact that the Lie property is hereditary. If $\Rep(G)^t$ is commutative then it is tensorial, or, more precisely, it corresponds to a
quotient quantum group, by  Propositions \ref{categoriesquotients} and   \ref{commutativetorsion}. Hence it admits a generating representation by the previous part, and therefore it must be  finite by commutativity.
\end{proof}

We note that if the Lie property is not assumed, finiteness of the torsion
part fails even assuming   that torsion representations are   central. Indeed, Remeslennikov has constructed   examples of finitely generated discrete groups $\Gamma$ such that the center $Z(\Gamma)$ contains an infinite torsion group with finite exponent \cite{Remeslennikov1, Remeslennikov2}.
Even stronger results have been obtained by Ould Houcine who proved,   among other things, that every countable abelian group is a subgroup of the centre of some finitely presented group \cite{Houcine}. We next combine the main results of the last two sections.

\begin{thm}\label{final} Let $G$ be a compact quantum group of Lie type with commutative and normal torsion subcategory $\Rep(G)^t$.
Then $G$ has torsion degree $\leq1$, $\go$ is a normal quantum subgroup and $\go\backslash G$ is finite. Furthermore,
$\Rep(\go\backslash G)$ identifies with $\Rep(G)^t$.
\end{thm}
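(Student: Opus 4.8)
The plan is to deduce the statement directly by combining Corollary \ref{commthentensfinite} with Theorem \ref{connectedcomponent}; the hypotheses have been arranged precisely so that this combination applies. The whole point is that the equivalence a)$\Leftrightarrow$b) of Theorem \ref{connectedcomponent} reduces everything to verifying that $\Rep(G)^t$ is tensorial, finite and normal, and the Lie type and commutativity assumptions are exactly what is needed to supply the first two of these three properties.

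First I would observe that, since $G$ is of Lie type and the torsion representations commute up to equivalence, Corollary \ref{commthentensfinite} immediately yields that $\Rep(G)^t$ is tensorial and finite. Together with the standing hypothesis that $\Rep(G)^t$ is normal, we thus find that $\Rep(G)^t$ is simultaneously tensorial, finite and normal, which is exactly condition b) of Theorem \ref{connectedcomponent}. Applying that theorem then produces all the remaining assertions at once: its condition a) states that $\go$ is normal in $G$ and that $\go\backslash G$ is finite, while the concluding part of the statement gives that $G$ has torsion degree $\leq 1$ and furnishes the identification $\Rep(\go\backslash G)=\Rep(G)^t$ of part c). This exhausts every claim in the statement.

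The only genuine input lies in the finiteness of $\Rep(G)^t$, which is precisely where the Lie property is indispensable: commutativity of the torsion part alone does not suffice, as shown by the finitely generated groups with infinite central torsion constructed by Remeslennikov \cite{Remeslennikov1, Remeslennikov2} and Ould Houcine \cite{Houcine}. All the deeper structural work---notably the induction-theoretic argument of \cite{PRinduction} showing that a free irreducible representation of $G$ remains free upon restriction to $G_1$---has already been carried out within the proof of Theorem \ref{connectedcomponent}, so no new obstacle appears at this final step; the present theorem is simply the assembly of the two halves of the paper.
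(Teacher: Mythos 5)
Your proof is correct and is exactly the paper's own argument: the paper gives Theorem \ref{final} with no separate proof precisely because it is the immediate combination of Corollary \ref{commthentensfinite} (Lie type plus commutativity gives $\Rep(G)^t$ tensorial and finite) with the implication b)$\Rightarrow$a) and conclusions c) of Theorem \ref{connectedcomponent}, which is how you assemble it. Your remarks on where the Lie property is indispensable and on the Remeslennikov/Ould Houcine counterexamples also match the discussion preceding the theorem in the paper.
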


In practice, an important class of compact quantum groups $G$ of Lie type with commutative torsion subcategory,   is that for which $R(G)$ is commutative and finitely generated (as a ring). A stronger commutativity requirement involving torsion representations, or other sufficient conditions that are quite easy to verify in specific examples,   ensure normality of $\Rep(G)^t$, by Proposition \ref{sufficientfornormality}.

We conclude this section with a couple of related results for more general rings.
We first notice a relation between hypergroup finite generation and ring finite generation for the Grothendieck ring associated to a general tensor $C^*$-category.
We omit the easy proof.\medskip
\begin{prop}
Let $\T$ be a tensor $C^*$-category with conjugates, subobjects and direct sums. If the Grothendieck ring $R(\T)$ is finitely generated as a ring, then the hypergroup $\hat\T$ of equivalence classes of irreducible objects of $\T$ is finitely generated as well. In other words, $\T$ has a generating object.
\end{prop}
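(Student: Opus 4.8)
The plan is to extract, from a finite ring-theoretic generating set of $R(\T)$, a finite set of irreducible objects whose associated sub-representation ring is all of $R(\T)$; by the correspondence recorded above between sub-representation rings of $R(\T)$ and full tensor subcategories of $\T$, this is precisely the assertion that $\T$ has a generating object, equivalently that $\hat\T$ is finitely generated.

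First I would use semisimplicity of $\T$: every element of $R(\T)$ is a \emph{finite} $\Zset$-linear combination of the classes of irreducible objects. Assume $R(\T)=\Zset[a_1,\dots,a_m]$ is finitely generated as a ring. Writing each generator in the irreducible basis, let $X\subset\Irr(R(\T))$ be the (finite) set of irreducibles occurring with nonzero coefficient in at least one $a_i$; finiteness of $X$ is immediate from finiteness of $m$ and of each expansion.

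The core step is to show that the sub-representation ring $\langle X\rangle$ --- the intersection of all sub-representation rings of $R(\T)$ containing $X$, itself a sub-representation ring --- coincides with $R(\T)$. Since $\langle X\rangle$ is in particular a unital subring of $R(\T)$ containing $X$, it is closed under $\Zset$-linear combinations and products, and hence contains every $a_i$ (each being a $\Zset$-linear combination of elements of $X$); therefore it contains the subring $\Zset[a_1,\dots,a_m]=R(\T)$ generated by the $a_i$. Thus $\langle X\rangle=R(\T)$, so $X$ is a finite generating set in the sense that $X\subset A < R(\T)$ forces $A=R(\T)$.

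Finally I would translate this into the language of objects by setting $u:=\bigoplus_{x\in X}x$: the irreducible subobjects of $u$ are exactly the elements of $X$, so $\langle\{u\}\rangle=\langle X\rangle=R(\T)$, where closure under duals is built into the definition of a sub-representation ring and need not be imposed separately on $u$. Hence $u$ is a generating object and $\hat\T$ is finitely generated. I do not expect a genuine obstacle here: the only point requiring care is to keep the subring \emph{generated} by $X$ distinct from the sub-representation ring $\langle X\rangle$, and to observe that the latter --- being closed under subobjects --- automatically contains the former. Once this is noted, finite ring-generation passes to finite generation of the hypergroup essentially by unwinding the definitions.
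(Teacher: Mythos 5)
Your proof is correct, and it is essentially the argument the authors had in mind: the paper explicitly omits the proof as easy, and your reduction --- expand the finitely many ring generators (which may be virtual, i.e.\ have negative coefficients, a case your choice of $X$ handles automatically) in the irreducible basis, observe that the sub-representation ring $\langle X\rangle$ generated by the finitely many constituents contains each generator and hence all of $R(\T)$, and translate back via the correspondence with full tensor subcategories --- is precisely the expected one. The one point of care you flag, distinguishing the subring generated by $X$ from the sub-representation ring $\langle X\rangle$ and noting the latter contains the former, is indeed the whole content of the statement.
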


We next recall that  Hashimoto proved the following result on finite generation of certain subrings of commutative rings with methods of algebraic geometry.
\medskip

\begin{thm}[\cite{hashimoto}] Let $Z$ be a Noetherian commutative ring and let
$A\subset R$ be an inclusion of commutative $Z$--algebras such that $R$ is finitely generated over $Z$ and $A$ is a pure.  Then $A$ is finitely generated over $Z$.
\end{thm}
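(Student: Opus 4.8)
The plan is to separate the statement into two parts: first, that purity already forces $A$ to be Noetherian, and second, the genuinely harder assertion that $A$ is of finite type over $Z$. Throughout I would use that $R$, being finitely generated over the Noetherian ring $Z$, is itself Noetherian by the Hilbert basis theorem, and that purity of $A\hookrightarrow R$ is inherited by every localization $A_a\hookrightarrow R_a$ and by every base change $A/I\to R/IR$.

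For Noetherianity, applying purity to the $A$-module $M=A/I$ for an ideal $I\subset A$ shows that $A/I\to R/IR$ is injective, that is, $IR\cap A=I$. Given an ascending chain $I_1\subset I_2\subset\cdots$ of ideals of $A$, the extended chain $I_1R\subset I_2R\subset\cdots$ stabilizes in the Noetherian ring $R$; contracting back via $I_nR\cap A=I_n$ shows that the original chain stabilizes. Hence $A$ is Noetherian. This step is routine and purely formal.

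The real content lies in upgrading \emph{$A$ Noetherian} to \emph{$A$ finitely generated over $Z$}; these are genuinely different properties, since there exist Noetherian rings that are not finite-type algebras. Geometrically I would phrase the goal as follows: the affine morphism $\operatorname{Spec}(R)\to\operatorname{Spec}(A)$ is surjective (a further consequence of $IR\cap A=I$ applied to prime $I$), the composite $\operatorname{Spec}(R)\to\operatorname{Spec}(Z)$ is of finite type, and I want to descend the finite-type property to $\operatorname{Spec}(A)\to\operatorname{Spec}(Z)$. The natural tool is fpqc descent of the property ``of finite type'' along faithfully flat quasi-compact morphisms. Now $\operatorname{Spec}(R)\to\operatorname{Spec}(A)$ is affine, hence quasi-compact, but in general \emph{not} flat, so the descent does not apply on the nose. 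What purity supplies for free is the faithful half: by generic freeness there is $0\neq a\in A$ with $R_a$ free over $A_a$, and since purity makes $\operatorname{Spec}(R_a)\to\operatorname{Spec}(A_a)$ surjective, the map $A_a\to R_a$ is faithfully flat. Over the dense open $D(a)\subset\operatorname{Spec}(A)$ the descent therefore applies and $A_a$ is of finite type over $Z$.

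The hard part will be globalizing this generic statement. The natural mechanism is Noetherian induction on the closed complement of $D(a)$: one restricts purity to the quotients $A/\mathfrak p$ at primes $\mathfrak p$ where flatness fails, and attempts to reassemble a single finite set of $Z$-algebra generators of $A$ from the finitely many strata produced. The difficulty, and the reason the result is due to Hashimoto and requires algebraic geometry rather than the classical Hilbert--Nagata invariant-theoretic argument, is precisely that purity is strictly weaker than the existence of an $A$-linear retraction $R\to A$: without such a Reynolds-type splitting one cannot simply pull generators of $R$ back to generators of $A$, and since the flat locus is only open, patching the finite-type property across the non-flat locus is where the genuine work resides. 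I note that in the application of the present paper the inclusion $A<R$ is in fact a direct summand, hence split, hence pure; a reader interested only in that case may replace the above by the classical retraction argument, but the stated generality does require Hashimoto's methods.
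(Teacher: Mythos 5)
Your preliminary steps are correct: purity applied to $M=A/I$ gives $IR\cap A=I$, contracting a stabilizing chain of extended ideals in the Noetherian ring $R$ then shows $A$ is Noetherian, and the base-changed purity $\kappa(\mathfrak p)\to R\otimes_A\kappa(\mathfrak p)$ does make $\operatorname{Spec}(R)\to\operatorname{Spec}(A)$ surjective. Even the generic step is essentially sound, with one silent lacuna: generic freeness requires $A$ to be a domain (or at least a reduction to minimal primes, which your Noetherianity step makes available), after which faithfully flat descent of the finite-type property does give $A_a$ of finite type over $Z$ for some $0\neq a\in A$. But none of this proves the theorem, and you concede as much. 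The genuine gap is the globalization, and the mechanism you sketch would fail as stated: finite generation over $Z$ does not glue along the open/closed stratification you propose. Knowing that $A_a$ is of finite type and that the quotients $A/\mathfrak p$ along the closed complement are of finite type does not recombine into finite generation of $A$ --- unlike Noetherianity, finite-typeness of a ring is not recoverable from such strata without substantial further input (consider that $A_a$ and $A/aA$ of finite type do not in general force $A$ to be of finite type). Supplying the bridging argument is precisely Hashimoto's contribution, so your proposal amounts to a correct pair of reductions (Noetherianity, plus generic finite-typeness over a dense open) followed by an honest statement of the remaining problem rather than a solution to it.

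A further point of comparison: the paper contains no proof of this statement at all --- it is quoted from \cite{hashimoto}, with the explicit remark that it is established ``with methods of algebraic geometry'' --- so there is no in-paper argument against which your route could be measured, and the attempt must stand on its own. It does not: it stops exactly where the difficulty begins. Your closing observation is, however, correct and worth keeping --- in the application made in Section 6 the subalgebra $A<R$ is a direct summand (Proposition \ref{alsoright}), so for the paper's purposes a split inclusion is all that occurs, even though a retraction $R\to A$ alone still does not yield the classical invariant-theoretic shortcut without a Reynolds-type compatibility, which is again why the general statement is delegated to \cite{hashimoto}.
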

A subring $A$ of a commutative ring $R$ is called a direct summand if there is an $A$--linear map $E:R\to A$ such that $E(a)=a$, $a\in A$. A direct summand subring is pure, i.e., for any $A$--module $M$, the map $m\in M\to m\otimes I\in M\otimes_AR$ is injective.\medskip

This result follows the work of Fogarty \cite{F}, which, in turn, has its roots in the classical problem of finite generation of rings of invariants under a group action. Notably, unlike classical invariant theory, these results on finite generation do not reduce to the graded case.

\section{An example: $\widehat{U_q(\mathfrak{su}_{1,1})}$ for negative values of $q$}
 Our aim in this section is to show that the compact real forms of $U_q(\mathfrak{sl}_2)$, for  $q\in{\mathbb R}$, $q\neq0$, $q\neq\pm1$, are not connected and that the identity component and the quantum component group can be computed with the methods developed in this paper. While the case $q>0$ is widely known,  we shall mostly focus on the case $q<0$.

Recall \cite{Drinfeld1, Drinfeld2, Jimbo} that
the Drinfeld-Jimbo quantum group $U_q(\mathfrak{sl}_2)$,  is the Hopf algebra generated by elements $E$, $F$, $K$ and relations
$$KK^{-1}=K^{-1}K=1,\quad KEK^{-1}= qE,\quad KFK^{-1}=q^{-1}F,$$
$$[E,F]=\frac{K^2-K^{-2}}{q-q^{-1}}$$
with comultiplication $\Delta$, antipode $S$ and counit $\varepsilon$ given by
$$\Delta(K)=K\otimes K,\quad \Delta(E)=E\otimes K^{-1}+K\otimes E,\quad
\Delta(F)=F\otimes K^{-1}+K\otimes F,$$
$$S(K)=K^{-1}, \quad S(E)=-q^{-1} E,\quad S(F)=-qF,
\quad \varepsilon(K)=1,\quad \varepsilon(E)=\varepsilon(F)=0.$$

Representation theory of  $U_{q}(\mathfrak{sl}_2)$ is  well known. There are four inequivalent irreducible representations for each dimension, $\pi_{(w,n)}:=\iota_w\otimes \pi_n$, where $\iota_w$ are $1$-dimensional representations,
$$\iota_w: E\to0,\quad F\to0;\quad K\to w,$$
with $w\in\{{\pm1, \pm i}\}$, and
$$\pi_{n}(E)v_r=[n-r+1]v_{r-1},\quad
\pi_{n}(F)v_r=[r+1]v_{r+1},$$
$$   \pi_{n}(K)v_r= t^{n-2r}v_r,$$
on a linear basis $v_0,\dots, v_n$ where now $t$ is a fixed square root of $q$  and
$$[k]:=\frac{q^k-q^{-k}}{q-q^{-1}}=\frac{q^{2k}-1}{q^{k-1}(q^2-1)}.$$
Tensor product of representations is naturally defined by means of the comultiplication. It is well known that the tensor products of the $\pi_n$'s commute up to canonical invertible intertwiners, the  braiding operators, and that $\pi_n\otimes\pi_m$ decomposes according to  the Clebsch-Gordan fusion rules. Hence $\{\iota_w\}$ is the set of irreducible torsion representations, and they form a finite tensor category corresponding to $\Cyc_4)$. The following simple observation will play a role later.
\begin{rem}\label{torsionpartisnormal}
A direct computation shows that the permutation operators establish equivalence between $\iota_{\pm 1}\otimes\pi_{w, n}$ and $\pi_{w,n}\otimes\iota_{\pm1}$.
\end{rem}
Moreover, $\pi_{w,n}=\iota_w\otimes \pi_n$ is equivalent to $\pi_n\otimes \iota_w$ also for $w=\pm i$ (although not through the permutation operator), therefore all the representations commute up to equivalence.

Recall that $U_q(\mathfrak{su}_{2})$ and $U_q(\mathfrak{su}_{1,1})$ are the Hopf $^*$-algebras derived from $U_q(\mathfrak{sl}_2)$ with the following involutions, respectively \cite{CP, KS},
$$ E^*=F, \quad K^*=K,\quad\quad U_q(\mathfrak{su}_{2}),$$
$$E^*=-F,\quad K^*=K, \quad \quad U_q(\mathfrak{su}_{1,1}).$$
One can derive compact quantum groups in two different ways.
\bigskip

\noindent {\it Case  $U_{q}(\mathfrak{su}_{2})$, $\quad q>0$}\
\medskip

\noindent It is well known that in this case $U_q(\mathfrak{su}_{2})$ has many non-trivial $^*$-representations on finite-dimensional Hilbert spaces, while $U_{q}(\mathfrak{su}_{1,1})$ has none. We need the following results on representation theory of $U_q(\mathfrak{su}_{2})$. For more details, see \cite{Rosso}. The category of finite-dimensional
$^*$-representations of $U_q(\mathfrak{su}_{2})$ on finite-dimensional Hilbert spaces is an embedded tensor $C^*$-category with conjugates. Hence it corresponds to a compact quantum group,  $G=\widehat{U_q(\mathfrak{su}_{2})}$.   Among the $1$-dimensional representations, only $\iota_{\pm 1}$ are $^*$-representations. In particular,  $\iota_{-1}$
generates $\Rep(G)^t$, which is a tensor category isomorphic to the category corresponding to $\Cyc_2$. The following is a complete set of irreducible $^*$-representations  with  positive weights, on an orthonormal basis $(\psi_0,\dots,\psi_n)$,
$$u_{n}(E)\psi_r=
   \sqrt{[n-r+1][r]}\psi_{r-1},\quad u_{ n}(F)\psi_r=
\sqrt{[r+1][n-r]}\psi_{r+1},$$
$$u_{ n}(K)\psi_r= (\sqrt{q})^{n-2r}\psi_r.$$
$G$ admits two irreducible representations for each dimension, $ \iota_{\pm1}\otimes u_n$.
 The $u_n$'s generate a torsion--free full tensor $C^*$-subcategory. These yield, via  Tannaka--Krein duality, the compact quantum group ${\rm SU}_q(2)$ of Woronowicz.
 The process of  restricting attention  to the  representations with positive weights can thus be interpreted as the passage to the identity component. Indeed,
every irreducible of $\Rep(G)$ is determined by a pair constituted by an irreducible of $\Rep({\rm SU}_q(2))$ and one of $\Cyc_2$, hence  we may reconstruct the original quantum group as the product of the identity component and the component group,
$$\widehat{U_q(\mathfrak{su}_{2})}={\rm SU}_q(2)\times\Cyc_2.$$

\noindent From this perspective, the next example  is more interesting.
\bigskip

\noindent{\it Case  $U_{q}(\mathfrak{su}_{1,1})$, $\quad q<0$}
\medskip

\noindent As in the previous example, $\iota_{\pm1}$ are still the $1$--dimensional $^*$-representations. The following fact may be  known. We include a proof as we do not have a reference.

\begin{prop}
For $q<0$, $U_{q}(\mathfrak{su}_{2})$ has no finite-dimensional $^*$-representation on a Hilbert space. $U_{q}(\mathfrak{su}_{1,1})$ admits two inequivalent irreducible Hilbert space $^*$-representations for each dimension $\geq0$. They are given as follows on an orthonormal basis $(\psi_0,\dots,\psi_n)$.

\noindent For $n$ odd,
$$u_{\pm n}(E)\psi_r=
\pm i \sqrt{[n-r+1][r]}\psi_{r-1},\quad u_{\pm n}(F)\psi_r= \pm i
\sqrt{[r+1][n-r]}\psi_{r+1},$$
$$u_{\pm n}(K)\psi_r= \mp (-1)^{\frac{n-1}{2}-r} (\sqrt{|q|})^{n-2r}\psi_r.$$
For $n$  even,
$$u_{\pm n}(E)\psi_r=
\pm(-1)^r \sqrt{-[n-r+1][r]}\psi_{r-1},\quad u_{\pm n}(F)\psi_r= \pm
(-1)^{r}
\sqrt{-[r+1][n-r]}\psi_{r+1},$$
$$u_{\pm n}(K)\psi_r= \pm (-1)^{\frac{n}{2}-r}(\sqrt{|q|})^{n-2r}\psi_r.$$
\end{prop}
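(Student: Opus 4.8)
The plan is to classify, for $q<0$, exactly which of the algebraic irreducibles $\pi_{(w,n)}=\iota_w\otimes\pi_n$ of $U_q(\mathfrak{sl}_2)$ carry an inner product turning them into $^*$-representations for each of the two involutions, and then to read off the explicit formulae. First I would reduce to irreducibles: a finite-dimensional $^*$-representation on a Hilbert space is completely reducible, since the orthogonal complement of an invariant subspace is again invariant (the operators $\pi(E),\pi(F),\pi(K)$ are, up to sign, adjoints of one another, and $\pi(K)$ is self-adjoint), and an irreducible $^*$-representation is algebraically irreducible, hence unitarily equivalent to some $\pi_{(w,n)}$. Computing $\iota_w\otimes\pi_n$ through the coproduct gives, on the weight basis $v_0,\dots,v_n$, the actions $\pi_{(w,n)}(E)v_r=w[n-r+1]v_{r-1}$, $\pi_{(w,n)}(F)v_r=w[r+1]v_{r+1}$ and $\pi_{(w,n)}(K)v_r=w\,t^{n-2r}v_r$. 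The whole problem then becomes: for which $w$ and which involution does there exist a positive-definite inner product making these operators satisfy $\pi(K)^*=\pi(K)$ and $\pi(E)^*=\epsilon\,\pi(F)$, where $\epsilon=+1$ for $\mathfrak{su}_2$ and $\epsilon=-1$ for $\mathfrak{su}_{1,1}$.

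Next I would exploit self-adjointness of $K$. Since $q<0$ the square root $t$ is purely imaginary, say $t=i\sqrt{|q|}$; requiring every eigenvalue $w\,t^{n-2r}$ to be real forces $w\in\{\pm1\}$ when $n$ is even and $w\in\{\pm i\}$ when $n$ is odd, that is $\sigma:=\overline{w}/w=(-1)^n$. As $|t|\neq1$ the eigenvalues $w\,t^{n-2r}$ are pairwise distinct, so $K$ has simple spectrum and any invariant inner product must be diagonal in the basis $(v_r)$. Writing $\psi_r=c_rv_r$ with $c_r>0$ and imposing $\pi(E)^*=\epsilon\,\pi(F)$ on adjacent matrix coefficients yields the single recursion $(c_r/c_{r-1})^2=\epsilon\,\sigma\,[r]/[n-r+1]$ for $1\le r\le n$; a positive solution exists, and is then essentially unique, if and only if the right-hand side is positive for every $r$.

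The decisive step is the sign computation. Writing $q=-|q|$ one finds $[k]=(-1)^{k+1}[k]_{|q|}$, where $[k]_{|q|}>0$ is the positive quantum integer built from $|q|$, so the sign of $[r]/[n-r+1]$ equals $(-1)^{n+1}$, independently of $r$. Combining with $\sigma=(-1)^n$, the positivity condition $\epsilon\,\sigma\,[r]/[n-r+1]>0$ collapses to $-\epsilon>0$, i.e.\ to $\epsilon=-1$. Hence for every $n\ge1$ an invariant inner product exists precisely for the involution of $\mathfrak{su}_{1,1}$, and then for both admissible values of $w$; this produces exactly two inequivalent irreducible $^*$-representations in each dimension, and shows that $U_q(\mathfrak{su}_2)$ admits no irreducible $^*$-representation of dimension $\ge2$, its only finite-dimensional $^*$-representations being direct sums of the characters $\iota_{\pm1}$ (the degenerate case $n=0$, where the recursion is empty).

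Finally I would solve the recursion: its positive solution gives $|c_r/c_{r-1}|=\sqrt{|[r]/[n-r+1]|}$, whence the off-diagonal matrix coefficients of $\pi(E)$ and $\pi(F)$ have moduli $\sqrt{|[n-r+1][r]|}$; substituting $w=\pm i$ (for $n$ odd) or $w=\pm1$ (for $n$ even) together with $t=i\sqrt{|q|}$, and absorbing the residual signs into a diagonal change of orthonormal basis, then reproduces the displayed formulae for $u_{\pm n}(E),u_{\pm n}(F),u_{\pm n}(K)$. I expect the main obstacle to be precisely this sign and phase bookkeeping: correctly tracking the alternating sign $(-1)^{k+1}$ of the quantum integers at negative $q$, the phase $\sigma=\overline{w}/w$ forced by reality of $K$, and the fourth-root-of-unity factors $i^{n-2r}$ in the eigenvalues of $K$, and then reconciling the naive normalization $c_r>0$ with the exact $(-1)^r$ and $(-1)^{(n\mp1)/2-r}$ signs appearing in the statement, which differ from it only by a harmless diagonal unitary.
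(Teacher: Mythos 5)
Your proposal is correct, and its skeleton coincides with the paper's at the two decisive points: the reality of the spectrum of $\pi(K)$ (with $t$ pure imaginary, $|t|\neq 1$) forcing $w=\pm i$ for $n$ odd and $w=\pm 1$ for $n$ even, and the sign pattern $[k]=(-1)^{k+1}[k]_{|q|}$ driving a diagonal recursion for the invariant inner product. Where you genuinely diverge is in how $U_q(\mathfrak{su}_2)$ is excluded: the paper runs two separate arguments --- it computes the eigenvalues of $\pi_{(w,n)}(EF)$, observes they are negative, and plays this against $EF=EE^*\geq 0$ in $U_q(\mathfrak{su}_2)$ (versus $EF=-EE^*\leq 0$ in $U_q(\mathfrak{su}_{1,1})$), and only afterwards constructs the diagonal $T$ solving $\overline{w}[n-r+1]|t_r|^2=-w[r]|t_{r+1}|^2$ --- whereas you fold existence and non-existence into the single Gram recursion $(c_r/c_{r-1})^2=\epsilon\sigma[r]/[n-r+1]$, whose sign collapses to $-\epsilon$. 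For that unification to be legitimate you need your extra lemma that any invariant inner product is diagonal in the weight basis, which your simple-spectrum observation (the moduli $|q|^{(n-2r)/2}$ are pairwise distinct since $|q|\neq1$) supplies; the paper gets the corresponding reduction from the polar-decomposition/positive-$T$ reformulation instead. Your route buys a cleaner essential-uniqueness statement, hence the count of exactly two inequivalent representations per dimension (distinguished by the spectrum of $K$), and your explicit treatment of complete reducibility and of the degenerate case $n=0$ is in fact more precise than the paper: the characters $\iota_{\pm1}$ are one-dimensional $^*$-representations of $U_q(\mathfrak{su}_2)$ as well, and the paper's own $EF$-argument (which needs $r<n$) only rules out dimensions $\geq 2$, so your formulation ``only direct sums of $\iota_{\pm1}$'' is the accurate reading of the literal claim. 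Finally, your closing remark on phase bookkeeping is sound: a diagonal unitary fixes $\pi(K)$ and commutes with taking adjoints, so the residual phases $(-1)^r$, $\pm i$ in the $E$, $F$ entries can be normalized to the displayed form, while the $K$-eigenvalue signs $\mp(-1)^{\frac{n-1}{2}-r}$ and $\pm(-1)^{\frac{n}{2}-r}$ are intrinsic and come out of $wt^{n-2r}$ exactly as stated.
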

\begin{proof}
We may assume $n>0$. Note that the sign of $[k]$ depends on the parity of $k$ and that $t$ is pure imaginary. Moreover, for
$r<n$,
$$\pi_{(w,n)}(EF)v_r=w^2\frac{(q^{2(r+1)}-1)(q^{2(n-r)}-1)}{q^{n-1}(q^2-1)^2}v_r.$$
Let us equip the space $V_{w,n}$ of $\pi_{w,n}$ with an arbitrary Hilbert space structure. By polar decomposition of invertible operators on Hilbert spaces,
$\pi_{(w,n)}$
is equivalent to a $^*$-representation of either $U_q(\mathfrak{su}_{2})$ or
$U_q(\mathfrak{su}_{1,1})$ if and only if there is a positive invertible operator $T$ on $V_{w,n}$ such that $X\to T\pi_{w,n}(X)T^{-1}$ is a $^*$-representation of the corresponding $^*$-algebra. Let us assume that this is the case. Since $K$ is self-adjoint in both algebras,  so is $T\pi_{w,n}(K)T^{-1}$, therefore $\pi_{w,n}(K)$ has real eigenvalues. On the other hand, if $n$ is odd, $t^{n-2r}$ is pure imaginary. Therefore $w=\pm i$. By the above formula, $\pi_{w,n}(EF)$ has negative eigenvalues, hence $T\pi_{w,n}(EF)T^{-1}$ is a negative operator. This is in agreement with $EF=-EE^*$ in $U_q(\mathfrak{su}_{1,1})$ but in contrast with $U_q(\mathfrak{su}_{2})$ where instead $EF=EE^*$.
If $n$ is even, $t^{n-2r}$ is now real, hence $w=\pm 1$, so $\pi_{w,n}(EF)$ has  negative eigenvalues, again in agreement with $U_q(\mathfrak{su}_{1,1})$ and in contrast with $U_q(\mathfrak{su}_{2})$. In particular, $U_q(\mathfrak{su}_{2})$ has no finite-dimensional $^*$-representations.

We  now  show that both $\pi_{(\pm i,\text{odd})}$ and $\pi_{(\pm1,\text{even})}$ are indeed equivalent to $^*$-representations of $U_{q}(\mathfrak{su}_{1,1})$ on a Hilbert space. We introduce an inner product
in the representation space making $\{v_r\}$ into an orthonormal basis, $\{\psi_r\}$.
Since $E^*=-F$ and the image of $K$ is diagonal,
it suffices to find an invertible diagonal matrix
 $T=\text{diag}(t_1,\dots, t_{n+1})$ with complex entries such that
$(T\pi(E)T^{-1})^*=-T\pi(F)T^{-1}$, where $\pi$ is either $\pi_{(\pm
i,\text{odd})}$ or $\pi_{(\pm1,\text{even})}$.
We are thus reduced to solve
$\pi(E)^*T^*T=-T^*T\pi(F)$. Explicitly, we need
$\overline{w}[n-r+1]|t_r|^2=-w[r]|t_{r+1}|^2$,
where $w$ takes the allowed values according to the parity of $n$. Specifically, for $n$ odd, $\overline{w}=-w$ and
$[n-r+1]$ and $[r]$ have the same sign, hence we may solve inductively and find positive
entries $t_1=1$, $t_2=\sqrt{[n][1]^{-1}}$,
$t_3=\sqrt{[n][n-1]([1][2])^{-1}},\dots$,
giving the desired $^*$-representation $u_{\pm n}$.
 If $n$ is even,
$\overline{w}=w$ and
$[n-r+1]$ and $[r]$ have now opposite sign,
and we may still find positive entries
$t_1=1$, $t_2=\sqrt{-[n][1]^{-1}}$,
$t_3=\sqrt{(-[n])(-[n-1])([1][2])^{-1}},\dots$,
giving again the stated $^*$-representation $u_{\pm n}$.
 \end{proof}

\begin{rem} The main difference with the example of the previous subsection is that for $n$ odd,
$u_{\pm n}$ is (not unitarily) equivalent  to $\iota_{\pm i} \otimes \pi_n$.
However, neither $\iota_{\pm i} $ nor $\pi_n$ are equivalent to $^*$-representations.
This phenomenon does not occur in the even case, as
 $u_{\pm \text{n}}$ is equivalent to $\iota_{\pm 1}\otimes \pi_{\text{n}}$.
\end{rem}

\noindent {\it Fusion rules of irreducible representations of $U_{q}(\mathfrak{su}_{1,1})$.}\
\medskip

\noindent  We write down the fusion rules of irreducible $^*$-representations on Hilbert spaces.  They may be easily
derived from the Clebsch--Gordan rules for  the representations
$\pi_n$ of $U_q(\mathfrak{sl}_2)$ and the fact that as representations, for a suitable   $w$,
$u_{\pm n}\simeq \iota_w\otimes\pi_n\simeq \pi_n\otimes\iota_w$.\medskip

In the following, the sums  involve terms with either even or odd indices and we assume $m$, $n\geq0$. We omit relations that can be obtained commuting the factors.
\medskip

\noindent For $m$, $n$ odd,
$$u_{n} u_{m}\simeq u_{-|n-m|}+\dots+ u_{-(n+m)}\simeq u_{-n}
 u_{-m},$$
$$u_{n} u_{-m}\simeq u_{|n-m|}+\dots+ u_{n+m}\simeq u_{-n}
 u_{m},$$
$m$, $n$ even,
$$u_{n}
 u_{m}\simeq u_{|n-m|}+\dots+ u_{n+m}\simeq u_{-n}
  u_{-m},$$
$$u_{n}
 u_{-m}\simeq u_{-|n-m|}+\dots+ u_{-(n+m)}\simeq u_{-n}
  u_{m},$$
$n$ odd, $m$ even,
$$u_{n}
 u_{m}  \simeq u_{ |n-m|}+\dots+ u_{n+m}\simeq u_{-n}
  u_{-m},$$
$$u_{-n}
 u_{m} \simeq u_{-|n-m|}+\dots+ u_{-(n+m)}\simeq u_{n} u_{-m}.$$
The fourth and last line in particular imply
$$\iota_{-1}
 u_n\simeq u_{-n}\simeq u_n
  \iota_{-1}.$$

  \noindent {\it The  associated compact quantum group
  $\widehat{U_q(\mathfrak{su}_{1,1})}$.}\
\medskip

Finite direct sums of irreducible Hilbert space $^*$-representations of $U_q(\mathfrak{su}_{1,1})$ form an embedded tensor $C^*$-category with subobjects and direct sums.
We claim that this category has conjugates. It suffices to show that every irreducible has a conjugate.

If $\overline{u}$ is a conjugate of $u$, then   $\iota<\overline{u}u$. The fusion rules show that for $n$ odd, $\iota<u_{-n}u_n$ and for $n$ even $\iota<u_n^2$, both inclusions have   multiplicity $1$. We need to show that indeed   $u_{-n}$ or $u_n$ is a  conjugate
of $u_n$ if $n$ is odd or even respectively.
 We start with the case $n=\pm1$, the other cases will easily follow from Theorem \ref{conjugatesexample}.

\begin{prop}\label{generators} Set $u:=u_1$ and $\overline{u}:=u_{-1}$. Up to scalars, with respect to the
 orthonormal basis $\{\psi_0,\psi_1\}$ of the Hilbert space of $u$ and $\overline{u}$,
 the arrows $R\in(\iota, \overline{u}u)$ and $\overline{R}\in(\iota, u\overline{u})$ are given by
  $$R=\psi_0\otimes\psi_1-|q|\psi_1\otimes\psi_0=\overline{R}.$$
  In particular, $\overline{u}$ is a conjugate of $u$.
\end{prop}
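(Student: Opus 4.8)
The plan is to determine the invariant vectors of $\overline{u}u$ and $u\overline{u}$ directly from the explicit formulas, specialized to $n=1$. On the orthonormal basis $\{\psi_0,\psi_1\}$ one reads off $u(E)\psi_1=i\psi_0$, $u(F)\psi_0=i\psi_1$, $u(K)\psi_0=-\sqrt{|q|}\,\psi_0$, $u(K)\psi_1=(\sqrt{|q|})^{-1}\psi_1$, with the remaining actions of $E,F$ vanishing; the action of $\overline{u}=u_{-1}$ is obtained by the replacements $i\mapsto -i$ in $E,F$ and $K\mapsto -K$. A vector $R\in(\iota,\overline{u}u)\subset H_{\overline{u}}\otimes H_u$ is precisely one fixed by $(\overline{u}\otimes u)(K)$ and annihilated by $(\overline{u}\otimes u)(E)$ and $(\overline{u}\otimes u)(F)$, where the tensor product representation is computed through $\Delta$, so that for instance $(\overline{u}\otimes u)(E)=\overline{u}(E)\otimes u(K^{-1})+\overline{u}(K)\otimes u(E)$.

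First I would impose the $K$-condition. The operator $(\overline{u}\otimes u)(K)$ is diagonal on $\{\psi_i\otimes\psi_j\}$ with eigenvalues $-|q|,\,1,\,1,\,-|q|^{-1}$ on $\psi_0\otimes\psi_0,\ \psi_0\otimes\psi_1,\ \psi_1\otimes\psi_0,\ \psi_1\otimes\psi_1$ respectively; since $|q|\neq 1$ (as $q\neq -1$), invariance forces $R=c_{01}\,\psi_0\otimes\psi_1+c_{10}\,\psi_1\otimes\psi_0$. I would then apply $(\overline{u}\otimes u)(E)$ to this vector; using the formulas above it collapses to a multiple of $\psi_0\otimes\psi_0$, and annihilation yields the single relation $c_{10}=-|q|\,c_{01}$. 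The $F$-condition gives the identical relation (landing in $\psi_1\otimes\psi_1$), hence is automatically satisfied. Thus $(\iota,\overline{u}u)$ is one-dimensional and spanned by $R=\psi_0\otimes\psi_1-|q|\,\psi_1\otimes\psi_0$. Running the same computation with the roles of $u$ and $\overline{u}$ interchanged gives $\overline{R}=\psi_0\otimes\psi_1-|q|\,\psi_1\otimes\psi_0\in(\iota,u\overline{u})$, exactly as claimed.

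To conclude that $\overline{u}$ is a conjugate of $u$, I would verify the conjugate equations of \cite{LR} up to rescaling. A direct evaluation gives
$$(\overline{R}^*\otimes 1_u)\circ(1_u\otimes R)=-|q|\,1_u,\qquad (R^*\otimes 1_{\overline{u}})\circ(1_{\overline{u}}\otimes\overline{R})=-|q|\,1_{\overline{u}},$$
so both composites are the same nonzero scalar multiple of the identity. Replacing $R$ and $\overline{R}$ by suitable scalar multiples (for instance $R/\sqrt{|q|}$ and $-\overline{R}/\sqrt{|q|}$) turns these into genuine solutions of the conjugate equations, which proves that $\overline{u}$ is a conjugate of $u$. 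Equivalently, since $\dim(\iota,\overline{u}u)=1$ and $\overline{u}$ is irreducible, $\overline{u}$ must be the conjugate of $u$.

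The only delicate point is the sign bookkeeping for $q<0$: the alternating factors in $u_{\pm 1}(K)$ and the correct evaluation of $u(K^{-1})$ and $\overline{u}(K^{-1})$. I expect no conceptual obstacle, but I would record that the appearance of the \emph{negative} scalar $-|q|$ in the conjugate equations—rather than a positive one, as would occur for $q>0$—is the algebraic signature of the $q<0$ phenomenon: it forces the rescaling to carry a sign and prevents $R$ and $\overline{R}$ from being simultaneously normalized with positive real scalars.
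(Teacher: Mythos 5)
Your proof is correct and takes essentially the same approach as the paper's: both solve the intertwining relations for $E$, $F$, $K$ directly on the basis $\{\psi_i\otimes\psi_j\}$ (the paper imposes the $E$- and $F$-relations first, noting the $K$-relation is automatic, while you diagonalize $K$ first, with the $F$-relation automatic), and your explicit check that both composites equal $-|q|\,1$ merely makes explicit the conjugate equations behind the paper's ``in particular''. One correction to your closing aside, though: the negative scalar is not a signature of $q<0$ --- for $q>0$ the fundamental representation of ${\rm SU}_q(2)$ has invariant vector $\psi_0\otimes\psi_1-q\,\psi_1\otimes\psi_0$ and the analogous composite equals $-q<0$ as well (the representation is pseudo-real), so the genuine $q<0$ novelty is rather, as the paper's subsequent remark records, that $u$ fails to be self-conjugate ($u^2\not>\iota$).
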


\begin{proof} The proof proceeds by straightforward computations.
 We write $\overline{R}(1):=\sum_0^1 a_{i,j}\psi_i\otimes\psi_j$, and the intertwining relations $\overline{R}\iota(X)=u\otimes\overline{u}(\Delta(X))\overline{R}$. For $X=E$, the left hand side annihilates, and the relation gives $a_{1,1}=0$,
 $a_{1,0}=-a_{0,1}|q|$. For $X=F$, one obtains  in addition $a_{0,0}=0$.  The relation for $X=K$ now follows automatically. To determine $R$ we may use
  $u_{-1}(X)=-u(X)$ for $X=E$, $F$, $K$.
\end{proof}

\begin{rem} Although the formula for $R$ and $\overline{R}$ reminds of the canonical generator of the representation category of ${\rm SU}_{|q|}(2)$, here $u$ is not self-conjugate, as
$u^2\not >\iota$.
\end{rem}

\begin{thm}\label{conjugatesexample} The category of finite direct sums of irreducible $^*$-representations of $U_q(\mathfrak{su}_{1,1})$ on Hilbert spaces is generated, as a tensor $C^*$-category with subobjects and direct sums, by the objects $\iota_{-1}$, $u=u_1$ and a pair of arrows
 $R\in(\iota, \overline{u} u)$,
$\overline{R}\in(\iota, u \overline{u})$ solving the conjugate equations, where $\overline{u}:=\iota_{-1}u$. In particular, this category has conjugates.
   \end{thm}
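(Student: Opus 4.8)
The plan is to establish two things: first, that every irreducible $^*$-representation $u_{\pm n}$ arises as a subobject of an iterated tensor product of the generators $\iota_{-1}$ and $u=u_1$, so that the generated subcategory $\mathcal{C}$ exhausts all objects of the ambient category $\mathcal{T}$; and second, that $\mathcal{C}$ is closed under conjugation. Since $\mathcal{T}$ is semisimple, all arrows are recovered from the subobject isometries once every irreducible is present, so these two facts give the statement.

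For the generation of objects I would argue by induction on $n$, showing $u_{\pm n}\in\mathcal{C}$ for every $n\geq 0$. The base cases are immediate: $u_0=\iota$ is the tensor unit, $u_{-0}=\iota_{-1}$ and $u_1=u$ are generators, and $u_{-1}=\iota_{-1}u\in\mathcal{C}$ by the relation $\iota_{-1}u_n\simeq u_{-n}$. For the inductive step, assuming $u_{\pm k}\in\mathcal{C}$ for all $k\leq n$, I inspect the Clebsch--Gordan rule for $u_1 u_n$, splitting on the parity of $n$. When $n$ is odd the rule $u_1 u_n\simeq u_{-(n-1)}+\dots+u_{-(n+1)}$ exhibits $u_{-(n+1)}$ as a subobject; when $n$ is even the rule $u_1 u_n\simeq u_{n-1}+\dots+u_{n+1}$ exhibits $u_{n+1}$ as a subobject. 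In either case, since $\mathcal{C}$ has subobjects, the new irreducible enters $\mathcal{C}$, and its sign-reverse is then produced by tensoring with $\iota_{-1}$ via $\iota_{-1}u_m\simeq u_{-m}$. Hence both $u_{\pm(n+1)}$ lie in $\mathcal{C}$, completing the induction; closure under direct sums then yields all objects of $\mathcal{T}$.

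For conjugates I would invoke the standard fact that, in a tensor $C^*$-category with subobjects and direct sums, the class of objects admitting a conjugate is closed under tensor products, direct sums and subobjects, and contains the tensor unit (in the sense of \cite{LR}). The generators meet the hypotheses: $\iota_{-1}$ is self-conjugate, with the conjugate equations solved trivially since $\iota_{-1}^2\simeq\iota$; and $u=u_1$ has conjugate $\overline{u}=u_{-1}=\iota_{-1}u$ together with the explicit solutions $R\in(\iota,\overline{u}u)$ and $\overline{R}\in(\iota,u\overline{u})$ of the conjugate equations furnished by Proposition \ref{generators}. Because $\mathcal{C}$ is built from $\iota_{-1}$, $u$, $R$, $\overline{R}$ precisely by the operations under which the conjugate property is stable, every object of $\mathcal{C}$ admits a conjugate; by the first part this is every object of $\mathcal{T}$.

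The only delicate bookkeeping lies in matching parities and signs in the fusion rules so that the correct $u_{\pm(n+1)}$ is produced at each inductive step, and this is precisely what the case split on the parity of $n$ resolves. The closure properties for conjugates are classical, and the recovery of all arrows from the subobject structure is automatic in the semisimple setting, so I do not expect either to present a genuine obstacle.
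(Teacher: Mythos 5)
There is a genuine gap, and it sits exactly where the theorem's statement is telling you to look: the generators include not just the objects $\iota_{-1}$ and $u$ but also the \emph{arrows} $R$, $\overline{R}$, and your argument never uses them for anything beyond the conjugate equations. The category $\C$ generated inside the ambient category $\T$ is a priori a \emph{non-full} tensor $^*$-subcategory: its arrow spaces $(x,y)_{\C}$ are only what can be built from identities, $R$, $\overline{R}$, compositions, adjoints, tensor products, direct sums and subobject isometries, and may be strictly smaller than the ambient intertwiner spaces. Your inductive step reads off the ambient Clebsch--Gordan decomposition $u_1u_n\simeq u_{-(n-1)}+\dots+u_{-(n+1)}$ and concludes ``since $\C$ has subobjects, the new irreducible enters $\C$''; but closure under subobjects only splits projections that already lie in $(u_1u_n,u_1u_n)_{\C}$, and it is precisely the membership of the Clebsch--Gordan projections in $\C$ that must be proved. (If the ambient decompositions were automatically available in the generated category, the objects alone would generate and $R$, $\overline{R}$ would be redundant in the statement --- yet with only the objects one could have, e.g., $(\iota,\overline{u}u)_{\C}=0$, in which case $\overline{u}u$ admits no nontrivial projection in $\C$ and the induction cannot even start.) The same conflation appears in your closing remark that recovery of all arrows ``is automatic in the semisimple setting'': semisimplicity of $\T$ says nothing about the arrow spaces of a non-full subcategory until the decomposition isometries are exhibited inside it.

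The paper's proof is organized exactly around filling this hole. From $0\neq R\in(\iota,\overline{u}u)_{\T}$ one gets the range projection of $R$ in $\T$, and its orthogonal complement is the projection onto $u_2$ (here it is essential that $\overline{u}u$ has exactly two inequivalent irreducible summands, so the complement of one known subobject is automatically the other); this puts $u_2$ \emph{and the decomposition} in the generated category. Then, since the conjugate equations for $u$ hold via $R$, $\overline{R}$ inside the generated category, Frobenius reciprocity maps such as $T\mapsto\overline{R}^*\otimes 1_u\circ 1_u\otimes T$ stay inside it, so a nonzero arrow space $(u_n,uu_{n-1})$ at one stage produces a nonzero arrow space $(\overline{u}u_n,u_{n-1})$ (or $(uu_n,u_{n-1})$, depending on the parity of $n$) at the next; polar decomposition in the $C^*$-structure converts a nonzero intertwiner into an isometry, hence into the missing projection, and $u_{n+1}$ is extracted. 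Your parity bookkeeping at the level of fusion rules is correct, and your appeal to the stability of the conjugate property under tensor products, subobjects and direct sums for the final clause is fine and matches the paper; what must be added is this propagation of nonzero arrow spaces via Frobenius reciprocity from $R$, which is the actual content of the proof rather than a routine verification.
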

\begin{proof}
Let ${\mathcal T}$ denote the smallest tensor $^*$-subcategory with subobjects and direct sums  containing objects $u$, $\iota_{-1}$ and arrows $R$, $\overline{R}$. Since $u$ and $\iota_{-1}$ have conjugates in ${\mathcal T}$, so does ${\mathcal T}$. The space of arrows in
${\mathcal T}$ between two objects    will be denoted $(u,v)_{{\mathcal T}}$.
We need to show that ${\mathcal T}$ contains a complete set of irreducible representations.
Since $u_{-n}\simeq \iota_{-1} u_n$, it suffices to show that $u_n\in{\mathcal T}$ for $n\geq 2$.
Since
$ \overline{u} u=\iota+ u_2$ in $\Rep(G)$,  and  $0\neq R\in(\iota, \overline{u}
 u)_{\mathcal T}$,   $u_2$ is the subobject of $\overline{u}
  u$ defined by  the projection orthogonal to the range of ${R}$. Hence $u_2\in{\mathcal T}$ and the decomposition holds in ${\mathcal T}$. This implies  $(u_2, \overline{u}
   u)_{\mathcal T}\neq0$.
On the other hand, the conjugate equations of $u$ induce a linear isomorphism, Frobenius reciprocity,   $T\in (u_2, \overline{u}
 u)_{\mathcal T}\to\overline{R}^*\otimes 1_u\circ 1_u\otimes T\in (u u_2, u)_{\mathcal T}$. Hence $(u u_2, u)_{\mathcal T}\neq0$ as well. But $u u_2=u+ u_3$, hence, as before, $u_3\in{\mathcal T}$.
Iteratively, we obtain: If $n$ is odd and $u_n\in{\mathcal T}$, since
$uu_{n-1}=u_{n-2}+ u_{n}$ in ${\mathcal T}$, then $0\neq (u_n, u u_{n-1})_{{\mathcal T}}\simeq
(\overline{u}u_n, u_{n-1})_{{\mathcal T}}$. But $\overline{u}u_n=u_{n-1}+u_{n+1}$ in $\Rep(G)$,
hence $u_{n+1}$, as well as the decomposition, are in ${\mathcal T}$. If $n$ is even and $u_n\in{\mathcal T}$,
$\overline{u}u_{n-1}=u_{n-2}+u_n$, which similarly implies
$0\neq (u_n, \overline{u}u_{n-1})_{\mathcal T}\simeq (uu_n, u_{n-1})_{\mathcal T}$. Now $uu_n= u_{n-1}+u_{n+1}$ hence $u_{n+1}\in{\mathcal T}$.
\end{proof}

We can now apply Tannaka--Krein--Woronowicz duality.

\begin{cor}
There is  a compact quantum group, $\widehat{U_q(\mathfrak{su}_{1,1})}$,
with this representation category.
\end{cor}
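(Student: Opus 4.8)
The plan is to invoke the Tannaka--Krein--Woronowicz reconstruction theorem recalled in Subsection~\ref{tannaka}: for any tensor $C^*$-category with conjugates, subobjects and direct sums, equipped with an embedding functor into $\Hilb$, there exists a compact quantum group $G$ with $(\mathcal{T},\mathcal{F})$ isomorphic to $(\Rep(G),H)$, unique up to the choice of norm completion. Accordingly, the entire task reduces to verifying that the category $\mathcal{C}$ of finite direct sums of irreducible Hilbert space $^*$-representations of $U_q(\mathfrak{su}_{1,1})$ meets every hypothesis of this theorem; the quantum group it produces is then christened $\widehat{U_q(\mathfrak{su}_{1,1})}$.

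First I would confirm that $\mathcal{C}$ is a genuine tensor $C^*$-category with subobjects and direct sums. Closure under subobjects and finite direct sums is built into its definition. The $^*$-operation on arrows is the Hilbert space adjoint, and the tensor product of objects is induced by the comultiplication $\Delta$ of $U_q(\mathfrak{su}_{1,1})$, acting on the Hilbert space tensor product; closure under this tensor product is exactly what the fusion rules computed above guarantee, since each product $u_{\pm n}\,u_{\pm m}$ decomposes as a finite direct sum of the irreducibles $u_{\pm k}$, hence again lies in $\mathcal{C}$. The one substantial ingredient, existence of conjugates, is precisely the content of Theorem~\ref{conjugatesexample}: the generator $u=u_1$ admits the conjugate $\overline{u}=\iota_{-1}u$ together with solutions $R,\overline{R}$ of the conjugate equations, and conjugates of all remaining irreducibles follow from the fact that $\mathcal{C}$ is generated by $u$ and $\iota_{-1}$ as a tensor $C^*$-category with subobjects and direct sums.

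Next I would exhibit the embedding functor $\mathcal{F}\colon\mathcal{C}\to\Hilb$ sending each $^*$-representation to its underlying Hilbert space and acting as the identity on arrows. This is faithful and compatible with the tensor structure precisely because the arrows of $\mathcal{C}$ are honest linear maps of Hilbert spaces intertwining the representations, and the tensor product of representations is carried by the Hilbert space tensor product; thus $\mathcal{F}$ is an embedding functor in the required sense. With these verifications in place, the duality theorem immediately yields the desired compact quantum group. I do not expect any serious obstacle: the only genuinely nontrivial step, construction of conjugates, has already been discharged in Theorem~\ref{conjugatesexample}, so what remains is the routine confirmation that the abstract hypotheses of Woronowicz's reconstruction are satisfied. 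The sole point deserving care is checking that $\mathcal{F}$ respects the full tensor $C^*$-structure, namely the tensor unit $\iota$, the associativity and unit constraints, and the $^*$-structure, all of which follow directly from the fact that the tensor product is implemented by $\Delta$ on genuine Hilbert spaces.
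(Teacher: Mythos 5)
Your proposal is correct and coincides with the paper's own argument: the corollary is obtained there precisely by applying the Tannaka--Krein--Woronowicz reconstruction of Subsection 2.2 to the embedded tensor $C^*$-category with conjugates, subobjects and direct sums furnished by Theorem \ref{conjugatesexample}, with the forgetful functor to $\Hilb$ as embedding. Your explicit verification of the routine hypotheses (closure under tensor products via the fusion rules, the $^*$-structure from Hilbert space adjoints) merely spells out what the paper leaves implicit.
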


 \begin{rem}
The proof  of Theorem \ref{conjugatesexample}  shows that every irreducible $u_{\pm n}$
admits a polynomial expression in $u$ and $\iota_{-1}$ in the commutative ring $R(\widehat{U_q(\mathfrak{su}_{1,1})}$.
In particular, $R(\widehat{U_q(\mathfrak{su}_{1,1})}$ is Noetherian.
 \end{rem}

  \noindent {\it The  identity component and the component group of
  $\widehat{U_q(\mathfrak{su}_{1,1})}$.}\
\medskip

We next identify the identity component and the quantum component group.
We shall need the well known variant of $U_q(\mathfrak{sl}_2)$, that we shall denote by
$U_q(\mathfrak{sl}_2)^{\widetilde{}}$. This is the Hopf algebra generated by $E$, $F$, $K$
and relations

$$KK^{-1}=K^{-1}K=1,\quad KEK^{-1}= q^2E,\quad KFK^{-1}=q^{-2}F,$$
$$[E,F]=\frac{K-K^{-1}}{q-q^{-1}}$$
with comultiplication $\Delta$, antipode $S$ and counit $\varepsilon$
 given by
$$\Delta(K)=K\otimes K,\quad \Delta(E)=E\otimes I+K\otimes E,\quad
\Delta(F)=F\otimes K^{-1}+I\otimes F,$$
$$S(K)=K^{-1}, \quad S(E)=-K^{-1} E,\quad S(F)=-FK,
\quad \varepsilon(K)=1,\quad \varepsilon(E)=\varepsilon(F)=0.$$
This Hopf algebra has two $^*$-involutions making it into a Hopf $^*$-algebra,
 $$ E^*=FK, \quad F^*=K^{-1}E,\quad K^*=K,\quad\quad U_q(\mathfrak{su}_2)^{\widetilde{}},$$
$$E^*=-FK,\quad F^*=-K^{-1}E\quad K^*=K, \quad \quad U_q(\mathfrak{su}_{1,1})^{\widetilde{}}.$$
There is a canonical isomorphism of Hopf $^*$-algebras
$$U_{-q}(\mathfrak{su}_{1,1})^{\widetilde{}}\to U_{q}(\mathfrak{su}_{1,1})^{\widetilde{}},$$
$$E\to E,\quad F\to -F, \quad K\to K.$$

\begin{rem} For $q<0$,  $U_q(\mathfrak{su}_{1,1})$ and $U_{-q}(\mathfrak{su}_2)$ are
not isomorphic as Hopf $^*$-algebras, as
if $n$ is odd,
$  u_n^2\not >\iota$
hence, unlike the irreducible representations of $U_{-q}(\mathfrak{su}_{2})$,  it is not self-conjugate.
\end{rem}

\begin{thm} Set $G=\widehat{U_q(\mathfrak{su}_{1,1})}$,  $q<0$. Then $\go$ is a normal quantum subgroup isomorphic to ${\rm SU}_{|q|}(2)$ and $\go\backslash G\simeq\Cyc_2$.
\end{thm}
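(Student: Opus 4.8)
The plan is to determine the torsion subcategory $\Rep(G)^t$ explicitly, check that it is tensorial, finite and normal, feed this into Theorem \ref{connectedcomponent} to get normality of $\go$ together with finiteness and the exact description of the quotient, and finally recognise the identity component as ${\rm SU}_{|q|}(2)$ via the restriction functor and Tannaka--Krein--Woronowicz duality. The last step is where the real content lies.

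First I would pin down the torsion objects. The irreducible $^*$-representations are the $u_{\pm n}$, $n\geq 0$, and for $n=0$ these are precisely the two one-dimensional representations $\iota=u_0$ and $\iota_{-1}=u_{-0}$. For $n\geq 1$ the Clebsch--Gordan-type fusion rules show that the full tensor subcategory generated by any $u_{\pm n}$ meets infinitely many dimensions, so no such object is torsion. Hence $\Irr(\Rep(G)^t)=\{\iota,\iota_{-1}\}$ with $\iota_{-1}^2=\iota$, i.e. $\Rep(G)^t$ is the representation category of $\Cyc_2$: in particular it is commutative, finite, and therefore tensorial by Proposition \ref{commutativetorsion}. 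For normality I would invoke Remark \ref{torsionpartisnormal}: the permutation operators intertwine $\iota_{\pm 1}\otimes\pi_{w,n}$ and $\pi_{w,n}\otimes\iota_{\pm 1}$, and since $u_{\pm n}\simeq\iota_w\otimes\pi_n$ this is exactly condition (b) of Proposition \ref{sufficientfornormality} applied to $\S=\Rep(G)^t$; hence $\Rep(G)^t$ is normal. (It is not central: for a free irreducible $u$ the object $\overline{u}\iota_{-1}u\simeq\iota_{-1}\overline{u}u$ contains non-torsion summands, so condition (d) fails, consistent with $\go$ being non-cocommutative.)

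With $\Rep(G)^t$ tensorial, finite and normal, Theorem \ref{connectedcomponent} gives at once that $\go$ is normal, $\go\backslash G$ is finite, $G$ has torsion degree $\leq 1$, and $\Rep(\go\backslash G)=\Rep(G)^t$; as the latter is the representation category of $\Cyc_2$, duality yields $\go\backslash G\simeq\Cyc_2$. (Alternatively, since $R(G)$ is Noetherian and hence of Lie type, and $\Rep(G)^t$ is commutative and normal, Theorem \ref{final} delivers the same conclusions directly.) It then remains to identify $\go$. Using the description of $\Rep(\go)$ in Theorem \ref{connectedcomponent}(d), I would compute the restriction functor. Because $\iota_{-1}$ restricts to the trivial representation of $\go$ and $u_{-n}\simeq\iota_{-1}u_n$, each pair $u_{\pm n}$ restricts to a single object $w_n:=u_n\upharpoonright_{\go}$. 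From the fusion rules one checks that the torsion part $(\overline{u_n}u_m)_t$ equals $\iota$ when $n=m$ and $0$ otherwise (the summand $\iota_{-1}$ never occurs, since $(\iota_{-1},\overline{u_n}u_m)\simeq(u_{-n},u_m)=0$ for $m\geq 0$); the dimension formula $\dim(u_n\upharpoonright_{\go},u_m\upharpoonright_{\go})=\dim H_{(\overline{u_n}u_m)_t}$ then shows the $w_n$ are irreducible, pairwise inequivalent, of dimension $n+1$, and exhaust $\Irr(\go)$. Restricting each displayed fusion relation collapses it to the single Clebsch--Gordan rule $w_n w_m\simeq w_{|n-m|}+\dots+w_{n+m}$, and $w_1$ becomes self-conjugate since $\overline{u_1}=u_{-1}=\iota_{-1}u_1$ also restricts to $w_1$. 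By Proposition \ref{generators} the conjugate equations for $w_1$ are solved by $R=\overline{R}=\psi_0\otimes\psi_1-|q|\psi_1\otimes\psi_0$, which is exactly the standard invariant for the fundamental representation of ${\rm SU}_{|q|}(2)$. Thus the pair $(\Rep(\go),H)$ is generated by a self-conjugate two-dimensional object with Clebsch--Gordan fusion and the $R$-matrix of ${\rm SU}_{|q|}(2)$, so by Tannaka--Krein--Woronowicz duality $\go\simeq{\rm SU}_{|q|}(2)$.

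The main obstacle is this last identification. Fusion rules alone do not determine the quantum group (for instance ${\rm SU}_q(2)$ and $A_o(F)$ share them), so the argument must match the full embedded-category data, not merely the fusion ring. The decisive point is reading off the deformation parameter $|q|$ from the explicit intertwiner $R$ of Proposition \ref{generators} and appealing to the rigidity of Woronowicz's reconstruction to conclude that $\go$ is ${\rm SU}_{|q|}(2)$ and not some other compact quantum group with the same fusion rules.
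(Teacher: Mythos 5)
Your proposal is correct, and its first half is exactly the paper's argument: both identify $\Irr(\Rep(G)^t)=\{\iota,\iota_{-1}\}$, obtain (strong) normality from Remark \ref{torsionpartisnormal} together with Proposition \ref{sufficientfornormality}b), and feed this into Theorem \ref{connectedcomponent} to get that $\go$ is normal with $\go\backslash G\simeq\Cyc_2$. Where you genuinely diverge is the identification $\go\simeq {\rm SU}_{|q|}(2)$. The paper never computes $\Rep(\go)$ abstractly: it exhibits ${\rm SU}_{|q|}(2)$ as a concrete connected quantum subgroup of $G$, namely the one arising from the Hopf $^*$-subalgebra $U_q(\mathfrak{su}_{1,1})^{\widetilde{}}\subset U_q(\mathfrak{su}_{1,1})$ generated by $E'=KE$, $F'=FK^{-1}$, $K'=K^2$, which is canonically isomorphic to $U_{|q|}(\mathfrak{su}_{2})^{\widetilde{}}$; the restrictions of the $^*$-representations of $G$ are the positive-weight representations of the latter, identified with $\Rep({\rm SU}_{|q|}(2))$ via \cite{Rosso}. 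Maximality is then shown directly: any connected quantum subgroup $C$ kills $\iota_{-1}$, so ${\mathcal Q}_C$ is generated by the coefficients of $u\upharpoonright_C$ together with the images of $R,\overline{R}$ from Proposition \ref{generators}, forcing $C$ to be a quantum subgroup of ${\rm SU}_{|q|}(2)$; connectedness of both $\go$ and ${\rm SU}_{|q|}(2)$ then gives equality. Your route --- reconstructing $\Rep(\go)$ from Theorem \ref{connectedcomponent}d) via $(\overline{u_n}u_m)_t=\delta_{nm}\,\iota$, collapsing the fusion table to Clebsch--Gordan, and identifying the resulting embedded category --- is precisely the alternative the authors sketch in the remark following the theorem, and your bookkeeping (irreducibility, inequivalence and exhaustion of the $w_n$, self-conjugacy of $w_1$ with $R=\psi_0\otimes\psi_1-|q|\psi_1\otimes\psi_0$) is accurate. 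What each approach buys: yours is self-contained within the paper's general machinery; the paper's trades that computation for the conceptually illuminating inclusion $U_{|q|}(\mathfrak{su}_2)^{\widetilde{}}\subset U_q(\mathfrak{su}_{1,1})$ and outsources the recognition step to \cite{Rosso}.

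One step you should firm up: ``rigidity of Woronowicz's reconstruction'' is not by itself a citable statement, since the generator, its fusion rules and one invariant vector $R$ pin down $\go$ only once you know the hom spaces of $\Rep(\go)$ are \emph{generated} by $R$. Two standard repairs: either invoke universality of $A_o(F)$ \cite{Wang_free, WVD} --- the self-conjugacy datum $R$ yields an epimorphism from $A_o(F)$, with $F$ the rank-two matrix read off from $R$, onto ${\mathcal Q}_{\go}$, with $A_o(F)\simeq{\rm SU}_{|q|}(2)$ by \cite{Banica}, and the Clebsch--Gordan count shows the restriction functor is bijective on irreducibles, so the epimorphism is an isomorphism --- or note that the Temperley--Lieb category generated by $R$ at the generic parameter $|q|$ is semisimple with Clebsch--Gordan fusion, so a dimension count forces it to exhaust the hom spaces you computed, after which \cite{Wtk} applies. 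This is essentially the same universality the paper uses implicitly in its maximality step, so the two proofs are closer than they first appear.
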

\begin{proof}
$\Rep(G)^t$ is tensorial and generated by $\iota_{-1}$. Moreover,  by Remark \ref{torsionpartisnormal} and Proposition \ref{sufficientfornormality} b) it is strongly normal.  Hence
by Theorem  \ref{connectedcomponent}, $\go$ is a normal quantum subgroup
and $\go\backslash G\simeq\Cyc_2$.   To complete the proof,
we need to show that $\go={\rm SU}_{|q|}(2)$, or, in other words, that $G$ admits   ${\rm SU}_{|q|}(2)$ as a quantum subgroup and that every connected quantum subgroup of $G$ is a subgroup of  ${\rm SU}_{|q|}(2)$.
It is well known that $U_q(\mathfrak{su}_{1,1})$ naturally contains   $U_q(\mathfrak{su}_{1,1})^{\widetilde{}}$ as the Hopf $^*$-subalgebra generated by $E'=KE$, $F'=FK^{-1}$ and $K'=K^2$.  We can thus consider the tensor $^*$-category   with subobjects and direct sums
generated by the restrictions of the Hilbert space $^*$-representations of
$U_q(\mathfrak{su}_{1,1})$ to $U_q(\mathfrak{su}_{1,1})^{\widetilde{}}$. We obtain all the Hilbert space $^*$-representations of $U_q(\mathfrak{su}_{1,1})^{\widetilde{}}$ with positive weights on $K'$. This is an embedded tensor $C^*$-category with conjugates, hence it corresponds to a compact quantum group $G'$.
The restriction functor gives $G'$ as a quantum subgroup of $G$. On the other hand,
 $U_q(\mathfrak{su}_{1,1})^{\widetilde{}}$ is canonically isomorphic to  $U_{|q|}(\mathfrak{su}_{2})^{\widetilde{}}$. By \cite{Rosso},  $G'$ is naturally isomorphic to ${\rm SU}_{|q|}(2)$. If $C$ is a connected quantum subgroup of $G$ then $\iota_{-1}$ restricts to the trivial representation of $C$. This holds in particular for  $C={\rm SU}_{|q|}(2)$. Taking into account the explicit form of the generators of $\Rep(G)$ given in Prop.
 \ref{generators}, we see that the restriction functor $\Rep(G)\to\Rep({\rm SU}_{|q|}(2))$ takes $R$ and $\overline{R}$ to the canonical generator of $\Rep({\rm SU}_{|q|}(2))$, and this holds also if ${\rm SU}_{|q|}(2)$ is replaced by $C$. Hence $C$ is a quantum subgroup of ${\rm SU}_{|q|}(2)$.
\end{proof}

\begin{rem}
The   identification of $\go$ in the proof emphasizes the role of the inclusion
 $$U_{|q|}(\mathfrak{su}_{2})^{\widetilde{}}\subset
U_q(\mathfrak{su}_{1,1}).$$ Alternatively, we may apply part d) of Theorem
\ref{connectedcomponent},  together with Prop. \ref{generators} and an explicit computation of the torsion vectors of tensor powers of $u$.
\end{rem}

\begin{rem} Since $\Rep(\widehat{U_q(\mathfrak{su}_{1,1})})$ is generated by $u$ and $\overline{u}$, which are irreducible and free, unlike the example of the previous subsection, $\widehat{U_q(\mathfrak{su}_{1,1})}$ is not
the product of the identity component and the component group.
\end{rem}

\appendix
\section{Subquotients of quantum groups}\label{subquotients}\

\noindent
If $L$ is a quotient quantum group of $G$ defined by $\varphi:\Q_L\to\Q_G$ and $K$ is a (not necessarily normal) quantum subgroup of $G$, defined by $\pi:Q_G\to Q_K$, then $\pi\circ \varphi(\Q_L)$ is a Hopf $^*$-subalgebra of $\Q_K$, and it is not hard to verify that its norm completion in $Q_K$ is a compact quantum group, denoted $[K]$, which, by construction, is a quantum subgroup of $L$ and a quotient quantum group of $K$. We shall refer to $[K]$ as the {\em  image} of $K$ in $L$.

In the Tannakian formalism, images of subgroups are described as follows. Consider the full subcategory $\T\subset \Rep(K)$ whose objects are subobjects of the restricted objects of $L$.
Since the restriction functor  $\Rep(G)\to\Rep(K)$ yields a tensor $^*$-functor, $\T$ is a tensor $^*$-subcategory with conjugates, subobjects and direct sums. Hence it corresponds to a compact quantum group, which coincides with $[K]$.
We thus obtain a commutative diagram
\[\xymatrix{
\Rep(L) \ar[d]_{\text{full}} \ar[r]
& \Rep([K])\ar[d]^{\text{full}} \\
\Rep(G)\ar[r] &\Rep(K) }\]
We may consider an alternative   notion of a subquotient, namely that of a quantum subgroup $F$ of a quotient quantum group $L$, and one immediately sees that already in the cocommutative case the notion of an image is in general strictly stronger than that of a subgroup of a quotient.
Indeed,  we may take $G=C^*(\Gamma)$ and    $L=C^*(\Lambda)$,
$F=C^*(\Omega\backslash\Lambda)$ with $\Omega\subset\Lambda\subset\Gamma$ an inclusion of groups such that
$\Omega$ is normal in $\Lambda$.
On the other hand, the image of  a quantum subgroup $K$ of $G$ in $L$ has the same form as $F$,  but  $\Omega$ is now required to be  normal in $\Gamma$.  Moreover,  the assumption  that $L$ is the quotient  $N\backslash G$  by a normal quantum subgroup of $G$ would not remedy the situation.

However, $G={\rm SU}_q(2)$ yields an important example where the two notions of a subquotient coincide. Quantum subgroups of ${\rm SU}_q(2)$ and ${\rm SO}_q(3)$ have been classified by Podles \cite{Podles}. He described all subgroups of ${\rm SO}_q(3)$ in terms of those of ${\rm SU}_q(2)$ showing indeed that they are precisely the quotients of intermediate subgroups $\Cyc_2\subset J\subset {\rm SU}_q(2)$ by the central subgroup $\Cyc_2\subset{\rm SU}_q(2)$.

As far as we know, a positive solution to the subquotient problem is missing in the general case. We shall restrict our considerations to the case where $L$ itself is the quotient $N\backslash G$ by a normal quantum subgroup $N$ of $G$.

Consider an intermediate  quantum subgroup $N\subset J\subset G$. Then  $N$ is normal in $J$ and moreover  it  is easy to see that  the image $[J]$ in $N\backslash G$ is the quotient quantum group $N\backslash J$.
 On the other hand, any image  in $N\backslash G$ is of this form. Indeed,   if $K$ is a generic quantum subgroup of $G$,
 we may consider the quantum subgroup `generated' by $N$ and $K$, denoted
 $NK$, defined by the embedded tensor category
 $$(u\upharpoonright_{NK}, v\upharpoonright_{NK}):=(u\upharpoonright_{N}, v\upharpoonright_{N})\cap(u\upharpoonright_{K}, v\upharpoonright_{K}).$$
Clearly, $NK$ contains $N$ and $K$ as quantum subgroups, and it is the smallest with this property.
In particular, $NK$ is intermediate between $N$ and $G$.
On the other hand,  $K$ and $NK$ have the same image in
$N\backslash G$. This is due to the fact that  representations of $N\backslash G$ act trivially on $N$,
therefore, by  the previous formula,  the  arrows between the restrictions of
two representations $u$, $v\in\Rep(N\backslash G)$, regarded as representations of $G$,  to $NK$ and $K$ are the same.

Let now $F$ be a quantum subgroup of $N\backslash G$.  We identify the Hopf $*$-algebra of $N\backslash G$ with a Hopf $^*$-subalgebra
of that of $G$. The quotient space $F\backslash(N\backslash G)$, being  an embedded action
of $N\backslash G$, thus becomes an embedded   action of $G$.   For any  irreducible representation $u$ of $\Rep(N\backslash G)$, the spectral space $F_u$ for the $G$--action coincides with the spectral space for the corresponding $N\backslash G$--action,
while  for $u\in\Rep(N\backslash G)^\perp$, $F_u=0$.
Note that if $F=N\backslash J$ is
the image of an intermediate subgroup $J$, this construction gives the spectral spaces $J_u$ for the
$G$--action on $J\backslash G$, and therefore determines $J$ by Theorem \ref{caratt}. In general, it may  fail to correspond to an intermediate quantum subgroup. Indeed, by the same theorem, we may infer  that the extended
$F_u$'s are the spectral spaces associated to a quantum   subgroup $J$ of $G$  if and only if they satisfy relation $(3.1)$ for every irreducible
representation $v\in\Rep(N\backslash G)$ and  $u\in\Rep(G)$.
Note that if this holds, the corresponding quantum subgroup $J$ of $G$ is necessarily intermediate between $N$ and $G$, as $J_u=F_u\subset N_u=H_u$ for all irreducible $u\in\Rep(G)$, and $[J]=F$.
But, being $F$ a subgroup of $N\backslash G$,  relation $(3.1)$ already holds for $u\in\Rep(N\backslash G)$, hence it needs to be verified only for $u\in\Rep(N\backslash G)^\perp$. We next discuss some cases of interest for this paper.

The previous arguments summarize  in the following result. The proof  can be completed with the same arguments as
in Proposition \ref{sufficientfornormality}.

\begin{thm}
Let $N$ be   a strongly normal   quantum subgroup of $G$. Then every quantum subgroup of $N\backslash G$ is the image of a quantum subgroup of $G$.
\end{thm}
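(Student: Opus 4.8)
The plan is to reduce the statement, by means of Theorem \ref{caratt}, to the verification of the single relation \eqref{3.1}, and then to extract that relation from strong normality of $N$ in exactly the way condition c) of Proposition \ref{sufficientfornormality} was used to establish ordinary normality. Write $L=N\backslash G$ and $\S=\Rep(L)$, regarded as a full tensor subcategory of $\Rep(G)$. Given a quantum subgroup $F$ of $L$, I would form, as in the discussion above, the embedded $G$-action $F\backslash L$, whose spectral functor sends an object $u$ to $F_u$, the space of $F$-invariant vectors when $u\in\S$ and $F_u=0$ when $u\in\S^\perp$. Being the spectral functor of an embedded action, $u\mapsto F_u$ is a $^*$-functor, so $TF_u\subset F_w$ for every arrow $T\in(u,w)$ of $\Rep(G)$. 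By Theorem \ref{caratt}, these $F_u$ are the spectral spaces of a quantum subgroup $J$ of $G$ precisely when \eqref{3.1} holds for all irreducible $u,v\in\Rep(G)$; in that case $J$ is automatically intermediate, $N\subset J\subset G$, with $[J]=F$.

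The verification of \eqref{3.1} splits according to the position of $u$ and $v$. If $v\in\S^\perp$ the left-hand side is zero since $F_v=0$; if $u\in\S$ and $v\in\S$, then $\overline u v u\in\S$ and the relation is exactly \eqref{3.1} for the subgroup $F$ of $L$ inside the tensor category $\S$, hence already known. The only genuine case is $v\in\S$, $u\in\S^\perp$ irreducible, and this is where strong normality enters. Fix $\phi\in F_v$ and $R\in(\iota,\overline u u)$. The arrow $1_v\otimes R\in(v,v\overline u u)$ is a $G$-intertwiner, so functoriality gives $(1_v\otimes R)\phi\in F_{v\overline u u}$; but as a vector $(1_v\otimes R)\phi=\phi\otimes R$. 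The intertwiner $\varepsilon_{v,\overline u}\otimes 1_u\in(v\overline u u,\overline u v u)$ supplied by strong normality then yields, using functoriality once more, $(\varepsilon_{v,\overline u}\otimes 1_u)(\phi\otimes R)\in F_{\overline u v u}$. Condition c) of Proposition \ref{sufficientfornormality} identifies this vector with $1_{\overline u}\otimes\phi\otimes 1_u\circ R$, whence $(1_{\overline u}\otimes F_v\otimes 1_u)R\subset F_{\overline u v u}$, which is \eqref{3.1}.

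With \eqref{3.1} established for all irreducible $u,v$, Theorem \ref{caratt} produces the quantum subgroup $J$ of $G$ whose spectral functor on $F\backslash L$ is $u\mapsto F_u$. Since $F_u\subset H_u=N_u$ for $u\in\S$ and $F_u=0=N_u$ for $u\in\S^\perp$, the subgroup $J$ is intermediate, $N\subset J\subset G$, and the construction identifies $[J]$ with $F$. This shows that every quantum subgroup of $N\backslash G$ arises as an image.

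I expect the only real obstacle to lie in the middle paragraph: one must confirm that the two applications of functoriality are legitimate, i.e.\ that $1_v\otimes R$ and $\varepsilon_{v,\overline u}\otimes 1_u$ are genuine arrows of $\Rep(G)$ between the stated objects, and that the vector identity $(1_v\otimes R)\phi=\phi\otimes R$ matches the normalization in condition c) exactly. Everything else is a formal transcription of the preceding reduction together with the computation already performed in Proposition \ref{sufficientfornormality}.
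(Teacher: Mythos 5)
Your proposal is correct and is essentially the paper's own argument: the appendix reduces the theorem to checking relation \eqref{3.1} for irreducible $v\in\Rep(N\backslash G)$ and $u\in\Rep(N\backslash G)^\perp$ (Theorem \ref{caratt}), and then completes the verification ``with the same arguments as in Proposition \ref{sufficientfornormality}'' --- which is exactly your middle paragraph, with the functor $u\mapsto H_{u_{\mathcal S}}$ replaced by the spectral functor $u\mapsto F_u$. The two uses of functoriality you flag are legitimate, since $1_v\otimes R\in(v,v\overline u u)$ and $\varepsilon_{v,\overline u}\otimes 1_u\in(v\overline u u,\overline u vu)$ are genuine arrows of $\Rep(G)$ (note $\overline u\in\S^\perp$) and the spectral functor of an embedded action satisfies $TF_u\subset F_w$ for $T\in(u,w)$, while the vector identity $(1_v\otimes R)\phi=\phi\otimes R$ matches the normalization in condition c) exactly.
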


\begin{rem}
It follows that if $G^\circ$ is strongly normal then $G^\circ\backslash G$ is totally disconnected, see also Remark \ref{totdisc3}.
\end{rem}

Let us now pass from the categorical to the algebraic description of the quantum groups. Given a quantum subgroup $F$ of $N\backslash G$ we denote by $q:{\mathcal Q}_{N\backslash G}\to{\mathcal Q}_F$ the defining epimorphism. We consider the special case where $N$ is a central quantum subgroup. The Hopf subalgebra ${\mathcal Q}_{N\backslash G}$ of ${\mathcal Q}_G$
is then globally invariant under
the maps $x\to \overline{u}_{\psi,\varphi}x u_{\psi',\varphi'}$, $u\in\Rep(G)$, by Proposition \ref{centrality}.

\begin{thm}
Let $N$ be   a central quantum subgroup of $G$. Then
a quantum subgroup $F$ of $N\backslash G$  is the image of a quantum subgroup of $G$ if and only if for every pair of irreducible representations $v\in\Rep(N\backslash G)$, $u\in\Rep(N\backslash G)^\perp$,
  $$q(\overline{u}_{\psi,\varphi}v_{\xi, f}u_{\psi',\varphi'})=\langle\xi,f\rangle q(\overline{u}_{\psi,\varphi}u_{\psi',\varphi'}),\eqno(8.1)$$
  for $f\in F_v$ and arbitrary vectors $\psi,\varphi,\psi',\varphi',\xi$.
\end{thm}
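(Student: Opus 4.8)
The plan is to prove the two implications of the theorem along asymmetric routes, exploiting the characterization recorded in Theorem \ref{caratt}. Throughout, centrality of $N$ enters through Proposition \ref{centrality}: for irreducible $v\in\Rep(N\backslash G)$ and $u\in\Rep(N\backslash G)^\perp$ one has $\overline u vu\in\Rep(N\backslash G)$ and, taking $v=\iota$, also $\overline u u\in\Rep(N\backslash G)$. This is what makes the identity $(8.1)$ meaningful: the products $\overline u_{\psi,\varphi}v_{\xi,f}u_{\psi',\varphi'}$ and $\overline u_{\psi,\varphi}u_{\psi',\varphi'}$ are matrix coefficients of $\overline u vu$ and $\overline u u$ respectively, hence lie in $\Q_{N\backslash G}$, the domain of $q$. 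I will freely use that a product of coefficients of $G$-representations is the corresponding coefficient of their tensor product, and that $q$ sends a coefficient of $w\in\Rep(N\backslash G)$ to the matching coefficient of $w\!\upharpoonright_F$.

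First I would show that if $F$ is an image, then $(8.1)$ holds. By the discussion preceding the theorem, every image in $N\backslash G$ has the form $N\backslash J$ for an intermediate quantum subgroup $N\subset J\subset G$, and $q$ coincides, on $\Q_{N\backslash G}$, with the restriction of $G$-coefficients to $J$; moreover, for $v\in\Rep(N\backslash G)$ the spectral space $F_v$ is precisely the space $J_v$ of $J$-invariant vectors. Restricting $\overline u_{\psi,\varphi}v_{\xi,f}u_{\psi',\varphi'}$ to $J$ and using that restriction is a tensor functor over $G$, it factors as $(\overline u\!\upharpoonright_J)_{\psi,\varphi}(v\!\upharpoonright_J)_{\xi,f}(u\!\upharpoonright_J)_{\psi',\varphi'}$. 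Since $f\in J_v$ is $J$-invariant, $(v\!\upharpoonright_J)_{\xi,f}=\langle\xi,f\rangle 1$, and collapsing this middle factor yields $\langle\xi,f\rangle(\overline u\!\upharpoonright_J)_{\psi,\varphi}(u\!\upharpoonright_J)_{\psi',\varphi'}$, which is $(8.1)$. This direction only needs that the $G$-representations $\overline u,v,u$ may be restricted to $J$, where the factorization is legitimate even though $u\notin\Rep(N\backslash G)$.

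Conversely, assuming $(8.1)$, I would verify relation \eqref{3.1} for the extended spectral spaces (with $F_u=0$ for $u\in\Rep(N\backslash G)^\perp$), which by Theorem \ref{caratt} and the preceding discussion identifies these as the spectral spaces of an intermediate $J$ with $N\backslash J=F$; since \eqref{3.1} is automatic for $u\in\Rep(N\backslash G)$ and trivial when $v\in\Rep(N\backslash G)^\perp$, only the pairs with $v\in\Rep(N\backslash G)$, $u\in\Rep(N\backslash G)^\perp$ irreducible remain. Fix such $u,v$, a vector $f\in F_v$, and $R\in(\iota,\overline u u)$, and write $R=\sum_\alpha\zeta_\alpha\otimes\eta_\alpha\in H_{\overline u}\otimes H_u$ for the corresponding vector. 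Reading $(8.1)$ as an identity of coefficients of $(\overline u vu)\!\upharpoonright_F$ and $(\overline u u)\!\upharpoonright_F$, I would specialize the column to $\zeta_\alpha\otimes f\otimes\eta_\alpha$ and sum over $\alpha$: the left side becomes the $(\psi\otimes\xi\otimes\psi')$-coefficient of $(\overline u vu)\!\upharpoonright_F$ evaluated at $a:=(1_{\overline u}\otimes f\otimes 1_u)R$, while the right side becomes $\langle\xi,f\rangle$ times the $(\psi\otimes\psi')$-coefficient of $(\overline u u)\!\upharpoonright_F$ at $R$. As $R$ is a $G$-invariant, hence $F$-invariant, vector of $\overline u u\in\Rep(N\backslash G)$, the latter equals $\langle\psi\otimes\psi',R\rangle 1$, and a short computation shows the resulting identity says exactly that $a$ is $F$-invariant, i.e.\ $a\in F_{\overline u vu}$. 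This is relation \eqref{3.1}, and the conclusion follows.

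The main obstacle is conceptual rather than computational: $(8.1)$ looks like the assertion that $1_{\overline u}\otimes f\otimes 1_u$ is a full $F$-intertwiner from $(\overline u u)\!\upharpoonright_F$ to $(\overline u vu)\!\upharpoonright_F$, which is a priori stronger than the single containment \eqref{3.1}. The point to get right is that one should not try to establish the equivalence $(8.1)\Leftrightarrow\eqref{3.1}$ head-on; rather, the two implications are proved along different paths --- the image $\Rightarrow (8.1)$ direction directly by restriction to $J$, and the $(8.1)\Rightarrow$ image direction by contracting $(8.1)$ against $R$ to recover \eqref{3.1} and then invoking Theorem \ref{caratt}. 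Care is also needed to justify the Tannakian dictionary between products of matrix coefficients and the spectral-space containments, and to check that $F_v$ coincides with the space of $J$-invariant vectors; both rest on centrality of $N$ as recorded above.
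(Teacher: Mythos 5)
Your proof is correct, and while the necessity direction coincides with the paper's (multiplicativity of the restriction epimorphism plus $\pi(v_{\xi,f})=\langle\xi,f\rangle$ for invariant $f$; whether one restricts to an intermediate $J$ or to a $K$ with $F=[K]$ is immaterial), your converse takes a genuinely different route. The paper never verifies \eqref{3.1} directly: it first identifies $\ker(q)$ with the ideal ${\mathcal I}$ of ${\mathcal Q}_{N\backslash G}$ generated by the elements $v_{\xi,f}-\langle\xi,f\rangle$, obtains via \cite[Theorem 5.1]{PR} a quantum subgroup $K$ of $G$ cut out by the ideal ${\mathcal J}$ of ${\mathcal Q}_G$ with the same generators, and then proves the key intersection ${\mathcal J}\cap{\mathcal Q}_{N\backslash G}={\mathcal I}$ using the conditional expectation $E:{\mathcal Q}_G\to{\mathcal Q}_{N\backslash G}$, the insertion of $(u\overline{u})_{s,s}=1$, and the fact (from centrality) that irreducibles of $\Rep(N\backslash G)^\perp$ restrict to multiples of characters $g$ of $N$; hypothesis $(8.1)$ enters only to annihilate the terms with $g'g=1$. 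You instead contract $(8.1)$ against $R\in(\iota,\overline{u}u)$, and your computation is sound: $G$-invariance of $R$ turns the right-hand side into $\langle\xi,f\rangle\langle\psi\otimes\psi',R\rangle=\langle\psi\otimes\xi\otimes\psi',a\rangle$ with $a=(1_{\overline{u}}\otimes f\otimes 1_u)R$, which is precisely the statement that $a$ is $F$-invariant in $\overline{u}vu$, i.e. relation \eqref{3.1} for the extended spectral spaces; Theorem \ref{caratt}, through the reduction already recorded in the discussion preceding the theorem (only the pairs $v\in\Rep(N\backslash G)$, $u\in\Rep(N\backslash G)^\perp$ need checking, and the resulting $J$ is intermediate with $[J]=F$), then finishes the argument. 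What each approach buys: yours is shorter and stays at the Tannakian level, making transparent that centrality is used only to place $\overline{u}vu$ and $\overline{u}u$ inside $\Rep(N\backslash G)$ (Proposition \ref{centrality}) so that $(8.1)$ is meaningful and $F_{\overline{u}vu}$ is an honest space of invariant vectors; the paper's ideal-theoretic argument is heavier but yields extra information not visible from the contraction, namely an explicit Hopf-algebraic presentation of the subgroup $K$ by the ideal ${\mathcal J}$ and the identification $\ker(q)={\mathcal I}$. Your closing observation about the asymmetry is also well taken: $(8.1)$ is formally stronger than \eqref{3.1}, and their equivalence for central $N$ emerges only because each is separately equivalent to $F$ being an image.
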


\begin{proof} If $F=[K]$ with $K$ defined by the epimorphism $\pi:{\mathcal Q}_G\to{\mathcal Q}_K$, then any matrix coefficient of  $v\in\Rep(N\backslash G)$, satisfies $q(v_{\psi,\varphi})=\pi(v_{\psi,\varphi})$.
Since $\pi$ is multiplicative and $q(v_{\xi, f})=\langle\xi, f\rangle$,
the necessity of  relation $(8.1)$ follows.
For the converse,
we need to show that, if $(8.1)$ holds,
there is a quantum subgroup $K$  of $G$ such that the defining epimorphism $\pi:{\mathcal Q}_G\to{\mathcal Q}_K$ restricts to $q$ on ${\mathcal Q}_{N\backslash G}$.
It is perhaps well known  that $\text{ker}(q)={\mathcal I}$, where
${\mathcal I}$ is the ideal  of ${\mathcal Q}_{N\backslash G}$ generated
by elements of the form $v_{\xi, f}-\langle \xi, f\rangle$, where $v\in\Rep(N\backslash G)$ is irreducible, $\xi$ is an arbitrary element of the space of $v$, while $f$ is an arbitrary fixed vector for the restriction of $v$ to $F$, hence an element of $F_v$.
Indeed, on one hand  the inclusion $\text{ker}(q)\supset{\mathcal I}$ is obvious. On the other, it is easy to see that the  ideal ${\mathcal I}$ is
also a coideal, and, since ${\mathcal I}\subset\text{ker}(q)$, it defines an intermediate quantum subgroup $F\subset \tilde{F}\subset N\backslash G$ satisfying $F_v\subset\tilde{F}_v$ for all $v\in\Rep(N\backslash G)$ by construction. Hence $F_v=\tilde{F}_v$, and this implies $F=\tilde{F}$, hence $\text{ker}(q)={\mathcal I}$.
Let us now regard the quotient space $F\backslash(N\backslash G)$ as an embedded action of $G$. As mentioned before, the associated spectral spaces extend the
$F_v$'s  by $F_u=0$ if $u\in\Rep(N\backslash G)^\perp$. By \cite[Theorem 5.1]{PR},
there is a quantum subgroup $K$ of $G$
such that $F_u\subset K_u$ for all
irreducible representations $u\in\Rep(G)$. By construction,  and also
by the form of the spectral spaces of the extended $G$--action, $K$ is defined by the epimorphism
${\mathcal Q}_G\to{\mathcal Q}_K$
whose kernel is the ideal ${\mathcal J}$, now  of ${\mathcal Q}_G$, generated by the same set of generators of ${\mathcal I}$, namely
$v_{\xi, f}-\langle \xi, f\rangle$. We are thus left to show that ${\mathcal J}\cap{\mathcal Q}_{N\backslash G}={\mathcal I}$, or, in other words, that ${\mathcal J}\cap{\mathcal Q}_{N\backslash G}\subset{\mathcal I}$, since the reverse inclusion is obvious. Let us
consider the projection $E:{\mathcal Q}_G\to{\mathcal Q}_{N\backslash G}$
that comes from the splitting of the irreducible objects of $\Rep(G)$ into those
of $\Rep(N\backslash G)$ and $\Rep(N\backslash G)^\perp$, and let $X\in{\mathcal J}\cap{\mathcal Q}_{N\backslash G}$. By Lemma \ref{splitinj}, $E$ is ${\mathcal Q}_{N\backslash G}$--bilinear. Since $X=E(X)$, we may assume that $X$ is a finite sum of elements  of the form $Y:=u'_{\psi', \varphi'}(v_{\xi, f}-\langle \xi, f\rangle)u_{\psi,\varphi}$, where $v\in\Rep(N\backslash G)$, $u$, $u'\in\Rep(N\backslash G)^\perp$,
$f\in F_v$ and the remaining vectors are arbitrary.
If $s$ is a normalized $G$--fixed vector for $u\overline{u}$ then $(u\overline{u})_{s,s}=1$, hence we can write
$$Y=\sum_{i,j}(u'_{\psi',\varphi'}u_{\psi_i,\psi_j})(\overline{u}_{\varphi_i,\varphi_j}(v_{\xi, f}-\langle \xi, f\rangle)u_{\psi,\varphi}),$$
where $\sum_i\psi_i\varphi_i=s$.
Note that the the factors $\overline{u}_{\varphi_i,\varphi_j}(v_{\xi, f}-\langle \xi, f\rangle)u_{\psi,\varphi}$ lie in ${\mathcal Q}_{N\backslash G}$, by centrality of $N$ and Proposition \ref{centrality}. On the other hand, both $u'$ and $u$ restrict to multiples of non-trivial one-dimensional representations of $N$, say $g'$ and $g$,  respectively. If $g'g\neq 1$, $u'u$ contains no subrepresentation which becomes trivial on $N$, hence $u'u\in\Rep(N\backslash G)^\perp$, implying $E(Y)=0$. If, instead, $g'g=1$ then
all the subrepresentations of $u'u$ restrict to a multiple of the trivial representation of $N$, so $u'_{\psi',\varphi'}u_{\psi_i,\psi_j}$, and hence also $Y$, lie in ${\mathcal Q}_{N\backslash G}$, and $q(Y)=0$ by our assumption. This shows that $X\in{\mathcal I}$, and the proof is complete.
\end{proof}

\begin{rem}
In the cocommutative case, as noticed before, every quantum subgroup is central. Resuming the previous notation, with data  described by
$\Omega\subset\Lambda\subset\Gamma$, condition $(8.1)$
then becomes the requirement of normality of $\Omega$ in $\Gamma$.
\end{rem}

 \bigskip

\noindent{\bf Acknowledgments} Part of the results of this paper have been presented at the  conference {\em Noncommutative geometry and quantum groups} held in Oslo in the summer 2012. C.P. would like to thank E. Bedos, M. Landstad, N. Larsen, S. Neshveyev, and L. Tuset for the invitation.

We are  grateful to S. Wang for sending us the paper \cite{Wang_adjoint}, to M. Chiodo, M. Hashimoto, V.N. Remeslennikov,  for correspondence, and to C. Bernardi, J. Bichon,  K. De Commer,  C. De Concini, M. Landstad, R. Meyer, I. Patri, R. Salvati Manni,  A. Van Daele,  for discussions.

L.S.C. acknowledges support by the National Research Fund, Luxembourg,
AFR project 1164566. A.D'A., C.P., and S.R. acknowledge support by AST fundings from ``La Sapienza'' University of Rome.

\vfill

\end{document}